\def\seq#1_#2{\langle #1_#2:#2\in\omega\rangle}
\def\fc#1|#2{#1\uparrow#2}
\def\ain{\subseteq^*}
\def\cl#1{\overline{#1}}
\def\K{{\cal K}}
\def\C{{\mathfrak c}}
\def\CC{{\cal C}}
\def\set#1:#2.{{\{\,#1: #2\,\}}}
\def\so{\mathop{\mathfrak{so}}}
\def\F{{\mathbb F}}
\newif\ifdraft
\title{Countably compact groups and sequential order}
\author{Dmitri Shakhmatov, Alexander Shibakov}
\ifdraft\usepackage{showlabels}\fi
\newtheorem{theorem}{Theorem}
\newtheorem{lemma}{Lemma}
\newtheorem{corollary}{Corollary}
\newtheorem{definition}{Definition}
\newtheorem{question}{Question}
\newlist{countup}{enumerate}{10}
\setlist[countup]{label={\rm(\arabic*)}, ref=(\arabic*)}
\begin{document}
\maketitle
\begin{abstract}
We use $\diamondsuit$ to construct, for every $\alpha\leq\omega_1$ a
sequential countably compact topological group of sequential order
$\alpha$. This establishes the independence of the existence of
sequential countably compact non Fr\'echet
groups from the usual axioms of ZFC and answers several questions of
D.~Shakhmatov.
\end{abstract}
\def\lad{\la{D}}
\def\las{\la{S}}
\def\lasm{\la{S^m}}
\def\lasi{\la{S^i}}
\def\lade{\la{2D}}
\def\lase{\la{2S}}
\def\lasem{\la{2S^m}}
\def\lasei{\la{2S^i}}
\def\lado{\lad\setminus\lade}
\def\laso{\las\setminus\lase}
\def\lasom{\lasm\setminus\lasem}
\def\lasoi{\lasi\setminus\lasei}
\def\laf{\la{F}}
\def\U{{\cal U}}
\def\kw{\mathop{k_\omega}}
\def\Pty{{\bf p}}
\def\B{{\mathbb B}}
\def\UU{{\mathbb U}}
\def\S{{\mathbb S}}
\def\SS{{\cal S}}
\def\limD{*}
\def\oG{\bar G}
\def\leql{\leq_L}
\def\geql{\not\leql}
\def\leqp{\leq_p}
\def\geqp{\geq_p}
\def\hgt{{\rm ht}}
\def\MM#1{${\mathbb M}(#1)$}
\def\kwk{{\kw(\K)}}
\def\la#1{{\langle{#1}\rangle}}
\def\le#1{\la{2{#1}}}
\def\lo#1{{\la{#1}\setminus\le{#1}}}
\def\hd#1#2#3{{\bf h}_{#3}^{#1}({#2})}
\def\hda#1#2#3{\hd{#1}{#2}{\la{#3}}}
\def\hdo#1#2#3{\hd{#1}{#2}{\lo{#3}}}
\def\ha#1#2#3{{^\omega\hd{#1}{#2}{#3}}}
\def\gku{{(G, \K, \U)}}
\def\gks#1{{(G#1, \K#1, \U#1)}}
\def\gkup{{(G', \K', \U')}}
\def\hku#1{{(H#1, \K#1, \U#1)}}
\def\Kw{\K_{\omega_1}}
\def\Uw{\U_{\omega_1}}
\def\Gw{G_{\omega_1}}
\def\mcofl{{\bf m}}
\def\ccl#1{{\bf c}{#1}}
\section{Introduction and notation.}
The standard definition of a topological space contains no reference
to convergence, sequential or otherwise. In common mathematical practice,
however, convergence often appears as a valuable tool in many
application areas for topology, such as analysis. In an effort to
formalize the relationship between the sequential convergence and
topology, the class of {\em Fr\'echet spaces\/} was defined in which the
closure operator is directly described in terms of limits of
convergent sequences. Later, a more general class of {\em sequential
spaces\/} was introduced in~\cite{Fr1} to encompass all the topological
spaces in which convergent sequences fully describe the topology (see
below for the definitions of these and other concepts used in the
introduction).

The addition of an algebraic structure appropriately coupled with the
topology (by requiring the operations to be continuous)
imposes a number of restrictions on both the topology and the
convergence. Thus separation axioms $T_1$--$T_{3{1\over2}}$ are
equivalent in topological groups, as are metrizability and the first
countability axiom (see~\cite{ArTk}). The investigation into
sequential topological groups was started by V.~Malykhin, P.~Nyikos,
I.~Protasov,
E.~Zelenyuk, and others (see~\cite{PZ1}, \cite{Ny1} and the
references therein) in
the 1970s and 1980s. In the following decade, a number of questions about
sequential topological groups (more generally, about `sequential
phenomena' in groups) had been stated, that have guided the future
development of this field.

The survey paper~\cite{Sha1} (see also~\cite{Sha2}) presents a
fairly complete
overview of the state of the art in the study of convergence in the
presence of an algebraic structure.
%It mentions, of course, the
%well-known Malykhin's problem about the existence of a non metrizable
%countable Fr\'echet topological group that was recently beautifully
%resolved in~\cite{HRG}, as well as a number of other natural questions
%about similar classes of spaces.
One of the problems posed in~\cite{Sha1} (Question~7.5) is the existence of a
countably compact sequential group that is not Fr\'echet. This
question is stated for precompact and
pseudocompact sequential groups, as well. It was shown in~\cite{Shi1}
that it is consistent with
the usual axioms of ZFC that there are no such countably compact
groups. The goal of this paper is to establish the consistency of the
existence of such groups thus showing that such existence is
independent of ZFC. Note that every {\em compact\/} sequential (or
even countably tight) topological group is metrizable (see~\cite{ArTk}).

Another question in~\cite{Sha1} deals with the measure of (sequential)
complexity of the closure operator in sequential groups, called {\em
the sequential order\/} (see below for a definition). Namely,
Question~7.4 asks if countably compact (pseudocompact, precompact)
sequential groups of arbitrary sequential orders exist, consistently, or otherwise.

A similar question was asked by P.~Nykos in~\cite{Ny1} about the class
of all sequential groups. The existence of sequential groups with
nontrivial ($\not\in\{0,1,\omega_1\}$) sequential orders was shown to
be independent of ZFC in~\cite{Shi1} and~\cite{Shi7}. The example in
this paper (see Theorem~\ref{cc.g}) thus establishes that the answer
to Question~7.4(iii) from~\cite{Sha1} (existence of countably compact
groups of arbitrary sequential orders)
is likewise independent of ZFC (for the classes of pseudocompact and
precompact groups it is still an open question whether it is consistent
that there are no such groups of sequential order $\omega_1$).

We use the standard set-theoretic notation, see~\cite{Ku}
and~\cite{ArTk}. All spaces are assumed to be regular unless stated
otherwise.

Abusing the notation somewhat we write $\sigma-1=\sigma'$ where
$\sigma\geq1$ is a successor ordinal and $\sigma'+1=\sigma$ or $\sigma=0$
and $\sigma'=-1$ (we do not go as far as call $-1$ an ordinal though). For brevity,
we use the term {\em increasing\/} to mean non decreasing, and use
{\em strictly increasing\/} when a stronger condition is assumed.
Define $\alpha\leql\beta$ as $\alpha<\beta$ for a limit $\beta$ and
$\alpha\leq\beta$ otherwise. Note that if $\alpha$ is a successor
$\alpha\leq\beta$ is equivalent to $\alpha\leql\beta$.

Our notation for various ordinal invariants is more detailed than customary
as we must frequently keep track of several topologies on the same
space. Whenever the topology is clear from the context we omit it from
the notation as well. The following
definition is the starting point for most arguments about convergence.
\begin{definition}\label{seq.cl}
Let $(X,\tau)$ be a topological space, $A\subseteq X$.
Define {\em the sequential closure of $A$},
$[A]^\tau_0=[A]^\tau=\set x\in X:\exists S\subseteq A, S\to x.$.
Given an ordinal $\sigma>0$, define $[A]_\sigma^\tau=[[A]_{\sigma-1}^\tau]^\tau$ if $\sigma$
is a successor and $[A]_\sigma^\tau=\cup_{\sigma'<\sigma}[A]_{\sigma'}^\tau$ otherwise.
\end{definition}

{\em Sequential topological spaces\/} may be defined as exactly those $(X,\tau)$ in which for every
$A\subseteq X$ there exists a $\sigma$ such that $\cl A^\tau=[A]_\sigma^\tau$. It is a quick argument
to show that in such spaces $\sigma\leq\omega_1$. 
\begin{definition}\label{pt.so}
Let $(X,\tau)$ be a sequential space, $A\subseteq X$, and $x\in X$.
Define $\so(x, A, \tau)=\inf\set\sigma:x\in[A]_\sigma^\tau.\cup\{\omega_1\}$.
\end{definition}
As a quick observation, if $x\in\cl{A}^\tau$ then
$\so(x, A, \tau)=\sigma<\omega_1$ is a successor
ordinal, and whenever $\sigma>0$ there are $x_n\in A$ such
that $x_n\to x$ in $\tau$ and $\so(x_n, A, \tau)=\sigma_n$ is an
increasing (non decreasing) sequence of ordinals such that
$\sigma_n<\sigma$ and $\sigma_n\to\sigma-1$.

The central ordinal invariant in the study of convergence can now be defined as
follows.
\begin{definition}\label{pt.so}
Let $(X,\tau)$ be a sequential space. Define {\em the sequential order of $X$},
$\so(X, \tau)=\sup\set\so(x, A, \tau):A\subseteq X, x\in\cl{A}^\tau.$.
\end{definition}

The construction below depends heavily on the algebraic properties of
the underlying group.
Recall that a group $G$ is called {\it boolean\/} if $a+a=0_G$ for any $a\in G$.
All such groups are abelian and may be viewed as vector spaces over $\F_2$. One can thus
consider (linearly) independent subsets of $G$ in the usual sense. If $A\subseteq G$
by $\la{A}$ we denote the span of $A$ in $G$. A convenient property of
boolean groups that is used without mentioning below is that $a+b=a-b$
for any $a,b\in G$ for a boolean $G$.

The following construction will be used often. 
\begin{definition}\label{lase}
Let $G$ be a boolean group, $S\subseteq G$ be an independent
subset. Define the {\em even subgroup of $\las$ (relative to $S$)\/} as $\lase=\la{S+S}$.
\end{definition}
Under most circumstances, the set $S$ will be clear from the context and the `relative to $S$'
part will be omitted. We assume below that all groups are abelian
unless stated otherwise, although a number of statements hold under
more general assumptions.

\begin{lemma}\label{set.additivity}
  Let $G$ be a topological group, $A, B\subseteq G$. Let
  $a\in[A]_\alpha$ and $b\in[B]_\beta$ for some
  $\alpha,\beta<\omega_1$. Then $a+b\in[A+B]_{\max\{\alpha,\beta\}}$.
\end{lemma}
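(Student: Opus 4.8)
The plan is to prove this by transfinite induction on $\gamma = \max\{\alpha, \beta\}$, reducing everything to the base case using the continuity of the group operation. The key fact driving the whole argument is that the map $(x,y) \mapsto x+y$ is continuous, so it carries convergent sequences to convergent sequences: if $x_n \to a$ and $y_n \to b$, then $x_n + y_n \to a+b$. This is exactly what makes the sequential closure interact well with the algebraic sum $A+B$.

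**First I would** dispose of the base case $\gamma = 0$, where $\alpha = \beta = 0$, so $a \in [A]_0$ and $b \in [B]_0$. By Definition~\ref{seq.cl} there are sequences $S = \seq a_n \subseteq A$ with $a_n \to a$ and $T = \seq b_n \subseteq B$ with $b_n \to b$. Then $a_n + b_n \to a + b$ by continuity of addition, and $\set a_n + b_n : n \in \omega. \subseteq A + B$, so $a + b \in [A+B]_0 = [A+B]_{\max\{0,0\}}$, as required. Strictly one must also note that a "convergent sequence" in the sense of Definition~\ref{seq.cl} is really a set $S \subseteq A$ with $S \to x$; I would treat $S$ and $T$ as enumerated and form the termwise sum, which is the only mild bookkeeping point here.

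**Next I would** handle the inductive step. Fix $\gamma > 0$ and assume the statement holds for all pairs with $\max\{\alpha',\beta'\} < \gamma$. Given $a \in [A]_\alpha$ and $b \in [B]_\beta$ with $\max\{\alpha,\beta\} = \gamma$, I want a witnessing sequence in $A + B$ whose terms lie at strictly lower levels. By the observation following Definition~\ref{pt.so}, since $a \in [A]_\alpha$ there is a sequence $a_n \to a$ with $a_n \in [A]_{\alpha_n}$ for ordinals $\alpha_n < \alpha$ (if $\alpha > 0$; if $\alpha = 0$ take $a_n = a$ with $\alpha_n = 0$), and similarly $b_n \to b$ with $b_n \in [B]_{\beta_n}$, $\beta_n < \beta$ (or $\beta_n = 0$ if $\beta = 0$). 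Since not both $\alpha$ and $\beta$ are zero, we have $\max\{\alpha_n, \beta_n\} < \gamma$ for each $n$. The inductive hypothesis then gives $a_n + b_n \in [A+B]_{\max\{\alpha_n,\beta_n\}}$, and since $\max\{\alpha_n,\beta_n\} < \gamma$, each such term lies in $[A+B]_{\gamma'}$ for some $\gamma' < \gamma$. Continuity gives $a_n + b_n \to a + b$, and applying $[\cdot]^\tau$ one more time (or passing through the limit level if $\gamma$ is a limit) places $a + b \in [A+B]_\gamma$.

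**The main obstacle** I expect is the careful handling of the successor-versus-limit distinction for $\gamma$, together with the asymmetry when one of $\alpha,\beta$ equals $0$ while the other is positive. When $\gamma$ is a successor, the limit $a+b$ of a sequence drawn from levels $<\gamma$ lands in $[[A+B]_{<\gamma}] \subseteq [A+B]_\gamma$ cleanly; when $\gamma$ is a limit, I must instead observe that each $a_n + b_n$ already sits at a level $\gamma_n < \gamma$, so $a + b \in [\{a_n+b_n\}]_0 \subseteq [[A+B]_{<\gamma}]$, and I should verify this still yields membership in $[A+B]_\gamma = \bigcup_{\gamma' < \gamma}[A+B]_{\gamma'}$ only after one application of the sequential closure operator — so a single extra level may be needed, which is why the statement is most robustly phrased in terms of reaching level $\gamma$ rather than strictly below. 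I would make sure the bookkeeping of which ordinal each term realizes is uniform enough that the termwise sums genuinely form a subset of $A+B$ at the claimed levels, as this is the step where an off-by-one error in the ordinal arithmetic is easiest to commit.
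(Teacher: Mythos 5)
Your overall strategy is the same as the paper's: transfinite induction on $\gamma=\max\{\alpha,\beta\}$, driven by continuity of addition (termwise sums of witnessing sequences converge to $a+b$). Your base case is fine, and arguably more faithful to Definition~\ref{seq.cl} than the paper's own one-liner, since $[A]_0$ is already the first sequential closure rather than $A$ itself, so the termwise-sum argument is genuinely needed there.

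There is, however, one unresolved point: your treatment of limit $\gamma$. You correctly observe that if you pick $a_n+b_n$ at levels $\gamma_n<\gamma$ and pass to the limit, one application of the closure operator only certifies $a+b\in[A+B]_{\gamma+1}$ when $\gamma$ is a limit (since $[A+B]_\gamma$ is then merely the union of the lower levels, not closed under taking limits of sequences drawn from them), and you leave the matter at ``a single extra level may be needed,'' which does not establish the stated bound. The fix is the paper's opening move, which you should make explicit: since $[A]_\sigma=\bigcup_{\sigma'<\sigma}[A]_{\sigma'}$ for limit $\sigma$, one may assume from the outset that $\alpha$ and $\beta$ are successor ordinals (or $0$), replacing each by a smaller ordinal if necessary and invoking monotonicity of $\sigma\mapsto[A+B]_\sigma$ at the end. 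With that reduction, $\gamma$ is never a limit at the stage where a convergent sequence is extracted, the witnessing terms lie in $[A+B]_{\gamma-1}$, and one application of $[\cdot]$ lands exactly in $[A+B]_\gamma$; no separate limit case ever arises. As written, your argument proves the lemma only for successor $\gamma$ and a weakened bound ($\gamma+1$) at limits.
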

\begin{proof}
We may assume that both $\alpha$ and $\beta$ are successor ordinals.
If $\max\{\alpha,\beta\}=0$ then $a\in A$ and $b\in B$ so
$a+b\in A+B=[A+B]_0$.

Suppose the Lemma holds for all successor $\alpha', \beta'$ such that
$\max\{\alpha',\beta'\}<\gamma$. Let $\so(a,A)=\alpha$ and
$\so(b,B)=\beta$, $\max\{\alpha,\beta\}=\gamma$. Pick  $b_n\to b$ and
$a_n\to a$ such that $\so(a_n, A)=\alpha_n$, $\so(b_n,B)=\beta_n$
where $\alpha_n<\gamma$, $\beta_n<\gamma$ are such that
$\alpha_n\to\alpha-1$ and $\beta_n\to\beta-1$. By the hypothesis
$a_n+b_n\in[A+B]_{\max\{\alpha_n,\beta_n\}}=[A+B]_{\gamma_n}$ where
$\gamma_n<\gamma$. Thus $a+b\in[A+B]_\gamma$.
\end{proof}

\begin{lemma}\label{cs.sos}
Let $G$ be a topological group and $K\subseteq G$ be a sequentially
compact subspace. Let $P\subseteq D+K$. Then for any
$\alpha<\omega_1$ $[P]_\alpha\subseteq[D]_\alpha+K$.
\end{lemma}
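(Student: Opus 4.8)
The plan is to argue by transfinite induction on $\alpha$, with the inductive hypothesis $[P]_{\alpha'}\subseteq[D]_{\alpha'}+K$ for all $\alpha'<\alpha$. The base case $\alpha=0$ is immediate, since $[P]_0=P\subseteq D+K=[D]_0+K$. For a limit $\alpha$, the hypothesis together with the fact that adding $K$ on the right commutes with unions gives $[P]_\alpha=\bigcup_{\alpha'<\alpha}[P]_{\alpha'}\subseteq\bigcup_{\alpha'<\alpha}\left([D]_{\alpha'}+K\right)=\left(\bigcup_{\alpha'<\alpha}[D]_{\alpha'}\right)+K=[D]_\alpha+K$, so the limit step is purely formal and uses nothing about $K$.

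The real content is the successor step $\alpha=\alpha'+1$. I would fix $x\in[P]_\alpha=[[P]_{\alpha'}]$ and pick a sequence $s_n\to x$ with $s_n\in[P]_{\alpha'}$. By the inductive hypothesis each $s_n$ lies in $[D]_{\alpha'}+K$, so one may write $s_n=d_n+k_n$ with $d_n\in[D]_{\alpha'}$ and $k_n\in K$. This is where sequential compactness of $K$ enters: I would pass to a subsequence along which $k_{n_j}\to k$ for some $k\in K$. Then $s_{n_j}\to x$ still holds, and by continuity of subtraction $d_{n_j}=s_{n_j}-k_{n_j}\to x-k$. Since every $d_{n_j}\in[D]_{\alpha'}$, the limit satisfies $x-k\in[[D]_{\alpha'}]=[D]_\alpha$, whence $x=(x-k)+k\in[D]_\alpha+K$, as required.

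The main obstacle, and the only place the hypothesis on $K$ is used, is precisely this extraction of a convergent subsequence of the ``$K$-coordinates'' $k_n$: without sequential compactness of $K$ there is no reason for the chosen decompositions $s_n=d_n+k_n$ to yield a convergent sequence of $d_n$'s, and the argument would collapse. Everything else is a routine application of the continuity of the group operations together with the recursive definition of $[\,\cdot\,]_\alpha$.
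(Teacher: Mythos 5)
Your proof is correct and follows essentially the same route as the paper's: decompose each term of an approximating sequence via the inductive hypothesis, use sequential compactness of $K$ to extract a convergent subsequence of the $K$-parts, and conclude by continuity of the group operation that the $D$-parts converge into $[D]_\alpha$. The only cosmetic difference is that you split the induction explicitly into limit and successor cases, while the paper inducts on $\so(x,P)$ (which is automatically a successor for $x$ in the closure), but the substance is identical.
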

\begin{proof}
Suppose the lemma holds for all $\alpha'<\alpha$ and let $\so(x,
P)=\alpha$. If $\alpha=0$ the argument is trivial, otherwise, there
are $x_n=d_n+g_n$ such that $x_n\to x$, $d_n\in D$, $g_n\in K$ and
$\so(x_n, P)=\alpha_n<\alpha$. By the hypothesis, we may assume that
$x_n=d_n'+g_n'$ where $\so(d_n',D)\leq\alpha_n$ and $g_n'\in K$. Using
the sequential compactness of $K$ we may assume (after passing to a
subsequence if necessary) that $g_n'\to g$ for some $g\in K$. Thus
$d_n'\to d=x-g$ and $\so(d, D)=\alpha'\leq\alpha$.
\end{proof}

\begin{corollary}\label{cs.mseq}
Let $G$ be a topological group and $K\subseteq G$ be a sequentially
compact subspace. Let $P=\set p_n:n\in\omega.$ and
$p_n=a_n+d^n+d_n$ where $a_n\in K$,
$d_n\to d$, and $g\in[\set d^i:i\in\omega.]_\sigma$.

Then there exists a $p\in[P]_{\max\{\sigma,1\}}$ such that $p\in
g+d+K$. If $a_n\to a$ one may assume that
$p=a+g+d$.
\end{corollary}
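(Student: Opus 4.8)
The plan is to induct on $\sigma$, lifting the scheme of convergent sequences that witnesses $g\in[D]_\sigma$, where $D=\set d^i:i\in\omega.$, to a parallel scheme inside $P$, and to use the sequential compactness of $K$ to force the correction terms $a_i+d_i$ to converge. The guiding observation is that if these corrections were \emph{constant}, say $a_i+d_i=a+d$ for every $i$, then $P=(a+d)+D$ and the conclusion would follow at once from Lemma~\ref{set.additivity}: applying it to the memberships $g\in[D]_\sigma$ and $a+d\in[\{a+d\}]_0$ gives $g+(a+d)\in[D+\{a+d\}]_\sigma=[P]_\sigma$, i.e. $a+g+d\in[P]_\sigma$. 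The real content of the corollary is that the corrections need only converge rather than be constant, and that absorbing this convergence costs at most one extra sequential step, which is exactly what the index $\max\{\sigma,1\}$ records.

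Concretely, I would prove by induction the sharper claim that for every $g\in[D]_\sigma$ there is $a\in K$ with $a+g+d\in[P]_\sigma$, and that $a$ is the prescribed limit whenever $a_n\to a$; the stated bound then follows since $[P]_\sigma\subseteq[P]_{\max\{\sigma,1\}}$ and $a\in K$ places $a+g+d$ in $g+d+K$. For the base case $\sigma=0$ write $g=\lim_k d^{i_k}$ for a sequence drawn from $D$ whose indices tend to infinity; the corresponding points $p_{i_k}=a_{i_k}+d^{i_k}+d_{i_k}$ lie in $P$, and since $d^{i_k}\to g$ and $d_{i_k}\to d$ (the latter because $d_n\to d$ along the whole sequence), a single further passage to a subsequence, provided by the sequential compactness of $K$, makes $a_{i_k}\to a\in K$. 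Then $p_{i_k}\to a+g+d$, so $a+g+d\in[P]_0$. It is precisely here that the hypothesis $d_n\to d$ supplies the summand $d$, and the requirement that the indices $i_k$ tend to infinity is what guarantees $d_{i_k}\to d$.

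For the inductive step, with $\so(g,D)=\sigma$ a successor, choose $g_k\to g$ with $\so(g_k,D)=\sigma_k<\sigma$ and $\sigma_k\to\sigma-1$. The inductive hypothesis yields $a^{(k)}\in K$ with $a^{(k)}+g_k+d\in[P]_{\sigma_k}$; passing once more to a subsequence along which $a^{(k)}\to a\in K$ and using $g_k\to g$, I obtain $a+g+d=\lim_k(a^{(k)}+g_k+d)$. Since each summand lies in $[P]_{\sigma_k}$ with $\sigma_k\to\sigma-1$, the limit lies in $[P]_\sigma$, completing the step. The limit ordinal case is immediate, for then $g\in[D]_{\sigma'}$ already for some $\sigma'<\sigma$.

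The step I expect to be the main obstacle is the bookkeeping that keeps the sequential rank from inflating: at every node of the lifted scheme I must thin out once to make the $K$-corrections converge, and I must verify that this extraction is absorbed into the limit step already present in the scheme for $g$ rather than adding a fresh one. This is exactly why the constant-correction bound $[P]_\sigma$ coming from Lemma~\ref{set.additivity} survives the passage to convergent corrections, degrading at worst to $[P]_{\max\{\sigma,1\}}$; one may cross-check the resulting estimate against Lemma~\ref{cs.sos}, which furnishes the matching upper bound $[P]_\alpha\subseteq[D]_\alpha+K$. The secondary point, that the limiting correction is the prescribed $a$ when $a_n\to a$, is automatic: every subsequence of a convergent sequence has the same limit, so all the extractions above are forced to produce that same $a$, and the identity $p=a+g+d$ holds exactly.
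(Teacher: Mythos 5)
Your argument is correct, but it takes a genuinely different route from the paper's. The paper's proof is a two-line reduction: it forms the diagonal set $D=\{\,d^n+d_n:n\in\omega\,\}$, applies Lemma~\ref{set.additivity} to get $g+d\in[D]_{\max\{\sigma,1\}}$, and then observes that $D\subseteq P+(-K)$ (because $p_n-(d^n+d_n)=a_n\in K$), so that Lemma~\ref{cs.sos} with the roles of $P$ and $D$ reversed gives $[D]_{\max\{\sigma,1\}}\subseteq[P]_{\max\{\sigma,1\}}+(-K)$ and hence a $p\in[P]_{\max\{\sigma,1\}}$ with $p=g+d+a$; the refinement when $a_n\to a$ is one more application of Lemma~\ref{set.additivity}. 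You instead run a single induction on $\sigma$ that lifts the convergence scheme witnessing $g\in[\{\,d^i:i\in\omega\,\}]_\sigma$ directly into $P$, invoking the sequential compactness of $K$ at each node to make the corrections converge; in effect you have unrolled and composed the inductions hidden inside Lemmas~\ref{set.additivity} and~\ref{cs.sos}, re-deriving the mechanism of the latter inline rather than quoting it. The paper's route buys brevity and reuse of the two preceding lemmas; yours buys self-containedness and an explicit account of where each subsequence extraction occurs and why it does not inflate the rank. One point both proofs elide in the same way: your base case writes $g=\lim_k d^{i_k}$ for a sequence whose indices tend to infinity, which is what makes $d_{i_k}\to d$ available; if $g$ were witnessed in $[\{\,d^i:i\in\omega\,\}]_0$ only by a trivial (eventually constant) sequence this alignment would be unavailable, but the paper's appeal to Lemma~\ref{set.additivity} for the diagonal set $D$ hides exactly the same issue, so this is a shared convention rather than a defect of your argument.
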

\begin{proof}
Put $D=\set d^n+d_n:n\in\omega.$ and apply Lemma~\ref{set.additivity} to show that
$g+d\in[D]_{\max\{\sigma,1\}}$. Now note that $D\subseteq
P+(-K)$ and use Lemma~\ref{cs.sos} (with the roles of $P$ and $D$
reversed) to find a $p\in[P]_{\max\{\sigma,1\}}$
and an $a\in K$ such that $p=g+d+a$.

If $a_n\to a$ note that
$p=a+g+d\in[P]_{\max\{\sigma,1\}}$ by
Lemma~\ref{set.additivity}.
\end{proof}
The next concept is used to build approximations of the sequential
group topology.
\begin{definition}\label{kw.topology}
Let $(X,\tau)$ be a topological space. Then $(X,\tau)$ is called $\kw$
if there exists a countable family $\K$ of subspaces of $X$ such that
$U\in\tau$ if and only if $U\cap K$ is relatively open in $K$ for
every $K\in\K$. We say that $\tau$ is {\em determined} by $\K$ and
write $\tau=\kw(\K)$.
\end{definition}

A rich source of $\kw$ topologies on $X$ is provided by the following
well known construction. Let $\K$ be a countable family of subsets of $X$ such
that each $K\in\K$ is endowed with a compact topology $\tau_K$. Define
a new topology $\tau=\kw(\K)=\set U:\forall K\in\K\; (U\cap K)\in\tau_K.$. It is easy to see
that such $\tau$ is automatically $\kw$. The construction above makes
sense for uncountable $\K$ as well. We will use $\kw(\K)$ to denote
the appropriate topology even though for an uncountable $\K$, the
topology $\kw(\K)$ is not necessarily $\kw$.

Note that for an arbitrary $\K$ such $\tau$ is not guaranteed to be
Hausdorff (although it is always $T_1$ provided each $\tau_K$ is), nor does
$\tau_K$ necessarily coinside with the topology inherited by $K$ from
$\tau$. Both of these properites are readily ensured by starting with
a countable family $\K$ of compact subspaces of $X$ in some (not
necessarily $\kw$) topology $\tau'$ on $X$ and taking $\tau_K$ to be
the appropriate subspace topology.

Group $\kw$ topologies have been well studied and are useful building
blocks for various examples of sequential groups (see, for
example~\cite{Sha1} and the
references therein).

The following simple lemma demonstrates a straightforward way to build
$\kw$ group topologies (see~\cite{Shi3}, Lemma~4 for a proof of a more
general statement).

\begin{lemma}\label{kw.groups}
Let $G$ be a boolean topological group and $\K$ be a countable family
of compact subspaces of $G$ closed under finite sums. Then $\kw(\K)$
is a Hausdorff group topology on $G$. Moreover, $\kw(\K)$ is the
finest group topology on $G$ in which each $K\in\K$ remains compact.
\end{lemma}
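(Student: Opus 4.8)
Write $\tau'$ for the given group topology on $G$ and put $\tau_K=\tau'|_K$ for $K\in\K$, so that $\tau:=\kw(\K)$ is the topology $\set U:\forall K\in\K\ (U\cap K)\in\tau_K.$. The plan is to dispose of the three assertions — that $\tau$ is Hausdorff, that it is a group topology, and that it is the finest group topology keeping each $K$ compact — in that order, the middle one being the only substantial point. It is harmless to assume in addition that $\K$ is closed under finite unions: this enlarges $\K$ by compact sets only, leaves both $\kw(\K)$ and the closure under finite sums intact, and makes the product decomposition below transparent.

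First I would record two elementary facts straight from the definition. Every $\tau'$-open $U$ meets each $K$ in a $\tau_K$-open set, so $\tau'\subseteq\tau$; since $\tau'$ is Hausdorff, so is the finer topology $\tau$. Moreover $\tau|_K=\tau_K$ for each $K\in\K$: the inclusion $\tau|_K\subseteq\tau_K$ is immediate from the definition of $\kw(\K)$, while $\tau_K=\tau'|_K\subseteq\tau|_K$ follows from $\tau'\subseteq\tau$. In particular every $K$ is a $\tau$-compact subspace carrying exactly its original topology.

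The main step is the continuity of addition $+\colon(G,\tau)\times(G,\tau)\to(G,\tau)$; inversion is the identity map here, $G$ being boolean, hence trivially continuous. I would invoke the well-known fact that the product of two $\kw$-spaces is again $\kw$, with the product topology determined by the products of the defining compacta, so that $(G,\tau)\times(G,\tau)=\kw(\set K\times L:K,L\in\K.)$. Consequently $+$ is continuous as soon as each restriction $+|_{K\times L}\colon K\times L\to(G,\tau)$ is continuous. But $+$ carries $K\times L$ into $K+L$, which lies in $\K$ by closure under finite sums, and on $K+L$ the topology $\tau$ agrees with $\tau'$ by the previous paragraph; since $\tau'$ is a group topology, $+|_{K\times L}$ is $\tau'$-continuous into $(K+L,\tau'|_{K+L})=(K+L,\tau|_{K+L})$ and therefore continuous into $(G,\tau)$. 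This is the crux of the argument and the one place where closure under finite sums (to land back inside $\K$) and the product theorem for $\kw$-spaces (to reduce to the compact pieces) are both essential; everything else is bookkeeping with the definition of $\kw(\K)$.

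Finally, for maximality, let $\tau^*$ be any group topology on $G$ in which every $K$ remains compact with its original topology, i.e.\ $\tau^*|_K=\tau_K$. For $U\in\tau^*$ we then have $U\cap K\in\tau^*|_K=\tau_K$ for each $K\in\K$, so $U\in\kw(\K)=\tau$ by definition; hence $\tau^*\subseteq\tau$. As $\tau$ itself is such a topology by the second paragraph, it is the finest one, completing the proof.
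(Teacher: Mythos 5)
The paper offers no proof of this lemma at all --- it explicitly defers to Lemma~4 of \cite{Shi3} for a more general statement --- so there is no internal argument to compare against. Your proof is the standard one and it is correct: $\tau'\subseteq\kw(\K)$ gives Hausdorffness and $\kw(\K)|_K=\tau_K$; the $k_\omega$ product theorem (which is precisely where the countability of $\K$ is used) reduces continuity of addition to the restrictions $+|_{K\times L}$, which land in $K+L\in\K$ by closure under finite sums, where $\kw(\K)$ and $\tau'$ agree; and maximality is immediate from the definition of $\kw(\K)$ once ``remains compact'' is read as ``remains a compact subspace carrying its original topology,'' which is the intended reading (without it the final clause would need a separate argument). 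Two small caveats, both shared with the paper's own statement rather than introduced by you: the product theorem for $k_\omega$-spaces requires $\cup\K=G$, which the lemma does not state but which holds in every application (it is part of the definition of a $\kw$-pair, and follows from closure under finite sums whenever $\cup\K$ generates $G$); and your appeal to that product theorem is an external citation, exactly as the paper's appeal to \cite{Shi3} is, so the argument is self-contained only modulo that standard fact.
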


\section{Basic definitions.}

Most known constructions of nontrivial sequential spaces use a
technique that separates the analysis of convergence from that of
the topology. Among the tools used to deal with convergence, the
standard {\em test spaces\/} (such as the sequential fan $S(\omega)$,
Arens' space $S_2$, Archangel'skii-Franklin space $S_\omega$, etc.) feature
prominently. Additional ordinal invariants, such as the
Cantor-Bendixson index, scatteredness rank
(see~\cite{Z}, \cite{Shi7}), etc.\ are often used to bound the sequential
order of `intermediate spaces' in the construction.

To outline the reasons why such methods
are of limited utility for the problem in this paper consider the
following argument.

Lemma~5 in~\cite{Shi3} states that given a $\kw$ boolean group $G$ and a
closed discrete subset $D\subseteq G$ one can find a coarser
$\kw$-topology on $G$ in which $D$ has a limit point. This suggests
the following brute force strategy for building a countably compact
sequential group that is not Fr\'echet.

Consider a subgroup $G$ of
$2^\C$ (algebraically) generated by a subspace homeomorphic to a
compact sequential space $K$ of sequential order $\geq2$ (for example, the
one point compactification of the well-known Mrowka's space
$\psi^*$). Endow $G$ with the natural $\kw$ topology determined by the
family $\K$ of all the iterated sums of $K$.

Recursively
(using an appropriate set-theoretic principle such as
$\diamondsuit$, see~\cite{Shi3} for details of similar constructions)
add new convergent sequences using the lemma from~\cite{Shi3}
mentioned above to make $G$ countably compact. The topology determined
by all the added compact subspaces together with the original family $\K$
will be sequential and countably compact providing the desired example.

It is instructive to see why the naive approach above fails. If such a
group topology $\tau$ on $G$ existed it would be easy to find a
quotient $G'$ of $G$ such that the quotient topology on $G'$ has a
countable pseudocharacter. As was shown in~\cite{Shi3} $G'$ will
then necessarily be countable (at least with the $K$ chosen
above) thus yielding a contradiction, since $G'$ must be
countably compact. In fact, it is still an open question whether it is
consistent with the axioms of ZFC that a countably compact sequential
group may contain a compact subspace of sequential order $\geq2$.

A more detailed analysis of countably compact (boolean) groups helps
to reveal the main source of difficulties with the approach above. 

Recall that a topological group $G$ is called {\em precompact\/} (or
{\em totally bounded}) if it
can be embedded as a subgroup in some compact group. The following
lemma follows from Pontryagin's duality for compact abelian groups and
the well-known characterization of precompact groups. Since this
result will never be used directly, its proof is omitted.

\begin{lemma}\label{precompact.groups}
Every countably compact group is precompact. Every precompact (thus
every countably compact) boolean group has a {\em linear} topology,
i.e.\ has a base of neighborhoods of $0$ consisting of (clopen) subgroups of
(necessarily) finite index.
\end{lemma}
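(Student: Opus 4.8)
The plan is to treat the two assertions separately: first that countable compactness implies precompactness (a point-set argument that uses no algebra beyond the group operations), and then that precompactness forces a linear topology in the boolean case (where the algebraic structure and duality enter).

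For the first assertion I would argue by contraposition. Suppose $G$ is not precompact. By the standard characterization of precompactness there is an open neighborhood $U$ of $0_G$ such that no finite family of translates of $U$ covers $G$. Choose a symmetric open $V=-V$ with $V+V\subseteq U$, and recursively pick $g_n\in G$ with $g_n\notin\bigcup_{i<n}(g_i+U)$, which is possible precisely because no finite family of translates of $U$ covers $G$. The key observation is that for every $x\in G$ the neighborhood $x+V$ contains at most one $g_n$: if $g_n,g_m\in x+V$ with $n<m$, then $g_m-g_n\in V+V\subseteq U$, forcing $g_m\in g_n+U$, contrary to the construction. Hence $\set{g_n}:n\in\omega.$ is an infinite closed discrete subset with no accumulation point, contradicting the countable compactness of $G$.

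For the second assertion I would invoke Weil's theorem: a precompact group $G$ is topologically isomorphic to a dense subgroup of a compact group $K$ (its completion), and since $G$ is boolean and dense, $K$ is a compact boolean group. The crux is to show that $K$ has a neighborhood base at $0_K$ consisting of clopen subgroups of finite index. By Pontryagin duality $K\cong\widehat{V}$ for a discrete abelian group $V$, and $K$ is boolean if and only if $V$ is, i.e.\ $V$ is an $\F_2$-vector space; then every character of $V$ takes values in the $2$-torsion $\{0,\tfrac12\}\subseteq\R/\Z$, so $K=\operatorname{Hom}(V,\F_2)$ is topologically isomorphic to $2^{\dim V}$ with the product topology. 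The standard basic neighborhoods of $0$ in $2^{\dim V}$ (those fixing finitely many coordinates to $0$) are exactly clopen subgroups of finite index and form a base. (Equivalently, a compact boolean group is totally disconnected, hence profinite, so its open subgroups already form such a base.) Finally I would push this base down to $G$: for each clopen finite-index subgroup $N\leq K$ the trace $G\cap N$ is a clopen subgroup of $G$, and density of $G$ yields $G/(G\cap N)\cong K/N$, whence $[G:G\cap N]=[K:N]<\omega$; these traces constitute the required neighborhood base at $0_G$.

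The main obstacle is the structural description of compact boolean groups underlying the second part: everything hinges on the fact that compactness together with the exponent-two condition forces a base of finite-index open subgroups---through duality this is the identification $K\cong 2^\kappa$, and without duality it is the profiniteness coming from total disconnectedness. The transfer to the dense subgroup $G$ is then routine, the only point genuinely requiring the density hypothesis being the finiteness of the index $[G:G\cap N]$.
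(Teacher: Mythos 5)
Your proof is correct and takes essentially the route the paper intends: the paper omits the proof of this lemma, saying only that it ``follows from Pontryagin's duality for compact abelian groups and the well-known characterization of precompact groups,'' and your argument supplies exactly those details (an infinite uniformly separated, hence closed discrete, set of translates to contradict countable compactness, and duality applied to the compact completion to identify it with a power of $\F_2$, whose finite-index clopen subgroups then trace down to a linear base on the dense subgroup). Nothing further is needed.
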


The construction in~\cite{Shi3} that `forces' $D$ to aquire a
limit point adds a single convergent sequence to the original topology
of $G$ along with all of the iterated sums of such sequence.
%As
%Lemma~\ref{precompact.groups} suggests what is required instead is
%adding the whole compact (metrizable) subrgroup generated by the
%sequence. Most points of this group will lie outside of $G$.
Unless some precautions with the choice of the new
convergent sequence (as well as its limit) are taken, it may easily
destroy the 
precompactness of any topology on $G$ compatible with the
$\kw$-topology.

The construction in this paper uses two separate topologies, one to
deal with the convergence, and the other to ensure that the limit space is a topological
group: a $\kw$ topology and a coarser precompact first countable
metrizable topology (see Definition~\ref{contrip} below), respectively. In order to
keep the topologies compatible throughout the construction, instead of
using the standard test spaces to estimate the sequential order of
points, estimates of the sequential order of some homogeneous
countable subspaces are used instead (the odd subspace, see
Definitions~\ref{lase} and~\ref{U.sep}).

In the arguments below it will be convenient to consider $\kw$ groups
together with the countable family of compact subsets that determine the
topology. We therefore introduce the following shortcut.

\begin{definition}\label{contrip}
Call $(G, \K)$ a {\em $\kw$-pair (with respect to $\tau$)\/}
if $(G,\tau)$ is a boolean topological
group with the $\kw$ topology $\tau$ and $\K$ is a countable family of
compact subspaces of $G$ closed under finite sums and intersections
such that $\tau=\kw(\K)$ and $\cup\K=G$.
\end{definition}

In almost every case, the existence of the Hausdorff topology $\tau$
will be clear from the context and will not be discussed while
referring to a $\kw$-pair.

As was noted above, any inductive construction of a countably compact
group must provide a mechanism for ensuring the precompactness of the
final topology. The following definition is used throughout
and forms one of the building blocks of the
construction.

\begin{definition}\label{contrip}
Call $\gku$ a {\em convenient triple\/} if $G$ is a boolean
group, $\U$ is a countable family of subgroups closed under finite
intersections that forms an open base of neighborhoods of $0$ in some
Hausdorff precompact topology $\tau(\U)$ on $G$, and $\K$ is a
countable family of compact (in $\tau(\U)$) subgroups of $G$ closed under finite sums
and intersections such that $\cup\K=G$.
\end{definition}

Trivially, if $\gku$ is a convenient
triple then $(G, \K)$ is a $\kw$-pair (with respect to
$\kw(\K)$) and every $K\in\K$ is metrizable.

A number of arguments involve translating various subsets by compact
subspaces. The definition below lists a few ordinal invariants that
measure the effects of such shifts.

\begin{definition}\label{kdepth}
Let $(G, \K)$ be a $\kw$-pair, $A\subseteq G$.
Suppose $D\subseteq G$ is countable and there exist an
$\alpha<\omega_1$ and $K\in\K$ such that
$D\subseteq[A]_\alpha+K$. Call the smallest $\alpha$ with this
property the {\em $\K$-depth of $D$ over $A$} and write
$\hd{\K}{D}{A}=\alpha$. If no such $\alpha$ exists $\hd{\K}{D}{A}$ is
defined to be $\omega_1$.

Let $\CC=\set S^i:i\in\omega.$ be a family of subsets of $G$. Call
$m\in\omega$ the {\em $\K$-depth of $D$ over
$\CC$\/} and write $\hd{\K}{D}{\CC}=m$ if $m\in\omega$ is the smallest
such that $D\subseteq\sum_{i\leq m}\cl{\lasi}^\kwk+K$ for some
$K\in\K$. If no such $m$ exists $\hd{\K}{D}{\CC}=\omega$.

Let $0\in\cl{P}^\kwk$ for some $P\subseteq G$. Call
$m\in\omega$ the {\em asymptotic $\K$-depth of $P$ over
$\CC$\/} and write $\ha{\K}{P}{\CC}=m$ if $m\in\omega$ is the smallest
such that $0\in\cl{P\cap(\sum_{i\leq m}\cl{\lasi}^\kwk+K)}$ for some
$K\in\K$. If no such $m$ exists $\ha{\K}{P}{\CC}=\omega$.
\end{definition}

The following property is an immediate corollary of the definition above.

\begin{lemma}\label{kdshift}
Let $(G, \K)$ be a $\kw$-pair, $A\subseteq G$, and
$P,D\subseteq G$ be countable subsets. Let $P\subseteq D+K$ for some
$K\in\K$. Then $\hd{\K}{P}{A}\leq\hd{\K}{D}{A}$.
\end{lemma}

Note that adding or removing finitely many
points does not change the $\K$-depth of a set. Combined with the
lemma above it follows that $D\ain D'$ implies
$\hd{\K}{D}{A}\leq\hd{\K}{D'}{A}$.

We now define some minimality properites of sets with respect to their $\K$-depth.
\begin{definition}\label{mm.sequence}
Let $(G, \K)$ be a $\kw$-pair, $A\subseteq G$, ${\cal
A}=\set A_i:i\in\omega.$ be a family of subsets of $G$, and
$D\subseteq G$ be a countable subset. Let
$\hd{\K}{D}{A}=\delta\leq\omega_1$. Define
\begin{labeling}{\MM{A,\K}}
\item[\MM{A,\K}:]for any $\beta<\delta$, $K\in\K$ there exists a
finite $F_{K,\beta}\subseteq D$ such that
$$
(d+K)\cap[A\cup\{0\}]_\beta^\kwk=\varnothing \hbox{ if } d\in\la{D}\setminus\la{F_{K,\beta}}
$$
\end{labeling}
\begin{labeling}{\MM{A,\K}}
\item[\MM{{\cal A},\K}:]for any $n\in\omega$, $K\in\K$ there exists a
finite $F_{K,n}\subseteq D$ such that
$$
(d+K)\cap\sum_{i<n}\overline{\la{A_i}}^\kwk=\varnothing \hbox{ if }
d\in\la{D}\setminus\la{F_{K,n}}
$$
\end{labeling}
\end{definition}

By choosing $F_{K,\beta}=F_{K,n}$ above for any $K\in\K$ and any
$\beta<\omega_1$ one shows that \MM{{\cal A}, \K} implies
\MM{\la{A_n}, \K} and $\hd{\K}{D}{\la{A_n}}=\omega_1$ for every
$n\in\omega$. Also note that both properties imply that $\lad\cap K$
is finite for every $K\in\K$ so $\lad$ is closed and discrete in $\kw(\K)$.

The following lemma will be part of most `thinning out' arguments
below.

\begin{lemma}\label{free.sequence}
Let $(G, \K)$ be a $\kw$-pair, $D'\subseteq G$ be
infinite, closed and discrete in $\kw(\K)$, and let ${\cal A}=\set
A_i:i\in\omega.$ be a family of subsets of $G$.

There exists an
infinite independent $D\subseteq D'$ such that $\lad$ is closed and
discrete in $\kwk$ and $D$ satisfies \MM{A_i,\K} for every
$i\in\omega$. If $\hd{\K}{D'}{{\cal A}}=\omega$ then $D$ can be
chosen to satisfy \MM{{\cal A},\K}.
\end{lemma}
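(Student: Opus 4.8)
The plan is to extract the whole statement from a single recursive construction that handles one set together with its sequential–closure filtration, and then to assemble the general case by repeated thinning and a diagonal argument. Fix an enumeration $\K=\{K_j:j\in\omega\}$ and, as the points $d_0,d_1,\dots$ of $D$ are chosen, write $S_k=\la{d_0,\dots,d_{k-1}}$ for the (finite) span chosen so far. At every stage $m$ the new point $d_m\in D'$ will be required to satisfy two standing conditions: (i) $d_m\notin S_{m-1}$, keeping $D$ independent; and (ii) $(d_m+S_{m-1})\cap K_j=\varnothing$ for all $j\le m$. Since $K_j+S_{m-1}$ is compact and $D'$ is closed and discrete, (ii) excludes only finitely many candidates, and an easy induction shows that (ii) forces $\lad\cap K_j$ to be finite for every $j$; by the remark following Definition~\ref{mm.sequence} this already makes $\lad$ closed and discrete in $\kwk$. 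All the work lies in superimposing the minimality clauses.

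The workhorse is the one–set case: given a single $A$, produce $D$ with \MM{A,\K}. Put $\delta^*=\hd{\K}{D'}{A}$; the crucial point is that $\delta^*<\omega_1$, so the family of pairs $(K_j,\beta)$ with $\beta<\delta^*$ is \emph{countable} and may be enumerated as ``tasks''. At stage $m$, with the first $m$ tasks $(K_{j_1},\beta_1),\dots,(K_{j_m},\beta_m)$ active, set $\beta^*=\max_t\beta_t<\delta^*$ and choose $d_m\in D'$ obeying (i), (ii) and the further requirement $d_m\notin[A]_{\beta^*}^\kwk+K_{j_t}+S_{m-1}$ for every $t\le m$. Because all these forbidden sets share the single first summand $[A]_{\beta^*}^\kwk$, their union equals $[A]_{\beta^*}^\kwk+C$ with $C=\bigcup_t(K_{j_t}+S_{m-1})$ compact, hence contained in a member of $\K$; as $\beta^*<\delta^*$, the set $D'$ is then not almost covered by $[A]_{\beta^*}^\kwk+C$, so infinitely many admissible $d_m$ remain. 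Once a task $(K_j,\beta)$ becomes active at stage $m_0$, all later $d_m$ avoid $[A]_\beta^\kwk+K_j+S_{m-1}$ (as $\beta\le\beta^*$ and $K_j\subseteq C$). Hence, using (ii) for the stray $0$–summand and the identity $[A\cup\{0\}]_\beta^\kwk=[A]_\beta^\kwk\cup\{0\}$, every $d\in\lad$ whose support meets $\{d_m:m\ge\max\{m_0,j\}\}$ misses $[A\cup\{0\}]_\beta^\kwk+K_j$; so the exceptional set sits inside $S_{\max\{m_0,j\}}$, which furnishes the required finite $F_{K_j,\beta}$. Since $\hd{\K}{D}{A}\le\delta^*$, every $\beta<\hd{\K}{D}{A}$ was enumerated, and \MM{A,\K} follows.

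To secure \MM{A_i,\K} for all $i$ at once I cannot fuse the constructions, since forbidden sets built from distinct $A_i$ need not collapse under a common maximal level. Instead I iterate the workhorse: put $D^{(-1)}=D'$ and, given the infinite closed–discrete $D^{(i-1)}$, thin it by the one–set case to an infinite independent $D^{(i)}\subseteq D^{(i-1)}$ with $\la{D^{(i)}}$ closed and discrete and satisfying \MM{A_i,\K}. Passing to a subset only shrinks $\lad$ and, by the monotonicity noted after Lemma~\ref{kdshift}, can only lower $\hd{\K}{D}{A_{i'}}$; hence \MM{A_{i'},\K} survives all later thinnings, and $D^{(i)}$ satisfies \MM{A_{i'},\K} for every $i'\le i$. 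Finally I diagonalise, choosing $d_k\in D^{(k)}$ subject to (i) and (ii). Then $D\ain D^{(i)}$ for each $i$, and since $\ain$ preserves each minimality clause (the exceptional sets and the relevant depth both being monotone), the diagonal $D$ is independent, has $\lad$ closed and discrete, and satisfies \MM{A_i,\K} for all $i$.

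For the final clause suppose $\hd{\K}{D'}{{\cal A}}=\omega$. Now the relevant objects $\sum_{i<n}\cl{\la{A_i}}^\kwk$ ($n\in\omega$) form a \emph{single} increasing filtration by subgroups indexed by $\omega$, so the workhorse applies verbatim with the integer $n$ replacing the ordinal $\beta$: the tasks $(K_j,n)$ are countable from the start, finitely many active constraints amalgamate at the level $n^*=\max_t n_t$, and $\hd{\K}{D'}{{\cal A}}=\omega$ guarantees that $D'$ is never almost covered by $\sum_{i<n}\cl{\la{A_i}}^\kwk+K$, so an admissible choice always survives. This single recursion yields $D$ with \MM{{\cal A},\K}; by the remark after Definition~\ref{mm.sequence}, together with $[A_i\cup\{0\}]_\beta^\kwk\subseteq\cl{\la{A_i}}^\kwk$ and $\hd{\K}{D}{\la{A_i}}=\omega_1$, this same $D$ also satisfies \MM{A_i,\K} for every $i$. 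The heart of the matter, and the reason a one–step recursion fails, is that one must control the whole span $\lad$ (not merely $D$) against an \emph{a priori} uncountable family of levels; this is tamed precisely by the two facts that only the countably many levels below the countable depth are relevant and that a single filtration lets finitely many constraints merge under the maximal level, after which diagonalisation disposes of the genuinely multi–set demand.
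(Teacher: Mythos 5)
Your overall strategy — recursively selecting points of $D'$ that avoid translates of the sets $[A]_\beta^\kwk$ by compacta plus the span chosen so far, then nesting the one-set constructions and diagonalizing — is the same as the paper's. But there is a genuine gap in your ``workhorse'': you declare that $\delta^*=\hd{\K}{D'}{A}<\omega_1$ is ``the crucial point'' and build your countable list of tasks $(K_j,\beta)$, $\beta<\delta^*$, on that basis. This is false in general: by Definition~\ref{kdepth}, $\hd{\K}{D'}{A}=\omega_1$ exactly when no countable level suffices, and this is not a degenerate case but the one the paper leans on most heavily (e.g.\ Corollary~\ref{d.subgroup} takes $A=\{0\}$, where $[A]_\alpha=\{0\}$ for all $\alpha$ and the depth of any infinite closed discrete $D'$ is $\omega_1$; likewise \MM{{\cal A},\K} forces $\hd{\K}{D}{\la{A_n}}=\omega_1$, and Lemma~\ref{w1.kdepth.reduce} is stated only for depth $\omega_1$). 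When $\delta^*=\omega_1$ your task list is uncountable, $\omega_1$ has uncountable cofinality so no countable subfamily of levels is cofinal, and your recursion of length $\omega$ cannot activate all tasks; as written your construction then fails to deliver \MM{A,\K}, which demands a finite $F_{K,\beta}$ for \emph{every} $\beta<\omega_1$.

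The paper's proof closes exactly this case by a different choice of forbidden set: when $\sigma_0=\omega_1$ it sets $\sigma_n^0=\omega_1$ and requires $d_n\not\in\sum_{i<n}K_i+\la{\{d_i:i<n\}}+[A\cup\{0\}]_{\omega_1}^\kwk$, i.e.\ the \emph{entire} union over all countable levels is avoided at every stage. This is legitimate because $D'$ is countable: if the recursion terminated, $D'$ would lie in $[A]_{\omega_1}^\kwk+K$ and hence in $[A]_{\sigma'}^\kwk+K$ for some $\sigma'<\omega_1$, contradicting $\hd{\K}{D'}{A}=\omega_1$. A single avoidance then witnesses all pairs $(K,\beta)$ simultaneously, with $F_{K,\beta}$ an initial segment of the chosen points. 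Your argument needs this case distinction added (your enumeration is fine when $\delta^*<\omega_1$, and your treatment of the final clause, where the filtration is genuinely indexed by $\omega$, is unaffected); without it the lemma is not proved in the form the rest of the paper uses it.
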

\begin{proof}
Passing to a subset if necessary, assume that $D'$ is countable.
Let $\hd{\K}{D'}{A_0}=\sigma_0\leq\omega_1$. Suppose
$0<\sigma_0\leq\omega_1$ and pick $\sigma_n^0\to\sigma_0$ if
$\sigma_0<\omega_1$ is a limit ordinal and $\sigma_n^0=\sigma_0-1$
otherwise. If $\sigma_0=\omega_1$ let $\sigma_n^0=\omega_1$.

Select points $d_n\in D'$ so that
$$
d_n\not\in\sum_{i<n}K_i+\langle\set
d_i:i<n.\rangle+[A_0\cup\{0\}]_{\sigma_n^0}^\kwk
$$
If the recursion terminates at some $n\in\omega$ then
$D'\subseteq[A_0\cup\{0\}]_{\sigma_n^0}+K$ for some $K\in\K$. If
$\sigma_n^0<\omega_1$ then $\hd{\K}{D'}{A_0}\leq\sigma_n^0<\sigma_0$
contradicting the choice of $\sigma_0$. If $\sigma_n^0=\omega_1$ then
$\sigma_0=\omega_1$. Since $D'$ is countable there is a
$\sigma'<\omega_1$ such that $D'\subseteq[A_0]_{\sigma'}^\kwk+K$
contradicting $\hd{\K}{D'}{A_0}=\omega_1$. 

Let $D_0=\set d_n:n\in\omega.$ and $D\ain D_0$. Let
$K=K_{i'}\in\K$, $\beta<\sigma_0$, 
$\sigma_m^0\geq\beta$. By picking $i\geq i'$ large
enough if necessary we may assume that $\la{D\setminus D_0}+K\subseteq
\sum_{i'<i}K_{i'}$. Let $n>\max\{i,m\}$. Put $F_K=\set d_j:j<n.$ and let $d\in\lad\setminus\la{F_K}$. Then
$d=d_{n'}+d'+a$ where $n'\geq n$, $a\in \la{D\setminus D_0}$, and
$d'\in\la{\set d_j:j<n'.}$. Now
$d_{n'}\not\in \sum_{i'<i}K_{i'}+d'+[A_0\cup\{0\}]_\beta^\kwk$ so $d\not\in
K+[A_0\cup\{0\}]_\beta^\kwk$. Thus $\lad\cap K$ is finite for
every $K\in\K$ so $\lad$ is closed in $\kwk$,
$\hd{\K}{D}{A_0}=\sigma_0$, and \MM{A_0, \K} holds for every infinite
$D\ain D_0$. The argument for $\sigma_0=0$ is similar,
replacing $[A_0\cup\{0\}]_{\sigma_n^0}^\kwk$ with $\{0\}$. If
$\hd{\K}{D}{{\cal A}}=\omega$ replace
$[A_0\cup\{0\}]_{\sigma_n^0}^\kwk$ with $\sum_{i\leq
n}\cl{\la{A_i}}^\kwk$. 

Repeatedly using the construction above construct
$D_0\supseteq D_1\supseteq\cdots\supseteq D_n\supseteq\cdots$ such
that $\la{D_i}$ is closed and discrete in $\kwk$ for every
$i\in\omega$ and \MM{A_i, \K} holds for every infinite $D\ain D_i$.
Let $D\ain D_i$ for every $i\in\omega$. If $D_0$
satisfies \MM{{\cal A},\K} put $D=D_0$.
\end{proof}

The remark after Definition~\ref{mm.sequence} now gives the following
corollary (after selecting a trivial $A=\{0\}$).

\begin{corollary}\label{d.subgroup}
Let $(G,\K)$ be a $\kw$-pair, $D\subseteq G$ be a countable closed and
discrete subspace of $G$. Then there exists an infinite $D'\subseteq
D$ such that $\la{D'}$ is closed and discrete in $G$.
\end{corollary}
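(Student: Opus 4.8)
The plan is to obtain this corollary as an immediate specialization of Lemma~\ref{free.sequence}. Since we seek an \emph{infinite} subset $D'$, we may assume $D$ itself is infinite. First I would apply Lemma~\ref{free.sequence} to the $\kw$-pair $(G,\K)$, taking the lemma's input set (its ``$D'$'') to be the given $D$, and choosing the trivial family ${\cal A}=\set A_i:i\in\omega.$ with $A_i=\{0\}$ for every $i\in\omega$. The lemma then produces an infinite independent subset $D'\subseteq D$ such that $\la{D'}$ is closed and discrete in $\kwk$ and $D'$ satisfies \MM{A_i,\K} for each $i$.

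Since the topology on $G$ is by definition $\kwk$, the conclusion that $\la{D'}$ is closed and discrete in $\kwk$ is exactly the assertion of the corollary, so this $D'$ is the required set. There is essentially no real obstacle here beyond correctly matching the hypotheses: one need only observe that the given $D$ meets the standing requirements of Lemma~\ref{free.sequence}, namely that it is infinite and closed and discrete in $\kwk$, which holds by assumption, and that the constant family $A_i=\{0\}$ is a legitimate instance of ${\cal A}$. The remark following Definition~\ref{mm.sequence} --- that \MM{A,\K} already forces $\lad\cap K$ to be finite for every $K\in\K$, hence $\lad$ closed and discrete --- is precisely what underlies the closed-discreteness clause of Lemma~\ref{free.sequence}, so no separate verification for $\la{D'}$ is needed and the corollary follows at once.
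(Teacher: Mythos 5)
Your proposal is correct and matches the paper's own argument exactly: the paper derives the corollary by applying Lemma~\ref{free.sequence} with the trivial choice $A=\{0\}$ and invoking the remark after Definition~\ref{mm.sequence} that \MM{A,\K} already forces $\la{D'}\cap K$ to be finite for every $K\in\K$. No further comment is needed.
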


\begin{lemma}\label{akdepth}
Let $(G, \K, \U)$ be a convenient triple and let ${\cal A}=\set
A_i:i\in\omega.$ be a family of subsets of $G$. Let $P\subseteq G$ be
a countable subset such that $0\in\cl{P}^{\tau(\U)}$ and $\hd{\K}{P\cap U}{{\cal
A}}=\omega$ for every $U\in\U$ (in particular, if $\ha{\K}{P}{{\cal
A}}=\omega$). 

Then there exists an $S\subseteq P$ such that $S\to0$ in $\tau(\U)$
and $S$ satisfies \MM{{\cal A}, \K}.
\end{lemma}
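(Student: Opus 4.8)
The plan is to reduce the statement to a single application of Lemma~\ref{free.sequence}. To that end I would first extract from $P$ an infinite subset $D'$ that (a) converges to $0$ in $\tau(\U)$, (b) is closed and discrete in $\kw(\K)$, and (c) satisfies $\hd{\K}{D'}{{\cal A}}=\omega$. Once such a $D'$ is available, Lemma~\ref{free.sequence} applied to the $\kw$-pair $(G,\K)$ and the family ${\cal A}$ produces an infinite $D\subseteq D'$ satisfying \MM{{\cal A},\K}. Setting $S=D\subseteq P$ then finishes the proof: $D$ is an infinite subset of a sequence converging to $0$ in $\tau(\U)$, so $D\setminus U$ is finite for every $U\in\U$ and hence $S\to 0$ in $\tau(\U)$, while $S$ already satisfies \MM{{\cal A},\K} by construction.

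The construction of $D'$ is the heart of the argument, and it rests on one algebraic observation. Enumerate $\U=\set U_n:n\in\omega.$ so that $U_0\supseteq U_1\supseteq\cdots$ (possible since $\U$ is a countable base of neighborhoods of $0$ closed under finite intersections) and enumerate $\K=\set K_n:n\in\omega.$. Because the members of $\K$ are subgroups and $\K$ is closed under finite sums, $K^*_k=K_0+\cdots+K_k\in\K$ and $K_j\subseteq K^*_k$ for every $j\leq k$; moreover $0\in\cl{\la{A_i}}^\kwk$ for each $i$, so the sets $\sum_{i\leq m}\cl{\la{A_i}}^\kwk+K$ increase with $m$ and contain $K$. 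Applying the hypothesis $\hd{\K}{P\cap U_k}{{\cal A}}=\omega$ to the single member $K^*_k\in\K$ yields $P\cap U_k\not\subseteq\sum_{i\leq k}\cl{\la{A_i}}^\kwk+K^*_k$, so I may recursively choose
$$
s_k\in (P\cap U_k)\setminus\Bigl(\sum\nolimits_{i\leq k}\cl{\la{A_i}}^\kwk+K^*_k\Bigr),
$$
and set $D'=\set s_k:k\in\omega.$.

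It remains to verify (a)--(c). For (a), every neighborhood of $0$ contains some $U_n$, and $s_k\in U_k\subseteq U_n$ for all $k\geq n$, so $s_k\to 0$ in $\tau(\U)$. For (b), fix $j$; since $K_j\subseteq K^*_k\subseteq\sum_{i\leq k}\cl{\la{A_i}}^\kwk+K^*_k$, the choice of $s_k$ gives $s_k\notin K_j$ for every $k\geq j$, so $D'\cap K_j$ is finite; as $\cup\K=G$, this makes $D'$ infinite as well as closed and discrete in $\kw(\K)$. For (c), fix a pair $(m,K)$ and pick $j$ with $K=K_j$; for every $k\geq\max\{m,j\}$ the inclusion $\sum_{i\leq m}\cl{\la{A_i}}^\kwk+K\subseteq\sum_{i\leq k}\cl{\la{A_i}}^\kwk+K^*_k$ holds, so $s_k$ avoids the left-hand side, whence $D'\not\subseteq\sum_{i\leq m}\cl{\la{A_i}}^\kwk+K$ for all $m$ and all $K\in\K$, that is $\hd{\K}{D'}{{\cal A}}=\omega$. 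The parenthetical special case is immediate, since $\ha{\K}{P}{{\cal A}}=\omega$ forces $\hd{\K}{P\cap U}{{\cal A}}=\omega$ for every $U\in\U$ by unwinding the definitions. The main obstacle I anticipate is arranging (a)--(c) with a single recursive choice at each stage rather than a separate thinning for each property; this is exactly what closure of $\K$ under finite sums provides, since it collapses the finitely many sets to be avoided at stage $k$ into the one member $K^*_k\in\K$, against which the hypothesis can be applied directly.
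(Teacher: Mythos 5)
Your proof is correct, and it reaches the conclusion by a different decomposition than the paper. The paper's proof is a single recursion: it picks $p_n\in P\cap\bigcap_{i\leq n}U_i$ with $p_n\notin\sum_{i\leq n}K_i+\la{\set p_i:i<n.}+\sum_{i\leq n}\cl{\la{A_i}}^\kwk$, building the span-avoidance needed for \MM{{\cal A},\K} directly into the selection, and then verifies \MM{{\cal A},\K} by repeating the argument from the proof of Lemma~\ref{free.sequence}. You instead run the simpler recursion that secures only $\tau(\U)$-convergence, closed discreteness, and $\hd{\K}{D'}{{\cal A}}=\omega$, and then invoke Lemma~\ref{free.sequence} itself; this two-stage factorization is legitimate precisely because of your observation that an infinite subset of a $\tau(\U)$-null sequence is still $\tau(\U)$-null, so the second thinning costs nothing. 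The two arguments share the same feasibility mechanism (closure of $\K$ under finite sums collapses everything to be avoided at stage $k$ into a single member of $\K$, against which the depth hypothesis is applied; the paper additionally absorbs the finite group $\la{\set p_i:i<n.}$ into a compact member using $\cup\K=G$); what yours buys is modularity, using the lemma as stated rather than replaying its proof, at the cost of an extra pass. Two minor repairs: a decreasing enumeration of all of $\U$ need not exist, so you should officially work with the decreasing base $V_k=\bigcap_{i\leq k}U_i\in\U$, which is what your recursion actually uses (and what the paper writes); and the parenthetical case is slightly more than ``unwinding the definitions'' --- as in the paper's final paragraph, one needs that $0\in\cl{P}^\kwk$ and that each $U\in\U$ is clopen in $\kwk\supseteq\tau(\U)$, so that $0\in\cl{P\cap U}^\kwk$, whence a finite value of $\hd{\K}{P\cap U}{{\cal A}}$ would put $0$ in the closure of $P\cap(\sum_{i\leq m}\cl{\la{A_i}}^\kwk+K)$, contradicting $\ha{\K}{P}{{\cal A}}=\omega$.
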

\begin{proof}
Let $\K=\set K_n:n\in\omega.$ and $\U=\set U_n:n\in\omega.$. Select
points $p_n\in P\cap\cap_{i\leq n}U_i$ so that
$$
p_n\not\in\sum_{i\leq n}K_i+\la{\set p_i:i<n.}+\sum_{i\leq
n}\cl{\la{A_i}}^\kwk
$$
Since $\hd{\K}{P\cap U}{{\cal A}}=\omega$ for every $U\in\U$ the
choice of $p_n$ is always possible. Verifying that $S=\set
p_n:n\in\omega.\to0$ in $\tau(\U)$ is routine.
The proof that $S$ satisfies \MM{{\cal A}, \K} is is the same as
in Lemma~\ref{free.sequence}.

To see that the condition of the lemma follows from $\ha{\K}{P}{{\cal
A}}=\omega$, let $m\in\omega$, $K\in\K$, and $U\in\U$. Then $\ha{\K}{P}{{\cal
A}}=\omega$ implies
$$0\not\in\cl{P\cap(\sum_{i\leq m}\cl{\la{A_i}}^\kwk+K)}^\kwk$$
therefore $P\cap
U\not\subseteq\sum_{i\leq m}\cl{\la{A_i}}^\kwk+K$.
Thus $\hd{\K}{P\cap U}{{\cal A}}=\omega$.
\end{proof}

\section{`Convexity' and extensions in boolean groups}
Given a convenient triple $\gku$, a common operation is to extend $G$ and $\K$
by adding points from the compact completion of $(G,\tau(\U'))$ where
$\U'$ extends $\U$. The
convergence properties of the new convenient triple $\gkup$ will
depend on the precompact topology whose extension produces $G'$. This
section lists a few results to help control these properties.

The following lemma may be viewed as a boolean
$\kw$ version of the classical Hahn-Banach theorem.
\begin{lemma}\label{bhb}
Let $(G, \K)$ be a $\kw$-pair such that every $K\in\K$ is a (compact)
subgroup of $G$, let $H$ be a subgroup of $G$ closed in
$\kwk$, and $g\in G\setminus H$. Then there exists a subgroup
$U\subseteq G$ of finite index open in $\kwk$ such that $H\subseteq U$
and $g\not\in U$.
\end{lemma}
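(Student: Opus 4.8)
The plan is to produce a continuous homomorphism $\chi\colon G\to\F_2$ (with $\F_2$ discrete) such that $\chi|_H=0$ and $\chi(g)=1$; then $U=\ker\chi$ is a subgroup of index $2$, contains $H$, excludes $g$, and is open in $\kwk$ because $U\cap K=\ker(\chi|_K)$ is clopen in every $K\in\K$. Thus the whole problem reduces to building such a $\chi$. Since $\cup\K=G$ and $\K$ is closed under finite sums, I may enumerate $\K$ and replace its $n$-th member by the sum of the first $n+1$ members to obtain an increasing sequence of compact subgroups $K_0\subseteq K_1\subseteq\cdots$ with $\bigcup_n K_n=G$, arranged so that $g\in K_0$. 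Each $K_n$ carries the subspace topology from $\kwk$, so it is a compact boolean group; by Pontryagin duality its continuous characters into $\F_2$ separate points and can be chosen to annihilate any prescribed closed subgroup with nontrivial quotient. Moreover $H\cap K_n$ is a closed, hence compact, subgroup of $K_n$.

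I would then construct continuous characters $\chi_n\colon K_n\to\F_2$ by recursion on $n$ so that $\chi_{n+1}|_{K_n}=\chi_n$, $\chi_n|_{H\cap K_n}=0$, and $\chi_0(g)=1$. For the base step, since $g\notin H$ gives $g\notin H\cap K_0$, the image of $g$ in the compact boolean group $K_0/(H\cap K_0)$ is nonzero, so a continuous character of that quotient nonzero at that image pulls back to the required $\chi_0$. For the inductive step, consider $L=K_n+(H\cap K_{n+1})$ inside $K_{n+1}$ and define $\psi\colon L\to\F_2$ by $\psi(x+h)=\chi_n(x)$ for $x\in K_n$, $h\in H\cap K_{n+1}$. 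This is well defined because on the intersection $K_n\cap(H\cap K_{n+1})=H\cap K_n$ the value forced by $\chi_n$ is $0$, matching the value $0$ prescribed on $H$; its kernel is $\ker(\chi_n)+(H\cap K_{n+1})$, a sum of compact subgroups, hence a closed index-two subgroup of the compact group $L$, therefore clopen, so $\psi$ is continuous. Pontryagin duality (surjectivity of the restriction $\widehat{K_{n+1}}\to\widehat{L}$ for the closed subgroup $L$) then extends $\psi$ to a continuous character $\chi_{n+1}$ of $K_{n+1}$; by construction $\chi_{n+1}|_{K_n}=\chi_n$ and $\chi_{n+1}|_{H\cap K_{n+1}}=0$.

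Finally I set $\chi=\bigcup_n\chi_n$, which is well defined and a homomorphism because the $K_n$ increase to $G$ and the $\chi_n$ cohere, satisfies $\chi|_H=0$ (as $H=\bigcup_n(H\cap K_n)$) and $\chi(g)=\chi_0(g)=1$, and is continuous since $\chi|_{K_n}=\chi_n$ is continuous for every $n$ and $\kwk$ is determined by $\K$. Then $U=\ker\chi$ is the desired finite-index open subgroup. The hypothesis that each $K\in\K$ is a subgroup is used throughout to keep the $K_n$ subgroups and to invoke compact-group duality. I expect the main obstacle to be the inductive extension step: verifying that the glued map $\psi$ is a continuous character on the closed subgroup $L$ and then extending it off $L$ via duality, which is precisely where the Hahn--Banach flavor of the statement resides.
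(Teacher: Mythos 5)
Your proof is correct, but it follows a genuinely different route from the paper's. You reduce the statement to constructing a continuous character $\chi\colon G\to\F_2$ killing $H$ and not $g$, and you build $\chi$ by coherently extending characters along an increasing exhaustion $K_0\subseteq K_1\subseteq\cdots$ of $G$ by compact subgroups, using Pontryagin duality twice: once to separate $g$ from $H\cap K_0$ in $K_0$, and once at each stage to extend a continuous character from the closed subgroup $L=K_n+(H\cap K_{n+1})$ to all of $K_{n+1}$ (your gluing and continuity checks for $\psi$ on $L$ are the right ones, and since $K_{n+1}$ is boolean every circle-valued character is automatically $\F_2$-valued, so the extension stays in $\F_2$). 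The paper instead works directly with subgroups: it recursively enlarges $H$ to $H_{n+1}=H_n+K_{n+1}'$ for clopen subgroups $K_{n+1}'\subseteq K_{n+1}$ chosen so that $g\notin H_{n+1}$ (which needs only that each compact boolean $K_{n+1}$ has a neighborhood base at $0$ of clopen subgroups, plus a compactness argument to see that some $K_{n+1}'$ works), obtains an open subgroup $H'=\bigcup_n H_n$ missing $g$, and then passes to finite index by adding an $\F_2$-linear complement of $\la{H'\cup\{g\}}$. Your version buys a cleaner endgame — the kernel of $\chi$ has index $2$ outright, with no complement step — at the cost of invoking duality (surjectivity of the restriction map on character groups for closed subgroups of compact abelian groups), where the paper's argument is elementary and self-contained. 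Both proofs use the hypothesis that the members of $\K$ are subgroups in the same essential way, namely to run the recursion along compact subgroups exhausting $G$.
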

\begin{proof}
Let $\K=\set K_i:i\in\omega.$. Build by induction a sequence of closed
subgroups $H\subseteq H_0\subseteq\cdots\subseteq H_n\subseteq\cdots$
such that $H_{n+1}=H_n+K_{n+1}'$ for some subgroup
$K_{n+1}'\subseteq K_{n+1}$ clopen in $K_{n+1}$, and $g\not\in H_n$. If $H_n$ has been built,
$g\not\in H_n$ so there exists a clopen subgroup $K_{n+1}'\subseteq
K_{n+1}$ such that $g\not\in H_{n+1}=H_n+K_{n+1}'$. Then $H_{n+1}$ is
a closed subgroup of $G$ and $H_{n+1}\cap K_i$ is relatively open in
$K_i$ for every $i\leq n+1$.

Let $H'=\cup_{n\in\omega}H_n$. Now the intersection $H'\cap K$ is relatively open in $K$ for every
$K\in\K$ so $H'$ is an open subgroup of $G$ in
$\kwk$. Let $G'\subseteq G$ be a subgroup such that
$G'\cap\la{H'\cup\{g\}}=\{0\}$ and $G'+\la{H'\cup\{g\}}=G$. Then
$g\not\in U=H'+G'$ is open in $\kwk$ and of finite index.
\end{proof}

In the statement of the next lemma we abuse the notation to use
$\tau(\U_0)$ for both the topology of the compact completion of $G$,
as well as the topology on $G$ generated by $\U_0$.
\begin{lemma}\label{ct.resolve}
Let $\gku$ be a convenient triple, let $H$ be a subgroup of $G$ closed in
$\kwk$. Then there exists a countable family of open (in $\kw(\K)$)
subgroups of finite index $\U_0\supseteq\U$ such that
$\cl{H}^{\tau(\U_0)}\cap G=\cap\set U\in\U_0:H\subseteq U.=H$.
\end{lemma}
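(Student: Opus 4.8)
The plan is to reduce the statement to a single separation task and then solve that task countably by exploiting the profinite structure of the members of $\K$. First I would record the closure formula: after enlarging $\U_0$ to be closed under finite intersections (a harmless step that changes neither its countability nor the intersection $\cap\set U\in\U_0:H\subseteq U.$, since intersecting a subgroup containing $H$ with one not containing $H$ again fails to contain $H$), the family $\U_0$ is a neighborhood base of $0$ consisting of subgroups, so the closure of $H$ in $(G,\tau(\U_0))$ is $\cl{H}^{\tau(\U_0)}\cap G=\cap_{U\in\U_0}(H+U)$. For $U\in\U_0$ with $H\subseteq U$ one has $H+U=U$, so restricting to this subfamily of terms only enlarges the intersection; hence $\cl{H}^{\tau(\U_0)}\cap G\subseteq\cap\set U\in\U_0:H\subseteq U..$ Since also $H$ is contained in both sets, it suffices to build a countable $\U_0\supseteq\U$ of open (in $\kwk$) finite-index subgroups with $\cap\set U\in\U_0:H\subseteq U.\subseteq H$; the chain $H\subseteq\cl{H}^{\tau(\U_0)}\cap G\subseteq\cap\set U\in\U_0:H\subseteq U.\subseteq H$ then collapses and gives both equalities. (Note every $U\in\U$ is already an open finite-index subgroup of $\kwk$, because $\tau(\U)$ is coarser than $\kwk$ and open subgroups of a precompact group have finite index, so the requirement $\U_0\supseteq\U$ is compatible.)

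Next I would carry out the separation locally on each $K\in\K$. Each such $K$ is a compact, metrizable, boolean group, hence zero-dimensional and profinite; consequently it has only countably many open subgroups, and the closed subgroup $H\cap K$ equals the intersection of the open subgroups $W$ of $K$ with $H\cap K\subseteq W$. Fix such a $W$. Since $W$ is an open subgroup of the compact group $K$ it has finite index, so $K\setminus W$ is a finite union of cosets of $W$, hence clopen and therefore \emph{compact}. For each $g\in K\setminus W$ we have $g\in K\setminus(H\cap K)\subseteq G\setminus H$, so Lemma~\ref{bhb} supplies a finite-index subgroup $U_g$ open in $\kwk$ with $H\subseteq U_g$ and $g\notin U_g$; as $U_g$ is clopen, $K\setminus U_g$ is relatively open in $K$ and contains $g$. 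The sets $\set K\setminus U_g:g\in K\setminus W.$ cover the compact set $K\setminus W$, so finitely many $g_1,\dots,g_m$ suffice, and $U_W=\cap_{j\le m}U_{g_j}$ is a finite-index subgroup open in $\kwk$ with $H\subseteq U_W$ and $U_W\cap K\subseteq W$.

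Finally I would assemble and verify. Writing $\K=\set K_n:n\in\omega.$, for each $n$ collect the countably many subgroups $U_W$ obtained from the countably many open $W\supseteq H\cap K_n$, and set $\U_0$ to be the closure under finite intersections of $\U$ together with all these $U_W$; this is a countable family of open finite-index subgroups with $\U_0\supseteq\U$. To check $\cap\set U\in\U_0:H\subseteq U.\subseteq H$, take $g\in G\setminus H$; since $\cup\K=G$ we have $g\in K_n$ for some $n$, and as $g\notin H\cap K_n=\cap\set W:H\cap K_n\subseteq W.$ there is an open $W\supseteq H\cap K_n$ with $g\notin W$, whence $g\notin U_W\supseteq$-witness because $U_W\cap K_n\subseteq W$. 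Thus no point of $G\setminus H$ survives the intersection, completing the reduction. I expect the main obstacle to be exactly the countability of $\U_0$: the naive use of Lemma~\ref{bhb} separates $H$ from each of the (possibly uncountably many) points of $G\setminus H$ by its own subgroup, and the essential idea is to funnel all of these through the countably many open subgroups of each compact metrizable $K_n$, using the compactness of $K_n\setminus W$ to merge infinitely many point-separations into a single finite-index subgroup $U_W$.
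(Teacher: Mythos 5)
Your proof is correct and follows essentially the same route as the paper's: both reduce the problem to countably many separation tasks per $K\in\K$ via the countable clopen-subgroup structure of these compact metrizable groups, and both use Lemma~\ref{bhb} as the separating device. The only organizational difference is that the paper attaches one subgroup $U(K')$ to each coset $a+U_n^K$ of a basic clopen subgroup that can be separated from $H$ (so no covering argument is needed, since $K/U_n^K$ is finite), whereas you attach one subgroup $U_W$ to each open $W\supseteq H\cap K$ by compactness of $K\setminus W$ and a finite subcover; both yield the same countable family.
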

\begin{proof}
Let $\K=\set K_n:n\in\omega.$. For each $K\in\K$ let $\set
U_n^K\subseteq K:n\in\omega.$ be a countable base of neighborhoods of
$0$ in $K$ consisting of clopen subgroups of finite index. The family
$\K_n^K=\set U_n^K+a:a\in K.$ is finite so
$\K^+=\cup_{m,n\in\omega}\K_n^{K_m}$ is countable. For each
$K\in\K^+$, let $U(K)\subseteq G$ be an open in $\kwk$ subgroup of finite index
such that $H\subseteq U(K)$ and $H\cap K=\varnothing$ if
such $U(K)$ exists. Let $U(K)=G$ otherwise. Let $\U_0$ be the closure
of $\U\cup\set U(K):K\in\K^+.$ under finite intersections.

Let $g\in G\setminus H$. By Lemma~\ref{bhb} there exists an open in
$\kwk$ subgroup $U$ of finite index such that $H\subseteq U$ and
$g\not\in U$. Let $K'\in\K$ be such that $g\in K'$ and let $U_n^{K'}$
be such that $U_n^{K'}\subseteq K'\cap U$. Then $K=g+U_n^{K'}\in\K^+$ and
$U\cap K=\varnothing$ so
$H\subseteq U(K)$, $U(K)\cap K=\varnothing$. Since $U(K)\in\U_0$ is clopen,
$g\not\in\cl{H}^{\tau(\U_0)}$.
\end{proof}

\begin{lemma}\label{ct.split}
Let $\gku$ be a convenient triple, let $S$ be a countable
independent subset of $G$, closed and
discrete in $\kwk$. Then there exist a countable family of open
subgroups of finite index $\U_0\supseteq\U$ such that
$\cl{\la{S'}}^{\tau(\U_0)}\cap G=\la{S'}$ for any $S'\subseteq S$ and
$\cl{\lase}^{\tau(\U_0)}\cap\cl{\laso}^{\tau(\U_0)}=\varnothing$.
\end{lemma}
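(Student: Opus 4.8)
The plan is to meet the two requirements separately and then take $\U_0$ to be the closure under finite intersections of $\U$ together with the countably many finite-index subgroups produced for each task; a finite intersection of finite-index $\kwk$-open subgroups is again finite-index and $\kwk$-open, so the resulting $\U_0$ is of the required form. The even/odd separation is the easier half, and I would dispose of it with a single character. Writing $S=\set s_n:n\in\omega.$, note that $\lase$ is precisely the kernel of the parity homomorphism $\pi\colon\las\to\F_2$ with $\pi(s_n)=1$ for all $n$, and that $\laso=s_0+\lase$. Thus it suffices to produce one $\kwk$-open subgroup $V$ of index $2$ with $\lase\subseteq V$ and $s_0\notin V$: then $\pi$ extends to the character with $\ker=V$, every $s_n\in s_0+\lase\subseteq G\setminus V$, and in the compact completion $\cl{V}^{\tau(\U_0)}$ is clopen of index $2$, so $\cl{\lase}^{\tau(\U_0)}\subseteq\cl{V}^{\tau(\U_0)}$ while $\cl{\laso}^{\tau(\U_0)}$ lies in the complementary clopen coset, giving disjointness. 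Such a $V$ is produced by Lemma~\ref{bhb} applied to the subgroup $\lase$ and the point $s_0\notin\lase$ (refining the resulting finite-index subgroup to index $2$ through the finite quotient).

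For the first requirement I would first resolve the whole span and then install coordinate characters. Apply Lemma~\ref{ct.resolve} to $H=\las$ to obtain (part of) $\U_0$ with $\cl{\las}^{\tau(\U_0)}\cap G=\las$; next, for each $n$ build $\phi_n\colon G\to\F_2$ with $\phi_n(s_m)=\delta_{nm}$ by applying Lemma~\ref{bhb} to the subgroup $\la{S\setminus\{s_n\}}$ and the point $s_n$, putting its (index-$2$) kernel $V_n$ into $\U_0$. This suffices: given $S'\subseteq S$ and $g\notin\la{S'}$, monotonicity of closure and the resolution of $\las$ force $\cl{\la{S'}}^{\tau(\U_0)}\cap G\subseteq\cl{\las}^{\tau(\U_0)}\cap G=\las$, so I may assume $g=\sum_{n\in F}s_n$ for a finite $F$ with $F\not\subseteq\set n:s_n\in S'.$; choosing $m\in F$ with $s_m\notin S'$, the character $\phi_m$ is $\tau(\U_0)$-continuous (as $V_m\in\U_0$), vanishes on $\la{S'}$ and hence on its closure, yet $\phi_m(g)=1$, so $g\notin\cl{\la{S'}}^{\tau(\U_0)}$. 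Thus $\cl{\la{S'}}^{\tau(\U_0)}\cap G=\la{S'}$ for every $S'\subseteq S$.

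The step I expect to be delicate is the repeated appeal to Lemmas~\ref{bhb} and~\ref{ct.resolve}, both of which require their subgroup argument to be \emph{closed} in $\kwk$: I have used $\kwk$-closedness of $\las$, of $\lase$, and of each $\la{S\setminus\{s_n\}}$. These hold at once once $\las$ is closed and discrete in $\kwk$, for then $\las\cap K$ is finite for every $K\in\K$, hence so is $\la{S'}\cap K$ for every $S'\subseteq S$, and every such $\la{S'}$ (in particular $\lase$ and the $\la{S\setminus\{s_n\}}$) is $\kwk$-closed and discrete; this is exactly the situation for the sets $S$ delivered by Lemma~\ref{free.sequence}. Under the literal hypothesis that only $S$ (and not $\las$) is closed and discrete the spans need not be closed — finite sums of the $s_n$ may accumulate inside a compact $K\in\K$ even when no individual $s_n$ lies in $K$ — and I would instead reprove the separations by a direct $\kw$-induction in the spirit of the proof of Lemma~\ref{bhb}: enumerating $\K=\set K_k:k\in\omega.$ and building each separating index-$2$ subgroup one $K_k$ at a time, using finiteness and independence of $S\cap K_k$ to prescribe the required parity or coordinate values there while maintaining the chain compatibility $H_{k+1}=H_k+K'_{k+1}$. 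Controlling that compatibility across the $K_k$ while keeping all of $S$ on the prescribed side is the genuinely delicate point.
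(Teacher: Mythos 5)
Your proof is correct and follows essentially the paper's own route: the paper observes that $\la{S'}=\bigcap\{\la{S\setminus\{s\}}:s\in S\setminus S'\}$ and that $s\notin\lase$ for every $s\in S$, and then applies Lemma~\ref{ct.resolve} (which is itself built on Lemma~\ref{bhb}) repeatedly to the subgroups $\la{S\setminus\{s\}}$ and to $\lase$ — your coordinate characters $\phi_n$ and the index-two subgroup $V$ separating $\lase$ from $s_0$ are exactly the separations this produces, so the two arguments differ only cosmetically. Your closing worry about closedness of the spans is well founded but resolves in favour of your main line rather than the fallback: since every $U\in\U_0$ is open in $\kwk$, the desired conclusion $\cl{\la{S'}}^{\tau(\U_0)}\cap G=\la{S'}$ already forces each $\la{S'}$ to be closed in $\kwk$, so the lemma can only hold under the reading (satisfied in every application, where $S$ is supplied by Lemma~\ref{free.sequence} or is assumed to span a closed discrete subgroup) that $\las$ itself is closed and discrete — in which case your argument is complete and the proposed direct induction is unnecessary, and under the literal weaker hypothesis no proof could succeed.
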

\begin{proof}
Note that $\la{S'}=\cap\set \la{S\setminus\{s\}}:s\in S\setminus S'.$
and $s\not\in\lase$ for any $s\in S$. Now apply Lemma~\ref{ct.resolve}
repeatedly to find $\U_0\supseteq\U$ such that
$\cl{\la{S\setminus\{s\}}}^{\tau(\U_0)}\cap G=\la{S\setminus\{s\}}$ for
every $s\in S$ and $\cl{\lase}^{\tau(\U_0)}\cap G=\lase$.
\end{proof}

We now define a basic extension operation used in the construction.

\begin{definition}\label{pse}
Let $(G, \K, \U)$ and $(G', \K', \U')$ be convenient triples and an
independent $D\subseteq G$ be such that $\lad$ is closed and discrete
in $\kwk$. Call $(G', \K', \U')$ a {\em primitive sequential
extension (pse for short) of $(G, \K, \U)$ over $D$\/} if the
following conditions hold:
\begin{countup}[series=bcp]
\item\label{pse.order}
$G\subseteq G'$, $\cl{U}^{\tau(\U')}\in\U'$ for every $U\in\U$, and
$\K'$ is the closure of $\K\cup\{L\}$ under finite sums where
$L=\cl{\lad}^{\tau(\U')}$ (thus $L$ is compact in $\kw(\K')$);

\item\label{pse.resolve}
$\cl{\la{D\setminus F}}^{\tau(\U')}\cap G=\la{D\setminus
F}$ for any $F$;

\end{countup}
\end{definition}
Abusing the notation we will also use $\tau(\U')$ to refer to the
topology on $G$ induced by $\tau(\U')$.

If the new points added to $G$ are `sufficiently far' from some set
$A\subseteq G$, the convergence properties at the points `near $A$'
may not be affected.
\begin{lemma}\label{D.flat}
Let $(G', \K', \U')$ be a pse of $(G, \K, \U)$ over $D\subseteq
G$. Let $A\subseteq G$, $\hd{\K}{D}{A}=\delta\leq\omega_1$, 
and let $D$ satisfy~\MM{A,\K}. If $\so(a,
A, \kw(\K'))=\sigma\leql\delta$ then $a\in G$ and $\so(a,
A, \kwk)=\sigma$.
\end{lemma}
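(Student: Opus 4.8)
The plan is to induct on $\sigma$ and prove the stronger statement $C(\sigma)$: every $a$ with $\so(a, A, \kw(\K'))=\sigma\leql\delta$ lies in $G$ and satisfies $\so(a,A,\kwk)=\sigma$. Since every point of a $\kw(\K')$-closure has successor sequential order, I may assume $\sigma$ is a successor, so that $\sigma\leql\delta$ reduces to $\sigma\leq\delta$ and hence $\sigma-1<\delta$; this is the only place the $\leql$ hypothesis is used, and it is exactly what makes a single bound below $\delta$ available even when $\delta$ is a limit. The base case $\sigma=0$ is trivial since then $a\in A\subseteq G$.

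For the inductive step I would fix the defining sequence $a_n\to a$ in $\kw(\K')$ with $\so(a_n,A,\kw(\K'))=\sigma_n<\sigma$ and $\sigma_n\to\sigma-1$. Each $\sigma_n<\delta$, so $C(\sigma_n)$ gives $a_n\in G$ and $a_n\in[A]_{\sigma_n}^{\kwk}\subseteq[A\cup\{0\}]_{\sigma-1}^{\kwk}$. Replacing $\K'$ by the increasing family of its finite partial sums (all members of $\K'$ are subgroups, so this is cofinal and determines the same topology), the convergent sequence together with its limit lies in a single $K^*\in\K'$; since $\K'$ is generated from $\K\cup\{L\}$ under finite sums, members of $\K$ are closed under sums, and $L+L=L$, I may take $K^*=K_0+L$ with $K_0\in\K$ and $L=\cl{\lad}^{\tau(\U')}$. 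Writing $a_n=g_n+\ell_n$ with $g_n\in K_0$ and $\ell_n\in L$, and using $a_n,g_n\in G$, I get $\ell_n\in L\cap G=\cl{\lad}^{\tau(\U')}\cap G=\lad$ by condition~\ref{pse.resolve}.

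The heart of the argument, and the step I expect to be the main obstacle, is to force $a\in G$ out of minimality. From $a_n\in\ell_n+K_0$ and $a_n\in[A\cup\{0\}]_{\sigma-1}^{\kwk}$ the set $(\ell_n+K_0)\cap[A\cup\{0\}]_{\sigma-1}^{\kwk}$ is nonempty for every $n$. Applying \MM{A,\K} with $K=K_0$ and the \emph{single} bound $\beta=\sigma-1<\delta$ yields a finite $F\subseteq D$ with $(d+K_0)\cap[A\cup\{0\}]_{\sigma-1}^{\kwk}=\varnothing$ whenever $d\in\lad\setminus\la{F}$, whence every $\ell_n\in\la{F}$. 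The delicate point is precisely that one bound $\sigma-1$ dominates all the $\sigma_n$ at once, so a single finite $F$ traps the entire sequence; this is where the $\K$-depth hypothesis does its real work. As $\la{F}$ is finite and $\tau(\U')$ is Hausdorff, after passing to a subsequence $\ell_n\equiv\ell\in\lad$, and then $g_n=a_n+\ell\to a+\ell=:g$ in $\tau(\U')$ with $g\in K_0$ (a closed set), so $a=g+\ell\in G$.

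Finally I would upgrade this to the order equality. On the subsequence $a_n=g_n+\ell$ with $g_n\to g$ inside $K_0$; invoking the standing fact that $\kwk$ and $\kw(\K')$ induce the same compact topology $\tau(\U)|_{K_0}$ on each $K_0\in\K$ (this rests on condition~\ref{pse.order} together with compactness), the convergence $g_n\to g$ also holds in $\kwk$, and translating by $\ell$ gives $a_n\to a$ in $\kwk$. Since $a_n\in[A]_{\sigma-1}^{\kwk}$ with $\sigma_n\to\sigma-1$, this gives $a\in[A]_\sigma^{\kwk}$, i.e.\ $\so(a,A,\kwk)\leq\sigma$. For the reverse inequality the same topology agreement shows, by a routine induction requiring no depth hypothesis, that $[A]_\rho^{\kwk}\subseteq[A]_\rho^{\kw(\K')}$ for every $\rho$, whence $\sigma=\so(a,A,\kw(\K'))\leq\so(a,A,\kwk)$. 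Combining the two inequalities yields $\so(a,A,\kwk)=\sigma$, completing $C(\sigma)$ and the lemma.
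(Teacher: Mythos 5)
Your proof is correct and follows essentially the same route as the paper's: induct on $\sigma$, split $a_n$ into a $\K$-part and an $L$-part, use \ref{pse.resolve} to place the $L$-parts in $\lad$, and invoke \MM{A,\K} at a single level $\beta<\delta$ dominating all $\sigma_n$ to trap them in a finite $\la{F}$. The only cosmetic difference is your uniform choice $\beta=\sigma-1$ where the paper splits into cases according to whether $\delta$ is a limit; both choices work for exactly the reason you identify.
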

\begin{proof}
If $a\in[A]_0$ then $a\in A\subseteq G$. Suppose the Lemma has been proved
for all $\sigma'<\sigma$. Pick $a_n\to a$ (in $\kw(\K')$) such that
$\so(a_n, A, \kw(\K'))=\sigma_n$ for some increasing
$\sigma_n\to\sigma-1$. Then by the inductive hypothesis $a_n\in G$ and
$\so(a_n, A, \kwk)=\sigma_n$.

Since $a_n\to a$ in $\kw(\K')$ there is a $K'\in\K$ such that
$\set a_n:n\in\omega.\subseteq K'+L$ where
$L=\overline{\lad}^{\tau(\U')}$. By thinning out and reindexing
we may assume that $a_n=a^n+d^n$ where $a^n\in K'$, $a^n\to a'\in K'$
and $d^n\in L$, $d^n\to d\in L$. Since $a^n,a_n\in G$,
by~\ref{pse.resolve} $d^n\in\lad$.

Let $\beta=\sigma$ if $\delta$ is limit or $\beta=\delta-1$
otherwise. Then $\beta<\delta$ and $\sigma_n\leq\beta$ for every
$n\in\omega$. Using~\MM{A, \K} pick a finite $F\subseteq D$ such
that $(d+K')\cap[A]_\beta=\varnothing$ for every
$d\not\in\laf$. Then $(d+K')\cap[A]_{\sigma_n}=\varnothing$ for
every $n\in\omega$ and $d\not\in\laf$.

Now $d^n\in\laf$ for every $n\in\omega$. We may assume that $d^n=d$
for every $n\in\omega$ so $a_n=a^n+d\in K''$ for some $K''\in\K$ and
$a_n\to a$ in $\kwk$. It follows that $a\in G$ and $\so(a,
A, \kwk)\leq\sigma$. Since $\kw(\K)\subseteq\kw(\K')$, $\so(a,
A, \kwk)=\sigma$.
\end{proof}

\section{Parity and separation}
While Lemma~\ref{D.flat} provides one way for preserving the
sequential order at some points, it is not always possible to expect a
given set to be far from a fixed witness to the sequential order. A
different mechanism is needed, introduced in this section.

The next definition is a convenient way to set a lower bound on the
sequential order in a $\kw$ group. Note that it is not required that
$0\in[S]_\sigma^\kwk$ (or $0\in\cl{S}^\kwk$).
\begin{definition}\label{U.sep}
Let $(G, \K, \U)$ be a convenient triple and $S\subseteq G$. Let
$\sigma$ be a successor. Say that $\S(\K, S, \sigma)$ holds if for every
$K\in\K$ and every $\sigma'<\sigma-1$ there exists a clopen
subgroup $\UU_K(\K, S, \sigma')\subseteq K$ such that $\UU_K(\K,
S, \sigma')\cap[\laso]_{\sigma'}=\varnothing$.
\end{definition}

\begin{lemma}\label{U.sep.so}
Let $(G, \K, \U)$ be a convenient triple and $S\subseteq G$ be a
countable independent subset. Let $\sigma<\omega_1$ be a successor
ordinal. Then $\S(\K, S, \sigma)$ holds if and only if
$\so(0, \laso, \kwk)\geq\sigma$.
\end{lemma}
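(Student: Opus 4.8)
The plan is to reduce the statement to a single local criterion for first-level sequential convergence to $0$, and then read off both implications by transfinite induction. Write $B_{\sigma'}=[\laso]^\kwk_{\sigma'}$, so that $B_{\sigma'+1}=[B_{\sigma'}]^\kwk$ for every $\sigma'$. The key observation, valid for an arbitrary $B\subseteq G$, is the equivalence, which I denote $(\star)$:
\[
0\notin[B]^\kwk\iff\text{for every }K\in\K\text{ there is a clopen subgroup }\UU\subseteq K\text{ with }\UU\cap B=\varnothing.
\]
Granting $(\star)$ with $B=B_{\sigma'}$, the condition $\S(\K,S,\sigma)$ says exactly that $0\notin B_{\sigma'+1}$ for every $\sigma'<\sigma-1$, i.e.\ that $0\notin B_\rho$ for every successor $\rho$ with $1\le\rho\le\sigma-1$. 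On the other hand $\so(0,\laso,\kwk)\ge\sigma$ unwinds, by Definition~\ref{pt.so}, to $0\notin B_{\sigma'}$ for every $\sigma'\le\sigma-1$. The lemma is the comparison of these two statements.

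To prove $(\star)$ I would use two features of a convenient triple. First, since the compact subgroups in $\K$ are closed under finite sums and cover $G$, enumerating $\K=\set K_n:n\in\omega.$ and setting $K'_n=\sum_{i\le n}K_i\in\K$ exhibits $(G,\kwk)$ as a $k_\omega$-space with the increasing cofinal sequence $K'_0\subseteq K'_1\subseteq\cdots$; hence every compact subset of $(G,\kwk)$ — in particular every convergent sequence together with its limit — is contained in a single $K\in\K$, on which the subspace topology agrees with the original compact topology. Second, each $K$ is compact, metrizable and boolean, so by Lemma~\ref{precompact.groups} its topology is linear, the clopen subgroups of finite index forming a neighborhood base at $0$. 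For the backward implication of $(\star)$, if $0\in[B]^\kwk$ choose $x_n\in B$ with $x_n\to0$; then $\set x_n:n\in\omega.\cup\{0\}\subseteq K$ for some $K\in\K$, so $x_n\to0$ inside $K$ and $x_n$ eventually lies in any clopen subgroup $\UU\ni0$, contradicting $\UU\cap B=\varnothing$. For the forward implication, fix $K$: if $0\in\cl{B\cap K}^K$, metrizability of $K$ gives a sequence in $B\cap K$ converging to $0$ in $K$, hence in $\kwk$, so $0\in[B]^\kwk$; therefore $0\notin\cl{B\cap K}^K$, and linearity yields a clopen subgroup $\UU\subseteq K$ with $\UU\cap B=\UU\cap(B\cap K)=\varnothing$. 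This localization of convergence to a single $K$ is the only genuinely nontrivial point.

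Finally I would pass from the successor values furnished by $(\star)$ to all values $\sigma'\le\sigma-1$, and here the parity of $\laso$ is essential: since $\lase$ is a subgroup we have $0\in\lase$, so $0\notin\laso=B_0$. For $\S(\K,S,\sigma)\Rightarrow\so(0,\laso,\kwk)\ge\sigma$ I prove $0\notin B_{\sigma'}$ for all $\sigma'\le\sigma-1$ by transfinite induction: the case $\sigma'=0$ is the parity observation, a successor $\sigma'$ is exactly the reformulation of $\S(\K,S,\sigma)$ above, and a limit $\sigma'$ follows from the inductive hypothesis together with $B_{\sigma'}=\bigcup_{\tau<\sigma'}B_\tau$. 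Conversely, if $\so(0,\laso,\kwk)\ge\sigma$ then $0\notin B_\rho$ for every successor $\rho\le\sigma-1$, so applying $(\star)$ to $B=B_{\rho-1}$ (using $B_\rho=[B_{\rho-1}]^\kwk$) produces, for each $K\in\K$ and each $\sigma'=\rho-1<\sigma-1$, a clopen subgroup $\UU\subseteq K$ disjoint from $B_{\sigma'}$; this is precisely $\S(\K,S,\sigma)$. Both directions are thus established.
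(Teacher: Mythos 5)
Your proof is correct. For the implication the paper actually proves, $\S(\K,S,\sigma)\Rightarrow\so(0,\laso,\kwk)\geq\sigma$, your mechanism is the same as the paper's: a sequence converging to $0$ in $\kwk$ lies, together with its limit, in a single $K\in\K$ (via the increasing cofinal chain $\sum_{i\leq n}K_i$), and the clopen subgroup $\UU_K(\K,S,\sigma')$ then blocks convergence; the paper runs this as a single contradiction starting from $\so(0,\laso,\kwk)=\sigma'<\sigma$, while you package it as the local criterion $(\star)$ plus a transfinite induction. The genuine added value is that $(\star)$ is an equivalence, so you also obtain the converse, which the paper omits with only the hint that it ``uses the property that each $K\in\K$ is first countable''; your argument supplies it exactly along those lines, using metrizability of $K$ to turn $0\in\cl{B\cap K}^K$ into a convergent sequence and linearity of the compact boolean group $K$ (Lemma~\ref{precompact.groups}) to replace an open separating neighborhood by a clopen subgroup. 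One bookkeeping caveat: your base case identifies $[\laso]_0$ with $\laso$, whereas Definition~\ref{seq.cl} as literally written makes $[A]_0=[A]$ already a sequential closure. The paper itself uses the convention $[A]_0=A$ elsewhere (e.g.\ in the proof of Lemma~\ref{set.additivity}), and the lemma is only true under that reading (for $\sigma=1$ the hypothesis $\S(\K,S,1)$ is vacuous, while nothing in the statement prevents $S$ itself from converging to $0$), so your reading is clearly the intended one; it is just worth saying explicitly that the parity observation $0\in\lase$ settles the level-$0$ case only under that convention.
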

\begin{proof}
Suppose $\so(0, \laso, \kwk)=\sigma'<\sigma$. Then there exist
$s_n\to0$ in $\kw(\K)$ such that
$\so(s_n, \laso, \kwk)=\sigma_n\leq\sigma'-1$. By taking a subsequence
if necessary we may assume that $s_n\in K$ for some $K\in\K$. Since
$\sigma_n\leq\sigma'-1<\sigma'\leq\sigma-1$, by $\S(\K, S, \sigma)$ there exists a
clopen subgroup $U=\UU_K(\K, S, \sigma'-1)\subseteq K$ such that
$U\cap[\laso]_{\sigma'-1}=\varnothing$. Since $0\in U$ and
$s_n\in[\laso]_{\sigma'-1}$, this contradicts $s_n\to0$. Hence $\S(\K,
S, \sigma)$ implies $\so(0, \laso, \kwk)\geq\sigma$.

The converse will not be used so its proof is
omitted. Its proof uses the property that each $K\in\K$ is first
countable.
\end{proof}

The algebraic tool used to control the convergence properties in the
extension is given by the following {\em parity\/} homomorphism.
\begin{definition}\label{pre.parity}
Let $(G', \K', \U')$ be a pse of $(G, \K, \U)$ over $D\subseteq G$, and let
$L=\overline{\lad}^{\tau(\U')}$. Define
$\Pty^S:[\las]_\sigma^\kwk\to2$ and $\Pty^L:L\to2$ by letting
$\Pty^S(a)=0$ if $a\in[\lase]_\sigma^\kwk$ and $\Pty^S(a)=1$ if
$a\in[\laso]_\sigma^\kwk$. If $d\in L$ put $\Pty^L(d)=0$ if
$d\in\overline{\lade}^{\tau(\U')}$ and $\Pty^L(d)=1$ otherwise. If $b=a+d$ for
some $a\in[\las]_\sigma^\kwk$ and $d\in L$ put
$\Pty(b)=\Pty^S(a)+\Pty^L(d)$.
\end{definition}
The next lemma shows that the parity homomorphism is well defined
under some conditions.
\begin{lemma}\label{parity}
Let $(G', \K', \U')$ be a pse of $(G, \K, \U)$ over
$D\subseteq[\laso]_\sigma^\kwk$ where $S\subseteq G$ is an
independent set, and let
$L=\overline{\lad}^{\tau(\U')}$. If
$\overline{\lade}^{\tau(\U')}\cap\overline{\lado}^{\tau(\U')}=\varnothing$ and
$\S(\K,S,\sigma+1)$ holds then $\Pty:[\las]_\sigma^\kwk+L\to2$ is
a well defined homomorphism, continuous on $L$.
\end{lemma}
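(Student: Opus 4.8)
We need to show that $\Pty:[\las]_\sigma^\kwk + L \to 2$ is a well-defined homomorphism that is continuous on $L$. The map is defined by writing $b = a + d$ with $a \in [\las]_\sigma^\kwk$ and $d \in L$, and setting $\Pty(b) = \Pty^S(a) + \Pty^L(d)$. The content is well-definedness: the decomposition $b = a+d$ is not unique, so I must verify that $\Pty^S(a) + \Pty^L(d)$ does not depend on the choice.

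Let me think about what could go wrong and how the hypotheses prevent it.

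First, I need $\Pty^S$ and $\Pty^L$ themselves to be well-defined on their domains. For $\Pty^S$: it's defined on $[\las]_\sigma^\kwk$, being $0$ on $[\lase]_\sigma$ and $1$ on $[\laso]_\sigma$. The issue is whether these two sets are disjoint and cover $[\las]_\sigma$. The covering comes from $\las = \lase \cup \laso$ (every element of the span is either in the even subgroup or in its complement, and $\laso = \las \setminus \lase$ by the macro definition). Actually I need $[\las]_\sigma = [\lase]_\sigma \cup [\laso]_\sigma$, which should follow from Lemma~\ref{set.additivity} type reasoning together with $S$ being independent. Disjointness of $[\lase]_\sigma$ and $[\laso]_\sigma$ is the real concern.

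For $\Pty^L$: well-definedness is automatic since it's defined by a single condition ($d \in \overline{\lade}^{\tau(\U')}$ or not), partitioning $L$ into two pieces.

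Here is my plan. The hypotheses are designed to give exactly the two disjointness facts I need.

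$\bullet$ The hypothesis $\overline{\lade}^{\tau(\U')} \cap \overline{\lado}^{\tau(\U')} = \varnothing$ directly handles $\Pty^L$ and, more importantly, the interaction on $L$.

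$\bullet$ The hypothesis $\S(\K, S, \sigma+1)$ handles $\Pty^S$: by Lemma~\ref{U.sep.so} it gives $\so(0, \laso, \kwk) \geq \sigma+1$, which should force $[\lase]_\sigma \cap [\laso]_\sigma = \varnothing$. Let me sketch why.

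I would organize the proof as follows.

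Step 1 (Well-definedness of $\Pty^S$). Show $[\lase]_\sigma \cap [\laso]_\sigma = \varnothing$. Suppose $a$ lies in both. Then $a \in [\lase]_\sigma$, and $a$ is a limit (at level $\sigma$) of points of $\laso$. The $\S(\K, S, \sigma+1)$ condition says: for every $K$ and every $\sigma' < \sigma$, there is a clopen subgroup $\UU_K \subseteq K$ with $\UU_K \cap [\laso]_{\sigma'} = \varnothing$. The plan is to translate the supposed coincidence $a \in [\lase]_\sigma \cap [\laso]_\sigma$ into a sequence of odd elements converging to a point it should be separated from, contradicting these clopen separations — essentially reproducing the argument of Lemma~\ref{U.sep.so} but localized. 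Concretely, if $a \in [\laso]_\sigma$ then (by the observation after Definition~\ref{pt.so}) there are $a_n \to a$ with $a_n \in [\laso]_{\sigma_n}$, $\sigma_n < \sigma$; but $a \in [\lase]_\sigma$ lets me shift by an even element to reduce to $0 \in \overline{[\laso]_{\sigma'}}$ for some $\sigma' < \sigma$, contradicting the clopen subgroup separating $0$ from $[\laso]_{\sigma'}$.

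Step 2 (Well-definedness of $\Pty$ on $[\las]_\sigma^\kwk + L$). Suppose $a + d = a' + d'$ with $a, a' \in [\las]_\sigma$ and $d, d' \in L$. Then $a + a' = d + d'$. The left side lies in $[\las]_\sigma$ (in fact in $G$, and by Lemma~\ref{set.additivity} at level $\sigma$), while the right side lies in $L$; by property~\ref{pse.resolve} of the pse, $L \cap G = \lad$, so $a + a' = d + d' \in \lad \subseteq [\laso]_\sigma$ (using $D \subseteq [\laso]_\sigma^\kwk$, so $\lad \subseteq [\las]_\sigma$ with controlled parity). I then need to check the two parity contributions agree: $\Pty^S(a) + \Pty^S(a') = \Pty^L(d) + \Pty^L(d')$. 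On the $S$-side, $\Pty^S(a) + \Pty^S(a') = \Pty^S(a + a')$ once I establish $\Pty^S$ is additive (a homomorphism $[\las]_\sigma \to 2$), which again rests on the parity of sums of even/odd elements being correctly tracked — here $\lase + \lase \subseteq \lase$, $\laso + \laso \subseteq \lase$, $\lase + \laso \subseteq \laso$, extended to level $\sigma$ via Lemma~\ref{set.additivity} and Step 1. On the $L$-side, I use the hypothesis $\overline{\lade}^{\tau(\U')} \cap \overline{\lado}^{\tau(\U')} = \varnothing$ to see that $\Pty^L$ is a homomorphism $L \to 2$, so $\Pty^L(d) + \Pty^L(d') = \Pty^L(d + d')$. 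Finally I match $\Pty^S(a+a')$ with $\Pty^L(d+d')$ on the common element $a + a' = d + d' \in \lad$, where both must register the odd parity consistently because $D \subseteq [\laso]_\sigma$.

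Step 3 (Homomorphism and continuity). Additivity of $\Pty$ on the whole domain follows by combining additivity of $\Pty^S$ and $\Pty^L$ established in Step 2. Continuity on $L$ is the statement that $\Pty^L: L \to 2$ is continuous, which follows because $\overline{\lade}^{\tau(\U')}$ is clopen in $L$: it is closed by definition, and its complement $\overline{\lado}^{\tau(\U')} \cap L$ is closed by the disjointness hypothesis together with $L = \overline{\lade}^{\tau(\U')} \cup (\overline{\lado}^{\tau(\U')} \cap L)$ — so each fiber of $\Pty^L$ is clopen in the compact group $L$.

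**The main obstacle.** The hard part will be Step 1 and the parity-tracking in Step 2: proving $[\lase]_\sigma \cap [\laso]_\sigma = \varnothing$ and, relatedly, that $\Pty^S$ is additive. This requires pushing the even/odd algebra of $\las$ through the transfinite sequential-closure levels using Lemma~\ref{set.additivity}, and converting the separation property $\S(\K,S,\sigma+1)$ (via Lemma~\ref{U.sep.so}) into the disjointness of the even and odd closures at level $\sigma$. The clean statement I expect to need, and would isolate as the crux, is that $\S(\K,S,\sigma+1)$ forces $0 \notin \overline{[\laso]_{\sigma'}}^\kwk$ for $\sigma' < \sigma$, which is precisely what the clopen subgroups $\UU_K(\K, S, \sigma')$ deliver, and that this propagates to disjointness of the full even and odd $\sigma$-closures by homogeneity (translation by even elements preserves parity). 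The $L$-side of Step 2 is comparatively routine given the explicit disjointness hypothesis on $\overline{\lade}^{\tau(\U')}$ and $\overline{\lado}^{\tau(\U')}$.
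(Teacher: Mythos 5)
Your proposal is correct and follows essentially the same route as the paper: disjointness of $[\lase]_\sigma^\kwk$ and $[\laso]_\sigma^\kwk$ is obtained by producing a sequence of elements of $[\laso]_{\sigma'}^\kwk$ ($\sigma'<\sigma$) converging to $0$ and contradicting the clopen subgroups supplied by $\S(\K,S,\sigma+1)$, additivity of $\Pty^S$ comes from the even/odd arithmetic pushed through Lemma~\ref{set.additivity}, and well-definedness of $\Pty$ reduces via~\ref{pse.resolve} to matching $\Pty^S$ and $\Pty^L$ on $\lad$, exactly as in the paper. The only loose phrase is ``shift by an even element'' in Step~1: the precise move (as in the paper) is to take a sequence $a_n^0\in[\lase]_{\sigma_n^0}$ and a sequence $a_n^1\in[\laso]_{\sigma_n^1}$ both converging to $a$ and add them termwise, so that $a_n^0+a_n^1\to0$ with $a_n^0+a_n^1\in[\laso]_{\sigma'-1}^\kwk$.
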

\begin{proof}
Since $\las=\lase\cup(\laso)$ if $a\in[\las]_\sigma^\kwk$ for
some $\sigma<\omega_1$ then either $a\in[\laso]_\sigma^\kwk$ or
$a\in[\lase]_\sigma^\kwk$. If
$a\in[\laso]_\sigma^\kwk\cap[\lase]_\sigma^\kwk$ then
$a\in[\laso]_{\sigma'}^\kwk\cap[\lase]_{\sigma'}^\kwk$ for some
successor ordinal $\sigma'\leq\sigma$. Thus there are
$a_n^0\to a$ and $a_n^1\to a$ such that $a_n^0\in[\lase]_{\sigma_n^0}$
and $a_n^1\in[\laso]_{\sigma_n^1}$ for some
$\sigma_n^0, \sigma_n^1\leq\sigma'-1<\sigma$. Now $a_n^0+a_n^1\to0$ and
$a_n^0+a_n^1\in[(\laso)+\lase]_{\sigma'-1}^\kwk=[\laso]_{\sigma'-1}^\kwk$
by Lemma~\ref{set.additivity}. Pick a $K\in\K$ such that $\set
a_n^0+a_n^1:n\in\omega.\subseteq K$. Then $\UU_K(\K,
S, \sigma'-1)\cap\set a_n^0+a_n^1:n\in\omega.=\varnothing$
contradicting $a_n^0+a_n^1\to0$.

Thus $\Pty^S$ is well defined
on $[\las]_\sigma^\kwk$. A similar argument involving
Lemma~\ref{set.additivity} and $(\laso)+(\laso)=\lase$ shows that
$\Pty^S$ is a homomorphism.

It follows from the definition of $\Pty^L$ and the choice of $D$ that
$\Pty^L$ is a continuous homomorphism on $L$.

If $d\in L\cap[\las]_\sigma^\kwk$ then $d\in\lad$ by~\ref{pse.resolve} so
$\Pty^S(d)=\Pty^L(d)$ by Lemma~\ref{set.additivity} and
$D\subseteq[\laso]_\sigma^\kwk$. Let
$a+d=a'+d'\in[\las]_\sigma^\kwk+L$ where $a,a'\in[\las]_\sigma^\kwk$ and
$d,d'\in L$. Then $a+a'=d+d'$ so $d+d'\in G$ and thus
$d+d'\in\lad\subseteq[\laso]_\sigma^\kwk$ by~\ref{pse.resolve}
and Lemma~\ref{set.additivity}. Now
$\Pty^S(a)+\Pty^S(a')=\Pty^S(a+a')=\Pty^S(d+d')=\Pty^L(d+d')=\Pty^L(d)+\Pty^L(d')$
so $\Pty^S(a)+\Pty^L(d)=\Pty^S(a')+\Pty^L(d')$ and $\Pty$ is well
defined.
\end{proof}
While the parity homomorphism is unlikely to be continuous on its
domain (even if an appropriate topology is agreed upon) it satisfies
the following weak continuity property.

\begin{lemma}\label{pparity}
Let $(G', \K', \U')$ be a pse of $(G, \K, \U)$ over
$D\subseteq G$. Let $D$ satisfy~\MM{\laso,\K} and
$\overline{\lade}^{\tau(\U')}\cap\overline{\lado}^{\tau(\U')}=\varnothing$,
and let $L=\overline{\lad}^{\tau(\U')}$. Let
$\hd{\K}{D}{\laso}=\delta\leq\omega_1$ and
$D\subseteq[\laso]_{\delta}^\kwk$ if $\delta<\omega_1$. Let $\alpha$
be a successor and $\S(\K,S,\alpha)$ hold. If $\delta<\alpha$, and
$\so(g, \laso, \kw(\K'))=\sigma<\alpha$ then
$g\in[\las]_\sigma^\kwk+L$, $\Pty(g)=1$ where
$\Pty:[\las]_{\alpha-1}^\kwk+L\to2$ is the homomorphism in
Definition~\ref{pre.parity}.
\end{lemma}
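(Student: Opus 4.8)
The plan is to induct on $\sigma=\so(g,\laso,\kw(\K'))$, splitting into the regime where $g$ already lives in the ground model and the genuinely new regime. First I would dispose of the case $\sigma\leql\delta$ (which includes the trivial base case $\sigma=0$, i.e.\ $g\in\laso$): here Lemma~\ref{D.flat}, applied with $A=\laso$ and using that $D$ satisfies \MM{\laso,\K} and $\hd{\K}{D}{\laso}=\delta$, yields $g\in G$ and $\so(g,\laso,\kwk)=\sigma$, so $g\in[\laso]_\sigma^\kwk\subseteq[\las]_\sigma^\kwk$ and, taking the $L$-component to be $0$, $\Pty(g)=\Pty^S(g)=1$ because $g$ is odd. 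Since $\sigma$ is a successor whenever $\sigma>0$, ``$\sigma\leql\delta$'' is just ``$\sigma\leq\delta$'', so the remaining case is $\delta\leq\sigma-1$, which is where the induction does its work.

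For the inductive step (assuming $\delta<\sigma$ and the statement for all smaller orders) I would choose $g_n\to g$ in $\kw(\K')$ with $\so(g_n,\laso,\kw(\K'))=\sigma_n\to\sigma-1$ and $\sigma_n<\sigma$. The inductive hypothesis gives $g_n\in[\las]_{\sigma_n}^\kwk+L$ with $\Pty(g_n)=1$; write $g_n=b_n+e_n$ accordingly, where $b_n\in[\las]_{\sigma_n}^\kwk$ and $e_n\in L$. Since $\kw(\K')$ is determined by the countable family $\K'$, the convergent sequence lies in some $K+L$ with $K\in\K$, so I may also write $g_n=a^n+d^n$ with $a^n\in K$ and $d^n\in L$, and, after thinning out using sequential compactness of $K$ and $L$, arrange $a^n\to a\in K$ and $d^n\to d\in L$, whence $g=a+d$. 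The two decompositions are reconciled by setting $w_n=a^n+b_n=d^n+e_n$; since $a^n,b_n\in G$ and $d^n,e_n\in L$ this lies in $G\cap L=\lad$ (property~\ref{pse.resolve} with $F=\varnothing$), and $\lad\subseteq[\las]_\delta^\kwk$.

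To prove $g\in[\las]_\sigma^\kwk+L$ I would note that $a^n=b_n+w_n\in[\las]_{\max\{\sigma_n,\delta\}}^\kwk$ by Lemma~\ref{set.additivity}; as both $\sigma_n$ and $\delta$ are at most $\sigma-1$, this puts $a^n\in[\las]_{\sigma-1}^\kwk$, and because $a^n\to a$ inside the compact $K$, on which $\kwk$ and $\kw(\K')$ induce the same topology, we get $a\in[\las]_\sigma^\kwk$ and hence $g=a+d\in[\las]_\sigma^\kwk+L$. For the parity I would use that $\Pty^S$ is a homomorphism on $[\las]_{\alpha-1}^\kwk$ and $\Pty^L$ a continuous homomorphism on $L$ (Lemma~\ref{parity}, whose hypotheses $D\subseteq[\laso]_{\alpha-1}^\kwk$ and $\S(\K,S,\alpha)$ are supplied by $\delta<\alpha$), together with the fact that $\Pty^S=\Pty^L$ on $\lad$. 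Expanding $\Pty^S(a^n)=\Pty^S(b_n)+\Pty^S(w_n)$ and $\Pty^L(d^n)=\Pty^L(e_n)+\Pty^L(w_n)$ and adding, the two occurrences of $w_n$ cancel modulo $2$, leaving $\Pty^S(a^n)+\Pty^L(d^n)=\Pty^S(b_n)+\Pty^L(e_n)=\Pty(g_n)=1$.

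The remaining, and main, obstacle is that $\Pty^S$ need not be continuous, so I cannot simply pass $\Pty^S(a^n)\to\Pty^S(a)$ to the limit. I would get around this by using that $\Pty^S$ takes only the values $0,1$: passing to a subsequence I may assume $\Pty^S(a^n)$ is constantly some $c$, that is, all $a^n$ lie in $[\lase]_{\sigma-1}^\kwk$ (if $c=0$) or all in $[\laso]_{\sigma-1}^\kwk$ (if $c=1$); taking limits then places $a$ in $[\lase]_\sigma^\kwk$, respectively $[\laso]_\sigma^\kwk$, so $\Pty^S(a)=c=\Pty^S(a^n)$. Here the well-definedness of $\Pty^S$ on $[\las]_\sigma^\kwk$, equivalently $[\lase]_\sigma^\kwk\cap[\laso]_\sigma^\kwk=\varnothing$, which is precisely where $\S(\K,S,\alpha)$ enters via Lemma~\ref{parity}, makes this value unambiguous. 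Since $\Pty^L(d)=\Pty^L(d^n)$ for large $n$ by continuity of $\Pty^L$, combining gives $\Pty(g)=\Pty^S(a)+\Pty^L(d)=\Pty^S(a^n)+\Pty^L(d^n)=1$ for suitable $n$, completing the induction.
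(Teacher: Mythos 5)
Your proof is correct and follows essentially the same route as the paper's: the same induction on $\sigma$ with Lemma~\ref{D.flat} disposing of $\sigma\leql\delta$, the same double decomposition of $g_n$ (one from the inductive hypothesis, one from compactness of $K+L$) reconciled through $G\cap L=\lad\subseteq[\las]_\delta^\kwk$ via~\ref{pse.resolve} and Lemma~\ref{set.additivity}, and the same subsequence argument making $\Pty^S(a^n)$ and $\Pty^L(d^n)$ constant to pass the parity to the limit. You are somewhat more explicit than the paper in spelling out the membership $g\in[\las]_\sigma^\kwk+L$ and in deriving $\Pty^S(a^n)+\Pty^L(d^n)=1$ by cancelling $w_n$ rather than just invoking well-definedness of $\Pty$, but these are presentational differences only.
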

\begin{proof}
Since $\delta<\alpha$ the homomorphism
$\Pty:[\las]_{\alpha-1}^\kwk+L\to2$ is well defined by
Lemma~\ref{parity}.

Suppose the Lemma has been proved for all $g'\in G$, $\sigma'<\sigma$
such that $\so(g', \laso, \kw(\K'))=\sigma'$ and let
$\so(g, \laso, \kw(\K'))=\sigma$.

If $\sigma\leql\delta$ then $\so(g, \laso, \kwk)=\sigma$ by
Lemma~\ref{D.flat}. Since $\sigma$ is a successor, assume below that
$\sigma>\delta$ and let $g_n\to g$ in $\kw(\K')$ be such that
$\so(g_n, \laso, \K')=\sigma_n$ where $\sigma_n\to\sigma-1$ and
$\sigma_n<\sigma$.  

Applying the inductive hypothesis each $g_n\in[\las]_{\sigma_n}^\kwk+L$, and
$\Pty(g_n)=1$ so there exist $a_n\in[\las]_{\sigma_n}^\kwk$ and $d_n\in L$
such that $g_n=a_n+d_n$. Since $g_n\to g$ in $\kw(\K')$ there
are (possibly after thinning out and reindexing) a $K'\in\K$, $a^n\in
K'$, and $d^n\in L$ such that $a^n\to a$, $d^n\to d$ and
$g_n=a^n+d^n$. Now $a^n-a_n=d_n-d^n=d_n'$ so $d_n'\in\lad$
by~\ref{pse.resolve}. Thus $a^n=a_n+d_n'$ where
$d_n'\in[\las]_{\delta}^\kwk$. Therefore
$a^n\in[\las]_{\sigma_n'}^\kwk$ for some $\sigma_n'<\sigma$ by
$\delta<\sigma$ and Lemma~\ref{set.additivity}.

After picking a subsequence and reindexing, assume that
$\Pty(a^n)$ and $\Pty(d^n)$ are constant. Then $\Pty(a)=\Pty(a^n)$,
$\Pty(d)=\Pty(d^n)$ by the definition of $\Pty$ and Lemma~\ref{parity}
so $\Pty(g)=\Pty(a+d)=\Pty(a_n+d_n)=\Pty(g_n)=1$ by Lemma~\ref{parity}.
\end{proof}

The main reason the parity homomorphism was defined is the proof of
the following lemma that states the conditions under which the
sequential order of some points is preserved across primitive
sequential extensions.

\begin{lemma}\label{D.bind}
Let $\gku$ be a convenient triple, $S\subseteq G$ be such that $\S(\K,
S, \alpha)$ holds for some successor ordinal $\alpha<\omega_1$. Let
$\gkup$ be a pse of $\gku$ over $D\subseteq G$ that has the following
properties.
\begin{countup}[bcp]
\item\label{bind.proximity}
$D$ satisfies \MM{\laso, \K}, $\hd{\K}{D}{\laso}=\delta$ and either
$\delta\geq\alpha-1$ or $D\subseteq[\laso]_\delta^\kwk$;

\item\label{bind.split}
if $\delta<\alpha-1$ then
$\cl{\lado}^{\tau(\U')}\cap\cl{\lade}^{\tau(\U')}=\varnothing$;

\end{countup}
Then $\S(\K', S, \alpha)$ holds so that $\UU_K(\K',
S, \sigma)=\UU_K(\K, S, \sigma)$ for every $K\in\K$ and
$\sigma<\alpha-1$.
\end{lemma}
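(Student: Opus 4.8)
The plan is to verify Definition~\ref{U.sep} for $\gkup$ head on: for each $K'\in\K'$ and each $\sigma<\alpha-1$ I must exhibit a clopen subgroup of $K'$ disjoint from $[\laso]_\sigma^{\kw(\K')}$. Since $\K$ is closed under finite sums and $L=\cl{\lad}^{\tau(\U')}$ is a compact subgroup with $L+L=L$, by~\ref{pse.order} every member of $\K'$ is either some $K\in\K$, or $L$, or $K+L$ with $K\in\K$; I will describe the witness for the form $K+L$, the others being the same argument with the $K$- or $L$-summand suppressed. Everything hinges on comparing $\sigma$ with $\delta=\hd{\K}{D}{\laso}$, and the two resulting regimes are handled by Lemma~\ref{D.flat} and Lemma~\ref{pparity} respectively. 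Throughout, for $K\in\K$ the claim is that the old group $\UU_K(\K,S,\sigma)$ still works, which is what the statement asserts.

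Regime I, $\sigma<\delta$. Here $\sigma\leql\delta$, so Lemma~\ref{D.flat} (with $A=\laso$, using \MM{\laso,\K} from~\ref{bind.proximity}) applies to every $g$ with $\so(g,\laso,\kw(\K'))\le\sigma$ and yields $[\laso]_\sigma^{\kw(\K')}\subseteq[\laso]_\sigma^\kwk\subseteq G$; no new points enter the $\sigma$-th closure. For $K\in\K$ the subgroup $\UU_K(\K,S,\sigma)$ is disjoint from $[\laso]_\sigma^\kwk$ by hypothesis and we are done. For $K+L$ I would take the finite $F=F_{K,\sigma}\subseteq D$ supplied by \MM{\laso,\K}, so that $(d+K)\cap[\laso]_\sigma^\kwk=\varnothing$ whenever $d\in\lad\setminus\la{F}$, pick $U\in\U'$ with $U\cap\la{F}=\{0\}$ (possible as $\la{F}$ is finite and $\tau(\U')$ is Hausdorff), and set the witness to $\UU_K(\K,S,\sigma)+(U\cap L)$, a clopen subgroup of $K+L$. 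A point $g$ in this subgroup and in $[\laso]_\sigma^{\kw(\K')}\subseteq G$ would decompose as $g=u+\ell$ with $u\in\UU_K(\K,S,\sigma)$ and $\ell\in U\cap L$; then $\ell=g+u\in L\cap G=\lad$ by~\ref{pse.resolve}, and $g\in\ell+K$ forces $\ell\in\la{F}$, hence $\ell=0$ and $g=u\in\UU_K(\K,S,\sigma)\cap[\laso]_\sigma^\kwk=\varnothing$, a contradiction.

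Regime II, $\delta\le\sigma<\alpha-1$. Now $\delta<\alpha-1$, so~\ref{bind.split} and the clause $D\subseteq[\laso]_\delta^\kwk$ of~\ref{bind.proximity} hold; Lemma~\ref{parity} then makes the homomorphism $\Pty\colon[\las]_{\alpha-1}^\kwk+L\to2$ available (with $\cl{\lade}^{\tau(\U')}=\ker\Pty^L$ clopen in $L$), and Lemma~\ref{pparity} guarantees $\Pty(g)=1$ for every $g$ with $\so(g,\laso,\kw(\K'))<\alpha$. For $K\in\K$, if $g\in\UU_K(\K,S,\sigma)\cap[\laso]_\sigma^{\kw(\K')}$ then Lemma~\ref{pparity} writes $g=a+d$ with $a\in[\las]_\sigma^\kwk$ and $d\in L\cap G=\lad\subseteq[\las]_\delta^\kwk\subseteq[\las]_\sigma^\kwk$; thus $g\in[\las]_\sigma^\kwk$ and $\Pty^S(g)=\Pty(g)=1$ give $g\in[\laso]_\sigma^\kwk$, contradicting the choice of $\UU_K(\K,S,\sigma)$. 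For $K+L$ the witness is $\UU_K(\K,S,\sigma)+\cl{\lade}^{\tau(\U')}$. The heart of the matter is to reconcile the two descriptions of a putative common point $g$: from membership, $g=u+\ell$ with $u\in\UU_K(\K,S,\sigma)$ and $\Pty^L(\ell)=0$; from Lemma~\ref{pparity}, $g=a+d$ with $a\in[\las]_\sigma^\kwk$ and $d\in L$. Comparing them gives $u+a=\ell+d\in L\cap G=\lad$, say equal to $d'\in\lad$, whence $u=a+d'$ and $\Pty^L(d)=\Pty^L(d')$. Since $\Pty^L=\Pty^S$ on $\lad$ (Lemma~\ref{parity}) and $\delta\le\sigma$, I would then compute $\Pty^S(u)=\Pty^S(a)+\Pty^S(d')=\Pty^S(a)+\Pty^L(d)=\Pty(g)=1$, so $u\in[\laso]_\sigma^\kwk$, once more contradicting the defining property of $\UU_K(\K,S,\sigma)$.

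Finally the two regimes exhaust $\sigma<\alpha-1$: if $\delta\ge\alpha-1$ then $\sigma<\delta$ always and only Regime~I occurs, while if $\delta<\alpha-1$ the ranges $\sigma<\delta$ and $\delta\le\sigma<\alpha-1$ fall under Regimes~I and~II. Since in both regimes the witness chosen for each $K\in\K$ is the original $\UU_K(\K,S,\sigma)$, this establishes $\S(\K',S,\alpha)$ with $\UU_K(\K',S,\sigma)=\UU_K(\K,S,\sigma)$, as required. I expect the main obstacle to be the $K+L$ case of Regime~II: matching the ``geometric'' splitting $g=u+\ell$ coming from membership in the witness against the ``dynamical'' splitting $g=a+d$ coming from Lemma~\ref{pparity}, and confirming that $\UU_K(\K,S,\sigma)+\cl{\lade}^{\tau(\U')}$ is genuinely a clopen subgroup of $K+L$ (the sum of two clopen subgroups of the compact groups $K$ and $L$).
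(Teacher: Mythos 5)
Your proof is correct and takes essentially the same route as the paper's: the same case split on $\sigma$ versus $\delta=\hd{\K}{D}{\laso}$, with Lemma~\ref{D.flat} disposing of the regime $\sigma<\delta$ and the parity homomorphism of Lemmas~\ref{parity} and~\ref{pparity} disposing of $\delta\leq\sigma<\alpha-1$ via the identical witness $\UU_K(\K,S,\sigma)+\cl{\lade}^{\tau(\U')}$, followed by restricting the witness built for $K+L$ to an arbitrary member of $\K'$. The only (cosmetic) divergence is your first-regime witness $\UU_K(\K,S,\sigma)+(U\cap L)$ with $U\in\U'$ missing $\la{F}\setminus\{0\}$, where the paper instead uses $K+\cl{\la{D\setminus F_K}}^{\tau(\U')}$; both reduce to the old family through~\ref{pse.resolve} and the finite set supplied by \MM{\laso,\K}.
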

\begin{proof}
Let $L=\cl{\lad}^{\tau(\U')}$, $K\in\K$, and $\sigma<\alpha-1$.

Suppose $\sigma<\delta$. Pick a finite $F_K\subseteq D$ using~\MM{\laso, \K} such
that $(K+d)\cap[\laso]_\sigma^\kwk=\varnothing$ for every
$d\in\lad\setminus\la{F_K}$. Put $U_{K+L}(\sigma)=K+\cl{\la{D\setminus
F_K}}^{\tau(\U')}$.

Suppose
$U_{K+L}(\sigma)\cap[\laso]_\sigma^{\kw(\K')}\not=\varnothing$. Let
$u\in K$, $d\in\cl{\la{D\setminus F_K}}^{\tau(\U')}$, and $a\in G'$ be such that
$u+d=a$ where $a\in[\laso]_\sigma^{\kw(\K')}$. By
Lemma~\ref{D.flat} $a\in[\laso]_\sigma^\kwk$ so $d\in G$. Then
$d\in\la{D\setminus F_K}$ by~\ref{pse.resolve} and
$(K+d)\cap[\laso]_\sigma^\kwk=\varnothing$ by the choice of $F_K$
contradicting the choice of $a$.

Suppose $\sigma\geq\delta$. Then $\delta<\alpha-1$. Put
$U_{K+L}(\sigma)=\UU_K(\K,
S, \sigma)+\overline{\lade}^{\tau(\U')}$. Then $U_{K+L}(\sigma)$ is a
compact subgroup of finite index in $K+L$ and thus clopen in
$K+L$. Suppose
$U_{K+L}(\sigma)\cap[\laso]_\sigma^{\kw(\K')}\not=\varnothing$. Then
there exist $u\in\UU_K(\K, S, \sigma)$ and
$d\in\overline{\lade}^{\tau(\U')}$ such that $u+d=a$ for some
$a\in[\laso]_\sigma^{\kw(\K')}$. By Lemma~\ref{parity}
and~\ref{bind.split} $\Pty:[\las]_\sigma^\kwk+L\to2$ is well defined
and $a=a'+d'$ where $a'\in[\las]_\sigma^\kwk$, $d'\in L$ and
$\Pty(a'+d')=1$ by Lemma~\ref{pparity}.

Now $u=a+d$ thus $a+d\in G$. Since $\Pty(a'+d')=1$ by
Lemma~\ref{pparity} and $\Pty(d)=0$ by Definition~\ref{pre.parity},
$\Pty(a'+d'+d)=1$. Since $a+d\in G$ and $u=a+d=a'+d'+d$, $d'+d\in
L\cap G=\lad\subseteq[\las]_\sigma^\kwk$ by~\ref{pse.resolve} and
$\delta\leq\sigma$.

Since $a'+d'+d\in[\las]_\sigma^\kwk$ by Lemma~\ref{set.additivity} and
$\Pty^S(a'+d'+d)=\Pty(a'+d'+d)=1$, $a'+d'+d\in[\laso]_\sigma^\kwk$ by the
definition of $\Pty^S$ contradicting the choice of $\UU_K(\K,
S, \sigma)$.

Now for every $K\in\K'$ pick a $p(K)\in\K$ such that $K\subseteq
p(K)+L$ and $p(K)=K$ if $K\in\K$. Note that $\UU_{p(K)}(\K,
S, \sigma)\subseteq U_{p(K)+L}(\sigma)$ for $K\in\K$. Put $\UU_K(\K',
S, \sigma)=U_{p(K)+L}(\sigma)\cap K$ if $K\in\K'\setminus\K$ and $\UU_K(\K',
S, \sigma)=\UU_K(\K, S, \sigma)$ otherwise.
\end{proof}

Let $\gamma$ be an ordinal. Suppose for every $\sigma<\gamma$ a
convenient triple $\hku{^\sigma}$ is defined so that the following
conditions hold:

\begin{countup}[bcp]
\item\label{stack.order}
$H^{\sigma'}\subseteq H^\sigma$, $\K^{\sigma'}\subseteq \K^\sigma$,
and $\U^{\sigma'}\subseteq \set U\cap H^{\sigma'}:U\in\U^\sigma.$
if $\sigma'\leq\sigma<\gamma$;

\item\label{stack.dense}
$H^{\sigma'}$ is dense in $H^\sigma$ in $\kw(\K^\sigma)$ for every
$\sigma'\leq\sigma$;

\end{countup}
Define $\hku{^{<\gamma}}$ by taking
$H^{<\gamma}=\cup_{\sigma<\gamma}H^\sigma$,
$\K^{<\gamma}=\cup_{\sigma<\gamma}\K^\sigma$,
$\U^{<\gamma}=\set \cl{U}^{\kw(\K^{<\gamma})}:U\in\U^\sigma, \sigma<\gamma.$.

Note that in the case of a successor $\gamma$, $\hku{^{<\gamma}}=\hku{^{\gamma-1}}$.
\begin{lemma}\label{stack.limit}
The family $\U^{<\gamma}$ forms a base of clopen subgroups of finite
index for a precompact group
topology $\tau(\U^{<\gamma})$ on $H^{<\gamma}$ and each $H^\sigma$, $\sigma<\gamma$ is dense
in $H^{<\gamma}$ in $\kw(\K^{<\gamma})$. If $\gamma<\omega_1$ then
$\hku{^{<\gamma}}$ is a convenient triple.
\end{lemma}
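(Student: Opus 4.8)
The plan is to establish the three assertions of Lemma~\ref{stack.limit} in sequence, treating the general-$\gamma$ statements first and deriving the convenient-triple conclusion for $\gamma<\omega_1$ at the end. The core of the argument is bookkeeping: verifying that the closures $\cl{U}^{\kw(\K^{<\gamma})}$ of the subgroups $U\in\U^\sigma$ behave correctly, and that no new points or pathologies are introduced in the union. Throughout I would lean on the compatibility hypotheses~\ref{stack.order} and~\ref{stack.dense}, together with the density claim, which I prove first since the other two depend on it.

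\medskip

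First I would verify that each $H^\sigma$ is dense in $H^{<\gamma}$ in $\kw(\K^{<\gamma})$. Since $\K^{<\gamma}=\cup_{\tau<\gamma}\K^\tau$, a basic $\kw(\K^{<\gamma})$-open set meeting a given point is determined by its traces on finitely many $K\in\K^{<\gamma}$, each of which lies in some $\K^\tau$; by~\ref{stack.order} these all sit inside a single $\K^{\tau_0}$ for $\tau_0<\gamma$ large enough, and then~\ref{stack.dense} (density of $H^\sigma$ in $H^{\tau_0}$, using $\sigma\le\tau_0$ after enlarging) gives a point of $H^\sigma$ in the open set. The point to be careful about is that density must be checked against the \emph{union} topology rather than each $\kw(\K^\tau)$ separately, but since every compact test set belongs to some single stage this reduces cleanly to the stagewise density already assumed.

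\medskip

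Next I would show $\U^{<\gamma}$ is a base of clopen finite-index subgroups for a precompact topology. Each $U\in\U^\sigma$ is a finite-index subgroup of $H^\sigma$, open in $\kw(\K^\sigma)$; its closure $\cl{U}^{\kw(\K^{<\gamma})}$ in $H^{<\gamma}$ is again a subgroup, and I must argue it has the same finite index and is clopen. Finite index is the key computation: because $H^\sigma$ is dense in $H^{<\gamma}$ and $U$ has finite index in $H^\sigma$, the finitely many cosets of $U$ have closures covering $H^{<\gamma}$, and disjointness of the closed cosets (each being a clopen-coset of a finite-index subgroup, hence closed) shows the index is preserved and $\cl{U}^{\kw(\K^{<\gamma})}$ is clopen. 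Closure under finite intersection and the neighborhood-base axioms for a group topology then follow formally, and precompactness is immediate from all basic subgroups having finite index (as in Lemma~\ref{precompact.groups}). The Hausdorff/separation requirement needs that distinct points are separated by some $\cl{U}^{\kw(\K^{<\gamma})}$, which I would obtain by tracing a separating $U$ back to the finite stage containing both points via~\ref{stack.order}.

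\medskip

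Finally, for $\gamma<\omega_1$, I would assemble the convenient-triple conditions of Definition~\ref{contrip}: $H^{<\gamma}$ is boolean (a union of boolean groups), $\U^{<\gamma}$ is \emph{countable} (a countable union of countable families, using $\gamma<\omega_1$) and closed under finite intersections with the precompact topology just established, and $\K^{<\gamma}$ is a countable family of compact subgroups closed under finite sums and intersections with $\cup\K^{<\gamma}=H^{<\gamma}$. The one genuine obstacle is verifying that each $K\in\K^{<\gamma}$ remains \emph{compact} in $\tau(\U^{<\gamma})$ rather than merely in its original $\tau(\U^\sigma)$; here I would argue that $\tau(\U^{<\gamma})$ restricted to such a $K$ coincides with its original compact topology, since the traces $\cl{U}^{\kw(\K^{<\gamma})}\cap K$ agree with the traces $U\cap K$ up to the density identification, so the subspace topology is unchanged and $K$ stays compact. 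The countability of $\U^{<\gamma}$ failing when $\gamma=\omega_1$ is exactly why the convenient-triple conclusion is restricted to $\gamma<\omega_1$, and I expect the compactness-preservation step to be where the real work lies.
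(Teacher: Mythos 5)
Your proposal is correct and follows essentially the same route as the paper's (terse) proof: the closure of each $U\in\U^\sigma$ has finite index in $H^{<\gamma}$ because $U+F=H^\sigma$ for a finite $F$ and density carries this finite coset decomposition up to the union, the $T_1$/Hausdorff separation comes from tracing a separating subgroup back to a single stage using the compatibility of the $\U^\sigma$, and the convenient-triple clauses for $\gamma<\omega_1$ are exactly the bookkeeping (countability, and compactness of each $K$ in $\tau(\U^{<\gamma})$ via comparison of compact Hausdorff topologies on $K$) that the paper dismisses as routine. The only blemish is your claim that a basic $\kw(\K^{<\gamma})$-open set is ``determined by its traces on finitely many $K$'' --- that is not how $k_\omega$ topologies work --- but the reduction you actually need, namely that an open set of $\kw(\K^{<\gamma})$ restricts to an open set of $\kw(\K^{\tau_0})$ simply because $\K^{\tau_0}\subseteq\K^{<\gamma}$, is immediate, so the density argument stands as you intended it.
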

\begin{proof}
Let $U\in\U^{\sigma'}$ for some $\sigma'<\gamma$. Then $U+F=H^{\sigma'}$ for
some finite $F\subseteq H^{\sigma'}$. Thus
$\cl{U}^{\kw(\K^\sigma)}+F=H^\sigma$ for any
$\sigma'\leq\sigma<\gamma$. Therefore,
$\cl{U}^{\kw(\K^{<\gamma})}+F=H^{<\gamma}$ so $\cl{U}^{\kw(\K^{<\gamma})}$ is a
clopen subgroup of finite index in $H^{<\gamma}$.

Furthermore, if $g\in H^{<\gamma}\setminus\{0\}$ then $g\in
H^{\sigma'}$ for some $\sigma'<\gamma$ and there exists a
$U'\in\U^{\sigma'}$ such that $g\not\in U'$. If $\sigma\geq\sigma'$ then
by~\ref{stack.order} $U'=U\cap H^{\sigma'}$ for some
$U\in\U^\sigma$. Thus $g\not\in\cl{U'}^{\kw(\K^\sigma)}$ for any
$\sigma<\gamma$. Now
$g\not\in\cl{U'}^{\kw(\K^{<\gamma})}=\cup_{\sigma<\gamma}\cl{U'}^{\kw(\K^\sigma)}$. Thus
$\tau(\U^{<\gamma})$ forms a base of a precompact $T_1$ group topology
on $H^{<\gamma}$ that consists of clopen subgroups. The rest of the
properties are routine.
\end{proof}

The lemma below shows that iterated extensions preserve the sequential
order some points have when they are added to the group.
{%
\def\alphaDm{\alpha_{\gamma'}^m}%
\def\lad{\langle \Dg\rangle}%
\def\Dg{D_{\gamma'}}%
\begin{lemma}\label{psestack}
Let $\gku$ be a convenient triple, $S\subseteq G$ be a countable
independent subset, and
$\delta<\omega_1$ be a successor ordinal. Suppose the family
$\set\hku{^\gamma}:\gamma<\alpha.$ where $\alpha<\omega_1$ has the
following properties.
\begin{countup}[bcp]
\item\label{ipse.order}
$\hku{^\gamma}$ is a primitive sequential extension
of $\hku{^{<\gamma}}$ over some $D_\gamma\subseteq H^{<\gamma}$ for
every $0\leq\gamma<\alpha$ and $\hku{^{<0}}=\gku$;

\item\label{ipse.sep}
if $\hd{\K^{<\gamma}}{D_\gamma}{\laso}=\delta_\gamma$ then either
$\delta_\gamma\geq\delta-1$ or $D_\gamma\subseteq[\laso]_{\delta_\gamma}^{\kw(\K^{<\gamma})}$,
$D_\gamma$ satisfies~\MM{\laso, \K^{<\gamma}}, and
$\cl{\lo{D_\gamma}}^{\tau(\U^\gamma)}\cap\cl{\le{D_\gamma}}^{\tau(\U^\gamma)}=\varnothing$;

\item\label{ipse.par}
$\S(\K^\gamma, S, \delta)$ holds for every $\gamma<\alpha$;

\end{countup}
Suppose $g\in H^\gamma\setminus H^{<\gamma}$ for some $\gamma<\alpha$
and $\so(g, \laso, \kw(\K^{<\alpha}))=\sigma<\delta$. Then
$\so(g, \laso, \kw(\K^\gamma))=\sigma$.
\end{lemma}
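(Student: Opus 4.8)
The plan is to prove the statement by induction on the ordinal $\sigma$, in the slightly strengthened form in which $\kw(\K^{<\alpha})$ is replaced by $\kw(\K^{<\gamma'})$ for an arbitrary $\gamma'$ with $\gamma<\gamma'\le\alpha$, where $g\in H^\gamma\setminus H^{<\gamma}$ and $\so(g,\laso,\kw(\K^{<\gamma'}))=\sigma<\delta$; the lemma itself is the case $\gamma'=\alpha$. Since $\K^\gamma\subseteq\K^{<\gamma'}$ we have $\kw(\K^{<\gamma'})\subseteq\kw(\K^\gamma)$, so $\kw(\K^\gamma)$ is finer and $\so(g,\laso,\kw(\K^\gamma))\ge\sigma$ is automatic; the whole difficulty is the reverse inequality $g\in[\laso]_\sigma^{\kw(\K^\gamma)}$.

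The first ingredient I would isolate is a single-step statement: if $\hku{^\eta}$ is the primitive sequential extension of $\hku{^{<\eta}}$ over $D_\eta$ (so $\kw(\K^\eta)$ is coarser than $\kw(\K^{<\eta})$), $L=\cl{\la{D_\eta}}^{\tau(\U^\eta)}$, $g\in H^{<\eta}$, and $\so(g,\laso,\kw(\K^\eta))=\sigma<\delta$, then $\so(g,\laso,\kw(\K^{<\eta}))=\sigma$. I would prove this by splitting on $\delta_\eta=\hd{\K^{<\eta}}{D_\eta}{\laso}$. If $\sigma\leql\delta_\eta$, then Lemma~\ref{D.flat} applies verbatim (with $A=\laso$) and already gives $\so(g,\laso,\kw(\K^{<\eta}))=\sigma$. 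If $\sigma>\delta_\eta$, then since $\sigma<\delta$ the alternative $\delta_\eta\ge\delta-1$ in~\ref{ipse.sep} is impossible, as no ordinal lies strictly between $\delta-1$ and the successor $\delta$; hence $D_\eta\subseteq[\laso]_{\delta_\eta}^{\kw(\K^{<\eta})}$ and Lemma~\ref{pparity} gives $g\in[\las]_\sigma^{\kw(\K^{<\eta})}+L$ with $\Pty(g)=1$. Writing $g=a+d$ with $a\in[\las]_\sigma^{\kw(\K^{<\eta})}$ and $d\in L$, the facts $g,a\in H^{<\eta}$ and~\ref{pse.resolve} force $d\in\la{D_\eta}\subseteq[\laso]_{\delta_\eta}^{\kw(\K^{<\eta})}$; then $g\in[\las]_\sigma^{\kw(\K^{<\eta})}$ by Lemma~\ref{set.additivity}, and $\Pty^S(g)=\Pty(g)=1$ (using Lemma~\ref{parity} to identify $\Pty^S$ and $\Pty^L$ on $\la{D_\eta}$), so $g\in[\laso]_\sigma^{\kw(\K^{<\eta})}$ by the definition of $\Pty^S$.

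With the single-step statement in hand I would descend the index $\gamma'$ to $\gamma$ by a subsidiary transfinite recursion. Choose $g_n\to g$ in $\kw(\K^{<\gamma'})$ with $\so(g_n,\laso,\kw(\K^{<\gamma'}))=\sigma_n<\sigma$; the convergent set lies in a single $K\in\K^{<\gamma'}$, hence in $\K^{\gamma^*}$ for one $\gamma^*<\gamma'$, and since $K$ is closed, $g\in K\subseteq H^{\gamma^*}$ together with $g\notin H^{<\gamma}$ forces $\gamma^*\ge\gamma$. At a successor stage I strip one extension by applying the single-step statement to $g$ itself, which lies in every $H^{<\eta}$ with $\eta>\gamma$. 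At a limit stage I use the $\kw$ localization: each $g_n$ lies in $H^{\gamma^*}$, so it is born at some stage $\gamma_n\le\gamma^*<\gamma'$, and the induction hypothesis on $\sigma$ applied to $g_n$ yields $\so(g_n,\laso,\kw(\K^{\gamma_n}))=\sigma_n$; as $\gamma_n\le\gamma^*$ the topology $\kw(\K^{\gamma_n})$ refines $\kw(\K^{\gamma^*})$, so $g_n\in[\laso]_{\sigma_n}^{\kw(\K^{\gamma^*})}$, and $g_n\to g$ inside $K$ gives $g\in[\laso]_\sigma^{\kw(\K^{\gamma^*})}$. Each step strictly lowers the measuring index, so the recursion reaches $\gamma$ and produces $g\in[\laso]_\sigma^{\kw(\K^\gamma)}$.

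The hard part is precisely the limit step: a sequence witnessing the order of $g$ in the coarse final topology may consist of points added at stages later than $\gamma$, which therefore do not lie in $H^\gamma$, so the witnessing sequence cannot simply be reused in the smaller group. Defeating this requires combining two facts carefully. The parity homomorphism of Definition~\ref{pre.parity} and Lemma~\ref{pparity} certifies membership in $[\laso]_\sigma$ computed in the smaller group through the decomposition $g=a+d$ with $d\in\la{D_\eta}$; and the $\kw$ property confines any single convergent witnessing sequence to one compact $K$ from a bounded stage $\gamma^*$, so that the induction on $\sigma$ can re-localize the lower-order witnesses and neutralize the apparent cofinality obstruction that would otherwise arise if the birth stages of the $g_n$ were cofinal in a limit $\gamma'$.
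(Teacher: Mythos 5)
Your proof is correct and follows essentially the same route as the paper's: an induction on $\sigma$ in which Lemma~\ref{D.flat} handles the case $\sigma\leql\delta_{\gamma'}$, the parity machinery of Lemmas~\ref{parity} and~\ref{pparity} strips a single extension when $\sigma>\delta_{\gamma'}$, and the $\kw$-localization of a witnessing sequence to one compact set from a bounded stage drives the descent (the paper packages this as a minimal-$\gamma'$ contradiction rather than an explicit recursion on the index). The only nitpick is that $\la{D_\eta}\subseteq[\las]_{\delta_\eta}$ rather than $[\laso]_{\delta_\eta}$, since even sums of elements of $D_\eta$ land in the even part; this does not affect your conclusion because Lemma~\ref{set.additivity} still gives $g\in[\las]_\sigma$ and the parity homomorphism then places $g$ in the odd coset.
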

\begin{proof}
Suppose the statement holds for all $g'\in H^{<\alpha}$ such that
$\so(g', \laso, \kw(\K^{<\alpha}))=\sigma'<\sigma$ for some
$\sigma<\delta$, and let $g\in H^\gamma\setminus H^{<\gamma}$ be such that
$\so(g, \laso, \kw(\K^{<\alpha}))=\sigma$. Let $\gamma'$ be the
smallest ordinal with the following property. There exist $g_n\to g$
in $\kw(\K^{\gamma'})$ such that
$\so(g_n, \laso, \kw(\K^{<\alpha}))=\sigma_n<\sigma$ such that
$\sigma_n\to\sigma-1$, $g_n\in H^{\gamma_n}\setminus H^{<\gamma_n}$,
and $\gamma_n\leq\gamma'$ is increasing.

Note that $g_n$ as above exist by the definition of
$\so(g, \laso, \kw(\K^{<\alpha}))$. By $g_n\to g$ there exists a
$K\in\K^\beta$ for some $\beta<\alpha$ such that $g_n\in K$ for
every $n\in\omega$. Thus $\gamma_n\leq\beta$ and
$\gamma'\leq\beta<\alpha$ is well defined.

Suppose $\gamma'>\gamma$. Then $\hku{^{\gamma'}}$ is a primitive
sequential extension of $\hku{^{<\gamma'}}$ over some $\Dg\subseteq
H^{<\gamma'}$ such that $\hd{\K^{<\gamma'}}{\Dg}{\laso}=\delta_{\gamma'}$.

If $\sigma\leql\delta_{\gamma'}$ then by Lemma~\ref{D.flat}
$\so(g, \laso, \kw(\K^{<\gamma'}))=\sigma$. Otherwise, since
$\S(\K^{\gamma'}, S, \delta)$ holds and $\delta_{\gamma'}<\sigma<\delta$,
$\Pty:[\las]_{\sigma}^{\kw(\K^{<\gamma'})}+\overline{\lad}\to2$ is
defined by Lemma~\ref{parity}. By Lemma~\ref{pparity} $g=a+s$ where
$\so(a, \las, \kw(\K^{<\gamma'}))\leq\sigma$,
$s\in\overline{\lad}^{\tau(\U^{<\gamma'})}$, and $\Pty(a+s)=1$. Since
$g\in H^\gamma\subseteq
H^{<\gamma'}$, $s\in H^{<\gamma'}$ so by~\ref{pse.resolve}
$s\in\lad$. By Lemma~\ref{set.additivity}
$\so(a+s, \las, \kw(\K^{<\gamma'}))=\max\{\sigma, \delta_{\gamma'}\}=\sigma$. Since
$\Pty(a+s)=1$, $\so(g, \laso, \kw(\K^{<\gamma'}))=\sigma$.

Thus we may assume that $g_n$ are chosen so that $g_n\to g$ in
$\kw(\K^{<\gamma'})$. Let $K\in\K^\beta$ for some $\beta<\gamma'$ be
such that $g_n\in K$. Then $g_n\to g$ in $\kw(\K^\beta)$ and
$\gamma_n\leq\beta$ contradicting the choice of $\gamma'$.

Thus $\gamma'\leq\gamma$ and $g_n\in H^\gamma$ for every $n\in\omega$
so by the hypothesis $\so(g_n, \laso, \kw(\K^\gamma))\leq\sigma_n$ and
$\so(g, \lasom, \kw(\K^\gamma))=\sigma$.
\end{proof}
}%

The next lemma shows that not only is the sequential order preserved,
it is preserved in a `uniform' way if certain conditions are met.

\begin{lemma}\label{wparity}
Let $(G, \K, \U)$ be a convenient triple and $S\subseteq G$ be a
countable independent subset. Suppose
the family $\set\hku{^\gamma}:\gamma<\alpha.$, $\alpha\leq\omega_1$
has the following properties:
\begin{countup}[bcp]
\item\label{wparity.order}
conditions~\ref{ipse.order} and~\ref{ipse.sep} of Lemma~\ref{psestack}
hold;

\item\label{wparity.par}
$\S(\K^\gamma, S, \delta)$ holds so that $\UU_K(\K^\gamma,
S, \sigma)=\UU_K(\K^\beta, S, \sigma)$ for any $K\in\K^\beta$,
$\sigma<\delta-1$, and $\beta\leq\gamma$;

\end{countup}
Then $\S(\K^{<\alpha}, S, \delta)$ holds by defining $\UU_K(\K^{<\alpha},
S, \sigma)=\UU_K(\K^\gamma, S, \sigma)$ for any $\sigma<\delta-1$ and
$K\in\K^\gamma$.
\end{lemma}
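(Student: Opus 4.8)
The plan is to verify the three things packed into the conclusion: that the prescription $\UU_K(\K^{<\alpha},S,\sigma):=\UU_K(\K^\gamma,S,\sigma)$ for $K\in\K^\gamma$ is unambiguous, that it names a clopen subgroup of $K$, and that it is disjoint from $[\laso]_\sigma^{\kw(\K^{<\alpha})}$ for every $\sigma<\delta-1$. Unambiguity is immediate from~\ref{wparity.par}: if $K\in\K^{\gamma_1}\cap\K^{\gamma_2}$ with $\gamma_1\le\gamma_2$, then taking $\gamma=\gamma_2$ and $\beta=\gamma_1$ in~\ref{wparity.par} gives $\UU_K(\K^{\gamma_2},S,\sigma)=\UU_K(\K^{\gamma_1},S,\sigma)$, so the value does not depend on which stage containing $K$ is chosen. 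That $\UU_K(\K^\gamma,S,\sigma)$ is a clopen subgroup of $K$ is part of $\S(\K^\gamma,S,\delta)$ (Definition~\ref{U.sep}), which holds by~\ref{wparity.par}. Thus everything reduces to the disjointness statement.

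For the disjointness I would argue by contradiction, fixing $K\in\K^\gamma$, $\sigma<\delta-1$, and a point $g\in\UU_K(\K^\gamma,S,\sigma)\cap[\laso]_\sigma^{\kw(\K^{<\alpha})}$. Since $g\in K\subseteq H^\gamma$ and $\so(g,\laso,\kw(\K^{<\alpha}))\le\sigma<\delta$, let $\gamma_0\le\gamma$ be least with $g\in H^{\gamma_0}$, so that $g\in H^{\gamma_0}\setminus H^{<\gamma_0}$ (the case $g\in G$ being the base, handled the same way with $\kwk$ in place of $\kw(\K^{\gamma_0})$). The hypotheses of Lemma~\ref{psestack} are met: \ref{wparity.order} supplies~\ref{ipse.order} and~\ref{ipse.sep}, while~\ref{wparity.par} supplies~\ref{ipse.par}. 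Hence Lemma~\ref{psestack} transfers the sequential order down to the stage where $g$ is born, giving $\so(g,\laso,\kw(\K^{\gamma_0}))=\so(g,\laso,\kw(\K^{<\alpha}))\le\sigma$. It then remains to move back up to stage $\gamma$, which is the easy, purely topological half: since $\gamma_0\le\gamma$ forces $\K^{\gamma_0}\subseteq\K^\gamma$ and every member of $\K^{\gamma_0}$ lies in $H^{\gamma_0}$, each $\kw(\K^\gamma)$-open set meets $H^{\gamma_0}$ in a $\kw(\K^{\gamma_0})$-open set; consequently $\kw(\K^{\gamma_0})$-convergent sequences remain $\kw(\K^\gamma)$-convergent, and an induction on $\sigma$ yields $[\laso]_\sigma^{\kw(\K^{\gamma_0})}\subseteq[\laso]_\sigma^{\kw(\K^\gamma)}$ (note $\laso\subseteq G$ lies in $H^{\gamma_0}$). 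Thus $g\in[\laso]_\sigma^{\kw(\K^\gamma)}$ while $g\in\UU_K(\K^\gamma,S,\sigma)$, contradicting $\S(\K^\gamma,S,\delta)$.

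The point that needs care, and which I expect to be the main obstacle, is that Lemma~\ref{psestack} is stated only for $\alpha<\omega_1$, whereas here $\alpha$ may equal $\omega_1$; for uncountable $\K^{<\omega_1}$ the topology $\kw(\K^{<\omega_1})$ need not even be $\kw$. The way around this is to reflect the witness for $g\in[\laso]_\sigma^{\kw(\K^{<\omega_1})}$ to a countable substage. The upward monotonicity established above shows that $\rho\mapsto\so(g,\laso,\kw(\K^{<\rho}))$ is non-increasing in $\rho$, so it stabilises at some countable $\beta$; one then needs that its value at $\omega_1$ is already attained at $\beta$, i.e.\ that the entire (countable) tree of iterated convergent sequences witnessing $g\in[\laso]_\sigma^{\kw(\K^{<\omega_1})}$ lives inside $H^{<\beta}$ and already converges in $\kw(\K^{<\beta})$. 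Granting this, one replaces $\alpha$ by $\beta<\omega_1$ and applies Lemma~\ref{psestack} to the countable family $\set\hku{^\rho}:\rho<\beta.$, after which the argument of the previous paragraph applies verbatim. Verifying that convergent sequences of $\kw(\K^{<\omega_1})$, together with their iterated witnesses, are captured at a countable substage is the crux of the $\alpha=\omega_1$ case; when $\alpha<\omega_1$ none of this is needed, since $\K^{<\alpha}$ is already countable and Lemma~\ref{psestack} applies directly.
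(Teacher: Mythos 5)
Your argument is correct and follows essentially the same route as the paper's proof: well-definedness and clopenness of $\UU_K(\K^{<\alpha},S,\sigma)$ come from~\ref{wparity.par} and $\S(\K^\gamma,S,\delta)$, and the disjointness is obtained by using Lemma~\ref{psestack} to pull $\so(g,\laso,\kw(\K^{<\alpha}))$ down to the stage $\beta$ where $g$ is born and then invoking $\S(\K^\gamma,S,\delta)$ via the monotonicity $[\laso]_{\sigma}^{\kw(\K^\beta)}\subseteq[\laso]_{\sigma}^{\kw(\K^\gamma)}$ (the paper phrases this as the two cases $\beta\leq\gamma$, where $\S(\K^\gamma,S,\delta)$ applies, and $\beta>\gamma$, where $g\notin K$ — which is exactly your observation that $g\in K\subseteq H^\gamma$ forces the birth stage to be at most $\gamma$). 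Your closing worry about $\alpha=\omega_1$ is legitimate but is not something the paper addresses either — its one-line proof cites Lemma~\ref{psestack} without comment on that case — so your reflection sketch is added care rather than a departure from the paper's argument.
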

\begin{proof}
Let $K\in\K^\gamma$, $\sigma<\delta-1$, and $g\in H^\beta\setminus H^{<\beta}$ where
$\beta, \gamma<\alpha$. Suppose
$\so(g, \laso, \kw(\K^{<\alpha}))=\sigma'\leq\sigma$. Then by
Lemma~\ref{psestack} $\so(g, \laso, \kw(\K^\beta))=\sigma'$. If
$\beta\leq\gamma$ then $g\not\in\UU_K(\K^\gamma,
S, \sigma)=\UU_K(\K^{<\alpha}, S, \sigma)$. If $\beta>\gamma$ then
$g\not\in K$.
\end{proof}

If the new points are sufficiently `far' from a given set, the next
lemma shows that the sequential order of the points `near' the set is
not affected.
{%
\def\alphaDm{\delta_\gamma^m}%
\def\lad{\langle \Dg\rangle}%
\def\Dg{D_\gamma}%
\begin{lemma}\label{w1.kdepth.reduce}
Let $\alpha<\omega_1$ and let $\hku{^\gamma}$ be a primitive
sequential extension of $\hku{^{<\gamma}}$ over some $\Dg$ for every
$\gamma<\alpha$. Let $A\subseteq G=H^{-1}$,
$\hd{\K^{<\gamma}}{\Dg}{A}=\omega_1$, and $\Dg$
satisfy \MM{A, \K^{<\gamma}}.

Then $\so(g, A, \kw(\K^{<\alpha}))=\so(g, A, \kw(\K^{<0}))$.
\end{lemma}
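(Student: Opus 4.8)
The plan is to deduce the statement from the single-step Lemma~\ref{D.flat} by transfinite induction on $\gamma\le\alpha$. I would prove the auxiliary assertion $Q(\gamma)$: for every $g$, if $\so(g,A,\kw(\K^{<\gamma}))=\sigma<\omega_1$ then $g\in G$ and $\so(g,A,\kw(\K^{<0}))=\sigma$. Two elementary facts are used throughout. First, the \emph{coarsening estimate}: whenever $\K'\subseteq\K''$ are two of the families occurring in the stack, $[A]_\rho^{\kw(\K')}\subseteq[A]_\rho^{\kw(\K'')}$ for every $\rho$ (a sequence converging in $\kw(\K')$ lies together with its limit in a single $K\in\K'\subseteq\K''$, where it converges in the common compact topology $\tau_K$, hence converges in $\kw(\K'')$; induct on $\rho$), so that $\so(g,A,\kw(\K''))\le\so(g,A,\kw(\K'))$ for $g$ in the smaller space. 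Second, for $g\notin G$ one has $\so(g,A,\kw(\K^{<0}))=\omega_1$ since $A\subseteq G=H^{-1}$. Granting $Q(\alpha)$, the lemma follows: if $\so(g,A,\kw(\K^{<\alpha}))<\omega_1$ then $Q(\alpha)$ gives the equality outright; if it equals $\omega_1$ then so does $\so(g,A,\kw(\K^{<0}))$, for otherwise the latter would be some $\rho<\omega_1$, forcing $g\in G$ and, by the coarsening estimate with $\K^{<0}\subseteq\K^{<\alpha}$, $\so(g,A,\kw(\K^{<\alpha}))\le\rho<\omega_1$.

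For the successor stage $\gamma=\beta+1$ I would use $\kw(\K^{<\gamma})=\kw(\K^{\beta})$ together with the fact that $\hku{^{\beta}}$ is a pse of the convenient triple $\hku{^{<\beta}}$ (convenient by Lemma~\ref{stack.limit}) over $D_\beta$, where $\hd{\K^{<\beta}}{D_\beta}{A}=\omega_1$ and $D_\beta$ satisfies \MM{A,\K^{<\beta}}. Applying Lemma~\ref{D.flat} with $\delta=\omega_1$ (so that every countable $\sigma$ satisfies $\sigma\leql\omega_1$) to a $g$ with $\so(g,A,\kw(\K^{\beta}))=\sigma<\omega_1$ yields $g\in H^{<\beta}$ and $\so(g,A,\kw(\K^{<\beta}))=\sigma$; the inductive hypothesis $Q(\beta)$ then places $g$ in $G$ with $\so(g,A,\kw(\K^{<0}))=\sigma$. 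This is where the hypotheses on the $\K$-depth and on \MM{A,\K^{<\gamma}} are consumed: they are exactly what Lemma~\ref{D.flat} needs to guarantee that a point of countable sequential order over the `distant' set $A$ cannot have been created by the extension.

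The main obstacle is the limit stage, since a witnessing sequence for $\so(g,A,\kw(\K^{<\gamma}))=\sigma$ need not lie at any single bounded level when $\gamma$ has countable cofinality. I would dispatch it by a secondary induction on $\sigma$. The base $\sigma=0$ is immediate: a sequence from $A$ converging to $g$ is captured at some level $\beta_0<\gamma$, whence $Q(\beta_0+1)$ applies. For $\sigma>0$ pick $g_n\to g$ in $\kw(\K^{<\gamma})$ with $\so(g_n,A,\kw(\K^{<\gamma}))=\sigma_n<\sigma$. The key move is to reflect each $g_n$ to $G$ \emph{first}: by the secondary hypothesis (valid as $\sigma_n<\sigma$) every $g_n\in G$ with $\so(g_n,A,\kw(\K^{<0}))=\sigma_n$. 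Only afterwards do I capture the convergent sequence in a single $K\in\K^{\beta_0}$, $\beta_0<\gamma$, so that $g_n\to g$ in $\kw(\K^{\beta_0})$ and, by the coarsening estimate with $\K^{<0}\subseteq\K^{\beta_0}$, $g_n\in[A]_{\sigma_n}^{\kw(\K^{<0})}\subseteq[A]_{\sigma-1}^{\kw(\K^{\beta_0})}$. Hence $g\in[A]_{\sigma}^{\kw(\K^{\beta_0})}$, so $\so(g,A,\kw(\K^{\beta_0}))\le\sigma<\omega_1$, and since $\kw(\K^{\beta_0})=\kw(\K^{<\beta_0+1})$ with $\beta_0+1<\gamma$, the transfinite hypothesis $Q(\beta_0+1)$ gives $g\in G$ and $\so(g,A,\kw(\K^{<0}))=\so(g,A,\kw(\K^{\beta_0}))\le\sigma$. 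The reverse inequality $\sigma=\so(g,A,\kw(\K^{<\gamma}))\le\so(g,A,\kw(\K^{<0}))$ comes from the coarsening estimate with $\K^{<0}\subseteq\K^{<\gamma}$, forcing equality. The point to stress is that reflecting the $g_n$ down to $G$ before capturing the sequence renders the unboundedness of their individual appearance levels harmless: only the single capture level $\beta_0$ of $g_n\to g$ ever enters, which is what sidesteps the cofinality difficulty.
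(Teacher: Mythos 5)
Your proof is correct and follows essentially the same route as the paper's: both arguments reduce everything to Lemma~\ref{D.flat} applied with $\delta=\omega_1$, and both handle the unbounded-level difficulty by first pushing the witnessing points down via the inductive hypothesis and then capturing the convergent sequence inside a single $K$ from a bounded level $\K^{\beta_0}$. The paper organizes the descent as a single induction on $\sigma$ with a minimal-capturing-level contradiction rather than your explicit outer induction on $\gamma$ with a nested induction on $\sigma$, but the content is identical.
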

\begin{proof}
Suppose the Lemma holds for all $g'\in H^{<\alpha}$ such
that $\so(g', A, \kw(\K^{<\alpha}))=\sigma'<\sigma$ for some
$\sigma<\omega_1$ and let $\so(g, A, \kw(\K^{<\alpha}))=\sigma$. Let
$g_n\to g$ in $\kw(\K^{<\alpha})$ so that $\so(g_n,
A, \kw(\K^{<\alpha}))<\sigma$. By the hypothesis $\so(g_n,
A, \kw(\K^{<0}))<\sigma$. There exists a $\gamma<\alpha$ such that
$\set g_n:n\in\omega.\subseteq K\in\K^\gamma$ so $g_n\to g$ in
$\kw(\K^\gamma)$ and $\so(g, A, \kw(\K^\gamma))=\sigma$. Let $\gamma$
be the smallest such and suppose $\gamma\geq0$.

Since $\hd{\K^{<\gamma}}{\Dg}{A}=\omega_1$ and $\Dg$
satisfies \MM{A, \K^{<\gamma}}, the sequential order $\so(g,
A, \kw(\K^{<\gamma}))=\sigma$ by Lemma~\ref{D.flat} so there are
$g_n\to g$ in $\kw(\K^{\gamma'})$ for some $\gamma'<\gamma$ and
$\so(g_n, A, \kw(\K^{<\alpha}))=\so(g_n, A, \kw(\K^{<0}))<\sigma$
contradicting the choice of $\gamma$.
\end{proof}
}%

The constructions above are intended for the case when the
sequential order is reflected by a single point in the group, i.e.\
when the sequential order desired is a successor ordinal. In the case
when the group to be constructed must have a limit sequential order,
some additional tools are required.

The following lemma presents a rough idea of how one might go about
handling the limit case. Note that the sequentiality of the
$\Sigma$-product of sequential spaces was proved in~\cite{Is},
Corollary~2.5
(one can also use an argument similar to the one below).

\begin{lemma}\label{sigma.so}
Let $G$ be a $\Sigma$-product of countably compact sequential spaces
$G_\alpha$, $\alpha<\gamma$ for some ordinal $\gamma$. Then
$$
\so(G)\leq\sup\set\min\sum_{\alpha\in
I}\so(G_\alpha):I\subseteq\gamma, |I|<\omega.+1.
$$
Here the $\min$ is taken over all the possible reorderings of $I$.
\end{lemma}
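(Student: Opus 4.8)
The plan is to bound the sequential order of a point $x$ in the closure of a set $A\subseteq G$ by analyzing the supports. Recall that in a $\Sigma$-product, each element has countable support, and a sequence converges if and only if it converges coordinatewise together with the supports stabilizing appropriately. The key structural fact I would exploit is that if $S=\seq x_n\to x$ in $G$, then (after thinning) the supports of the $x_n$ are contained in a fixed countable set, and the convergence is coordinatewise on each $G_\alpha$. This lets me reduce a sequential-closure computation in $G$ to simultaneous computations in the factors $G_\alpha$.

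First I would set up the right notion of ``support depth'' for the closure process. Given $A\subseteq G$ and $x\in\cl A$, I want to show $\so(x,A)\leq\sup\set\min\sum_{\alpha\in I}\so(G_\alpha):I\subseteq\gamma, |I|<\omega.+1$. The main idea is: when we build $x$ from $A$ by iterated sequential closure, only finitely many coordinates $\alpha$ ``actively change'' along the convergent sequences reaching $x$, and for each such coordinate the number of closure iterations needed is at most $\so(G_\alpha)$. I would formalize this by induction on the structure of the witnessing convergence. At each stage, a convergent sequence $x_n\to x$ lives (after thinning) in a fixed countable support $C$; but the genuinely nontrivial coordinates form a finite set $I\subseteq C$, since outside a finite set the coordinates of the $x_n$ must already equal those of $x$ (by the definition of $\Sigma$-product convergence, the symmetric differences of supports shrink). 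On this finite $I$ we run the closure coordinatewise, accumulating $\so(G_\alpha)$ iterations in each coordinate.

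The crucial counting step is that the iterations in distinct coordinates can be \emph{interleaved} rather than merely summed blindly, which is exactly why the formula involves $\min$ over reorderings of $I$: we process the coordinates in an order minimizing the total number of closure levels, and the $+1$ absorbs the final step assembling the limit $x$ itself once all coordinates have been resolved. So I would prove, by induction on the (ordinal) rank, the statement: if $x$ is obtained within finitely many active coordinates $I$, then $\so(x,A)\leq\min_{\text{reorderings}}\sum_{\alpha\in I}\so(G_\alpha)$, and then take the supremum over finite $I$ and add one for the top-level convergence, yielding the displayed bound.

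The hard part will be making the ``only finitely many active coordinates'' claim precise and stable under iteration of the sequential closure operator. A single convergent sequence clearly has this behavior, but $[A]_\sigma$ is built by transfinitely many nested applications, and I must ensure that the finite active support of the final point $x$ controls all intermediate points used to reach it (possibly after passing to subsequences at each level). The technical obstacle is a diagonalization: at each closure level one thins the sequence and fixes a finite support, and one must verify that these finite supports can be arranged coherently so that the total active support of $x$ remains finite and the coordinatewise closure-levels add up as claimed. I expect this bookkeeping — rather than any deep topological fact — to be where the real work lies, with the $\Sigma$-product convergence criterion doing the heavy lifting to keep supports finite.
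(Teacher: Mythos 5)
Your overall strategy (reduce to finitely many coordinates, use a finite-product bound, add one at the end) points in the right direction, but the structural claim it rests on is false as stated, and the two devices that make the idea rigorous are missing. In a $\Sigma$-product carrying the subspace topology of the Tychonoff product, a convergent sequence $x_n\to x$ converges coordinatewise; it is \emph{not} true that outside a finite set of coordinates the $x_n$ already agree with $x$, nor that ``the symmetric differences of supports shrink.'' For each fixed coordinate the agreement (or convergence) is only eventual, and the threshold depends on the coordinate, so a single convergent sequence can have infinitely many ``active'' coordinates. Consequently the induction you propose --- tracking a finite active support through transfinitely many closure iterations --- has no sound base case, and the bookkeeping you defer to the end cannot be carried out in the form described.

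The paper's proof repairs exactly this point by introducing a finer topology $\tau$ on $G$ in which every factor $G_\alpha$ is discrete; the $\Sigma$-product of discrete spaces is Fr\'echet, and its basic neighborhoods of $0$ are the sets $U_I$ of points vanishing on a \emph{finite} $I\subseteq\gamma$. For each finite $I$ one projects $A$ to $\prod_{\alpha\in I}G_\alpha$ and invokes the finite-product sequential order theorem of Nogura--Shibakov (\cite{NS}, Theorem~2.2) to get $0\in[\pi_I(A)]_{\sigma_I}$ with $\sigma_I\leq\min\sum_{\alpha\in I}\so(G_\alpha)$ --- this theorem is precisely the nontrivial ``interleaving'' bound you gesture at, and it is not something to re-derive in passing. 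Sequential compactness is then used to lift this to a point of $[A]_{\sigma_I}$ lying in $U_I$ inside $G$ itself, and the final $+1$ comes from the Fr\'echet property of $\tau$: the point $0$ lies in the $\tau$-closure of the set of all such lifted points, so one more convergent sequence reaches it. If you want to salvage your approach, you should make the auxiliary discrete-factor topology (or an equivalent device) explicit and cite the finite-product theorem rather than treating both as bookkeeping.
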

\begin{proof}[Sketch of proof]
Below we refer to the `center' point of each $G_\alpha$, as well as
the corresponding points in $\Sigma G_\alpha$ and finite products of
$G_\alpha$ as $0$.
Let $A\subseteq G$ be such that $\cl{A}\ni0$. Introduce a finer
topology $\tau$ on $G$ by making each $G_\alpha$ discrete. It is well
known that $\tau$ is F\'echet. Consider a finite $I\subseteq\gamma$
and a basic
neighborhod $U_I=\set p\in G
:p(\alpha)=0\mbox{ if }\alpha\in I.$ of $0$ in $\tau$. Consider the natural
projection $\pi_I(A)$ of $A$ into the finite 
product $\prod_{\alpha\in I}G_\alpha$. Note that
$\cl{\pi_I(A)}\ni0$ so by \cite{NS}, Theorem~2.2
$0\in[\pi_I(A)]_{\sigma_I}$ where $\sigma_I\leq\min\sum_{\alpha\in I}\so(G_\alpha)$. Using
induction on $\sigma_I$ (and the sequential compactness of $G$) one
shows that there exists an $x\in U_I$ such that
$x\in[A]_{\sigma_I}$. Thus $\cl{X}^\tau\ni0$ where $X=\set
x\in[A]_{\sigma_I}:I\subseteq\gamma, |I|<\omega.$. Since $\tau$ is
Fr\'echet, this completes the proof.
\end{proof}
The lemma above exhibits two obstacles to obtaining a group of a
sequential order that is a limit ordinal. Consider the case of
$\omega+\omega$. If the product approach suggested by the lemma above
is to be followed, it is clear that infinitely many factors are
required, infinitely may of which must have the sequential order above
$\omega+1$. The best estimate for the sequential order of a product of
$n$ of such factors (obtained in~\cite{NS}) will then exceed
$n\omega>\omega+\omega$. Thus one must be concerned with providing a
better growth rate for the sequential order of finite products of
groups.

The other obstacle is the $+1$ part of the uppper bound estimate. Even
if the sequential order of finite products can be made to grow slowly,
the points in the `last sequence' converging to a given point in the
product might still have unbounded sequential orders thus leading to a
successor
sequential order for the group.

It is unclear to the authors whether the examples below can be given a
product structure, thus a different approach was taken. The examples
`look like' products of groups, although the countable product
structure is destoyed.

The lemma below is a parity neutral version of Lemma~\ref{pparity}.
\begin{lemma}\label{D.e.flat}
Let $\gku$ be a convenient triple.
Let $D\subseteq G$ be such that
$\hd{\K}{D}{\la{A}}=\alpha_\K\leq\omega_1$. Suppose
$D\subseteq\cl{\la{A}}^\kwk$ if $\alpha_\K<\omega_1$, and $D$
satisfies~\MM{\la{A},\K}. Let $\gkup$ be a primitive sequential
extension of $\gku$ over $D$.

If $g\in\overline{\la{A}}^{\kw(\K')}$ then $g=a+s$ where
$a\in\overline{\langle A\rangle}^{\kw(\K)}$ and $s\in
L=\overline{\lad}^{\tau(\U')}$.
\end{lemma}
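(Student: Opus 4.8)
### Proof plan for Lemma~\ref{D.e.flat}

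The plan is to run an induction on $\sigma=\so(g,\la{A},\kw(\K'))$, mirroring the structure of the proof of Lemma~\ref{D.flat}, but without the parity machinery since here we only need the weaker conclusion that $g$ splits as an element of $\cl{\la{A}}^\kwk$ plus an element of $L$, with \emph{no} control over which `side' of $L$ the second summand lies on. First I would dispose of the base case: if $g\in[\la{A}]_0^{\kw(\K')}=\la{A}$ then $g\in G$, so take $a=g$ and $s=0$. For the inductive step, assume the statement holds for all $g'$ with $\so(g',\la{A},\kw(\K'))<\sigma$ and pick $g_n\to g$ in $\kw(\K')$ with $\so(g_n,\la{A},\kw(\K'))=\sigma_n<\sigma$. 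By the inductive hypothesis each $g_n=a_n+s_n$ with $a_n\in\cl{\la{A}}^\kwk$ and $s_n\in L$.

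The next step is the standard $\kw$ decomposition of the convergent sequence. Since $g_n\to g$ in $\kw(\K')$, there is a single $K'\in\K'$ with $\set g_n:n\in\omega.\subseteq K'$, and by~\ref{pse.order} we may take $K'\subseteq K''+L$ for some $K''\in\K$. Thinning out and reindexing, I would write $g_n=a^n+d^n$ with $a^n\in K''$, $d^n\in L$, $a^n\to a\in K''$ and $d^n\to d\in L$ (using compactness of $K''$ and $L$ in their respective topologies). Comparing the two decompositions, $a^n+a_n=d^n+s_n\in L$, and since $a^n,a_n\in G$ the left side lies in $G$, so by~\ref{pse.resolve} we get $a^n+a_n=d^n+s_n\in\lad\subseteq G$. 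In particular $d^n\in\lad$ for each $n$ (as $s_n\in L\cap G=\lad$ by~\ref{pse.resolve} and closure under sums).

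Now the crucial point where the hypotheses \MM{\la{A},\K} and $\hd{\K}{D}{\la{A}}=\alpha_\K$ enter, exactly as in Lemma~\ref{D.flat}: I want to force the tail of the sequence $d^n$ to be eventually constant. The main obstacle is handling the dichotomy in $\alpha_\K$. If $\alpha_\K<\omega_1$, then $D\subseteq\cl{\la{A}}^\kwk$, so $\lad\subseteq\cl{\la{A}}^\kwk$ and each $d^n$ is already absorbed into the $a$-summand — I would then combine $a^n+d^n$, note $a^n+d^n\in\cl{\la{A}}^\kwk$ by Lemma~\ref{set.additivity}, and conclude directly. If $\alpha_\K=\omega_1$, I instead use \MM{\la{A},\K}: choosing any fixed $\beta$ with all $\sigma_n\leq\beta<\omega_1$ (possible since the countable set $\set\sigma_n:n\in\omega.$ is bounded below $\omega_1$), pick the finite $F\subseteq D$ so that $(d'+K'')\cap[\la{A}]_\beta=\varnothing$ whenever $d'\in\lad\setminus\laf$. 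Since each $a_n=g_n-s_n$ has $a_n\in[\la{A}]_{\sigma_n}\subseteq[\la{A}]_\beta$ and $g_n=a^n+d^n$ with $a^n\in K''$, I can deduce $d^n\in\laf$ for all $n$, whence (after a final thinning) $d^n=d$ is constant. Then $g_n-d=a^n\in K''$ converges in $\kwk$ to $a=g-d$, giving $a\in\cl{\la{A}}^\kwk$ by continuity of sequential closure under the coarser topology, and $g=a+d$ with $d\in\lad\subseteq L$ is the desired splitting.
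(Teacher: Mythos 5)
Your strategy is essentially the paper's: induct on $\so(g,\la{A},\kw(\K'))$, decompose each $g_n$ once via the inductive hypothesis and once via the $\kw$ structure of $\K'$, compare the two decompositions, and use~\ref{pse.resolve} to place the discrepancy in $\lad$. However, two consecutive steps fail as written, and both come from forgetting that $g_n$ need not lie in $G$: the iterated sequential closure of $\la{A}\subseteq G$ taken in $\kw(\K')$ typically contains points of $G'\setminus G$ (that is the point of the extension). First, you assert $s_n\in L\cap G=\lad$ and conclude $d^n\in\lad$; but $s_n=g_n+a_n$, so $s_n\in G$ would force $g_n\in G$, which is not available. The comparison of the two decompositions only gives that the \emph{sum} $d^n+s_n=a^n+a_n$ lies in $L\cap G=\lad$; neither summand separately need be in $G$. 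Second, in the case $\alpha_\K<\omega_1$ you propose to conclude $a^n+d^n\in\cl{\la{A}}^\kwk$ by Lemma~\ref{set.additivity}; but $a^n+d^n=g_n$, which again need not even lie in $G$, and $a^n$ is so far just an element of $K''$ with no relation to $\la{A}$, so that lemma does not apply there.

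The repair is exactly the paper's bookkeeping. Put $e_n=d^n+s_n\in\lad$ and rewrite the compact part as $a^n=a_n+e_n$. If $\alpha_\K<\omega_1$ then $D\subseteq\cl{\la{A}}^\kwk$, so $\lad\subseteq\cl{\la{A}}^\kwk$ by Lemma~\ref{set.additivity}, hence $a^n=a_n+e_n\in\cl{\la{A}}^\kwk$; since $a^n\to a$ inside $K''$, i.e.\ in $\kwk$, and $\cl{\la{A}}^\kwk$ is $\kwk$-closed, we get $a\in\cl{\la{A}}^\kwk$ and $g=a+d$ with $d\in L$ is the required splitting. Your $\alpha_\K=\omega_1$ branch has the right idea but must likewise be run on $e_n$ rather than $d^n$ (so \MM{\la{A},\K} yields that $e_n\in\laf$ is eventually constant, and then $a_n=a^n+e_n$ converges in $\kwk$); also the bound $a_n\in[\la{A}]_{\sigma_n}^\kwk$ is not what your inductive hypothesis provides — take instead any countable $\beta$ bounding $\so(a_n,\la{A},\kwk)$ over $n$, which is legitimate here because $\hd{\K}{D}{\la{A}}=\omega_1$. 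Alternatively, the paper dispatches every $\sigma\leql\alpha_\K$ — in particular the entire case $\alpha_\K=\omega_1$ — with a one-line appeal to Lemma~\ref{D.flat}, which already yields $g\in\cl{\la{A}}^\kwk$ and $s=0$.
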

\begin{proof}
Let $\so(g, \la{A}, \kw(\K'))=\sigma$. If $\sigma\leql\alpha_\K$ by
Lemma~\ref{D.flat} $\so(g, \la{A}, \kwk)=\sigma$. 
Suppose the statement holds for all $g'\in G$ such that
$\so(g', \la{A}, \kw(\K'))=\sigma'<\sigma$ where
$\sigma\geql\alpha_\K$.

Pick $g_n\to g$ in $\kw(\K')$ such that
$\so(g_n, \la{A}, \kw(\K'))=\sigma_n<\sigma$. By the hypothesis
$g_n=g^n+s_n$ where $g^n\in\overline{\la{A}}^\kwk$ and $s_n\in
L$. Since $g_n\to g$ one can find a $K\in\K$, $a_n\in K$, $s^n\in L$
such that $g_n=a_n+s^n$, $a_n\to a$, $s^n\to s$. Then
$d_n=s_n+s^n=g^n+a_n\in G$ so by~\ref{pse.resolve}
$d_n\in\lad$. Thus $a_n=g^n+d_n$ and
$a_n\in\overline{\la{A}}^\kwk$ by Lemma~\ref{set.additivity}. Thus
$g=a+s$ where $a\in\overline{\la{A}}^\kwk$ and $s\in L$.
\end{proof}
The next definition itroduces a product-like structure into the
group. The preservation of this structure is the subject of
several lemmas that follow.
\begin{definition}\label{discrete.separated}
Let $\gku$ be a convenient triple. Let $\CC=\set
S^i:i\in\omega.$ be a family of countable subsets of $G$. Call $\CC$ {\em
discrete separated by $\K$ in $G$\/} if for any infinite, closed and discrete
$D^i=\set d_n^i:n\in\omega.\subseteq\cl{\lasi}^\kwk$, $i\in I$ where $I\subseteq\omega$ is
finite the set $\set\sum_{i\in I}d_n^i:n\in\omega.\not\subseteq K$ for
any $K\in\K$ (equivalently, $\set\sum_{i\in I}d_n^i:n\in\omega.$ is
infinite, closed, and discrete).
\end{definition}

The following lemma follows from the definition of discrete separation.

\begin{lemma}\label{ds.closed.sums}
Let $\gku$ be a convenient triple. Let $\CC=\set S^i:i\in\omega.$ be a
family of countable subsets of $G$ discrete separated by $\K$ in $G$.

Let $I\subseteq\omega$ be finite and let $A_i\subseteq\cl{\lasi}^\kwk$
be closed (in $\kwk$) subsets of $G$ for $i\in I$. Then $\sum_{i\in
I}A_i$ is closed in $\kwk$. 
\end{lemma}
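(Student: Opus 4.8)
```latex
The plan is to prove that $\sum_{i\in I}A_i$ is closed in $\kwk$ by
showing that its complement is open, which for a $\kw$ topology means
verifying that $\left(\sum_{i\in I}A_i\right)\cap K$ is closed in $K$
for every $K\in\K$. Since each $K$ is compact and metrizable (being
a member of $\K$ in a convenient triple, or at worst first countable
in a $\kw$-pair), it suffices to take a sequence $x_n\in
\left(\sum_{i\in I}A_i\right)\cap K$ with $x_n\to x$ in $K$ and show
that the limit $x$ again lies in $\sum_{i\in I}A_i$. Write each
$x_n=\sum_{i\in I}a_n^i$ with $a_n^i\in A_i\subseteq\cl{\lasi}^\kwk$.
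The natural first step is to pass to a subsequence so that, for each
fixed $i\in I$, the sequence $\seq a^i_n$ is either eventually constant
or forms a genuinely non-repeating sequence; reindexing, assume the
latter for the coordinates in some $I'\subseteq I$ and constancy on
$I\setminus I'$.

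The crux of the argument is to force the pieces $\seq a^i_n$ to each
live in a single compact $K_i\in\K$, so that their limits exist inside
$A_i$ by closedness. Here is where \emph{discrete separation} does the
work. Suppose, toward a contradiction, that for some $i\in I'$ the set
$D^i=\set a_n^i:n\in\omega.$ fails to be relatively compact, i.e.\ is
not contained in any single $K'\in\K$. After thinning, $D^i$ is an
infinite, closed and discrete subset of $\cl{\lasi}^\kwk$. Apply
Definition~\ref{discrete.separated} (discrete separation of $\CC$ by
$\K$) to the family of such coordinate sequences over the index set
$I'$: the definition asserts precisely that
$\set\sum_{i\in I'}a_n^i:n\in\omega.$ is then infinite, closed, and
discrete, hence contained in no member of $\K$. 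But
$\sum_{i\in I'}a_n^i$ differs from $x_n$ only by the fixed finite sum
$\sum_{i\in I\setminus I'}a^i$ (a single point of $G$), and adding or
removing a fixed point cannot move a convergent sequence out of a
compact set; so this sum sequence would have to be contained in a
compact set, contradicting discreteness.

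Consequently every coordinate sequence $\seq a^i_n$ must be contained
in some $K_i\in\K$, so by passing to a further subsequence (using
compactness and metrizability of each $K_i$) we may assume
$a_n^i\to a^i$ for each $i\in I$. Since each $A_i$ is closed in $\kwk$
and $a_n^i\in A_i$, we get $a^i\in A_i$. Finally, because addition is
continuous, $x_n=\sum_{i\in I}a_n^i\to\sum_{i\in I}a^i$, and by
uniqueness of limits $x=\sum_{i\in I}a^i\in\sum_{i\in I}A_i$. This
shows $\sum_{i\in I}A_i$ is sequentially closed in each $K$, hence
closed in $K$, hence closed in $\kwk$.

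The main obstacle is the bookkeeping in the contradiction step: one
must be careful that ``$D^i$ is not contained in any $K\in\K$'' can be
upgraded, after thinning, to an infinite closed discrete subset to
which Definition~\ref{discrete.separated} genuinely applies, and that
the index set $I'$ of ``moving'' coordinates is handled uniformly.
Corollary~\ref{d.subgroup} and the fact that adding finitely many
points does not change membership-in-a-compact-set are the tools that
make this rigorous; the rest is the routine observation that in a $\kw$
topology closedness may be checked one compact set at a time and, on
each metrizable compact $K$, sequentially.
```
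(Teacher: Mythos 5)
Your overall strategy is the right one and is essentially the argument the paper has in mind (the paper states this lemma without proof, but the same maneuver is written out in its proof of Lemma~\ref{compact.project}): check closedness on each $K\in\K$, decompose a convergent sequence $x_n=\sum_{i\in I}a_n^i$ coordinatewise, and use discrete separation to force each coordinate sequence into a single compact set. However, one step is not licensed by Definition~\ref{discrete.separated} as you have written it. Your dichotomy ``eventually constant versus non-repeating'' is not the relevant one: a non-repeating coordinate sequence may well converge, in which case it is relatively compact but not closed and discrete. Definition~\ref{discrete.separated} applies only to a finite index set \emph{all} of whose coordinate sequences are infinite, closed and discrete; you apply it to the whole set $I'$ of non-repeating coordinates, which may contain convergent ones, so the conclusion that $\set\sum_{i\in I'}a_n^i:n\in\omega.$ is closed and discrete does not follow from the definition in the situation you have set up.

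The repair is short and is exactly the paper's maneuver in Lemma~\ref{compact.project}: after thinning, arrange that each $D^i=\set a_n^i:n\in\omega.$ is either convergent in $\kwk$ (hence contained in some $K_i\in\K$) or infinite, closed and discrete, and let $I''\subseteq I$ be the set of indices of the second kind. If $I''\neq\varnothing$, then
$\set\sum_{i\in I''}a_n^i:n\in\omega.=\set x_n+\sum_{i\in I\setminus I''}a_n^i:n\in\omega.\subseteq K+\sum_{i\in I\setminus I''}K_i\in\K$,
and applying Definition~\ref{discrete.separated} to the index set $I''$, where every coordinate is now genuinely closed and discrete, gives the contradiction. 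Hence $I''=\varnothing$, every coordinate sequence converges inside some $K_i$, and your concluding paragraph goes through verbatim. With this adjustment the proof is correct.
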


\begin{lemma}\label{ds.stable}
Let $\gku$ be a convenient triple. Let $\CC=\set S^i:i\in\omega.$ be a
family of countable subsets of $G$ discrete separated by $\K$ in $G$.

Let $D\subseteq G$ be a countable closed discrete subspace such that $D$
satisfies property~\MM{\lasi,\K} for every $i\in\omega$, and
$D\subseteq\overline{\lasm}^\kwk$ for some $m\in\omega$. Let $\gkup$
be a primitive sequential extension of $\gkup$ over $D$. Then $\CC$ is
discrete separated by $\K'$ in $G'$.
\end{lemma}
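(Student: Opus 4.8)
The plan is to establish the conclusion of Definition~\ref{discrete.separated} for $\gkup$ by contradiction. Fix a finite $I\subseteq\omega$ and infinite, closed and discrete sets $D^i=\set d_n^i:n\in\omega.\subseteq\cl{\lasi}^{\kw(\K')}$ for $i\in I$, and suppose $\set\sum_{i\in I}d_n^i:n\in\omega.\subseteq K'$ for some $K'\in\K'$. Since $L=\cl{\lad}^{\tau(\U')}$ is a compact subgroup and $\K'$ is generated from $\K\cup\{L\}$ by finite sums, I may take $K'=K+L$ with $K\in\K$; moreover $(K+L)\cap G=K+\lad$ by~\ref{pse.resolve}, and $\lad\subseteq\cl{\lasm}^\kwk$ because $\cl{\lasm}^\kwk$ is a subgroup containing $D$. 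I would first record the auxiliary fact that $\hd{\K}{D}{\lasi}=\omega_1$ for every $i\neq m$: enumerating $D=\set d_n:n\in\omega.$, were $\hd{\K}{D}{\lasi}=\alpha<\omega_1$ we could write $d_n=a_n'+k_n$ with $a_n'\in[\lasi]_\alpha\subseteq\cl{\lasi}^\kwk$ and $k_n\in K_0\in\K$; then $\set a_n':n.\subseteq\cl{\lasi}^\kwk$ and $D\subseteq\cl{\lasm}^\kwk$ would be infinite, closed and discrete in $\kwk$ (closed discreteness of the former via sequential compactness of $K_0$), while $\set a_n'+d_n:n.=\set k_n:n.\subseteq K_0$, contradicting that $\CC$ is discrete separated by $\K$ in $G$ (here $i\neq m$ is essential). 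Together with the hypothesis $D\subseteq\cl{\lasm}^\kwk$, which handles $i=m$, this lets Lemma~\ref{D.e.flat}, applied with $A=S^i$, decompose each $d_n^i=a_n^i+s_n^i$ with $a_n^i\in\cl{\lasi}^\kwk\subseteq G$ and $s_n^i\in L$.

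The reason this cannot be finished by a direct appeal to discrete separation is that $\kw(\K')$ induces on $G$ a topology \emph{coarser} than $\kwk$, so closed discreteness in $\kwk$ does not transfer up to $\kw(\K')$; the device to circumvent this is to drive the relevant sum into a genuine member of $\K$. Before doing so I would deal with distinctness. Passing to a common infinite set of indices, I may assume each $(d_n^i)_n$ is either constant or injective and, by first selecting an index on which the coordinate is injective and refining within it, that at least one stays injective; the constant coordinates contribute only a fixed translate, which is absorbed into $K'$, so I may assume every $(d_n^i)_n$ is injective, over a nonempty $I$. Passing to a further subsequence (legitimate, since it only shrinks each $D^i$ to an infinite closed discrete subset while keeping the sum inside $K'$) I arrange $s_n^i\to s^i\in L$ for all $i\in I$. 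Then each $\set a_n^i:n.$ is infinite, closed and discrete in $\kwk$: if not, there is an infinite set of indices along which $a_{n_j}^i\to a$ for some $a$, whence $d_{n_j}^i=a_{n_j}^i+s_{n_j}^i\to a+s^i$ in $\kw(\K')$ by continuity of addition, and as $(d_n^i)_n$ is injective these are distinct points of $D^i$, contradicting that $D^i$ is closed discrete in $\kw(\K')$.

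Now set $a_n=\sum_{i\in I}a_n^i\in G$. Since $\sum_i d_n^i=a_n+\sum_i s_n^i$ lies in $K+L$ and $\sum_i s_n^i\in L$, we get $a_n\in(K+L)\cap G=K+\lad$, say $a_n=k_n+e_n$ with $k_n\in K$ and $e_n\in\lad\subseteq\cl{\lasm}^\kwk$. Absorbing $e_n$ into the $S^m$-coordinate, define $b_n^i=a_n^i$ for $i\in I\setminus\{m\}$ and $b_n^m=a_n^m+e_n$ (adjoining the index $m$, with $b_n^m=e_n$, if $m\notin I$); then $b_n^i\in\cl{\lasi}^\kwk$ and $\sum_{i\in I\cup\{m\}}b_n^i=k_n\in K$. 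The one remaining point is that $\set b_n^m:n.$ is infinite, closed and discrete in $\kwk$. If $m\notin I$ this is immediate, as $\set e_n:n.\subseteq\lad$ is automatically closed discrete and is infinite because $\set a_n:n.$ is. If $m\in I$, note $b_n^m+d_n^m=s_n^m+e_n\in L$, so $b_n^m$ differs from the injective sequence $d_n^m$ by an element of the compact set $L$; any failure of $\set b_n^m:n.$ to be infinite closed discrete would, after passing to a convergent subsequence of these $L$-shifts, produce a nontrivial convergent sequence of distinct points of $D^m$ in $\kw(\K')$, again a contradiction. With all coordinate sequences $\set b_n^i:n.$ infinite, closed and discrete in $\kwk$ and their sum contained in $K\in\K$, discrete separation of $\CC$ by $\K$ in $G$ is violated.

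I expect the genuine difficulty to lie entirely in the bookkeeping of the middle two steps: reducing to injective coordinate sequences without losing infiniteness, and then certifying that each auxiliary sequence $\set a_n^i:n.$ and $\set b_n^m:n.$ extracted inside $G$ is actually infinite, closed and discrete. The algebraic content (the decomposition of Lemma~\ref{D.e.flat}, the identity $(K+L)\cap G=K+\lad$, and the containment $\lad\subseteq\cl{\lasm}^\kwk$) is routine by comparison, and the role of the hypothesis $D\subseteq\cl{\lasm}^\kwk$ is exactly to let the stray $\lad$-translate be absorbed into the $S^m$-coordinate.
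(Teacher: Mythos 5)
Your proof follows essentially the same route as the paper's: the same preliminary claim that $\hd{\K}{D}{\lasi}=\omega_1$ for $i\neq m$ (proved by the same appeal to discrete separation against the hypothesis $D\subseteq\cl{\lasm}^\kwk$), the same use of Lemmas~\ref{D.flat} and~\ref{D.e.flat} together with~\ref{pse.resolve} to pull the $L$-residue back into $\lad\subseteq\cl{\lasm}^\kwk$, the same absorption of that residue into an $S^m$-coordinate, and the same final contradiction with discrete separation of $\CC$ by $\K$ in $G$. The only organizational difference is that you decompose every coordinate via Lemma~\ref{D.e.flat} and then recombine, whereas the paper keeps the coordinates with $i\neq m$ inside $G$ outright (via Lemma~\ref{D.flat}) and only decomposes the $m$-th one; this changes nothing essential.

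One step is misstated. In the case $m\notin I$ you assert that $\set e_n:n\in\omega.$ ``is infinite because $\set a_n:n\in\omega.$ is.'' Neither half of this is justified: $\set a_n:n\in\omega.$ could a priori be finite, and even when it is infinite, $a_n=k_n+e_n$ with $k_n$ ranging over the compact $K$ is perfectly compatible with $\set e_n:n\in\omega.$ being finite. The repair is immediate, and is exactly the two-case split the paper performs: if $\set e_n:n\in\omega.$ is finite, then $\set a_n:n\in\omega.\subseteq K+F\subseteq K''$ for some $K''\in\K$, so the sums $\sum_{i\in I}a_n^i$ of the already-certified infinite closed discrete sets $\set a_n^i:n\in\omega.\subseteq\cl{\lasi}^\kwk$ land in a single member of $\K$, contradicting discrete separation in $G$ with no $m$-coordinate needed at all; if it is infinite, your argument proceeds as written. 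With that case added, the proof is correct.
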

\begin{proof}
Suppose there exist infinite closed and discrete in $\kw(\K')$
subspaces $D^i=\set
d_n^i:n\in\omega.\subseteq\overline{\lasi}^{\kw(\K')}$ where $i\in I$ for
some finite $I\subseteq\omega$, and a
$K'\in\K'$ such that $\set\sum_{i\in I}d_n^i:n\in\omega.\subseteq K'$.

Suppose $i\not=m$ and $\hd{\K}{D}{\lasi}<\omega_1$. Let $D=\set
p_n:n\in\omega.$ and $p_n=a_n'+p_n'$ where $a_n'\in K''$ for some
$K''\in\K$ and $p_n'\in\cl{\lasi}^{\kw(\K)}$. Passing to a subset, if
necessary, we may assume that $p_n'\not=p_k'$ if $n\not=k$, so
$D'=\set p_n':n\in\omega.\subseteq\cl{\lasi}^{\kw(\K)}$ is infinite,
closed, and discrete in $\kw(\K)$. Then $\set
p_n+p_n':n\in\omega.=\set a_n':n\in\omega.\subseteq K''$ contradicting
the property that $\CC$ is discrete separated by $\K$. Hence
$\hd{\K}{D}{\lasi}=\omega_1$ for every $i\not=m$. By Lemma~\ref{D.flat}
$d_n^i\in\cl{\lasi}^\kwk\subseteq G$ for $i\not=m$.

Let $K'\subseteq K+L$ where $L=\cl{\lad}^{\tau(\U')}$ and
$K\in\K$. Now $\sum_{i\in I}d_n^i=a_n+d_n$ where $a_n\in K$ and
$d_n\in L$. If $m\not\in I$ then
$d_n\in G$ by the argument in the preceeding paragraph, so
by~\ref{pse.resolve} and the choice of $D$,
$d_n\in\lad\subseteq\cl{\lasm}^{\kw(\K)}$. By passing to a subset if
necessary, we may assume that the set $D^m=\set d_n=d_n^m:n\in\omega.$ is
either infinite, closed, and discrete, or is a subset of some $K''\in\K$. In the first case
$\set\sum_{i\in I\cup\{m\}}d_n^i:n\in\omega.\subseteq K$, while in the
second case $\set\sum_{i\in I}d_n^i:n\in\omega.\subseteq K+K''\in\K$,
contradicting the property that $\CC$ is discrete separated. Thus
we may assume $m\in I$.

Since $L\subseteq\cl{\lasm}^{\kw(\K')}$, the set $\set
d_n^m+d_n:n\in\omega.\subseteq\cl{\lasm}^{\kw(\K')}$ is also closed and
discrete in $\kw(\K')$ so we may assume, after replacing $D^m$, if
necessary, that $\sum_{i\in
I\setminus\{m\}}d_n^i+d_n^m=a_n\in G$. Then $d_n^m\in G$ and by
Lemma~\ref{D.e.flat} $d_n^m=d_n'+s_n$ where
$d_n'\in\overline{\lasm}^\kwk$ and $s_n\in L$. Since $d_n^m\in G$,
$s_n\in\lad$ by~\ref{pse.resolve} so $D^m\subseteq\cl{\lasm}^\kwk$ by
Lemma~\ref{set.additivity}. Now
$D^i\subseteq\overline{\lasi}^\kwk$ are closed and discrete in $\kwk$
contradicting the assumption that $\CC$ is discrete separated in $G$.
\end{proof}

\begin{lemma}\label{ds.flat}
Let $\gku$ be a convenient triple. Let $\CC=\set
S^i:i\in\omega.$ be a family of countable subsets of $G$ discrete
separated by $\K$ in $G$. Let $D\subseteq G$ be a countable closed
discrete subspace that satisfies~\MM{\CC, \K}.

Let $\gkup$ be a primitive sequential extension of $\gku$ over
$D$. Then $\CC$ is discrete separated by $\K'$ in $G'$.
\end{lemma}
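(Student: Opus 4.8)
The plan is to argue by contradiction, following the skeleton of Lemma~\ref{ds.stable} but exploiting the stronger hypothesis \MM{\CC, \K}, which renders the distinguished index $m$ of that lemma unnecessary and so shortens the argument. Suppose $\CC$ is \emph{not} discrete separated by $\K'$ in $G'$: there are a finite $I\subseteq\omega$, infinite sets $D^i=\set d_n^i:n\in\omega.\subseteq\cl{\lasi}^{\kw(\K')}$ that are closed and discrete in $\kw(\K')$ for $i\in I$, and some $K'\in\K'$ with $\set\sum_{i\in I}d_n^i:n\in\omega.\subseteq K'$. I must manufacture a contradiction with discrete separation of $\CC$ by $\K$ in $G$.

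First I would record what \MM{\CC, \K} buys. By the remark following Definition~\ref{mm.sequence} it yields $\hd{\K}{D}{\lasi}=\omega_1$ and \MM{\lasi, \K} for every $i\in\omega$. Hence Lemma~\ref{D.flat} applies with $A=\lasi$ and $\delta=\omega_1$: since $\kw(\K')$ is sequential, each $d_n^i\in\cl{\lasi}^{\kw(\K')}$ has $\so(d_n^i, \lasi, \kw(\K'))<\omega_1$, so $d_n^i$ already belongs to $G$, and in fact $d_n^i\in\cl{\lasi}^\kwk$ for all $i\in I$ and $n\in\omega$. Thus every summand has been pulled back into $G$.

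Next I would push everything into $G$ and then apply \MM{\CC, \K}. Writing $L=\cl{\lad}^{\tau(\U')}$, every member of $\K'$ is contained in $K+L$ for some $K\in\K$ (as $L$ is a compact subgroup and $\K$ is closed under sums), so $\sum_{i\in I}d_n^i=a_n+d_n$ with $a_n\in K$ and $d_n\in L$. Because $\sum_{i\in I}d_n^i\in G$ and $a_n\in K\subseteq G$ we get $d_n\in G$, whence property~\ref{pse.resolve} forces $d_n\in L\cap G=\lad$. Now the crucial step: choose $n_0$ with $I\subseteq n_0$, so that $\sum_{i\in I}d_n^i\in\sum_{i<n_0}\cl{\lasi}^\kwk$, while also $\sum_{i\in I}d_n^i=a_n+d_n\in d_n+K$. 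Applying \MM{\CC, \K} with this $n_0$ and $K$ produces a finite $F\subseteq D$ with $(d+K)\cap\sum_{i<n_0}\cl{\lasi}^\kwk=\varnothing$ whenever $d\in\lad\setminus\la{F}$; the nonempty intersection just exhibited therefore forces $d_n\in\la{F}$ for every $n$. As $\la{F}$ is finite, after passing to a subsequence I may assume $d_n=d$ is constant.

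The final step derives the contradiction inside $G$. With $d$ constant, $\set\sum_{i\in I}d_n^i:n\in\omega.\subseteq K+d\subseteq K+K_d$, where $K_d\in\K$ contains $d$ (using $\cup\K=G$) and $K+K_d\in\K$. It remains to see, exactly as at the end of Lemma~\ref{ds.stable}, that each $D^i\subseteq\cl{\lasi}^\kwk\subseteq G$ is closed and discrete not merely in $\kw(\K')$ but in $\kwk$: a $\kwk$-convergent sequence of distinct points of $D^i$ would lie, with its limit, in a single $K\in\K\subseteq\K'$ carrying the same compact topology in both pairs, hence it would converge in $\kw(\K')$ too, contradicting closed discreteness of $D^i$ in $\kw(\K')$. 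With the $D^i$ closed and discrete in $\kwk$, discrete separation of $\CC$ by $\K$ in $G$ forces $\set\sum_{i\in I}d_n^i:n\in\omega.$ to be infinite, closed, and discrete in $\kwk$, contradicting its containment in $K+K_d\in\K$. I expect this last inter-topology passage to be the only genuinely delicate point; everything else is the bookkeeping driven by \MM{\CC, \K} together with Lemma~\ref{D.flat}.
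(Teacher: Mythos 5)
Your proposal is correct and follows essentially the same route as the paper's proof: reduce to $d_n^i\in\cl{\lasi}^\kwk$ via Lemma~\ref{D.flat} (using that \MM{\CC,\K} gives \MM{\lasi,\K} and $\hd{\K}{D}{\lasi}=\omega_1$), write $\sum_{i\in I}d_n^i=a_n+d_n$ with $d_n\in\lad$ by~\ref{pse.resolve}, trap the $d_n$ in a finite $\laf$ via \MM{\CC,\K}, and contradict discrete separation by $\K$ in $G$. The only cosmetic differences are that the paper absorbs $\laf$ directly into a single $K''\in\K$ rather than passing to a constant subsequence, and leaves implicit the (trivial, since $\K\subseteq\K'$) transfer of closed discreteness of the $D^i$ from $\kw(\K')$ to $\kwk$ that you spell out.
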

\begin{proof}
Note that $\hd{\K}{D}{\lasi}=\omega_1$ for every $i\in\omega$.

Suppose there exist infinite closed and discrete subspaces $D^i=\set
d_n^i:n\in\omega.\subseteq\overline{\lasi}^{\kw(\K')}$ such that
$\set\sum_{i\in I}d_n^i:n\in\omega.\subseteq K'\in\K'$.
Let $K'\subseteq K+L$ where $L=\overline{\lad}^{\tau(\U')}$ and
$K\in\K$. Now $\sum_{i\in I}d_n^i=a_n+d_n$ where $a_n\in K$ and $d_n\in L$.

By Lemma~\ref{D.flat} $d_n^i\in\overline{\lasi}^\kwk$ for every
$i\in I$, $n\in\omega$ so $d_n\in G$ and by~\ref{pse.resolve}
$d_n\in\lad$. Use \MM{\CC, \K} to find a finite $F\subseteq D$ such
that
$(d+K)\cap(\sum_{i\in I}\overline{\lasi}^\kwk)=\varnothing$
if $d\in\lad\setminus\laf$. Thus $\set d_n:n\in\omega.\subseteq\laf$ and
$\set\sum_{i\in I}d_n^i:n\in\omega.\subseteq K''$ for some $K''\in\K$
contradicting that $\CC$ is discrete separated by $\K$ in $G$.
\end{proof}
It is not possible to preserve the sequential order of every new point
(even if a uniform bound is imposed) in the extension so the next
lemma deals with a much weaker property.

{%
\def\alphaDm{\delta_\gamma^m}%
\def\lad{\langle \Dg\rangle}%
\def\Dg{D_\gamma}%
\begin{lemma}\label{so.reflect}
Let $\alpha<\omega_1$ and let $\hku{^\gamma}$ be a primitive
sequential extension of $\hku{^{<\gamma}}$ over some $\Dg\subseteq
H^{<\gamma}$ for every $\gamma<\alpha$. Let $S^m\subseteq G=H^{<0}$, $m\in\omega$
be countable subsets. Let
$\hd{\K^{<\gamma}}{\Dg}{\lasm}=\alphaDm$. Suppose for every
$m\in\omega$ the set $\Dg$ satisfies~\MM{\lasm, \K^{<\gamma}} and has
the property that either $\alphaDm=\omega_1$ or
$D_\gamma\subseteq\overline{\lasm}^{\kw(\K^{<\gamma})}$.

If $g\in H^\beta\setminus H^{<\beta}$ for some $\beta<\alpha$ and
$g\in\overline{\lasm}^{\kw(\K^{<\alpha})}$ then
$g\in\overline{\lasm}^{\kw(\K^\beta)}$.
\end{lemma}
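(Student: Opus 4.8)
The plan is to mimic the inductive strategy used in Lemma~\ref{psestack}, carrying out an induction on $\sigma=\so(g,\lasm,\kw(\K^{<\alpha}))$ while tracking the smallest stage $\gamma'$ at which a suitable approximating sequence converges. First I would fix $m\in\omega$ and suppose the statement holds for all $g'\in H^{<\alpha}$ with $\so(g',\lasm,\kw(\K^{<\alpha}))=\sigma'<\sigma$. Given $g\in H^\beta\setminus H^{<\beta}$ with $\so(g,\lasm,\kw(\K^{<\alpha}))=\sigma$, I would pick $g_n\to g$ in $\kw(\K^{<\alpha})$ with $\so(g_n,\lasm,\kw(\K^{<\alpha}))=\sigma_n<\sigma$ and $\sigma_n\to\sigma-1$, where each $g_n\in H^{\gamma_n}\setminus H^{<\gamma_n}$. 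Since a convergent sequence in a $\kw$ topology lies in a single compact $K\in\K^{\nu}$ for some $\nu<\alpha$, all the $\gamma_n$ are bounded, so I can let $\gamma'$ be the least ordinal admitting such an approximating sequence converging in $\kw(\K^{\gamma'})$.

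The crux, exactly as in Lemma~\ref{psestack}, is to rule out $\gamma'>\beta$. Suppose it holds; then $\hku{^{\gamma'}}$ is a pse of $\hku{^{<\gamma'}}$ over $\Dg\subseteq H^{<\gamma'}$, and since $g\in H^\beta\subseteq H^{<\gamma'}$ I would apply the parity-neutral flattening result, Lemma~\ref{D.e.flat}, with $A$ taken so that $\la{A}=\lasm$. The hypotheses match: $\hd{\K^{<\gamma'}}{\Dg}{\lasm}=\delta_{\gamma'}^m$ with either $\delta_{\gamma'}^m=\omega_1$ or $\Dg\subseteq\overline{\lasm}^{\kw(\K^{<\gamma'})}$, and $\Dg$ satisfies \MM{\lasm,\K^{<\gamma'}}. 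Lemma~\ref{D.e.flat} then gives $g=a+s$ with $a\in\overline{\lasm}^{\kw(\K^{<\gamma'})}$ and $s\in L=\overline{\lad}^{\tau(\U^{\gamma'})}$. Because $g\in H^{<\gamma'}$ and $a\in H^{<\gamma'}$, we get $s\in H^{<\gamma'}\cap L=\lad$ by~\ref{pse.resolve}; then Lemma~\ref{set.additivity} yields $g=a+s\in\overline{\lasm}^{\kw(\K^{<\gamma'})}$, so that $g$ is already a limit of a sequence from $\lasm\subseteq H^{<\gamma'}$ converging in $\kw(\K^{<\gamma'})$, hence in $\kw(\K^{\nu})$ for some $\nu<\gamma'$. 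This contradicts the minimality of $\gamma'$.

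Therefore $\gamma'\leq\beta$, and consequently $g_n\in H^{<\gamma'}\subseteq H^\beta$ for every $n$, with $g_n\to g$ in $\kw(\K^\beta)$. By the inductive hypothesis applied to each $g_n$ (whose sequential order $\sigma_n<\sigma$), $g_n\in\overline{\lasm}^{\kw(\K^{\beta_n})}$ for the appropriate stage, and in particular $g_n\in\overline{\lasm}^{\kw(\K^\beta)}$; since $g_n\to g$ in $\kw(\K^\beta)$ it follows that $g\in\overline{\lasm}^{\kw(\K^\beta)}$, completing the induction. The main obstacle I anticipate is the bookkeeping in the case $\gamma'>\beta$: one must verify carefully that the $\delta_\gamma^m$ dichotomy in the hypothesis supplies precisely the input Lemma~\ref{D.e.flat} demands (the $\omega_1$ branch versus the containment branch), and that the membership $s\in\lad$ genuinely forces $g$ into the closure at the earlier stage. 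A secondary subtlety is ensuring that the single compact set capturing $\set g_n:n\in\omega.$ lives at a uniform finite stage so that $\gamma'$ is well defined and bounded by $\beta$; this relies on the $\kw$ character of the determining families $\K^\gamma$ and is routine once the index bound is extracted.
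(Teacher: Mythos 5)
Your overall strategy --- induct on $\sigma=\so(g,\lasm,\kw(\K^{<\alpha}))$, locate a minimal stage $\gamma'$ carrying a suitable approximating sequence, and use Lemma~\ref{D.e.flat} together with~\ref{pse.resolve} and Lemma~\ref{set.additivity} (splitting on whether $\delta^m_{\gamma'}=\omega_1$) to push $g$ into $\cl{\lasm}^{\kw(\K^{<\gamma'})}$ --- assembles the right local ingredients, and that flattening step is essentially the successor case of the paper's argument. The scaffolding around it, however, has a gap at the point where you derive the contradiction with the minimality of $\gamma'$. First, the claim that $g\in\cl{\lasm}^{\kw(\K^{<\gamma'})}$ makes $g$ ``a limit of a sequence from $\lasm$'' conflates the closure with the one-step sequential closure; all you get is that $g$ acquires some sequential order $\sigma''=\so(g,\lasm,\kw(\K^{<\gamma'}))$ at the earlier stage. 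Second, and more seriously, $\sigma''$ need not equal $\sigma$: since $\kw(\K^{<\gamma'})$ is finer than $\kw(\K^{<\alpha})$ one only gets $\sigma''\geq\sigma$, and Lemma~\ref{D.e.flat} --- unlike the parity machinery that makes the minimal-$\gamma'$ trick work in Lemmas~\ref{pparity} and~\ref{psestack} --- gives no control over the order of the component $a$. Hence the approximating sequence witnessing $\sigma''$ in $\kw(\K^{<\gamma'})$ consists of points whose orders in $\kw(\K^{<\alpha})$ you can only bound by $\sigma''-1$, which may be $\geq\sigma$; such points need not satisfy the defining condition of $\gamma'$ (orders $<\sigma$ tending to $\sigma-1$), so no contradiction with minimality is obtained. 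There is also a limit-stage issue: membership in $\cl{\lasm}^{\kw(\K^{<\gamma'})}$ for limit $\gamma'$ does not by itself place the relevant witnesses inside a single $\K^\nu$ with $\nu<\gamma'$.

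The paper closes exactly this hole by running the induction on the lexicographic pair $(\alpha,\sigma)$ rather than on $\sigma$ alone. In the successor case (with $\beta<\alpha-1$) it performs your flattening step from $\K^{<\alpha}$ down to $\K^{<\alpha-1}$ and then simply invokes the inductive hypothesis at the strictly smaller first coordinate $\alpha-1$, so whatever the new sequential order of $g$ turns out to be is irrelevant. In the limit case it uses the fact that the approximating sequence lies in a single $K\in\K^\gamma$ with $\gamma<\alpha$, applies the hypothesis at $(\alpha,\sigma_n)$ to each $g_n$, and then the hypothesis at a smaller first coordinate $\alpha'$ with $\gamma<\alpha'<\alpha$. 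If you replace your single induction and the minimal-$\gamma'$ bookkeeping by this double induction, the remaining steps of your argument go through essentially as written.
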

\begin{proof}
Suppose the statement holds for all $g'\in H^{<\alpha'}$ such that
$\so(g', \lasm, \kw(\K^{<\alpha'})=\sigma'$ where
$(\alpha',\sigma')<(\alpha,\sigma)$. Let
$\so(g, \lasm, \kw(\K^{<\alpha}))=\sigma$. If $\sigma=0$ then
$g\in\lasm$, otherwise there are $g_n\to g$ in $\kw(\K^{<\alpha})$
such that $\so(g_n, \lasm, \kw(\K^{<\alpha}))=\sigma_n<\sigma$. Since $g_n\to
g$ there exists a $K\in\K^\gamma$ for some $\gamma<\alpha$ such that
$g_n\in K$.

Suppose $\alpha$ is a successor ordinal. If $\beta=\alpha-1$
then $\K^{<\alpha}=\K^\beta$ so the statement holds. Otherwise $\beta<\alpha-1$ and $g\in
H^{<\alpha-1}$ so by Lemma~\ref{D.e.flat} $g=a+s$ where
$a\in\cl{\lasm}^{\kw(\K^{<\alpha-1})}$ and
$s\in\cl{\la{D_{\alpha-1}}}^{\tau(\U^{\alpha-1})}\subseteq\cl{\lasm}^{\kw(\K^{\alpha-1})}$
if $\delta_{\alpha-1}^m<\omega_1$. If $\delta_{\alpha-1}^m=\omega_1$
then $g\in\cl{\lasm}^{\kw(\K^{<\alpha-1})}$ by
Lemma~\ref{D.flat}. Since $a,g\in H^{<\alpha-1}$, it follows that
$s\in H^{<\alpha-1}$, so by~\ref{pse.resolve} $s\in\la{D_{\alpha-1}}$ and
$g\in\overline{\lasm}^{\kw(\K^{<\alpha-1})}$. Since $(\alpha-1,\sigma)<(\alpha,\sigma)$,
the statement holds by the hypothesis.

If $\alpha$ is a limit ordinal $\gamma<\alpha'$ for some
$\alpha'<\alpha$. Since $(\alpha,\sigma_n)<(\alpha,\sigma)$ and
$g_n\in H^\gamma$, by the hypothesis
$g_n\in\overline{\lasm}^{\kw(\K^{<\alpha'})}$ so
$g\in\overline{\lasm}^{\kw(\K^{<\alpha'})}$ and
$g\in\overline{\lasm}^{\kw(\K^\beta)}$ by the hypothesis.
\end{proof}
}%

{%
\def\alphaDm{\delta_\gamma^m}%
\def\lad{\langle \Dg\rangle}%
\def\Dg{D_\gamma}%
\begin{lemma}\label{ds.limit}
Let $\alpha<\omega_1$ and let $\hku{^\gamma}$ be a primitive
sequential extension of $\hku{^{<\gamma}}$ over some $\Dg$ for every
$\gamma<\alpha$. Let $S^i\subseteq G=H^{<0}$ be countable subsets such
that $\CC=\set S^i:i\in\omega.$ is discrete separated by $\K^\gamma$
in $H^\gamma$ for every $\gamma<\alpha$. Let
$\hd{\K^{<\gamma}}{\Dg}{\lasm}=\alphaDm$. Suppose for every
$m\in\omega$ the set $\Dg$ satisfies~\MM{\lasm, \K^{<\gamma}} and has
the property that 
whenever $\alphaDm<\omega_1$ the set
$\Dg\subseteq\overline{\lasm}^{\kw(\K^{<\gamma})}$.

Then $\CC$ is discrete separated by $\K^{<\alpha}$ in $H^{<\alpha}$.
\end{lemma}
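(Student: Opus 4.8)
The plan is to argue by induction on $\alpha$, separating the successor and limit cases. If $\alpha=\alpha'+1$ is a successor there is nothing to do: by the remark following the definition of $\hku{^{<\gamma}}$ we have $\hku{^{<\alpha}}=\hku{^{\alpha'}}$, and discrete separation of $\CC$ by $\K^{\alpha'}$ in $H^{\alpha'}$ is already assumed (since $\alpha'<\alpha$). Thus the entire content lies in the limit case, which I would attack by contradiction.

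So let $\alpha$ be a limit and suppose $\CC$ is \emph{not} discrete separated by $\K^{<\alpha}$ in $H^{<\alpha}$: there are a finite $I\subseteq\omega$, infinite sets $D^i=\set d_n^i:n\in\omega.\subseteq\cl{\lasi}^{\kw(\K^{<\alpha})}$ that are closed and discrete in $\kw(\K^{<\alpha})$, and a $K'\in\K^{\gamma_0}$ for some $\gamma_0<\alpha$ with $\set\sum_{i\in I}d_n^i:n\in\omega.\subseteq K'$. Write $s_n=\sum_{i\in I}d_n^i\in K'$, and for each $n,i$ let $\beta_n^i<\alpha$ be the level at which $d_n^i$ first appears, so $d_n^i\in H^{\beta_n^i}\setminus H^{<\beta_n^i}$; by Lemma~\ref{so.reflect}, $d_n^i\in\cl{\lasi}^{\kw(\K^{\beta_n^i})}$. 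The key claim is that $\beta_n^i\leq\gamma_0$ for all $n,i$, i.e.\ no summand escapes to a level above $\gamma_0$.

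To prove the claim, fix $n$ and suppose $\beta_n:=\max_{i\in I}\beta_n^i>\gamma_0$; set $J_n=\set i\in I:\beta_n^i=\beta_n.$. If $J_n=\{i^*\}$ is a singleton, then $d_n^{i^*}=s_n+\sum_{i\neq i^*}d_n^i\in H^{<\beta_n}$, because $s_n\in K'\subseteq H^{\gamma_0}\subseteq H^{<\beta_n}$ and every other summand has level $<\beta_n$; this contradicts $d_n^{i^*}\notin H^{<\beta_n}$. Hence $|J_n|\geq2$; pick $i_1\neq i_2$ in $J_n$. For $j\in\{1,2\}$ we have $d_n^{i_j}\in\cl{\la{S^{i_j}}}^{\kw(\K^{\beta_n})}\setminus H^{<\beta_n}$, so applying Lemma~\ref{D.flat} to the pse $\hku{^{\beta_n}}$ of $\hku{^{<\beta_n}}$ over $D_{\beta_n}$ (which satisfies \MM{\la{S^{i_j}}, \K^{<\beta_n}}) forces $\hd{\K^{<\beta_n}}{D_{\beta_n}}{\la{S^{i_j}}}=\delta_{\beta_n}^{i_j}<\omega_1$, for otherwise $d_n^{i_j}$ would lie in $H^{<\beta_n}$. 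By the hypothesis this gives $D_{\beta_n}\subseteq\cl{\la{S^{i_j}}}^{\kw(\K^{<\beta_n})}$ for \emph{both} $j=1,2$. This is the crux of the argument: the set $D_{\beta_n}$, which is infinite (else $H^{\beta_n}=H^{<\beta_n}$ and nothing could appear at level $\beta_n$) and closed and discrete in $\kw(\K^{<\beta_n})$, now lies in $\cl{\la{S^{i_1}}}^{\kw(\K^{<\beta_n})}\cap\cl{\la{S^{i_2}}}^{\kw(\K^{<\beta_n})}$ with $i_1\neq i_2$. Taking the diagonal $D^{i_1}_*=D^{i_2}_*=D_{\beta_n}$ and using that the group is boolean, the sum $\set d+d:d\in D_{\beta_n}.=\{0\}$ is contained in every member of $\K^{<\beta_n}$, so $\CC$ fails to be discrete separated by $\K^{<\beta_n}$ in $H^{<\beta_n}$. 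For successor $\beta_n$ this contradicts the hypothesis (as $\K^{<\beta_n}=\K^{\beta_n-1}$), and for limit $\beta_n$ it contradicts the inductive hypothesis (the present lemma at $\beta_n<\alpha$). This establishes the claim, and I expect this diagonal step to be the main obstacle.

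With the claim every $d_n^i\in H^{\gamma_0}$, so Lemma~\ref{so.reflect} yields $d_n^i\in\cl{\lasi}^{\kw(\K^{\beta_n^i})}\subseteq\cl{\lasi}^{\kw(\K^{\gamma_0})}$. Since $\K^{\gamma_0}\subseteq\K^{<\alpha}$ and each $D^i$ meets every member of $\K^{<\alpha}$ in a finite set, it meets every member of $\K^{\gamma_0}$ finitely, hence is infinite, closed and discrete in $\kw(\K^{\gamma_0})$; moreover $\set\sum_{i\in I}d_n^i:n\in\omega.\subseteq K'\in\K^{\gamma_0}$. This contradicts discrete separation of $\CC$ by $\K^{\gamma_0}$ in $H^{\gamma_0}$, which is part of the hypothesis, and completes the limit case. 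Apart from the diagonal step, the argument uses only reflection (Lemma~\ref{so.reflect}) and the standard fact that in a $\kw$ topology a set is closed and discrete precisely when it meets each determining compactum in a finite set.
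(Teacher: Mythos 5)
Your proof is correct and follows essentially the same route as the paper's: the same reflection via Lemma~\ref{so.reflect}, the same use of Lemma~\ref{D.flat} to push every summand of infinite $\K$-depth below the top level, and the same diagonal trick (if $D_{\beta_n}$ had finite depth over two distinct $\la{S^{i_1}}$, $\la{S^{i_2}}$, taking $D^{i_1}=D^{i_2}=D_{\beta_n}$ makes the sum $\{0\}$ and kills discrete separation — the paper states this as ``$\delta_\gamma^m<\omega_1$ for at most one $m$''). The only difference is organizational: the paper splits into ``all levels bounded below $\alpha$'' versus ``some level sequence cofinal in $\alpha$,'' while you prove the uniform per-$n$ bound $\max_{i\in I}\beta_n^i\leq\gamma_0$ and then reduce to level $\gamma_0$; the substance is identical.
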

\begin{proof}
Suppose the statement holds for all $\gamma<\alpha$.
Let $d_n^i\in H^{\alpha_n^i}\setminus H^{<\alpha_n^i}$ and
$K\in\K^\beta$, $\beta<\alpha$ be such that $D=\set\sum_{i\in
I}d_n^i:n\in\omega.\subseteq K$, where $D^i=\set
d_n^i:n\in\omega.\subseteq\overline{\lasi}^{\kw(\K^{<\alpha})}$ are
closed discrete subspaces of $H^{<\alpha}$, $i\in
I$, and $I\subseteq\omega$ is finite.

If $\gamma=\max\{\set\sup\alpha_n^i:i\in I.\cup\{\beta\}\}<\alpha$ then
by Lemma~\ref{so.reflect}
$D^i\subseteq\overline{\lasi}^{\kw(\K^\gamma)}$ so $\CC$ is not
discrete separated in $\kw(\K^\gamma)$.

Suppose, say, $\alpha_n^0$ is unbounded in $\alpha$. Pick $n\in\omega$
so that $\alpha_n^0>\beta$ and let $\gamma=\max_{i\in
I}\alpha_n^i$. By the inductive hypothesis, $\CC$ is discrete
separated by $\K^{<\gamma}$ in $H^{<\gamma}$. Since $\la{D_\gamma}$ is
closed and discrete, this implies that
$\alphaDm<\omega_1$ for at most one $m\in\omega$ (note that $D_\gamma$
cannot be finite, otherwise $H^\gamma\setminus
H^{<\gamma}=\varnothing$). Then $\gamma>\beta$
and
$\hd{\K^{<\gamma}}{\Dg}{\lasi}=\omega_1$ for all $i\in I$ such that
$i\not=m$ for some $m\in\omega$. By Lemma~\ref{D.flat}
$d_n^i\in\overline{\lasi}^{\K^{<\gamma}}$ for every $i\not=m$ so
$m\in I$ (otheriwse $\max_{i\in
I}\alpha_n^i<\gamma$). Thus $\sum_{i\in I}d_n^i\in H^\gamma\setminus H^{<\gamma}$
contradicting $\sum_{i\in I}d_n^i\in K\subseteq H^{<\gamma}$.
\end{proof}
}%
Putting the concepts introduced above together, the next definition
introduces a basic `extension step' of the construction.

\begin{definition}\label{fine.pse}
Let $\gku$ and $\gkup$ be convenient triples, $\CC=\set
S^i:i\in\omega.$ be a family of countable subsets of $G$, and
$\set\sigma^i:i\in\omega.\subseteq\omega_1$ be a set of successor
ordinals. Let $D\subseteq G$ be a countable independent closed and
discrete subset of $G$ in $\kwk$. Call $\gkup$ a {\em fine primitive
sequential extension (or fpse for short) of $\gku$ over $(D, \CC)$ (or
$D$ if $\CC$ is clear)\/} if the following properties hold.
\begin{countup}[bcp]
\item\label{fine.pse.pse}
$\gkup$ is a primitive sequential extension of $\gku$ over $D$, which
satisfies \MM{\K, \lasi} for every $i\in\omega$, and
either $D$ satisfies \MM{\K, \CC} or $D\subseteq\cl{\lasm}^\kwk$ for
some $m\in\omega$;

\item\label{fine.pse.sep}
if $\hd{\K}{D}{\lasom}=\delta_D^m<\sigma^m-1$ then
$D\subseteq[\lasom]_{\delta_D^m}^\kwk$, $D$ satisfies \MM{\K, \lasom},
and
$\overline{\lado}^{\tau(\U')}\cap \overline{\lade}^{\tau(\U')}=\varnothing$;

\end{countup}
\end{definition}

\begin{definition}\label{fpse.chain}
Let $\gku$ be a convenient triple. Let $\CC=\set S^i:i\in\omega.$ be a
family of countable subsets of $G$. Call a family
$C=\set\hku{^\gamma}:\gamma^0\leq\gamma<\alpha.$ of convenient triples an {\em
fpse-chain above $\gku$ along $\set D_\gamma:\gamma<\alpha.$ relative
to $\CC$\/} if $\hku{^\gamma}$ is an fpse of $\hku{^{<\gamma}}$ over
$(D_\gamma,\CC)$ where $D_\gamma\subseteq H^{<\gamma}$ for $\gamma>\gamma^0$
and $\hku{^{\gamma^0}}$ is an fpse of $\gku$ over $(D_{\gamma^0}, \CC)$ where
$D_{\gamma^0}\subseteq G$. Call each $D_\gamma$ the {\em free sequence at
$\gamma$}.
\end{definition}

{%
\def\alphaDm{\delta_\gamma^m}%
\def\lad{\langle\Dg\rangle}%
\def\Dg{D_{\gamma'}}%
\begin{lemma}\label{w1.kdepth.sum}
Let $\alpha<\omega_1$ and let $\hku{^\gamma}$ be a primitive
sequential extension of $\hku{^{<\gamma}}$ over some
$D_\gamma\subseteq H^{<\gamma}$ for every
$\gamma^0<\gamma<\alpha$. Let $S^i\subseteq H^{\gamma^0}$, $i\in\omega$ be
such that for every $\gamma<\alpha$ there is at most one $i\in\omega$ such that
$\hd{\K^{<\gamma}}{D_\gamma}{\lasi}<\omega_1$, $D_\gamma$
satisfies \MM{\lasi, \K^{<\gamma}} for every $\gamma^0<\gamma<\alpha$
and every $i\in\omega$, and
$D_\gamma\subseteq\cl{\lasi}^{\kw(\K^{<\gamma})}$ whenever
$\hd{\K^{<\gamma}}{D_\gamma}{\lasi}<\omega_1$.

Let $g=\sum_{i\in I}g_i$ where
$g_i\in\cl{\lasi}^{\kw(\K^{<\alpha})}$ for some finite
$I\subseteq\omega$. If $g\in H^\gamma\setminus H^{<\gamma}$ and
$g_i\in H^{\gamma_i}\setminus H^{<\gamma_i}$ then
$g_i\in\cl{\lasi}^{\kw(\K^{\gamma_i})}$ and $\gamma=\max_{i\in
I}\gamma_i$.
\end{lemma}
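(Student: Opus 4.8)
The plan is to prove the two conclusions in turn: first the localization $g_i\in\cl{\lasi}^{\kw(\K^{\gamma_i})}$, then the identity $\gamma=\max_{i\in I}\gamma_i$, using the former. For the localization I would invoke Lemma~\ref{so.reflect} on each summand $g_i$ (taking the base triple to be $\hku{^{\gamma^0}}$), whose hypotheses are exactly what is assumed here: each $D_\gamma$ satisfies~\MM{\lasi, \K^{<\gamma}}, and $D_\gamma\subseteq\cl{\lasi}^{\kw(\K^{<\gamma})}$ whenever $\hd{\K^{<\gamma}}{D_\gamma}{\lasi}<\omega_1$. Since $g_i\in\cl{\lasi}^{\kw(\K^{<\alpha})}\subseteq H^{<\alpha}$ forces $\gamma_i<\alpha$, and $g_i\in H^{\gamma_i}\setminus H^{<\gamma_i}$, Lemma~\ref{so.reflect} gives $g_i\in\cl{\lasi}^{\kw(\K^{\gamma_i})}$ for every $i\in I$, which is the first assertion.

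Next I set $\gamma^*=\max_{i\in I}\gamma_i$. Each $g_i\in H^{\gamma_i}\subseteq H^{\gamma^*}$ by~\ref{stack.order}, so $g=\sum_{i\in I}g_i\in H^{\gamma^*}$, and minimality of $\gamma$ yields $\gamma\le\gamma^*$. To obtain equality I argue by contradiction, assuming $\gamma<\gamma^*$; one may also assume $\gamma^*>\gamma^0$, since otherwise $g$ and all its summands lie in the base $H^{\gamma^0}$ and the claim follows by the same reasoning inside the pse-chain that builds that base. Put $J=\set i\in I:\gamma_i=\gamma^*.$, which is nonempty.

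The crux is to show $J$ collapses to a single index, and this is where the hypothesis that at most one $i$ has $\hd{\K^{<\gamma^*}}{D_{\gamma^*}}{\lasi}<\omega_1$ does its work; call that index $m$ if it exists. Fix $i_0\in J$ with $i_0\ne m$. Then $\hd{\K^{<\gamma^*}}{D_{\gamma^*}}{\la{S^{i_0}}}=\omega_1$ and $D_{\gamma^*}$ satisfies~\MM{\la{S^{i_0}}, \K^{<\gamma^*}}, while $\la{S^{i_0}}\subseteq H^{\gamma^0}\subseteq H^{<\gamma^*}$. By the first part $g_{i_0}\in\cl{\la{S^{i_0}}}^{\kw(\K^{\gamma^*})}$, so $\so(g_{i_0}, \la{S^{i_0}}, \kw(\K^{\gamma^*}))\leql\omega_1$, and Lemma~\ref{D.flat}—applied to the pse $\hku{^{\gamma^*}}$ of $\hku{^{<\gamma^*}}$ over $D_{\gamma^*}$ with $A=\la{S^{i_0}}$ and $\delta=\omega_1$—forces $g_{i_0}\in H^{<\gamma^*}$, contradicting $\gamma_{i_0}=\gamma^*$. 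Hence every element of $J$ equals $m$, so $J=\{m\}$ (in particular such an $m$ exists).

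Finally, with $J=\{m\}$ every $i\in I\setminus\{m\}$ has $\gamma_i<\gamma^*$, so $\sum_{i\ne m}g_i\in H^{<\gamma^*}$, and $g\in H^\gamma\subseteq H^{<\gamma^*}$ because $\gamma<\gamma^*$. The boolean identity $g_m=g+\sum_{i\ne m}g_i$ then gives $g_m\in H^{<\gamma^*}$, contradicting $g_m\in H^{\gamma^*}\setminus H^{<\gamma^*}$. This excludes $\gamma<\gamma^*$, so $\gamma=\gamma^*$. I expect the main obstacle to be the step in the previous paragraph: legitimately `flattening' each non-distinguished top-level summand into $H^{<\gamma^*}$ via Lemma~\ref{D.flat}, which hinges on the $\hd=\omega_1$ flatness supplied there by the `at most one index' hypothesis. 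The remaining care is purely bookkeeping—the base offset $\gamma^0$ and the observation that $\la{S^{i_0}}$ sits below level $\gamma^*$.
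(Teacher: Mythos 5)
Your proposal is correct and follows essentially the same route as the paper's proof: Lemma~\ref{so.reflect} for the localization $g_i\in\cl{\lasi}^{\kw(\K^{\gamma_i})}$, then Lemma~\ref{D.flat} together with the ``at most one index of finite $\K$-depth'' hypothesis to show that only one summand can sit at the top level $\gamma^*=\max_i\gamma_i$, whence $\gamma=\gamma^*$. The paper states this last step directly rather than by contradiction, but the content is identical.
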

\begin{proof}
By Lemma~\ref{so.reflect} $g_i\in\overline{\lasi}^{\kw(\K^{\gamma_i})}$. 
Let $\gamma'=\max_{i\in I}\gamma_i$. If $\gamma'=\gamma^0$ then
$\gamma_i=\gamma^0$ for every $i\in I$ so $g\in H^{\gamma^0}$. Otherwise
$\hku{^{\gamma'}}$ is a primitive sequential extension of
$\hku{^{<\gamma'}}$ over $\Dg\subseteq H^{<\gamma'}$. If
$\hd{\K^{<\gamma'}}{\Dg}{\lasi}=\omega_1$ then
$g_i\in\cl{\lasi}^{\kw(\K^{<\gamma'})}$ by Lemma~\ref{D.flat} so
$\gamma_i<\gamma'$. Thus there is a unique $j\in I$ such that
$\gamma_j=\gamma'$ and $\gamma_i<\gamma'$ for $i\not=j$. Now
$g=\sum_{i\in I}g_i$ so $\gamma'=\gamma$.
\end{proof}
}%
Certain discrete separated families behave like direct sums
algebraically.

\begin{lemma}\label{pse.sum}
Let $\gkup$ be a pse of $\gku$ over
$D\subseteq G$. Let $S^i\subseteq G$, $i\in\omega$ be such that
$\cl{\lasm}^\kwk\cap(\sum_{i\in I}\cl{\lasi}^\kwk)=\{0\}$ for any
finite $I\subseteq\omega\setminus\{m\}$. Suppose there is at
most one $i\in\omega$ such that $\hd{\K}{D}{\lasi}<\omega_1$ and if
such $i$ exists $D\subseteq\cl{\lasi}^\kwk$. Let $D$
satisfy \MM{\K, \lasi} for every $i\in\omega$.

Then $\cl{\lasm}^{\kw(\K')}\cap(\sum_{i\in
I}\cl{\lasi}^{\kw(\K')})=\{0\}$ for any finite
$I\subseteq\omega\setminus\{m\}$.
\end{lemma}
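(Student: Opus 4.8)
The plan is to fix a finite $I\subseteq\omega\setminus\{m\}$, take a point $g\in\cl{\lasm}^{\kw(\K')}\cap\left(\sum_{i\in I}\cl{\lasi}^{\kw(\K')}\right)$, and push both $g$ and the summands $g_i\in\cl{\lasi}^{\kw(\K')}$ witnessing $g=\sum_{i\in I}g_i$ back down into $G$ and the coarser topology $\kwk$, where the hypothesis $\cl{\lasm}^\kwk\cap\left(\sum_{i\in I}\cl{\lasi}^\kwk\right)=\{0\}$ finishes the argument at once. The engine is the dichotomy built into the hypotheses on $D$: whenever $\hd{\K}{D}{\lasi}=\omega_1$, Lemma~\ref{D.flat} (applied with $A=\lasi$ and $\delta=\omega_1$, using property~\MM{\lasi,\K}) collapses $\cl{\lasi}^{\kw(\K')}$ into $\cl{\lasi}^\kwk\subseteq G$; and at most one exceptional index $i_0$ escapes this, for which instead $D\subseteq\cl{\la{S^{i_0}}}^\kwk$ and Lemma~\ref{D.e.flat} is available.

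First I would dispose of the case in which the exceptional index (if it exists at all) lies outside $\{m\}\cup I$. Then $\hd{\K}{D}{\lasi}=\omega_1$ for $i=m$ and for every $i\in I$, so the collapse above gives $g\in\cl{\lasm}^\kwk$ and $g_i\in\cl{\lasi}^\kwk$ for each $i\in I$; hence $g$ lies in the old intersection and $g=0$.

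The substantive case is $i_0\in\{m\}\cup I$. Here I would first observe that $g$ still lies in $G$: if $i_0=m$ this is because the summands $g_i$ ($i\in I$) all collapse into $G$ by Lemma~\ref{D.flat}, and if $i_0\in I$ it is because $g\in\cl{\lasm}^{\kw(\K')}\subseteq\cl{\lasm}^\kwk\subseteq G$. I would then apply Lemma~\ref{D.e.flat} with $A=S^{i_0}$ to the single non-collapsing summand (namely $g$ itself when $i_0=m$, or $g_{i_0}$ when $i_0\in I$), obtaining a decomposition $a_{i_0}+s_{i_0}$ with $a_{i_0}\in\cl{\la{S^{i_0}}}^\kwk$ and $s_{i_0}\in L=\cl{\lad}^{\tau(\U')}$. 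Since every remaining term, as well as $g$, lies in $G$, the residual $s_{i_0}$ lies in $G$, whence $s_{i_0}\in L\cap G=\lad$ by~\ref{pse.resolve}; and because $D\subseteq\cl{\la{S^{i_0}}}^\kwk$ and the latter is a subgroup, $\lad\subseteq\cl{\la{S^{i_0}}}^\kwk$, so the non-collapsing summand $a_{i_0}+s_{i_0}$ is in fact already in $\cl{\la{S^{i_0}}}^\kwk$. This returns $g$ to $\cl{\lasm}^\kwk$ and to $\sum_{i\in I}\cl{\lasi}^\kwk$ simultaneously, and the hypothesis again forces $g=0$.

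The step I expect to be the crux is controlling the single $L$-valued component that the primitive sequential extension introduces. The bookkeeping in the hypotheses is exactly what guarantees that at most one summand can resist Lemma~\ref{D.flat}, and the delicate point is arguing that this residual piece $s_{i_0}$ is not genuinely new but lands in $\lad$, and hence in $\cl{\la{S^{i_0}}}^\kwk$. This is precisely where property~\ref{pse.resolve} and the containment $D\subseteq\cl{\la{S^{i_0}}}^\kwk$ interact, and where one must first secure $g\in G$ so that $L\cap G=\lad$ can be invoked. The boolean structure keeps the algebraic recombination of the two decompositions of $g$ purely additive and routine.
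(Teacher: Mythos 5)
Your proof is correct and follows essentially the same route as the paper's: both arguments split on where the (at most one) exceptional index with $\hd{\K}{D}{\lasi}<\omega_1$ sits, use Lemma~\ref{D.flat} to collapse every non-exceptional summand (and $g$ itself) into $G$, and then use Lemma~\ref{D.e.flat} together with~\ref{pse.resolve} and $\lad\subseteq\cl{\la{S^{i_0}}}^\kwk$ to pull the one remaining piece back into the old closure. The only cosmetic difference is that when the exceptional index lies in $I$ the paper cites Lemma~\ref{w1.kdepth.sum} to push the summands $g_i$ down into $\cl{\lasi}^\kwk$, whereas you inline that step with a direct application of Lemma~\ref{D.e.flat}; the underlying mechanism is the same.
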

\begin{proof}
Let $g=\sum_{i\in I}g_i$ where $I\subseteq\omega$ is finite,
$g_i\in\cl{\lasi}^{\kw(\K')}$ for $i\in I$, $m\not\in I$, and
$g\in\cl{\lasm}^{\kw(\K')}$.

Suppose $\hd{\K}{D}{\lasm}=\omega_1$. Then $g\in\overline{\lasm}^\kwk$
by Lemma~\ref{D.flat} and $g_i\in\overline{\lasi}^\kwk$ by
Lemma~\ref{w1.kdepth.sum} so $g=0$ by the property of $\CC$.

If $\hd{\K}{D}{\lasm}<\omega_1$ and $g\in G'\setminus G$ then
$\hd{\K}{D}{\lasi}=\omega_1$ and $g_i\in\cl{\lasi}^\kwk$ for $i\in
I$. Thus $g=\sum_{i\in I}g_i\in G$ contradicting the assumption on
$g$. Thus $g\in G$ and by Lemma~\ref{D.e.flat} $g=a+s$ where
$a\in\cl{\lasm}^\kwk$ and $s\in\cl{D}^{\tau(\U')}$. Since $g\in G$,
$s\in G$ and by~\ref{pse.resolve} $s\in\lad$. Thus
$g\in\cl{\lasm}^\kwk$ and $g=0$ by the condition on $\CC$.
\end{proof}
It is convenient to have an upper bound measure
for the sequential order of
the construction.

\begin{definition}\label{seq.scale}
Let $\gku$ be a convenient triple. Let $\CC=\set S^i:i\in\omega.$ be a
family of countable subsets of $G$ and
$\xi=\set\sigma^i:i\in\omega.\subseteq\omega_1$ be a set of successor
ordinals. Call $(\CC, \xi)$ a {\em sequential scale in $\gku$\/} if
the following conditions are satisfied.
\begin{countup}[bcp]
\item\label{seq.scale.ds}
$\CC$ is discrete separated by $\K$ in $G$;

\item\label{seq.scale.direct}
$\cl{\lasm}^\kwk\cap\sum_{i\in I}\cl{\lasi}^\kwk=\{0\}$ for every
finite $I\subseteq\omega\setminus\{m\}$;

\item\label{seq.scale.so}
$\S(\K, S^i, \sigma^i)$ holds for every $i\in\omega$;

\end{countup}
\end{definition}
A trivial observation shows that a collection $\CC=\set
S^i:i\in\omega.$ of subsets will form a sequential scale for {\em any\/}
$\xi\subseteq\omega_1$ provided $\cup_{i\in\omega}S^i$ is independent,
and $\la{\cup_{i\in\omega}S^i}$ is closed
and discrete in $\kwk$.

\begin{lemma}\label{fpse.successor}
Let $\gku$ be a convenient triple, $\CC=\set S^i:i\in\omega.$ be such
that $(\CC,\xi)$ is a sequential scale in $\gku$ for some $\xi\subseteq\omega_1$. Let
$P\subseteq G$ be an infinite subset closed and discrete in
$\kwk$. Suppose $P\subseteq\cl{\lasm}^\kwk$ or $\hd{\K}{P}{\CC}=\omega$.

Then there exists an fpse $\gkup$ of $\gku$ such that $S\to s$ for
some infinite $S\subseteq P$ and $s\in G'\setminus G$.
\end{lemma}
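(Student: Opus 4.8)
The plan is to produce a new point $s$ in the completion of $(G,\tau(\U))$, together with an infinite $S\subseteq P$ converging to $s$, in such a way that the resulting triple $\gkup$ is both a convenient triple and a genuine fpse of $\gku$ over $(D,\CC)$ for a suitably thinned-out independent $D$. The two hypotheses on $P$ (either $P\subseteq\cl{\lasm}^\kwk$ for some $m$, or $\hd{\K}{P}{\CC}=\omega$) are exactly the two alternatives appearing in condition~\ref{fine.pse.pse} of Definition~\ref{fine.pse}, so the construction must branch on these cases only at the point where we invoke the thinning lemmas.

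**First steps: extracting a good free sequence.**

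First I would replace $P$ by a convenient subset. Since $P$ is infinite, closed, and discrete in $\kwk$, Corollary~\ref{d.subgroup} (or directly Lemma~\ref{free.sequence}) lets me pass to an infinite independent $D'\subseteq P$ with $\lad$ closed and discrete. Next I apply Lemma~\ref{free.sequence} to the family $\CC=\set S^i:i\in\omega.$: this yields an infinite independent $D\subseteq D'$ such that $\lad$ is closed and discrete in $\kwk$ and $D$ satisfies~\MM{\lasi,\K} for every $i\in\omega$. In the case $\hd{\K}{P}{\CC}=\omega$, the depth is unchanged under $\ain$ (by the remark after Lemma~\ref{kdshift}), so $\hd{\K}{D'}{\CC}=\omega$ and Lemma~\ref{free.sequence} additionally delivers~\MM{\CC,\K}; in the other case $P\subseteq\cl{\lasm}^\kwk$ forces $D\subseteq\cl{\lasm}^\kwk$, which is the alternative clause of~\ref{fine.pse.pse}. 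Either way condition~\ref{fine.pse.pse} of the fpse definition is arranged, provided I eventually build the pse over this $D$.

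**Building the precompact extension (the main work).**

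Now I must produce $\U'\supseteq\U$ and a limit point $s$ satisfying the pse conditions~\ref{pse.order} and~\ref{pse.resolve} together with the separation clause~\ref{fine.pse.sep}. The heart of the matter is choosing the precompact topology $\tau(\U')$ so that $\cl{\lad}^{\tau(\U')}$ is compact and so that $D$ acquires a single new limit point. I would invoke Lemma~\ref{ct.split} applied to the independent set $D$: it furnishes a countable family $\U_0\supseteq\U$ of clopen finite-index subgroups with $\cl{\la{D'}}^{\tau(\U_0)}\cap G=\la{D'}$ for every $D'\subseteq D$, giving~\ref{pse.resolve}, and simultaneously $\cl{\lade}^{\tau(\U_0)}\cap\cl{\lado}^{\tau(\U_0)}=\varnothing$, which yields the even/odd splitting demanded by~\ref{fine.pse.sep} (in the nontrivial subcase $\delta_D^m<\sigma^m-1$). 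To force an actual convergent sequence $S\subseteq D$ with limit $s\in G'\setminus G$, I would choose $s$ as a point of the compact completion of $(G,\tau(\U_0))$ in the closure of $D$ but outside $G$ itself, then refine $\U_0$ to $\U'$ so that the enlarged group $G'=\la{G\cup\{s\}}$ (or a larger span accommodating the iterated sums $L=\cl{\lad}^{\tau(\U')}$) is a convenient triple with $\K'$ the closure of $\K\cup\{L\}$ under finite sums, per~\ref{pse.order}. Here I would also have to verify $\cl{U}^{\tau(\U')}\in\U'$ for $U\in\U$ and that $L$ is compact in $\kw(\K')$; the latter is automatic once $L$ is $\tau(\U')$-compact.

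**The main obstacle and how I would close it.**

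The delicate point — and where I expect the real difficulty — is the separation clause~\ref{fine.pse.sep}: I must compute $\hd{\K}{D}{\lasom}=\delta_D^m$ and, whenever $\delta_D^m<\sigma^m-1$, guarantee both that $D\subseteq[\lasom]_{\delta_D^m}^\kwk$ and that the even and odd closures of $\lad$ stay disjoint in $\tau(\U')$. The disjointness is handled by Lemma~\ref{ct.split} as above, but arranging $D\subseteq[\lasom]_{\delta_D^m}^\kwk$ may require a further thinning of $D$ tailored to the finitely many indices $m$ for which $\delta_D^m$ is small; I would incorporate this into the application of Lemma~\ref{free.sequence} by running its recursion against the relevant sets $\lasom$ in addition to the $\lasi$. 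Since $(\CC,\xi)$ is a sequential scale, condition~\ref{seq.scale.so} gives $\S(\K,S^m,\sigma^m)$, which is precisely the parity hypothesis needed later for Lemma~\ref{parity} to apply to this extension, so consistency of the construction is preserved. Once $D$, $s$, and $\U'$ are fixed with all of~\ref{pse.order},~\ref{pse.resolve},~\ref{fine.pse.pse}, and~\ref{fine.pse.sep} verified, setting $S=D$ (thinned so that $S\to s$ in $\tau(\U')$, which follows from $s\in\cl{D}^{\tau(\U')}\setminus G$ after passing to a subsequence) completes the proof.
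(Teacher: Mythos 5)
There is a genuine gap at exactly the point you flag as delicate, and your proposed remedy does not close it. You build the primitive sequential extension over a thinned subset $D\subseteq P$ and hope to arrange $D\subseteq[\lasom]_{\delta_D^m}^\kwk$ by further thinning against the sets $\lasom$. This cannot work: by Definition~\ref{kdepth}, $\hd{\K}{D}{\lasom}=\delta$ only gives $D\subseteq[\lasom]_\delta^\kwk+K$ for some $K\in\K$, i.e.\ containment in a \emph{compact translate} of the iterated sequential closure. If every point of $P$ is offset from $[\lasom]_\delta^\kwk$ by a fixed nonzero $k\in K$ (say $p_n=x_n+k$ with $x_n\in\lasom$), then every infinite subset of $P$ still has depth $\delta=0$ over $\lasom$ but no subset of $P$ is contained in $[\lasom]_0^\kwk$, so condition~\ref{fine.pse.sep} fails for \emph{every} choice of $D\subseteq P$; thinning changes neither the depth nor the offset. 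The same problem occurs in your first case: $P\subseteq\cl{\lasm}^\kwk$ does not place $P$ inside $[\lasom]_{\delta}^\kwk$.

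The paper's proof resolves this by not taking the free sequence inside $P$ at all. After first stabilizing $\hd{\K}{P'}{\lasom}=\sigma$ for all infinite $P'\subseteq P$ (using the remark after Lemma~\ref{kdshift}), it decomposes $p_n=d_n+a_n$ with $d_n\in[\lasom]_\sigma^\kwk$ and $a_n$ in a fixed $K\in\K$ with $a_n\to a$ (when $\sigma<\sigma^m-1$; otherwise it sets $D=P$), and then runs Lemma~\ref{free.sequence} and Lemma~\ref{ct.split} on $Q=\set d_n:n\in J.$, which genuinely lies in $[\lasom]_\sigma^\kwk$ and hence satisfies~\ref{fine.pse.sep}. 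The pse is taken over $Q$, with $Q\to q$ in $\tau(\U_0)$, and the convergent subsequence of $P$ demanded by the lemma is recovered afterwards as $S=\set d_n+a_n:n\in J.\to q+a=s\in G'\setminus G$; in particular $S$ and the free sequence $Q$ are different sets, whereas you conflate them by setting $S=D$. Your remaining ingredients (Lemma~\ref{ct.split} for~\ref{pse.resolve} and the even/odd splitting, and the $\hd{\K}{P}{\CC}=\omega$ case via \MM{\CC,\K}) do match the paper, but without the decomposition $p_n=d_n+a_n$ the extension you construct is a pse that need not be an fpse.
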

\begin{proof}
Using the observation immediately following Lemma~\ref{kdshift} and
passing to a subset if neccessary,
assume that $\hd{\K}{P'}{\lasom}=\sigma$ for every infinite
$P'\subseteq P$. Let $P=\set p_n:n\in\omega.$,
$D=\set d_n:n\in\omega.\subseteq[\lasom]_\sigma^\kwk$ if
$\sigma<\sigma^m-1$, $K\in\K$, and $a_n, a\in K$ be such that
$p_n=d_n+a_n$ and $a_n\to a$. If $\sigma\geq\sigma^m-1$ put
$D=P$. Using Lemma~\ref{free.sequence} find an infinite 
$J\subseteq\omega$ such that the set $Q=\set d_n:n\in J.$
satisfies \MM{\lasi, \K} for $i\in\omega$ and
satisfies \MM{\lasom, \K} if $\sigma<\sigma^m-1$. Note that
$\hd{\K}{Q}{\lasom}=\sigma$ by the assumption on $P$ and
Lemma~\ref{kdshift}.

Use Lemma~\ref{ct.split} to find a family of clopen subgroups
$\U_0\supseteq\U$ of finite index in $G$ such that
$\cl{\la{Q'}}^{\tau(\U_0)}\cap G=\la{Q'}$ for every infinite $Q'\subseteq Q$
and
$\cl{\le{Q}}^{\tau(\U_0)}\cap\cl{\lo{Q}}^{\tau(\U_0)}=\varnothing$ if
$\sigma<\sigma^m-1$ (all closures with respect to $\tau(\U_0)$ are
assumed to be taken in the appropriate compact completion of $G$). By
passing to a subset if necessary assume $Q\to
q$ in $\tau(\U_0)$ for some $q$ in the compact completion of $G$. Put
$L=\cl{\la{Q}}^{\tau(\U_0)}$, let $G'=G+L$, and let $\K'$ be the closure
of $\K\cup\{L\}$ under finite sums and intersections. Let $\U'$ be the
trace of $\U_0$ on
$G'$. Now~\ref{pse.order}, \ref{pse.resolve}, \ref{fine.pse.pse},
and~\ref{fine.pse.sep}
follow from the construction and $S=\set d_n+a_n:n\in J.\to q+a=s$ in
$\tau(\U')$ so $\gkup$ is the desired fpse.

If $\hd{\K}{P}{\CC}=\omega$ use Lemma~\ref{free.sequence} to find an infinite
$S\subseteq P$ that satisfies \MM{\CC, \K} and use an argument similar
to the one above to construct $\gkup$.
\end{proof}

\begin{lemma}\label{ssc.resolve}
Let $\gku$ be a convenient triple, $\CC=\set S^i:i\in\omega.$ be such
that $(\CC,\xi)$ is a sequential scale in $\gku$ for some $\xi$. Let
$I\subseteq\omega$ be finite, and let $U\in\U$.

Then there exists a $U'\subseteq U$ such that $(\sum_{i\in
I}\cl{\lasi}^\kwk)\cap U'=\sum_{i\in I}(\cl{\lasi}^\kwk\cap U)$ and
$U'$ is a clopen subgroup of finite index in $(G,\kw(\K))$.
\end{lemma}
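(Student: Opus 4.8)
The plan is to exploit the near-directness of the sum guaranteed by~\ref{seq.scale.direct} together with the closedness of sums guaranteed by~\ref{seq.scale.ds}, and then to carve out the desired subgroup using the boolean Hahn--Banach lemma (Lemma~\ref{bhb}). First I would record two structural facts. Since each $K\in\K$ is compact in $\tau(\U)$ and, by Lemma~\ref{kw.groups}, $\kwk$ is the finest group topology keeping every $K$ compact, we have $\tau(\U)\subseteq\kwk$; hence the finite-index subgroup $U\in\U$ is clopen in $\kwk$, and each $\cl{\lasi}^\kwk\cap U$ has finite index in $\cl{\lasi}^\kwk$. Next, writing $W=\sum_{i\in I}\cl{\lasi}^\kwk$, condition~\ref{seq.scale.direct} makes this sum \emph{internally direct}: if $\sum_{i\in I}g_i=0$ with $g_i\in\cl{\lasi}^\kwk$, then for each $m\in I$ one has $g_m=\sum_{i\in I\setminus\{m\}}g_i\in\cl{\lasm}^\kwk\cap\sum_{i\in I\setminus\{m\}}\cl{\lasi}^\kwk=\{0\}$, so every $g_i=0$. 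Thus each element of $W$ has a unique decomposition along the $\cl{\lasi}^\kwk$.

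Set $V_i=\cl{\lasi}^\kwk\cap U$ and $H=\sum_{i\in I}V_i$, which by the above is the internal direct sum of the $V_i$; it satisfies $H\subseteq U$, and since each index $[\cl{\lasi}^\kwk:V_i]$ is finite it has finite index in $W$. Each $V_i$ is closed in $\kwk$ (as $\cl{\lasi}^\kwk$ is closed and $U$ is clopen), so Lemma~\ref{ds.closed.sums} (applicable because $\CC$ is discrete separated by~\ref{seq.scale.ds}) shows $H$ is closed in $\kwk$. I would then fix representatives $w_1,\dots,w_k$ of the finitely many nonzero cosets of $H$ in $W$; each $w_j\in G\setminus H$. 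Applying Lemma~\ref{bhb} to the closed subgroup $H$ and the point $w_j$ yields a clopen subgroup $U_j$ of finite index in $\kwk$ with $H\subseteq U_j$ and $w_j\notin U_j$. Put $U'=U\cap\bigcap_{j\le k}U_j$.

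Then $U'$ is a finite intersection of clopen finite-index subgroups of $(G,\kwk)$, hence itself clopen of finite index, with $U'\subseteq U$ and $H\subseteq U'$. To finish I would argue $W\cap U'=H$: this is a subgroup of $W$ containing $H$, and if it met some nonzero coset $w_j+H$ then $w_j\in U'\subseteq U_j$, contradicting $w_j\notin U_j$; hence $W\cap U'$ contains no element outside $H$, giving $W\cap U'=H$. Unwinding the notation, $\bigl(\sum_{i\in I}\cl{\lasi}^\kwk\bigr)\cap U'=H=\sum_{i\in I}(\cl{\lasi}^\kwk\cap U)$, as required (the degenerate cases $I=\varnothing$ or $H=W$ simply give $U'=U$). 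The only genuinely nontrivial inputs are the directness of the sum, from~\ref{seq.scale.direct}, and the closedness of $H$ needed to invoke Lemma~\ref{bhb}, from~\ref{seq.scale.ds} via Lemma~\ref{ds.closed.sums}; once these are secured the separation argument is routine, and I expect the closedness of $H$ to be the step that most needs care.
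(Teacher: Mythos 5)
Your argument is correct and is essentially the paper's own proof: form $H=\sum_{i\in I}(\cl{\lasi}^\kwk\cap U)$, note it is closed by Lemma~\ref{ds.closed.sums} and of finite index in $\sum_{i\in I}\cl{\lasi}^\kwk$, separate the finitely many nontrivial coset representatives from $H$ using Lemma~\ref{bhb}, and intersect the resulting subgroups with $U$. The only extra ingredient you use, the internal directness of the sum from~\ref{seq.scale.direct}, is harmless but not actually needed (finite index of $H$ in the sum already follows from each $\cl{\lasi}^\kwk\cap U$ having finite index in $\cl{\lasi}^\kwk$).
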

\begin{proof}
The group $H=\sum_{i\in I}(\cl{\lasi}^\kwk\cap U)$ is closed in $\kwk$
by Lemma~\ref{ds.closed.sums}
and is of finite index in $\sum_{i\in I}\cl{\lasi}^\kwk$. Let
$g_1,\ldots,g_k\in G\setminus H$ be such that $H+\set g_i:i\leq
k.\supseteq\sum_{i\in I}\cl{\lasi}^\kwk$. Use Lemma~\ref{bhb} to construct an open
subgroup of finite index $U_i$, $i=1,\ldots,k$ such that $H\subseteq
U_i$ and $g_i\not\in U_i$. Put $U'=\cap_{i\leq k}U_i\cap U$.
\end{proof}
To keep the sequential order low one must be able to construct
sequences converging to some points that are already present in the
group.

\begin{lemma}\label{z.group}
Let $\gku$ be a convenient triple, $\CC=\set S^i:i\in\omega.$ and
$\xi=\set\sigma^i:i\in\omega.\subseteq\omega_1$ be such that
$(\CC,\xi)$ is a sequential scale in $\gku$. Let $I\subseteq\omega$ be
finite, $D^i=\set d_n^i:n\in\omega.\subseteq\cl{\lasi}^\kwk$ be such
that $D^i\to0$ in $\tau(\U)$ for every $i\in I$, and
$\hd{\K}{D^{i'}}{\lo{S^{i'}}}\geq\sigma^{i'}-1>0$ for some $i'\in I$.

Then there exists an infinite $J\subseteq\omega$ and an fpse $\gkup$ of
$\gku$ over $D=\set d_n^{i'}:n\in J.$ such that $D\to0$ in $\kw(\K')$
and $\set d_n^i:n\in J.\to0$ in $\tau(\U')$ for every $i\in I$.
\end{lemma}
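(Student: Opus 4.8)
The plan is to imitate the construction of Lemma~\ref{fpse.successor}, the new features being that the fresh convergent sequence must have limit $0\in G$ (a point already present) rather than a new point, and that the auxiliary sequences $\set d_n^i:n\in\omega.$, $i\in I$, must be kept convergent to $0$ in the refined precompact topology. The free sequence of the extension will be a subsequence of $D^{i'}$ itself.

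First I would produce the free sequence. Running the selection procedure from the proof of Lemma~\ref{free.sequence} directly on $D^{i'}$ against the family $\langle\lo{S^{i'}},\la{S^0},\la{S^1},\dots\rangle$ (with $\lo{S^{i'}}$ in the role of $A_0$), I extract an infinite independent $Q_0\subseteq D^{i'}$. At stage $n$ the required point exists: were it to fail, $D^{i'}$ would lie mod finite in $[\lo{S^{i'}}]_{\sigma_n}^\kwk+K$ for some $K\in\K$ and some $\sigma_n<\sigma^{i'}-1$, forcing $\hd{\K}{D^{i'}}{\lo{S^{i'}}}\leq\sigma_n<\sigma^{i'}-1$ against the hypothesis; in particular the degenerate possibility $D^{i'}\ain K$ is ruled out. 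The same selection makes $\la{Q_0}$ closed and discrete in $\kwk$, yields $\MM{\lasi,\K}$ for every $i\in\omega$, and, as in Lemma~\ref{free.sequence}, preserves depth so that $\hd{\K}{Q_0}{\lo{S^{i'}}}\geq\sigma^{i'}-1$.

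Next I would check that no parity/separation clause of an fpse is actually needed. Condition~\ref{fine.pse.pse} holds since $Q_0\subseteq\cl{\la{S^{i'}}}^\kwk$ gives the second alternative and the $\MM{\lasi,\K}$ are in hand. For~\ref{fine.pse.sep} I must see $\hd{\K}{Q_0}{\lo{S^m}}\geq\sigma^m-1$ for every $m$. For $m=i'$ this is the preserved lower bound. For $m\neq i'$ I would show $\hd{\K}{Q_0}{\lo{S^m}}=\omega_1$: if $Q_0\subseteq[\lo{S^m}]_\beta^\kwk+K$, each $q\in Q_0\subseteq\cl{\la{S^{i'}}}^\kwk$ splits as $q=x+k$ with $x\in\cl{\la{S^m}}^\kwk$, $k\in K$, so $q+x=k\in K$; but $q$ and $x$ come from different factors of the direct sum, and since $Q_0$ is closed discrete the $x$'s are either finitely many (forcing $Q_0$ into a compact set) or accumulate or form a closed discrete sequence, each case contradicting discrete separation~\ref{seq.scale.ds} or closed discreteness of $Q_0$. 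Hence~\ref{fine.pse.sep} is vacuous and I need not arrange any even/odd separation of $\le{Q_0}$ from $\lo{Q_0}$.

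The core is the construction of the refined precompact topology $\tau(\U_0)$, $\U_0\supseteq\U$, with $\cl{\la{Q_0'}}^{\tau(\U_0)}\cap G=\la{Q_0'}$ for every $Q_0'\subseteq Q_0$ (this is~\ref{pse.resolve}), while $Q_0\to0$ and every $\set d_n^i:n\in\omega.\to0$ ($i\in I$) persist. Resolving $\la{Q_0'}$ is done by Lemma~\ref{ct.split}, using only the resolution of the subgroups $\la{Q_0\setminus\{q\}}$ and omitting the separation of $\le{Q_0}$ from $\lo{Q_0}$ (unnecessary by the previous paragraph); each resolving subgroup then contains a cofinite tail of $Q_0$, so $Q_0\to0$ survives — omitting the even subgroup is exactly what keeps this true. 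The delicate part is the other convergences. Since $Q_0$ lives in the single factor $\cl{\la{S^{i'}}}^\kwk$ of the direct sum~\ref{seq.scale.direct}, I would first apply Lemma~\ref{ssc.resolve} to the finite set $I$ to make the subgroups respect the decomposition $\sum_{i\in I}\cl{\lasi}^\kwk$, and then carry out the resolution of $\la{Q_0'}$ inside the $\cl{\la{S^{i'}}}^\kwk$–coordinate only, leaving the coordinates $i\in I\setminus\{i'\}$ as in $\U$; discrete separation~\ref{seq.scale.ds} (through Lemma~\ref{ds.closed.sums}) guarantees these coordinatewise subgroups are honest closed subgroups of finite index. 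Then each $d_n^i$ ($i\neq i'$) meets the new subgroups exactly as it met those of $\U$, so $\set d_n^i:n\in\omega.\to0$ is inherited from $\tau(\U)$. I expect this simultaneous preservation of several convergences to $0$ — in contrast with the single new convergence of Lemma~\ref{fpse.successor} — to be the main obstacle.

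Finally, after passing to a common subsequence $J$ along which $Q=\set d_n^{i'}:n\in J.$ and every $\set d_n^i:n\in J.$ converge in the metrizable $\tau(\U_0)$, I would set $L=\cl{\la{Q}}^{\tau(\U_0)}$, $G'=G+L$, take $\K'$ to be the closure of $\K\cup\{L\}$ under finite sums, and $\U'$ the trace of $\U_0$. By construction $\gkup$ is a convenient triple and a pse of $\gku$ over $D=Q$ satisfying~\ref{pse.order} and~\ref{pse.resolve}, hence an fpse by the second and third paragraphs. Since $L\in\K'$ is compact and $D=Q\to0$ in $\tau(\U')$, we obtain $D\to0$ in $\kw(\K')$, while $\set d_n^i:n\in J.\to0$ in $\tau(\U')$ for every $i\in I$ by the choice of $\U_0$, as required.
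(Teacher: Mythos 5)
Your proposal is correct and its overall skeleton matches the paper's: extract a well-behaved free subsequence of $D^{i'}$, refine the precompact topology so that all the $D^i$ still converge to $0$ while $\la{Q_0'}$ is resolved, adjoin $L=\cl{\la{Q}}^{\tau(\U')}$, and observe that the separation clause~\ref{fine.pse.sep} is vacuous (your explicit verification that $\hd{\K}{Q_0}{\lo{S^m}}=\omega_1$ for $m\not=i'$ via discrete separation is a point the paper leaves implicit). The one place where you genuinely diverge is the step you yourself flag as the main obstacle. The paper handles the simultaneous convergences by thinning $J$ so that each $D(i)$ either converges in $\kwk$ or generates a closed discrete subgroup, and then resolving the \emph{joint} group $\la{\cup_{i\in I'}D(i)\setminus F_i}$ (over all finite $F_i$): every resolving subgroup then contains a cofinite tail of each closed-discrete $D(i)$ by fiat, the $\kwk$-convergent ones are absorbed because the subgroups are $\kwk$-open, and \ref{pse.resolve} for $D=D(i')$ alone is recovered by intersecting over the $F_i$, $i\not=i'$, using independence. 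You instead resolve only $\la{Q_0'}$ and keep the other coordinates under control via the direct-sum structure. That route works, but the phrase ``leaving the coordinates $i\in I\setminus\{i'\}$ as in $\U$'' needs to be implemented as a padding step: each new subgroup must be obtained by applying Lemma~\ref{bhb} to a closed subgroup of the form $\la{Q_0\setminus F}+\sum_{i\in I\setminus\{i'\}}(U\cap\cl{\lasi}^\kwk)$ with $U\in\U$ (closed by Lemma~\ref{ds.closed.sums}, missing the point to be excluded by~\ref{seq.scale.direct}, and with $\bigcap_{U\in\U}$ of these equal to $\la{Q_0\setminus F}$ so that the countable $\K^+$-scheme of Lemma~\ref{ct.resolve} still resolves); Lemma~\ref{ssc.resolve} by itself only adjusts subgroups below a given $U$ and does not perform the resolution. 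With that made explicit your argument is complete, and it has the mild advantage of not needing the paper's case split between the $\kwk$-convergent and closed-discrete $D(i)$'s.
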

\begin{proof}
Pick an infinite $J\subseteq\omega$ such that either $D(i)=\set
d_n^i:n\in J.\to0$ or $\la{D(i)}$ is closed and discrete in
$\kwk$ and $D(i')$ satisfies \MM{\lasi,\K} for every
$i\in\omega$. Since $\hd{\K}{D^{i'}}{\lo{S^{i'}}}>0$ the set
$\la{D(i')}$ is infinite, closed and discrete in $\kwk$. By taking a subset of
$J$ if necessary assume that $D(i')$ satisfies \MM{\lasoi,\K} and the
set $\cup_{i\in I}D(i)$ is independent (using~\ref{seq.scale.direct}). Since
$\CC$ is discrete separated and $D(i)\subseteq\cl{\lasi}^\kwk$, the
group $\la{\cup_{i\in I'}D(i)}$ is closed and discrete in $\kwk$ where
$I'=\set i\in I:\la{D(i)}\hbox{ is closed discrete in }\kwk.$.

Use Lemma~\ref{bhb} and the argument in Lemma~\ref{ct.resolve} to find
a $\U'\supseteq\U$ such that for every $U\in\U'$ and every $i\in I$
the set $D(i)\setminus U$ is finite and $\cl{\la{\cup_{i\in
I'}D(i)\setminus F_i}}^{\tau(\U')}\cap G=\la{\cup_{i\in
I'}D(i)\setminus F_i}$ for any finite $F_i\subseteq G$. Then
$D(i)\to0$ in $\tau(\U')$.

Put $L=\cl{D}^{\tau(\U')}$ and let $\K'$ be the closure of
$\K\cup\{L\}$ under finite intersections and
sums. Now $D(i')\to0$ in $\kw(\K')$, \ref{pse.order}
and~\ref{pse.resolve} hold, and, since 
$\hd{\K}{D}{\lasoi}\geq\sigma^{i'}-1$,
properties~\ref{fine.pse.pse}--\ref{fine.pse.sep} hold so $\gkup$
is an fpse of $\gku$ over $D$.
\end{proof}

The next lemma presents the basic structure of the example.
\begin{lemma}\label{fpsestack}
Let $\gku$ be a convenient triple, $\CC=\set S^i:i\in\omega.$ and
$\xi=\set\sigma^i:i\in\omega.$ be such that $(\CC,\xi)$ is a
sequential scale in $\gku$. Let $\set\hku{^\gamma}:\gamma<\alpha.$ be
an fpse-chain over $\gku$ along $\set D_\gamma:\gamma<\alpha.$
relative to $\CC$.

Then $(\CC,\xi)$ is a sequential scale in $\hku{^{<\alpha}}$ and the
following properties hold.
\begin{countup}[bcp]
\item\label{fc.so}
$\UU_K(\K^{<\alpha}, S^i, \sigma)=\UU_K(\K^{\gamma'},
S^i, \sigma)=\UU_K(\K^\gamma, S^i, \sigma)$ for every $K\in\K^\gamma$,
$\gamma\leq\gamma'<\alpha$, and $\sigma<\sigma^i-1$;

\item\label{fc.cl}
if $g\in H^\gamma\setminus H^{<\gamma}$ for $\gamma<\alpha$ and
$g\in\cl{\lasi}^{\kw(\K^{<\alpha})}$ then
$g\in\cl{\lasi}^{\kw(\K^\gamma)}$;

\item\label{fc.scl}
if $g\in H^\gamma\setminus H^{<\gamma}$ for some $\gamma<\alpha$ and
$\so(g, \lasoi, \kw(\K^{<\alpha}))=\sigma<\sigma^i$ then
$\so(g, \lasoi, \kw(\K^\gamma))=\sigma$;

\end{countup}
\end{lemma}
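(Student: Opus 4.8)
The plan is to prove the three numbered claims together with the scale-preservation statement by a single transfinite induction on $\alpha$, reducing each claim to the lemmas already established about single primitive sequential extensions and their chains. The key observation is that an fpse-chain is exactly the setting in which all the technical lemmas of the previous sections apply simultaneously: by Definition~\ref{fine.pse} each $D_\gamma$ satisfies \MM{\lasi,\K^{<\gamma}} for every $i$, satisfies the parity-separation condition~\ref{fine.pse.sep}, and lies either in some $\cl{\lasm}^{\kw(\K^{<\gamma})}$ or has $\hd{\K^{<\gamma}}{D_\gamma}{\CC}=\omega$. Thus the hypotheses of Lemmas~\ref{psestack}, \ref{wparity}, \ref{so.reflect}, \ref{ds.limit}, and \ref{pse.sum} are met at every stage, and the work consists mostly of checking that the bookkeeping matches each lemma's statement.

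\textbf{Preservation of the scale.}
First I would verify that $(\CC,\xi)$ remains a sequential scale in $\hku{^{<\alpha}}$, i.e.\ that conditions~\ref{seq.scale.ds}--\ref{seq.scale.so} of Definition~\ref{seq.scale} survive. Discrete separation~\ref{seq.scale.ds} is handled by Lemma~\ref{ds.limit} (for the limit bookkeeping) together with Lemmas~\ref{ds.stable} and~\ref{ds.flat} at successor steps; the hypotheses of Lemma~\ref{ds.limit} are precisely~\ref{fine.pse.pse} restated. The direct-sum condition~\ref{seq.scale.direct} propagates through each single extension by Lemma~\ref{pse.sum} (whose hypotheses are again~\ref{fine.pse.pse}) and through limits by the induction hypothesis applied cofinally, since any witnessing relation $g=\sum_{i\in I}g_i$ lives in a single $\K^\gamma$ by the $\kw$ property. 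Condition~\ref{seq.scale.so}, namely $\S(\K^{<\alpha},S^i,\sigma^i)$, is delivered by Lemma~\ref{wparity} applied to each $S^i$ separately: its hypotheses~\ref{wparity.order} and~\ref{wparity.par} are exactly~\ref{ipse.order}, \ref{ipse.sep}, together with the uniformity built into Lemma~\ref{D.bind}, which also supplies the equality of the $\UU_K$'s needed for claim~\ref{fc.so}.

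\textbf{The three numbered claims.}
Claim~\ref{fc.so} is then immediate from the conclusion of Lemma~\ref{wparity}, which defines $\UU_K(\K^{<\alpha},S^i,\sigma)$ by stabilizing the values along the chain; the two-sided equality $\UU_K(\K^{<\alpha},\cdot)=\UU_K(\K^{\gamma'},\cdot)=\UU_K(\K^\gamma,\cdot)$ just records that the stabilized value does not depend on where past stage $\gamma$ one reads it off, which follows from Lemma~\ref{D.bind}'s clause $\UU_K(\K',S,\sigma)=\UU_K(\K,S,\sigma)$. Claim~\ref{fc.cl} is exactly the conclusion of Lemma~\ref{so.reflect} with $\lasm$ replaced by $\lasi$; I would simply check that the reflection hypotheses (the $\alphaDm=\omega_1$ or $D_\gamma\subseteq\cl{\lasm}^{\kw(\K^{<\gamma})}$ dichotomy, and \MM{\lasm,\K^{<\gamma}}) coincide with~\ref{fine.pse.pse}. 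Claim~\ref{fc.scl} is the conclusion of Lemma~\ref{psestack}, whose parity hypotheses~\ref{ipse.order}--\ref{ipse.par} are supplied by~\ref{fine.pse.sep} together with the already-established $\S(\K^\gamma,S^i,\sigma^i)$; here I would invoke $\S(\K^\gamma,S^i,\sigma^i)$ at each intermediate $\gamma$, which holds by the scale-preservation step applied to the initial segment below $\alpha$.

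\textbf{Main obstacle.}
The delicate point will be managing the \emph{simultaneity} over all $i\in\omega$ while each individual lemma is stated for a single set $S$ (or $S^m$). Because the scale conditions~\ref{seq.scale.direct} and the discrete-separation property couple the different $S^i$ together, I expect the real care to lie in verifying that the single-index hypotheses of Lemmas~\ref{psestack}, \ref{so.reflect}, and \ref{pse.sum} all hold \emph{uniformly} under the one-at-a-time exceptional-index clause of~\ref{fine.pse.pse} (``$D$ satisfies \MM{\K,\CC} or $D\subseteq\cl{\lasm}^\kwk$ for some $m$''), i.e.\ that at each stage at most one index $m$ is ``close'' to $D_\gamma$ and every other index remains at depth $\omega_1$, so that Lemma~\ref{D.flat}-type flatness applies to all $i\neq m$. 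Isolating this single exceptional index at each stage, and confirming the depth dichotomy is inherited at limit stages (as in Lemma~\ref{ds.limit}'s proof, where $\la{D_\gamma}$ closed discrete forces $\alphaDm<\omega_1$ for at most one $m$), is where the argument needs the most attention; once that is in hand, the three claims reduce to quoting the corresponding lemmas.
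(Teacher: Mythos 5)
Your proposal is correct and follows essentially the same route as the paper's proof: a transfinite induction on $\alpha$ split into successor and limit cases, with discrete separation handled by Lemmas~\ref{ds.stable}, \ref{ds.flat}, and \ref{ds.limit}, the direct-sum condition by Lemma~\ref{pse.sum} and reflection to a single stage at limits, $\S(\K^{<\alpha},S^i,\sigma^i)$ and claim~\ref{fc.so} by Lemmas~\ref{D.bind} and \ref{wparity}, claim~\ref{fc.cl} by Lemma~\ref{so.reflect}, and claim~\ref{fc.scl} by Lemma~\ref{psestack}. You also correctly isolate the one genuinely delicate point, namely that the single-exceptional-index clause of~\ref{fine.pse.pse} is what lets the single-index lemmas apply uniformly across all $i\in\omega$.
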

\begin{proof}
Suppose the statement has been proved for all
$\alpha'<\alpha$. Suppose $\alpha$ is a successor ordinal such that
$\alpha'=\alpha-1$. Then by the hypothesis $(\CC,\xi)$ is a sequential
scale in $\hku{^{<\alpha'}}$ and $\hku{^{<\alpha}}=\hku{^{\alpha'}}$ is an fpse of
$\hku{^{<\alpha'}}$ over some $D_{\alpha'}\subseteq
H^{<\alpha'}$. Now~\ref{seq.scale.ds} and~\ref{fc.cl} hold by
Lemmas~\ref{ds.stable}, \ref{ds.flat}, and~\ref{so.reflect},
and~\ref{fine.pse.pse}. Property~\ref{seq.scale.direct} follows from
Lemma~\ref{pse.sum}
and~\ref{fine.pse.pse}. Property~\ref{seq.scale.so} follows from
Lemma~\ref{D.bind} applied to every $S^i$ and
property~\ref{fine.pse.sep}. Moreover, by Lemma~\ref{D.bind} one may
choose $\UU_K(\K^{<\alpha}, S^i, \sigma)=\UU_K(\K^{\alpha'},
S^i, \sigma)=\UU_K(\K^\gamma, S^i, \sigma)$ for any $K\in\K^\gamma$,
$\sigma<\sigma^i-1$, $i\in\omega$, $\gamma<\alpha$ so~\ref{fc.so}
holds. Now~\ref{fc.scl} follows from
Lemma~\ref{psestack}, \ref{fine.pse.pse}, and~\ref{fine.pse.sep}.

Let $\alpha$ be a limit ordinal. Then~\ref{seq.scale.ds} holds by
Lemma~\ref{ds.limit}, the hypothesis,
and~\ref{fine.pse.pse}. Property~\ref{fc.cl} holds by
Lemma~\ref{so.reflect} and~\ref{fine.pse.pse}.

Let $I\subseteq\omega$ be finite and
$g_i\in\cl{\lasi}^{\kw(\K^{<\alpha})}$ for $i\in I$, $m\not\in I$ and
$g_m\in\cl{\lasm}^{\kw(\K^{<\alpha})}$. Then
$g_i\in\cl{\lasi}^{\kw(\K^\gamma)}$ where $\gamma=\max_{i\in
I\cup\{m\}}\gamma_i$ and $g_i\in H^{\gamma_i}\setminus H^{<\gamma_i}$
for $i\in I\cup\{m\}$. Then $\gamma<\alpha$ and
property~\ref{seq.scale.direct} holds by the
hypothesis. Properties~\ref{seq.scale.so} and~\ref{fc.so} hold by
Lemma~\ref{wparity}, \ref{fine.pse.sep}, and the
hypothesis. Property~\ref{fc.scl} holds by Lemma~\ref{psestack}.
\end{proof}

\section{Stability}
A number of arguments below require that new sequences are added in a
strict order (both to make the recursion work, as well as to
keep the convergence structure intact) depending on the sequential
order of the points involved.

The sequential order may change as the new sequences are added, however, so a
mechanism to keep the order fixed is needed. One such mechanism is
described below.

While the results below hold for any type of fpse-chains, the proofs
are more involved and the additional generality is not required in the
constructions that follow. We thus introduce a narrow subclass of
fpse-chains.

\begin{definition}\label{finite.type}
Let $\gku$ be a convenient triple and $\CC=\set S^i:i\in\omega.$ be
such that $(\CC,\xi)$ is a sequential scale in $\gku$ for some
$\xi$. Call an fpse-chain
$C=\set\hku{^\gamma}:\gamma^0\leq\gamma\leq\gamma^1.$ above $\gku$ along
$\set D_\gamma:\gamma^0\leq\gamma\leq\gamma^1.$ relative to
$(\CC,\xi)$ {\em of finite type\/} if
$D_\gamma\subseteq\cl{\la{S^{i(\gamma)}}}^{\kw(\K^{<\gamma})}$ for
every $\gamma$.

If $i(\gamma)=m$ for every $\gamma$ say that $C$ is {\em close to
$\lasm$}. Otherwise, if $i(\gamma)\not=m$ for every $\gamma$ say that
$C$ is {\em away from $\lasm$}.
\end{definition}
The constructions below add new sequences to several `summands' of the
`direct sum'. The next construction presents a way to rearrange the
order in which new sequences are added.

\begin{definition}\label{m.reduce}
Let $\gku$ be a convenient triple and $\CC=\set S^i:i\in\omega.$ be
such that $(\CC,\xi)$ is a sequential scale in $\gku$ for some
$\xi$. Let $C=\set\hku{^\gamma}:\gamma^0\leq\gamma\leq\gamma^1.$ be an
fpse-chain of finite type above $\gku$ along $\set
D_\gamma:\gamma^0\leq\gamma\leq\gamma^1.$ relative to $(\CC,\xi)$.

Let $m\in\omega$ and
$\gamma^{(m)}=\set\gamma^0\leq\gamma\leq\gamma^1:
D_\gamma\subseteq\cl{\lasm}^{\kw(\K^{<\gamma})}.$.
Let $\mu<\omega_1$ be such that $h:\mu\to\gamma^{(m)}$ is the unique
1-to-1 monotone map. Define a chain
$C^{(m)}(\mu')=\set\hku{^\lambda_{(m)}}:\lambda<\mu'\leq\mu.$ above $\gku$ along $\set
D_{h(\lambda)}:\lambda<\mu'.$ recursively as follows.

Let $C^{(m)}(\varnothing)=\varnothing$. Suppose $C^{(m)}(\mu'')$ has
been defined for all $\mu''<\mu'$. If $\mu'$ is a limit ordinal, put
$C^{(m)}(\mu')=\cup_{\mu''<\mu'}C^{(m)}(\mu'')$. Otherwise
$\mu'=\mu''+1$ for some $\mu''<\omega_1$. Put
$L_{\mu'}=\cl{D_{h(\mu')}}^{\tau(\U_{h(\mu')})}$. Let
$H^{\mu'}_{(m)}=H^{\mu''}_{(m)}+L_{\mu'}$ and let $\K^{\mu'}_{(m)}$ be
the closure of $\K^{\mu''}_{(m)}\cup\{L_{\mu'}\}$ under finite sums
and intersections. Let $\U^{\mu'}_{(m)}=\set U\cap
H^{\mu'}_{(m)}:U\in\U^{h(\mu')}.$. In the definition above,
$H^\lambda_{(m)}\subseteq H^{\gamma^1}$ and
$\K^\lambda_{(m)}\subseteq\K^{\gamma^1}$. It is an easy argument to see that
each $\hku{^\lambda_{(m)}}$ is a convenient triple.

Call $C^{(m)}=C^{(m)}(\mu)$ the {\em $m$-reduction of $C$\/}. 
\end{definition}

It is not immediately clear that the $m$-reduction is an
fpse-chain. This is the subject of Lemma~\ref{m.reduce.fpse}.

The lemma below holds for any pse-chains, thus the sequential scale is
only needed to narrow its statement to fpse-chains and plays no other
role in the proof.

\begin{lemma}\label{ct.compacts}
Let $\gku$ be a convenient triple and $\CC=\set S^i:i\in\omega.$ be
such that $(\CC,\xi)$ is a sequential scale in $\gku$ for some
$\xi$. Let $\set\hku{^\gamma}:\gamma\leq\gamma^0.$ be an
fpse-chain above $\gku$ along $\set
D_\gamma:\gamma\leq\gamma^0.$ relative to $(\CC,\xi)$. Let
$L_\gamma=\cl{D_\gamma}^{\tau(\U_\gamma)}$. 

Let $K\subseteq H^\gamma$ be compact in $\kw(\K^\gamma)$. Then
$K\subseteq K'+\sum_{\lambda\in F}L_\lambda$ where $K'\in\K$ and
$F\subseteq\gamma+1$ is finite.
\end{lemma}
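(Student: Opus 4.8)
The plan is to reduce the statement to two facts about the determining family $\K^\gamma$: that it is generated from $\K$ by adjoining the compact subgroups $L_\lambda$ along the chain, and that a countable family of compact subgroups closed under finite sums makes its $\kw$ topology hemicompact. First I would record the structure of $\K^\gamma$. By~\ref{pse.order} each fpse step adjoins a single compact subgroup $L_\lambda=\cl{\la{D_\lambda}}^{\tau(\U^\lambda)}$ and replaces the previous family by the closure of it together with $L_\lambda$ under finite sums and intersections, while at limit stages $\K^{<\gamma}=\cup_{\sigma<\gamma}\K^\sigma$ by the definition of $\hku{^{<\gamma}}$. An easy induction along the chain then shows that $\K^\gamma$ is exactly the closure of $\K\cup\set L_\lambda:\lambda\leq\gamma.$ under finite sums and intersections. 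Since the chain is countable, $\K^\gamma$ is a countable family of compact subgroups, closed under finite sums, with $\cup\K^\gamma=H^\gamma$; by Lemma~\ref{kw.groups} it determines the Hausdorff group topology $\kw(\K^\gamma)$.

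The main step is to show that the compact set $K\subseteq H^\gamma$ is contained in a \emph{single} member of $\K^\gamma$. I would enumerate $\K^\gamma=\set M_n:n\in\omega.$ and pass to the increasing sequence $K_n=\sum_{i\leq n}M_i\in\K^\gamma$, which still determines $\kw(\K^\gamma)$ and has union $H^\gamma$, exhibiting $H^\gamma$ as the $\kw$ direct limit of the $K_n$. If $K$ were contained in no $K_n$, then choosing $x_n\in K\setminus K_n$ would, by monotonicity of the $K_n$, yield a set meeting each $K_m$ in a finite (hence closed) set; this set is therefore closed and discrete in $\kw(\K^\gamma)$, it is infinite since every point lies in some $K_m$, and it is contained in the compact set $K$ — impossible in a Hausdorff space. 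Hence $K\subseteq M$ for some $M\in\K^\gamma$. This hemicompactness of $\kw$ spaces is the one genuinely topological point, and the place where Hausdorffness and the increasing rearrangement are essential; I expect it to be the only real obstacle.

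It then remains to bound an arbitrary $M\in\K^\gamma$. Arguing by induction on the generation of $\K^\gamma$ from $\K\cup\set L_\lambda:\lambda\leq\gamma.$, I would show that every $M\in\K^\gamma$ satisfies $M\subseteq K'+\sum_{\lambda\in F}L_\lambda$ for some $K'\in\K$ and finite $F\subseteq\gamma+1$. Members of $\K$ give $F=\varnothing$ and the $L_\lambda$ are handled using $0\in K'$ for any $K'\in\K$; the sum case uses that $\K$ is closed under finite sums, so $K_A'+K_B'\in\K$ and the index sets merely union; and the intersection case is immediate since $A\cap B\subseteq A$. Combining this with the previous paragraph gives $K\subseteq M\subseteq K'+\sum_{\lambda\in F}L_\lambda$, which is the assertion. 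Apart from the hemicompactness step, everything is routine bookkeeping along the chain.
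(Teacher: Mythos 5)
Your proof is correct and matches the paper's intent: the paper's own proof is the one-line ``Induction on $\gamma^0$,'' and your argument supplies exactly the two ingredients that induction needs, namely hemicompactness of the $\kw$ topology determined by a countable family of compact subgroups and the bookkeeping that every member of $\K^\gamma$ is generated from $\K\cup\set L_\lambda:\lambda\leq\gamma.$ by finite sums and intersections. Your reading of $L_\lambda$ as the compact \emph{subgroup} $\cl{\la{D_\lambda}}^{\tau(\U^\lambda)}$ (so that $L_\lambda+L_\lambda=L_\lambda$) is the one the statement requires, consistent with~\ref{pse.order}.
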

\begin{proof} Induction on $\gamma^0$.
\end{proof}
Further similarity with the direct sum is provided by the following
`projection' result.

\begin{lemma}\label{compact.project}
Let $\gku$ be a convenient triple and $\CC=\set S^i:i\in\omega.$ be
such that $(\CC,\xi)$ is a sequential scale in $\gku$ for some
$\xi$. Let $K\subseteq G$ be compact in $\kw(\K)$, $I\subseteq\omega$
be finite and $m\in\omega$. Then the set 
$\pi_{I,m}(K)=\set d\in\cl{\lasm}^\kwk:d+d'\in K\hbox{ for some
}d'\in\sum_{i\in I\setminus\{m\}}\cl{\lasi}^\kwk.$ is compact in $\kwk$.
\end{lemma}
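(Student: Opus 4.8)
The plan is to realize $\pi_{I,m}(K)$ as the image of a compact set under the coordinate projection of an internal direct sum, and then to show that this projection carries compact sets to compact sets; the latter is exactly where discrete separation does the work.

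First I would set $H_i=\cl{\lasi}^\kwk$ for $i\in I$ and observe that, by condition~\ref{seq.scale.direct} applied with each $j\in I$ playing the role of $m$, the subgroup $\sum_{i\in I}H_i$ (closed by Lemma~\ref{ds.closed.sums}) is the internal direct sum $\bigoplus_{i\in I}H_i$; let $p_m$ denote the projection onto the $H_m$-summand. Writing $K_0=K\cap\sum_{i\in I}H_i$, which is closed in $K$ and hence compact, a direct unraveling of the definition gives $\pi_{I,m}(K)=p_m(K_0)$: if $d+d'\in K$ with $d\in H_m$ and $d'\in\sum_{i\in I\setminus\{m\}}H_i$, then $d$ is precisely the $H_m$-component of $d+d'\in K_0$, and conversely. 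Since, by the standard property of $\kw$ spaces, every compact set lies in some member of the determining family, and each such member is metrizable ($\gku$ being a convenient triple), $K_0$ sits inside a compact metrizable $K'\in\K$, so I may argue with sequences.

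The crux is the claim that \emph{if $f_n^i\in H_i$ for $i\in I$ and $\sum_{i\in I}f_n^i\to0$ in $\kwk$, then $f_n^i\to0$ for every $i\in I$;} I expect this to be the main obstacle, and I would prove it in two stages. For boundedness, suppose some $\set f_n^i:n\in\omega.$ is unbounded; passing to a subsequence I arrange that for each $i$ the sequence $\set f_n^i:n\in\omega.$ is either convergent (when bounded, using metrizability of a member of $\K$ containing it) or infinite, closed and discrete in $\kwk$ (when unbounded, by the usual extraction in a $\kw$ space). Let $I_B\neq\varnothing$ collect the indices of the second type. On one hand $\sum_{i\in I_B}f_n^i$ equals $\sum_{i\in I}f_n^i$ plus the (convergent) remaining part, hence converges and is bounded; on the other hand discrete separation (condition~\ref{seq.scale.ds}, Definition~\ref{discrete.separated}) forces $\set\sum_{i\in I_B}f_n^i:n\in\omega.\not\subseteq K$ for every $K\in\K$, a contradiction. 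Thus all $\set f_n^i:n\in\omega.$ are bounded and lie in a single compact metrizable subgroup $\sum_{i\in I}K_i\in\K$; a routine subsequence argument then produces limits $\ell^i\in H_i$ with $\sum_{i\in I}\ell^i=0$, whence $\ell^i=0$ for each $i$ by directness (condition~\ref{seq.scale.direct}), giving $f_n^i\to0$.

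Finally I would deduce compactness of $p_m(K_0)$ from the claim. Given any sequence $d_n=p_m(c_n)$ with $c_n\in K_0$, I pass (in the metrizable $K'$) to $c_n\to c\in K_0$; applying the claim to $f_n^i=e_n^i+e^i$, the $H_i$-components of $c_n$ and $c$, yields $p_m(c_n)=e_n^m\to e^m=p_m(c)\in p_m(K_0)$. This shows at once that $p_m(K_0)$ is sequentially closed, hence closed since $\kwk$ is sequential, and --- running the same extraction against a hypothetical infinite closed discrete subset of $p_m(K_0)$ --- that it is bounded, so contained in some $K^\#\in\K$. A closed subset of the compact set $K^\#$ is compact, which completes the proof.
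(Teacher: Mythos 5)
Your proof is correct and its engine is the same as the paper's: thin each coordinate sequence into a convergent or an infinite closed-discrete block and apply discrete separation (Definition~\ref{discrete.separated}) to the nonempty closed-discrete block, whose sums must nevertheless lie in a single member of $\K$ --- a contradiction. The paper runs this as a short direct contradiction starting from an infinite closed discrete subset of $\pi_{I,m}(K)$, whereas you package it as sequential continuity of the direct-sum projection $p_m$ on $K\cap\sum_{i\in I}H_i$ and, unlike the paper's proof, explicitly verify that the image is closed as well as bounded.
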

\begin{proof}
Suppose $\pi_{I,m}(K)$ is not compact. Then there exists an infinite
closed and discrete subset $D^m=\set d_n^m:n\in\omega.$ such that for
some $d_n^i\in\cl{\lasi}^\kwk$ and finite $I\subseteq\omega$, $I\ni
m$, and $D=\set\sum_{i\in
I}d_n^i:n\in\omega.\subseteq K$. After thinning out and reindexing we
may assume that each $D^i=\set d_n^i:n\in\omega.$ is either closed and
discrete in $\kwk$ or $D^i\to d^i$ for some $d^i\in G$. Let
$I'\subseteq I$ be the set of all $i\in I$ such that $D^i$ is closed
and discrete in $\kwk$. Then $m\in I'$ and $\set\sum_{i\in 
I'}d_n^i:n\in\omega.\subseteq K'$ for some $K'\in\K$ contradicting the
assumption that $\CC$ is discrete separated.
\end{proof}

\begin{lemma}\label{m.reduce.so}
Let $\gku$ be a convenient triple and $\CC=\set S^i:i\in\omega.$ be
such that $(\CC,\xi)$ is a sequential scale in $\gku$ for some
$\xi$. Let $C=\set\hku{^\gamma}:\gamma^0\leq\gamma\leq\gamma^1.$ be an
fpse-chain of finite type above $\gku$ along $\set
D_\gamma:\gamma^0\leq\gamma\leq\gamma^1.$ relative to $(\CC,\xi)$.

Let $m\in\omega$ and
$\gamma^{(m)}=\set\gamma^0\leq\gamma\leq\gamma^1:
D_\gamma\subseteq\cl{\lasm}^{\kw(\K^{<\gamma})}.$.
Let $\mu<\omega_1$ be such that $h:\mu\to\gamma^{(m)}$ is the unique
1-to-1 monotone map. Let
$C^{(m)}=\set\hku{^\lambda_{(m)}}:\lambda<\mu.$ be the
$m$-reduction of $C$ and let $\so(g,
D, \kw(\K^\gamma))=\sigma<\omega_1$ where $D\subseteq\lasm$.

Then $g\in H^\lambda_{(m)}$ and $\so(g, D, \kw(\K^\lambda_{(m)}))=\sigma$ where
$\lambda=\sup\set\lambda':h(\lambda')\leq\gamma.$. If $\lambda$ is a
limit ordinal and $\gamma\not=h(\lambda)$, $\lambda$ may be replaced with
with $<\lambda$.
\end{lemma}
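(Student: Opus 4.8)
The plan is to prove the two assertions — that $g\in H^\lambda_{(m)}$ and that $\so(g,D,\kw(\K^\lambda_{(m)}))=\sigma$ — by a single induction on $\sigma=\so(g,D,\kw(\K^\gamma))$, with $\gamma$ (hence $\lambda$) fixed. One inequality is free: every compact added in the reduction is already present in the full chain, so $\K^\lambda_{(m)}\subseteq\K^\gamma$, whence $\kw(\K^\gamma)\subseteq\kw(\K^\lambda_{(m)})$ and $[D]_\tau^{\kw(\K^\lambda_{(m)})}\subseteq[D]_\tau^{\kw(\K^\gamma)}$ for every $\tau$; thus, once $g\in H^\lambda_{(m)}$ is known, we automatically get $\so(g,D,\kw(\K^\lambda_{(m)}))\geq\sigma$. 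All the work is therefore in the reverse inequality together with the membership $g\in H^\lambda_{(m)}$. Throughout I use that $(\CC,\xi)$ is a sequential scale in each $\hku{^\gamma}$ by Lemma~\ref{fpsestack}, so that Lemmas~\ref{ct.compacts} and~\ref{compact.project} and condition~\ref{seq.scale.direct} are available at level $\gamma$.

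The technical core is the following sub-claim, which I would isolate and prove first by a downward induction over levels. \textbf{(A)} Every $K^*$ compact in $\kw(\K^\gamma)$ with $K^*\subseteq\cl{\lasm}^{\kw(\K^\gamma)}$ is contained in some $\tilde K\in\K^\lambda_{(m)}$ (in particular $K^*\subseteq H^\lambda_{(m)}$). To prove (A) I would apply Lemma~\ref{ct.compacts} to write $K^*\subseteq K''+\sum_{\gamma''\in F''}L_{\gamma''}$ with $K''\in\K$ and $F''$ finite, and induct on $\max F''=:\gamma^*$. If $\gamma^*$ is an \emph{away} step then $\hd{\K^{<\gamma^*}}{D_{\gamma^*}}{\lasm}=\omega_1$ and $D_{\gamma^*}$ satisfies \MM{\lasm,\K^{<\gamma^*}} by~\ref{fine.pse.pse}, so Lemma~\ref{D.flat} (with $A=\lasm$, $\delta=\omega_1$) shows every point of $K^*\subseteq\cl{\lasm}^{\kw(\K^{\gamma^*})}$ already lies in $\cl{\lasm}^{\kw(\K^{<\gamma^*})}\subseteq H^{<\gamma^*}$; hence $K^*$ is compact in $\kw(\K^{<\gamma^*})$ and the inductive hypothesis applies with a strictly smaller top level. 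If $\gamma^*$ is a \emph{close} step I would decompose each $x\in K^*$ as $x=a+s$ with $a\in\cl{\lasm}^{\kw(\K^{<\gamma^*})}$ and $s\in L_{\gamma^*}$ via Lemma~\ref{D.e.flat}, extract the $\lasm$-slice by Lemma~\ref{compact.project} to obtain a $\kw(\K^{<\gamma^*})$-compact $K_1^*\subseteq\cl{\lasm}^{\kw(\K^{<\gamma^*})}$ with $K^*\subseteq K_1^*+L_{\gamma^*}$, apply the inductive hypothesis to $K_1^*$, and note $L_{\gamma^*}\in\K^\lambda_{(m)}$ since $\gamma^*=h(\lambda_0)$ with $\lambda_0\le\lambda$.

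With (A) in hand the main induction is short. For $\sigma=0$ we have $g\in D\subseteq G\subseteq H^\lambda_{(m)}$. For $\sigma>0$ choose $g_n\to g$ in $\kw(\K^\gamma)$ with $\so(g_n,D,\kw(\K^\gamma))=\sigma_n<\sigma$, $\sigma_n\to\sigma-1$, all lying in a single compact $K\in\K^\gamma$; replacing $K$ by its $\lasm$-slice $\pi_{I,m}(K)$ from Lemma~\ref{compact.project} (legitimate since $g_n,g\in\cl{\lasm}^{\kw(\K^\gamma)}$, witnessed with $d'=0$) I may assume $K\subseteq\cl{\lasm}^{\kw(\K^\gamma)}$. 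By the inductive hypothesis each $g_n\in H^\lambda_{(m)}$ and $\so(g_n,D,\kw(\K^\lambda_{(m)}))=\sigma_n$, and by (A) there is $\tilde K\in\K^\lambda_{(m)}$ with $\{g_n\}\cup\{g\}\subseteq\tilde K$. As $\tilde K\in\K^\gamma\cap\K^\lambda_{(m)}$, both topologies induce the same (original compact) topology on $\tilde K$, so $g_n\to g$ in $\kw(\K^\lambda_{(m)})$ as well; therefore $g\in[D]_\sigma^{\kw(\K^\lambda_{(m)})}$ and $\so(g,D,\kw(\K^\lambda_{(m)}))\le\sigma$. Combined with the free inequality this gives equality, while $g\in H^\lambda_{(m)}$ comes from (A). The localization of the creation level of $g$ to a close step $h(\lambda_0)$ (needed for the membership statement, and via Lemma~\ref{so.reflect}) is precisely the away-step collapse inside (A); when $\lambda$ is a limit with $\gamma\neq h(\lambda)$, every close step below $\gamma$ has index $<\lambda$, so $\lambda_0<\lambda$ and $g$ is already accounted for at a level below $\lambda$, which is the final remark.

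I expect the away case of (A) to be the main obstacle: one must argue that no genuine point of $\cl{\lasm}^{\kw(\K^{\gamma^*})}$ is created at an away step. This rests on the coordinate-independence condition~\ref{seq.scale.direct} (through the closedness of the coordinate sums, Lemma~\ref{ds.closed.sums}): the component of an element of $\cl{\lasm}$ lying in $L_{\gamma^*}\subseteq\cl{\la{S^{i(\gamma^*)}}}$ with $i(\gamma^*)\neq m$ must fall in $\cl{\lasm}^{\kw(\K^{\gamma^*})}\cap\cl{\la{S^{i(\gamma^*)}}}^{\kw(\K^{\gamma^*})}=\{0\}$, forcing that component to vanish. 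Keeping this vanishing uniform over a compact set — so that it transfers to a genuine covering by a member of $\K^\lambda_{(m)}$ rather than merely a pointwise decomposition — is the bookkeeping that requires care, and it is exactly what Lemmas~\ref{ct.compacts} and~\ref{compact.project} are designed to supply.
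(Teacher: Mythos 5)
Your skeleton matches the paper's: induct on $\sigma$, put the approximating sequence $g_n$ inside one $K\in\K^\gamma$, decompose $K$ via Lemma~\ref{ct.compacts}, use~\ref{seq.scale.direct} to annihilate the coordinates other than $m$, and land in a compact of the reduced chain; the ``free'' inequality from $\K^\lambda_{(m)}\subseteq\K^\gamma$ is also how the paper closes. The trouble is in your proof of the pivotal claim (A), to which everything is reduced. In the away branch you pass from the pointwise statement $K^*\subseteq H^{<\gamma^*}$ (via Lemma~\ref{D.flat}) to ``hence $K^*$ is compact in $\kw(\K^{<\gamma^*})$,'' and this does not follow: $\kw(\K^{<\gamma^*})$ is the \emph{finer} topology on $H^{<\gamma^*}$, and a $\kw(\K^{\gamma^*})$-compact subset of $H^{<\gamma^*}$ need not be compact one level down. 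For instance, any nontrivial $\tau(\U^{\gamma^*})$-convergent sequence inside $\la{D_{\gamma^*}}$ (which exists, since $\la{D_{\gamma^*}}$ is dense in the infinite metrizable compact $L_{\gamma^*}$) is a $\kw(\K^{\gamma^*})$-compact subset of $H^{<\gamma^*}$ that is closed and discrete in $\kw(\K^{<\gamma^*})$. So membership in $H^{<\gamma^*}$ alone cannot feed your inductive hypothesis; what is missing is the argument that the $L_{\gamma^*}$-components of the points of $K^*$ range over a \emph{finite} subset of $\la{D_{\gamma^*}}$. That finiteness comes exactly from the cancellation you defer to your closing remark: the full $i(\gamma^*)$-coordinate of each $x\in K^*\subseteq\cl{\lasm}^{\kw(\K^{\gamma^*})}$ vanishes by~\ref{seq.scale.direct}, which forces the $L_{\gamma^*}$-component into $\la{D_{\gamma^*}}\cap(\mbox{a compact of level }<\gamma^*)$, a compact-meets-closed-discrete and hence finite set. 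The close branch has a parallel defect: Lemma~\ref{compact.project} separates the \emph{distinct} coordinates $S^i$; it does not separate the old part $a$ from the new part $s\in L_{\gamma^*}$, both of which lie in coordinate $m$, and in any case the projection it yields is compact in $\kw(\K^{\gamma^*})$, not in $\kw(\K^{<\gamma^*})$, so again the descent of compactness is asserted rather than proved.

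The paper sidesteps the level-by-level induction entirely and does the cancellation in one shot, which is the cleanest repair of (A): write $K\subseteq K'+\sum_{\nu\in F}L_\nu$ by Lemma~\ref{ct.compacts}, express each $g_n$ as $g^n+\sum_{\nu\in F}g_n^\nu$ with $g^n\in K'\subseteq G$, decompose $g^n=\sum_{i\in I}g^n(i)$ into coordinates using Lemma~\ref{w1.kdepth.sum} (this is the step that legitimizes treating $K'$ coordinatewise --- your sketch never accounts for the $K''$ part of the decomposition, which need not lie in any single $\cl{\lasi}$), and then conclude from~\ref{seq.scale.direct} that $g^n(i)+\sum_{i(\nu)=i}g_n^\nu=0$ for every $i\neq m$, so that $g_n\in\pi_{I,m}(K')+\sum_{i(\nu)=m}L_\nu$, a single compact of $\K^\lambda_{(m)}$ by Lemma~\ref{compact.project}. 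With (A) established this way, your assembly of the main induction and the two inequalities is correct.
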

\begin{proof}
Suppose the satement holds for all $g'\in G^\gamma$ such that $\so(g',
D, \kw(\K^\gamma))=\sigma'<\sigma$ and let $\so(g,
D, \kw(\K^\gamma))=\sigma$. If $\sigma=0$ then the proof is immediate
so assume that $\sigma>0$ and there are $g_n\to g$ in $\kw(\K^\gamma)$
such that $\so(g_n, D, \kw(\K^\gamma))=\sigma_n<\sigma$.

By the hypothesis $\so(g_n, D, \kw(\K^\lambda_{(m)}))=\sigma_n$ where
$\lambda$ is defined as in the statement of the Lemma. Since $g_n\to
g$ in $\kw(\K^\gamma)$ there exists a compact $K\in\K^\gamma$ such
that $g_n\in K$ and by Lemma~\ref{ct.compacts} $K\in K'+\sum_{\nu\in
F}L_\nu$ for some finite $\nu\subseteq\gamma+1$. Thus
$g_n=g^n+\sum_{\nu\in F}g_n^\nu$ where $g^n\in K'\subseteq G$, $g_n^\nu\in L_\nu$, and
$L_\nu=\cl{D_\nu}^{\tau(\U^\nu)}\subseteq\cl{\la{S^{i(\nu)}}}^{\kw(\K^\nu)}$.

Let $I=\set i(\nu):\nu\in F.\cup\{m\}$. By Lemma~\ref{set.additivity}
$g^n\in\sum_{i\in I}\cl{\lasi}^{\kw(\K^\gamma)}$ so $g^n=\sum_{i\in
I}g^n(i)$ where $g^n(i)\in\cl{\lasi}^{\kw(\K^\gamma)}$. Since $g^n\in G$ by
Lemma~\ref{w1.kdepth.sum} $g^n(i)\in\cl{\lasi}^\kwk$ for every $i\in I$.

Since $(\CC,\xi)$ is a sequential scale
$g^n(i)+\sum_{i(\nu)=i}g_n^\nu=0$ for every $i\not=m$ by~\ref{seq.scale.direct} so
$g_n=g^n(m)+g_n^\nu$ where $g^n(m)\in\pi_{I,m}(K')$ and
$g_n^\nu\in\sum_{i(\nu)=m}L_\nu$. If $i(\nu)=m$ then
$L_\nu\in\K^\nu_{(m)}$ where $\nu\leq\gamma$ so $\nu=h(\lambda')$ for
some $\lambda'\leq\lambda$. Thus $g_n\in K''\in\K^\lambda_{(m)}$ and
$\so(g, D, \kw(\K^\lambda_{(m)}))=\sigma$.
\end{proof}

\begin{lemma}\label{m.reduce.hd}
Let $\gku$ be a convenient triple and $\CC=\set S^i:i\in\omega.$ be
such that $(\CC,\xi)$ is a sequential scale in $\gku$ for some
$\xi$. Let $C=\set\hku{^\gamma}:\gamma^0\leq\gamma\leq\gamma^1.$ be an
fpse-chain of finite type above $\gku$ along $\set
D_\gamma:\gamma^0\leq\gamma\leq\gamma^1.$ relative to $(\CC,\xi)$.

Let $m\in\omega$ and
$\gamma^{(m)}=\set\gamma^0\leq\gamma\leq\gamma^1:
D_\gamma\subseteq\cl{\lasm}^{\kw(\K^{<\gamma})}.$.
Let $\mu<\omega_1$ be such that $h:\mu\to\gamma^{(m)}$ is the unique
1-to-1 monotone map. Let
$C^{(m)}=\set\hku{^\lambda_{(m)}}:\lambda<\mu.$ be the
$m$-reduction of $C$ and let $D\subseteq\lasm$,
$P_0\subseteq\cl{\lasm}^{\kw(\K^\gamma)}$ .

Then $\hd{\K^\gamma}{P_0}{D}=\hd{\K^\lambda_{(m)}}{P_0}{D}$ where
$\lambda=\sup\set\lambda':h(\lambda')\leq\gamma.$. If $\lambda$ is a
limit ordinal and $\gamma\not=h(\lambda)$, $\lambda$ may be replaced
with $<\lambda$.
\end{lemma}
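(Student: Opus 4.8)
The plan is to separate the computation of $\K$-depth into its two ingredients — the iterated sequential closures $[D]_\beta$ and the translating compact set — and to treat each using the material already developed for the $m$-reduction. Throughout write $\lambda=\sup\set\lambda':h(\lambda')\leq\gamma.$ and $L_\nu=\cl{D_\nu}^{\tau(\U^\nu)}$ as in Lemma~\ref{ct.compacts}. Since $D\subseteq\lasm\subseteq\cl{\lasm}^{\kw(\K^\gamma)}$ and the latter is a closed subgroup, every iterated sequential closure $[D]_\beta^{\kw(\K^\gamma)}$ (and likewise $[D]_\beta^{\kw(\K^\lambda_{(m)})}$) is contained in $\cl{\lasm}$. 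I would prove the two inequalities $\hd{\K^\lambda_{(m)}}{P_0}{D}\leq\hd{\K^\gamma}{P_0}{D}$ and $\hd{\K^\gamma}{P_0}{D}\leq\hd{\K^\lambda_{(m)}}{P_0}{D}$ separately.

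First I would record that $[D]_\beta^{\kw(\K^\gamma)}=[D]_\beta^{\kw(\K^\lambda_{(m)})}$ for every $\beta$. The inclusion ``$\subseteq$'' is exactly Lemma~\ref{m.reduce.so}: any $x$ with $\so(x, D, \kw(\K^\gamma))=\sigma<\omega_1$ satisfies $\so(x, D, \kw(\K^\lambda_{(m)}))=\sigma$, so $x\in[D]_\beta^{\kw(\K^\gamma)}$ forces $x\in[D]_\beta^{\kw(\K^\lambda_{(m)})}$. For ``$\supseteq$'' I would use that $\K^\lambda_{(m)}\subseteq\K^\gamma$: since each $\hku{^\lambda_{(m)}}$ is a convenient triple, any sequence converging in $\kw(\K^\lambda_{(m)})$ lies together with its limit in a single member of $\K^\lambda_{(m)}\subseteq\K^\gamma$ and hence converges in $\kw(\K^\gamma)$, so sequential closures only grow, giving $[D]_\beta^{\kw(\K^\lambda_{(m)})}\subseteq[D]_\beta^{\kw(\K^\gamma)}$. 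The easy inequality now follows at once: a witness $P_0\subseteq[D]_\beta^{\kw(\K^\lambda_{(m)})}+K''$ with $K''\in\K^\lambda_{(m)}\subseteq\K^\gamma$ is, by the closure equality, also a witness $P_0\subseteq[D]_\beta^{\kw(\K^\gamma)}+K''$, so $\hd{\K^\gamma}{P_0}{D}\leq\hd{\K^\lambda_{(m)}}{P_0}{D}$.

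The substantive direction is to push a witness for the full chain down to the reduction, and this is the step I expect to be the main obstacle. Suppose $\hd{\K^\gamma}{P_0}{D}=\alpha$, so $P_0\subseteq[D]_\alpha^{\kw(\K^\gamma)}+K$ for some $K\in\K^\gamma$. As both $P_0$ and $[D]_\alpha^{\kw(\K^\gamma)}$ lie in the subgroup $\cl{\lasm}^{\kw(\K^\gamma)}$, one may replace $K$ by $K\cap\cl{\lasm}^{\kw(\K^\gamma)}$. To see this intersection sits inside a member of $\K^\lambda_{(m)}$ I would apply Lemma~\ref{ct.compacts} to get $K\subseteq K'+\sum_{\nu\in F}L_\nu$ with $K'\in\K$, $F$ finite, and $L_\nu\subseteq\cl{\la{S^{i(\nu)}}}^{\kw(\K^\nu)}$. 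For $k\in K\cap\cl{\lasm}$, writing $k=k'+\sum_{\nu\in F}l_\nu$ and decomposing $k'=\sum_{i\in I}k'(i)$ with $I=\set i(\nu):\nu\in F.\cup\{m\}$ and $k'(i)\in\cl{\lasi}^\kwk$ (legitimate by Lemmas~\ref{set.additivity} and~\ref{w1.kdepth.sum}, since $k'\in G$), the direct-sum property~\ref{seq.scale.direct} of the sequential scale forces $k'(i)+\sum_{i(\nu)=i}l_\nu=0$ for every $i\neq m$; hence $k=k'(m)+\sum_{i(\nu)=m}l_\nu$ with $k'(m)\in\pi_{I,m}(K')$. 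By Lemma~\ref{compact.project} $\pi_{I,m}(K')$ is compact in $\kwk$, so it is contained in some member of $\K$, while each $\nu$ with $i(\nu)=m$ and $\nu\leq\gamma$ equals $h(\lambda')$ for some $\lambda'\leq\lambda$, so $L_\nu\in\K^\lambda_{(m)}$. Thus $K\cap\cl{\lasm}\subseteq\tilde K$ for some $\tilde K\in\K^\lambda_{(m)}$, and using the closure equality $P_0\subseteq[D]_\alpha^{\kw(\K^\lambda_{(m)})}+\tilde K$, whence $\hd{\K^\lambda_{(m)}}{P_0}{D}\leq\alpha$.

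Combining the two inequalities gives the desired equality. For the final remark, if $\lambda$ is a limit ordinal and $\gamma\neq h(\lambda)$ then $\K^\lambda_{(m)}=\cup_{\lambda''<\lambda}\K^{\lambda''}_{(m)}$; since the witness $\tilde K$ produced above is a finite sum involving only the finitely many $L_\nu$ with $\nu\leq\gamma<h(\lambda)$, each such $\nu=h(\lambda')$ has $\lambda'<\lambda$, so $\tilde K$ already belongs to $\K^{\lambda''}_{(m)}$ for some $\lambda''<\lambda$, and the same computation shows $\lambda$ may be replaced by this $\lambda''$.
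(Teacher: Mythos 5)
Your proposal is correct and follows essentially the same route as the paper's proof: transfer $[D]_\delta$ to the reduction via Lemma~\ref{m.reduce.so}, decompose the translating compact set via Lemma~\ref{ct.compacts}, and use the direct-sum property~\ref{seq.scale.direct} together with Lemma~\ref{compact.project} to show that the part of $K$ meeting $\cl{\lasm}$ is covered by a member of $\K^\lambda_{(m)}$. The paper phrases this pointwise with $g=p_1-p_0$ and leaves the reverse inequality and the limit-$\lambda$ remark implicit, whereas you spell them out, but the substance is identical.
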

\begin{proof}
Let $P_1\subseteq[D]_\delta^{\kw(\K^\gamma)}$ and $K\in\K^\gamma$ be
such that $P_0\subseteq P_1+K$. By Lemma~\ref{m.reduce.so}
$P_1\subseteq[D]_\delta^{\kw(\K^\lambda_{(m)})}$ and by
Lemma~\ref{ct.compacts} $K=K'+\sum_{\nu\in F}L_\nu$ where
$F\subseteq\gamma+1$ is finite, $K'\in\K$, and
$L_\nu=\cl{D_\nu}^{\tau(\U^\nu)}\subseteq\cl{\la{S^{i(\nu)}}}^{\kw(\K^\nu)}$. Let
$g=p_1-p_0\in K$ for some $p_1\in P_1$ and $p_0\in P_0$. Then
$g\in\cl{\lasm}^{\kw(\K^\gamma)}$ by Lemma~\ref{set.additivity} and
$g=g'+\sum_{\nu\in F}g_\nu$ where $g_\nu\in
L_\nu\subseteq\cl{\la{S^{i(\nu)}}}^{\kw(\K^\gamma)}$ and $g'\in K'$.

As in the proof of Lemma~\ref{m.reduce.so} $g=g''+g_m$ where $g''\in\pi_{I,m}(K')$
and $g_m\in\sum_{i(\nu)=m}L_\nu$ where $I=\set i(\nu):\nu\in
F.\cup\{m\}$. Thus
$p_1-p_0\in\pi_{I,m}(K')+\sum_{i(\nu)=m}L_\nu\subseteq
K''\in\K^\lambda_{(m)}$ so $P_0\subseteq P_1+K''$.
\end{proof}

\begin{lemma}\label{w1.m.hd}
Let $\gku$ be a convenient triple and $\CC=\set S^i:i\in\omega.$ be
such that $(\CC,\xi)$ is a sequential scale in $\gku$ for some
$\xi$. Let $m\in\omega$ and
$C=\set\hku{^\gamma}:\gamma^0\leq\gamma\leq\gamma^1.$ be an fpse-chain
of finite type above $\gku$ away from $\lasm$.
Let $D\subseteq\lasm$ and $P\subseteq\cl{\lasm}^{\kw(\K^\gamma)}$.

Then $P\subseteq G$ and $\hd{\K^\gamma}{P}{D}=\hd{\K}{P}{D}$.
If $P$ is closed and discrete in $\kwk$ then $P$ is closed
and discrete in $\kw(\K^\gamma)$.
\end{lemma}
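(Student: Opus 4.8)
The plan is to reduce everything to Lemma~\ref{w1.kdepth.reduce} by first exploiting the hypothesis that the chain $C$ is \emph{away} from $\lasm$ to show that every free sequence is as far from $\lasm$ as possible. Concretely, I would first prove that $\hd{\K^{<\gamma'}}{D_{\gamma'}}{\lasm}=\omega_1$ for every $\gamma^0\le\gamma'\le\gamma^1$. Applying Lemma~\ref{fpsestack} to the initial segment of $C$, $(\CC,\xi)$ is a sequential scale in $\hku{^{<\gamma'}}$, so $\CC$ is discrete separated by $\K^{<\gamma'}$ in $H^{<\gamma'}$. If the depth were some $\delta<\omega_1$, then $D_{\gamma'}\subseteq\cl{\lasm}^{\kw(\K^{<\gamma'})}+K$ for some $K\in\K^{<\gamma'}$; writing $d_n=c_n+k_n$ with $c_n\in\cl{\lasm}^{\kw(\K^{<\gamma'})}$, $k_n\in K$, thinning so that $k_n\to k$, and using that $D_{\gamma'}\subseteq\cl{\la{S^{i(\gamma')}}}^{\kw(\K^{<\gamma'})}$ with $i(\gamma')\ne m$ (finite type, away from $\lasm$), one checks that $\set c_n:n\in\omega.$ is again infinite, closed and discrete, while $\set c_n+d_n:n\in\omega.=\set k_n:n\in\omega.\subseteq K$ — contradicting discrete separation with $I=\{m,i(\gamma')\}$. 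Since $D\subseteq\lasm$, monotonicity of the $\K$-depth in the target set gives $\hd{\K^{<\gamma'}}{D_{\gamma'}}{D}=\omega_1$ as well, and \MM{\lasm,\K^{<\gamma'}} (part of the fpse data) immediately yields \MM{D,\K^{<\gamma'}} because $[D\cup\{0\}]_\beta\subseteq[\lasm\cup\{0\}]_\beta$. Thus the hypotheses of Lemma~\ref{w1.kdepth.reduce} hold both for $A=\lasm$ and for $A=D$.

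Next I would read off the easy consequences. Applying Lemma~\ref{w1.kdepth.reduce} with $A=\lasm$, every $p\in P\subseteq\cl{\lasm}^{\kw(\K^\gamma)}$ has $\so(p,\lasm,\kw(\K^\gamma))<\omega_1$, hence $\so(p,\lasm,\kwk)<\omega_1$ and $p\in\cl{\lasm}^\kwk\subseteq G$; this proves $P\subseteq G$. Applying the same lemma with $A=D$ gives $\so(g,D,\kw(\K^\gamma))=\so(g,D,\kwk)$ for all relevant $g$, so $[D]_\delta^{\kw(\K^\gamma)}=[D]_\delta^\kwk$ for every $\delta$ (the inclusion $\subseteq$ is the content of the lemma, and $\supseteq$ holds because $\kw(\K^\gamma)$ is coarser than $\kwk$). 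Since $\K\subseteq\K^\gamma$, the inequality $\hd{\K^\gamma}{P}{D}\le\hd{\K}{P}{D}$ is then immediate.

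For the reverse inequality I would \emph{project out} the new coordinates. Suppose $\hd{\K^\gamma}{P}{D}=\delta$, so $P\subseteq[D]_\delta^{\kw(\K^\gamma)}+K'=[D]_\delta^\kwk+K'$ with $K'\in\K^\gamma$. By Lemma~\ref{ct.compacts}, $K'\subseteq K''+\sum_{\lambda\in F}L_\lambda$ with $K''\in\K$, $F$ finite, and each $L_\lambda\subseteq\cl{\la{S^{i(\lambda)}}}^{\kw(\K^\lambda)}$, $i(\lambda)\ne m$. Given $p\in P$, write $p=p_1+k'$ with $p_1\in[D]_\delta^\kwk\subseteq\cl{\lasm}^\kwk$; then $g=p+p_1\in\cl{\lasm}$ equals $k'$, so $g=g'+\sum_{\lambda\in F}g_\lambda$ with $g'\in K''$ and $g_\lambda\in L_\lambda$. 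Since $g'\in G$ and $g'\in\sum_{i\in I}\cl{\lasi}^{\kw(\K^\gamma)}$ for $I=\set i(\lambda):\lambda\in F.\cup\{m\}$, Lemma~\ref{w1.kdepth.sum} lets me write $g'=\sum_{i\in I}g'(i)$ with $g'(i)\in\cl{\lasi}^\kwk$; comparing the two expressions for $g$ and using the direct-sum property~\ref{seq.scale.direct}, the components with $i\ne m$ cancel against the $g_\lambda$, leaving $g=g'(m)\in\pi_{I,m}(K'')$. By Lemma~\ref{compact.project} $\pi_{I,m}(K'')$ is compact in $\kwk$, hence contained in some $K\in\K$; thus $p\in p_1+K\subseteq[D]_\delta^\kwk+K$, giving $\hd{\K}{P}{D}\le\delta$. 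The same projection yields the final claim: if $P$ is closed and discrete in $\kwk$, then for any $K'\in\K^\gamma$ the set $P\cap K'$ lands inside $\pi_{I,m}(K'')\subseteq K\in\K$, so $P\cap K'\subseteq P\cap K$ is finite, and $P$ is closed and discrete in $\kw(\K^\gamma)$.

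I expect the main obstacle to be the first paragraph — establishing that ``away from $\lasm$'' forces the $\K$-depth over $\lasm$ to be $\omega_1$ at every stage — since that is exactly the place where the discrete-separation hypothesis built into the sequential scale must be converted into a statement about depths, thereby unlocking Lemma~\ref{w1.kdepth.reduce}. Once that lemma applies, the remainder is careful bookkeeping with the projection $\pi_{I,m}$ and the direct-sum identity~\ref{seq.scale.direct}.
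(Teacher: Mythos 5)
Your proof is correct, and all three claims ($P\subseteq G$, the equality of the two $\K$-depths, and preservation of closed discreteness) are covered; but it is organized quite differently from the paper's proof. The paper's entire argument is the observation that for a chain away from $\lasm$ the $m$-reduction $C^{(m)}$ of Definition~\ref{m.reduce} is empty (immediate from~\ref{seq.scale.direct}, since an infinite $D_\gamma\subseteq\cl{\la{S^{i(\gamma)}}}^{\kw(\K^{<\gamma})}$ with $i(\gamma)\not=m$ cannot lie in $\cl{\lasm}^{\kw(\K^{<\gamma})}$), after which Lemmas~\ref{m.reduce.so} and~\ref{m.reduce.hd} applied with the empty reduction give $P\subseteq G$ and $\hd{\K}{P}{D}\leq\hd{\K^\gamma}{P}{D}$, and the closed‑discrete claim is the same projection argument. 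You bypass the reduction machinery entirely: you first convert ``away from $\lasm$'' into $\hd{\K^{<\gamma'}}{D_{\gamma'}}{\lasm}=\omega_1$ via discrete separation (this is the same computation as in Lemma~\ref{ds.stable}, and it is in any case needed to legitimize the use of Lemma~\ref{w1.kdepth.sum}, so it is not wasted work), then invoke Lemma~\ref{w1.kdepth.reduce} with $A=\lasm$ and $A=D$ to get $P\subseteq G$ and $[D]_\delta^{\kw(\K^\gamma)}=[D]_\delta^\kwk$, and finally run the $\pi_{I,m}$ projection (Lemmas~\ref{ct.compacts}, \ref{w1.kdepth.sum}, \ref{compact.project} and property~\ref{seq.scale.direct}) by hand — which is essentially the proof of Lemma~\ref{m.reduce.hd} specialized to $C^{(m)}=\varnothing$. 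The paper's route buys brevity; yours makes explicit exactly where the hypothesis ``away from $\lasm$'' enters and avoids the bookkeeping of the reduction chain. One presentational nit: Lemma~\ref{w1.kdepth.sum} does not \emph{produce} a decomposition of $g'$, it only constrains a given one, so you should say that $g'=g+\sum_{\lambda\in F}g_\lambda$, grouped by $i(\lambda)$, \emph{is} the decomposition (unique by~\ref{seq.scale.direct} in $\hku{^\gamma}$, where the scale persists by Lemma~\ref{fpsestack}) and then apply the lemma to it; with that reading your cancellation step and the inclusion $g\in\pi_{I,m}(K'')$ are exactly as in the proof of Lemma~\ref{m.reduce.so}.
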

\begin{proof}
Note that $C^{(m)}=\varnothing$. Applying Lemma~\ref{m.reduce.so} to
each $p\in P$ one shows that 
$P\subseteq G$. 
Let $\hd{\K^\gamma}{P}{D}=\delta$. It follows from
Lemma~\ref{m.reduce.hd} and $C^{(m)}=\varnothing$ that
$\hd{\K}{P}{D}\leq\delta$. To see that $P$ is closed and
discrete note that otherwise for some infinite $P'\subseteq P$ there
exists a compact $K\in\K^{\gamma^1}$ such that $P'\subseteq K$. Just
as in the proof of Lemma~\ref{m.reduce.hd} we may assume that $K\in\K$
contradicting the assumption that $P$ is closed and discrete in $\kwk$.
\end{proof}

\begin{lemma}\label{m.reduce.fpse}
Let $\gku$ be a convenient triple and $\CC=\set S^i:i\in\omega.$ be
such that $(\CC,\xi)$ is a sequential scale in $\gku$ for some
$\xi$. Let $C=\set\hku{^\gamma}:\gamma^0\leq\gamma\leq\gamma^1.$ be an
fpse-chain of finite type above $\gku$ along $\set
D_\gamma:\gamma^0\leq\gamma\leq\gamma^1.$ relative to $(\CC,\xi)$.

Then the $m$-reduction
$C^{(m)}=\set\hku{^\lambda_{(m)}}:\lambda<\mu.$ is an
fpse-chain above $\gku$ along $\set D_{h(\lambda)}:\lambda<\mu.$
relative to $(\CC,\xi)$ close to $\lasm$.
\end{lemma}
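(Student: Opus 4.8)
The plan is to verify, one ordinal $\lambda<\mu$ at a time, that $\hku{^\lambda_{(m)}}$ is an fpse of $\hku{^{<\lambda}_{(m)}}$ over $(D_{h(\lambda)},\CC)$; because $h(\lambda)\in\gamma^{(m)}$ means exactly that $D_{h(\lambda)}\subseteq\cl{\lasm}^{\kw(\K^{<h(\lambda)})}$, i.e.\ $i(h(\lambda))=m$, this at the same time shows that $C^{(m)}$ is close to $\lasm$. Fix $\lambda$ and put $\gamma=h(\lambda)$. The decisive observation is that the extension datum is reused unchanged: $L_\gamma=\cl{\la{D_\gamma}}^{\tau(\U^\gamma)}$ is the same compact subgroup adjoined in $C$ and in $C^{(m)}$ (Definition~\ref{m.reduce}), while $H^{<\lambda}_{(m)}\subseteq H^{<\gamma}$, $\K^{<\lambda}_{(m)}\subseteq\K^{<\gamma}$, and $\U^{<\lambda}_{(m)}$ is the trace of $\U^{<\gamma}$. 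Hence every clause of Definitions~\ref{pse} and~\ref{fine.pse} for the reduced step concerns the \emph{same} set $D_\gamma$ over a \emph{smaller} family of compacta, and the whole verification becomes a transfer from the fpse-step of $\hku{^\gamma}$ over $\hku{^{<\gamma}}$ already present in $C$.

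First I would record the transfer tools, which apply because the reference sets $\lasm$, $\lasom$, $\lasem$ entering a chain close to $\lasm$ all lie inside $\lasm$. For every $A\subseteq\lasm$ and all $\beta<\omega_1$, Lemma~\ref{m.reduce.so} and its closing remark give $[A]_\beta^{\kw(\K^{<\gamma})}=[A]_\beta^{\kw(\K^{<\lambda}_{(m)})}$ (any membership witness already occurs in some $\kw(\K^{\gamma'})$ with $\gamma'<\gamma$ and transfers to $\kw(\K^{\lambda'}_{(m)})$ with $\lambda'<\lambda$, using the remark when $\lambda$ is a limit), while Lemma~\ref{m.reduce.hd} gives $\hd{\K^{<\gamma}}{D_\gamma}{A}=\hd{\K^{<\lambda}_{(m)}}{D_\gamma}{A}$. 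In particular $D_\gamma\subseteq\cl{\lasm}^{\kw(\K^{<\lambda}_{(m)})}\subseteq H^{<\lambda}_{(m)}$, so $D_\gamma$ really does lie in the base group of the reduced step. Now \ref{pse.order} is immediate from Definition~\ref{m.reduce}, and \ref{pse.resolve} follows at once from $\cl{\la{D_\gamma\setminus F}}^{\tau(\U^\gamma)}\cap H^{<\lambda}_{(m)}\subseteq\cl{\la{D_\gamma\setminus F}}^{\tau(\U^\gamma)}\cap H^{<\gamma}=\la{D_\gamma\setminus F}$, the reverse inclusion holding since $\la{D_\gamma\setminus F}\subseteq H^{<\lambda}_{(m)}$.

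It remains to move the two fpse clauses across. For \ref{fine.pse.pse} the alternative $D_\gamma\subseteq\cl{\lasm}^{\kw(\K^{<\lambda}_{(m)})}$ is exactly the containment just obtained, so I only need \MM{\lasi,\K^{<\lambda}_{(m)}} for every $i$. A sequence converging in $\kw(\K^{<\lambda}_{(m)})$ is captured by a single $K\in\K^{<\lambda}_{(m)}\subseteq\K^{<\gamma}$ and hence converges in $\kw(\K^{<\gamma})$, so $[\lasi]_\beta^{\kw(\K^{<\lambda}_{(m)})}\subseteq[\lasi]_\beta^{\kw(\K^{<\gamma})}$ for all $i$ and $\beta$; thus each emptiness $(d+K)\cap[\lasi]_\beta=\varnothing$ supplied by the original \MM{\lasi,\K^{<\gamma}} persists verbatim with the same finite witness $F\subseteq D_\gamma$. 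The range of $\beta$ agrees on both sides: for $i=m$ the depth is unchanged by Lemma~\ref{m.reduce.hd}, while for $i\ne m$ the discrete separation of $\CC$ (which survives in $\hku{^{<\gamma}}$ by Lemma~\ref{fpsestack}) forces $\hd{\K^{<\gamma}}{D_\gamma}{\lasi}=\omega_1$, since the closed discrete $D_\gamma\subseteq\cl{\lasm}$ cannot lie in $\cl{\lasi}^{\kw(\K^{<\gamma})}+K$ by Definition~\ref{discrete.separated}, and the reduced depth is then $\omega_1$ as well. For \ref{fine.pse.sep} one has $\hd{\K^{<\lambda}_{(m)}}{D_\gamma}{\lasom}=\hd{\K^{<\gamma}}{D_\gamma}{\lasom}=\delta_D^m$; if $\delta_D^m\geq\sigma^m-1$ there is nothing to check, and otherwise the original step yields $D_\gamma\subseteq[\lasom]_{\delta_D^m}^{\kw(\K^{<\gamma})}$, \MM{\lasom,\K^{<\gamma}}, and $\cl{\lo{D_\gamma}}^{\tau(\U^\gamma)}\cap\cl{\le{D_\gamma}}^{\tau(\U^\gamma)}=\varnothing$. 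The first passes to $\kw(\K^{<\lambda}_{(m)})$ by the closure equality ($\lasom\subseteq\lasm$), the second by the containment above, and the third is unchanged because $\tau(\U^\lambda_{(m)})$ is the trace of $\tau(\U^\gamma)$ and $\lo{D_\gamma},\le{D_\gamma}\subseteq\la{D_\gamma}\subseteq L_\gamma$, so the two closures are the very same compacta as in $C$. Limit stages $\lambda<\mu$ require only the routine verification that the union is again a convenient triple, which is Lemma~\ref{stack.limit}.

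The step I expect to be the main obstacle is the index bookkeeping that converts the ``stage-$\gamma$ versus reduced-stage-$\lambda$'' statements of Lemmas~\ref{m.reduce.so} and~\ref{m.reduce.hd}, which compare $\kw(\K^\gamma)$ with $\kw(\K^\lambda_{(m)})$ \emph{after} $L_\gamma$ has been adjoined, into the versions comparing $\kw(\K^{<\gamma})$ with $\kw(\K^{<\lambda}_{(m)})$ needed to analyze the \emph{base} of the reduced extension; this is precisely where the remark allowing $\lambda$ to be replaced by $<\lambda$ for limit $\lambda$ is used, together with the fact that any single witness to a sequential-closure membership already appears at some finite stage $\kw(\K^{\gamma'})$, $\gamma'<\gamma$. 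A second delicate point, easy once noticed, is that for $i\ne m$ no real transfer is needed at all, as discrete separation pins both depths at $\omega_1$ and forces the $\beta$-ranges to coincide.
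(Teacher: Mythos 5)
Your proposal is correct and follows essentially the same route as the paper's (much terser) proof: both reduce everything to Lemma~\ref{m.reduce.so} (to get $D_{h(\lambda)}\subseteq\cl{\lasm}^{\kw(\K^{<\lambda}_{(m)})}$), Lemma~\ref{m.reduce.hd} (to transfer the $\K$-depths over $\lasm$ and $\lasom$), and the containment $\K^{<\lambda}_{(m)}\subseteq\K^{<h(\lambda)}$ (so that the \MM{}-witnesses and the emptiness statements persist verbatim). Your additional bookkeeping for the pse clauses, the limit stages, and the $i\neq m$ case via discrete separation just fills in details the paper leaves implicit.
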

\begin{proof}
Let $m\in\omega$ and
$\gamma^{(m)}=\set\gamma^0\leq\gamma\leq\gamma^1:
D_\gamma\subseteq\cl{\lasm}^{\kw(\K^{<\gamma})}.$.
Let $\mu<\omega_1$ be such that $h:\mu\to\gamma^{(m)}$ is the unique
1-to-1 monotone map.

Each $\hku{^\lambda_{(m)}}$ is a convenient triple. To
show~\ref{fine.pse.pse} and~\ref{fine.pse.sep} note that each
$D_{h(\lambda)}\subseteq\cl{\lasm}^{\kw(\K^{<\lambda}_{(m)})}$ by
Lemma~\ref{m.reduce.so} and
$\hd{\K^{<\lambda}_{(m)}}{D_{h(\lambda)}}{P}=\hd{\K^{<h(\lambda)}}{D_{h(\lambda)}}{P}$
for $P\in\{\,\lasm, \lasom\,\}$ by Lemma~\ref{m.reduce.hd} and
$\K^{<\lambda}_{(m)}\subseteq\K^{<h(\lambda)}$.
\end{proof}

The next definition and lemma introduce the concept of stability as
well as a way to pass to a stable subset in some cases.

\begin{definition}\label{stable}
Let $\gku$ be a convenient triple, $\CC=\set S^i:i\in\omega.$ and
$\xi\subseteq\omega_1$ be such that $(\CC,\xi)$ is a sequential scale
in $\gku$. Let $P\subseteq\cl{\lasm}^\kwk$ be closed and discrete in
$\kwk$. Call $P$ {\em $(\CC,\xi)$-stable\/} (or simply {\em stable\/}
if $(\CC,\xi)$ is clear from the context) {\em in $\gku$\/} if for any infinite
$P'\subseteq P$ and any fpse-chain
$\set\hku{^\gamma}:\gamma^0\leq\gamma\leq\gamma^1.$ of finite type
above $\gku$ relative to $(\CC,\xi)$
$\hd{\K^{\gamma^1}}{P'}{\lasom}=\hd{\K}{P}{\lasom}$ provided $P'$ is
closed and discrete in $\kw(\K^{\gamma^1})$.
\end{definition}

\begin{lemma}\label{p.stable}
Let $\gku$ be a convenient triple, $\CC=\set S^i:i\in\omega.$ and
$\xi\subseteq\omega_1$ be such that $(\CC,\xi)$ is a sequential scale
in $\gku$. Let $P\subseteq\cl{\lasm}^\kwk$ be closed and discrete in
$\kwk$. Then there exists an infinite $P'\subseteq P$ and an
fpse-chain $\set\hku{^\gamma}:\gamma^0\leq\gamma<\gamma^1.$ above
$\gku$ close to $\lasm$ such that $P'$ is $(\CC,\xi)$-stable in
$\hku{^{\gamma^1}}$.
\end{lemma}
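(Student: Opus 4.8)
The plan is to obtain $P'$ and the chain by a stabilisation (minimisation) argument resting on a single observation: the $\K$-depth over $\lasom$ can only drop, never grow, along finite type fpse-chains. First I would record this monotonicity. If $\hku{^{\gamma^1}}$ is the endpoint of any fpse-chain above $\gku$, then $\K\subseteq\K^{\gamma^1}$ and every $K\in\K$ stays compact, so by the universal property in Lemma~\ref{kw.groups} the topology $\kw(\K^{\gamma^1})$ induces on $G$ a topology coarser than $\kwk$. Hence every point of $[\lasom]_\delta^\kwk$ remains in $[\lasom]_\delta^{\kw(\K^{\gamma^1})}$, and since also $\K\subseteq\K^{\gamma^1}$ we obtain $\hd{\K^{\gamma^1}}{P'}{\lasom}\leq\hd{\K}{P'}{\lasom}$ for every $P'$. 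Together with $\hd{\K}{P''}{\lasom}\leq\hd{\K}{P'}{\lasom}$ for $P''\subseteq P'$ (the remark after Lemma~\ref{kdshift}), this shows that the depth over $\lasom$ never increases, either under passing to an infinite subset or under extending along a finite type fpse-chain.

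Next I would minimise. Passing to a countable subset I may assume $P$ is countable, so that all competing chains have countable length and all attained depths lie in $\omega_1+1$. Consider the pairs $(P',C)$ in which $P'\subseteq P$ is infinite, $C$ is a finite type fpse-chain above $\gku$ close to $\lasm$ with endpoint $\hku{^{\gamma^1}}$ (still a convenient triple in which $(\CC,\xi)$ is a sequential scale, by Lemma~\ref{fpsestack}), and $P'$ is closed and discrete in $\kw(\K^{\gamma^1})$. The empty chain (with $\hku{^{\gamma^1}}=\gku$) and $P'=P$ form such a pair, so the family is nonempty and the values $\hd{\K^{\gamma^1}}{P'}{\lasom}$ form a nonempty set of ordinals; let $\delta^*$ be its least element, realised by a pair $(P_0,C_0)$ with endpoint $\hku{^{\gamma^1}}$. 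This $C_0$ and $P'=P_0$ are the chain close to $\lasm$ and the subset claimed in the lemma.

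It remains to verify that $P_0$ is $(\CC,\xi)$-stable in $\hku{^{\gamma^1}}$ (Definition~\ref{stable}). Let $P''\subseteq P_0$ be infinite and let $C'$ be an arbitrary finite type fpse-chain above $\hku{^{\gamma^1}}$ with endpoint $T=(H_T,\K_T,\U_T)$ such that $P''$ is closed and discrete in $\kw(\K_T)$; I must show $\hd{\K_T}{P''}{\lasom}=\hd{\K^{\gamma^1}}{P_0}{\lasom}=\delta^*$. Applying the $m$-reduction (Definition~\ref{m.reduce}) to $C'$, Lemma~\ref{m.reduce.fpse} turns it into a finite type fpse-chain ${C'}^{(m)}$ close to $\lasm$ above $\hku{^{\gamma^1}}$, and Lemma~\ref{m.reduce.hd} (applied with the set $\lasom\subseteq\lasm$) identifies the depth of $P''$ over $\lasom$ at the endpoint of ${C'}^{(m)}$ with $\hd{\K_T}{P''}{\lasom}$; closed discreteness of $P''$ survives the reduction because the reduced family is contained in $\K_T$. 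Since $C_0$ and ${C'}^{(m)}$ are both close to $\lasm$, their concatenation is a finite type fpse-chain close to $\lasm$ above $\gku$ whose endpoint is that of ${C'}^{(m)}$, with $P''\subseteq P$ infinite and closed and discrete there. Minimality of $\delta^*$ therefore gives $\hd{\K_T}{P''}{\lasom}\geq\delta^*$, while the monotonicity of the first paragraph, applied from $\hku{^{\gamma^1}}$ along $C'$ with $P''\subseteq P_0$ and $\hd{\K^{\gamma^1}}{P_0}{\lasom}=\delta^*$, gives $\hd{\K_T}{P''}{\lasom}\leq\delta^*$. Hence equality holds, which is exactly stability.

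The soft part is the monotonicity, which is essentially immediate from the universal property of $\kw(\K)$. I expect the genuine obstacle to be the bookkeeping required to admit arbitrary finite type chains in the definition of stability while minimising only over chains close to $\lasm$: one must use the $m$-reduction together with Lemmas~\ref{m.reduce.hd} and~\ref{m.reduce.fpse} to see that a general chain cannot push the depth below the close-to-$\lasm$ minimum, and one must check that concatenations of fpse-chains are again finite type fpse-chains and that the closed and discrete status of $P''$ is preserved both under $m$-reduction and under the inclusions $\K_{(m)}\subseteq\K_T$ and $\K\subseteq\K^{\gamma^1}$.
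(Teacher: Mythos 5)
Your proof is correct and follows essentially the same route as the paper's: minimize the $\K$-depth of an infinite subset of $P$ over fpse-chains close to $\lasm$, then use the $m$-reduction together with Lemmas~\ref{m.reduce.hd} and~\ref{m.reduce.fpse} to show that an arbitrary finite type chain witnessing a failure of stability would contradict that minimality. The only difference is that you make explicit the monotonicity of $\K$-depth under subsets and extensions, which the paper leaves implicit.
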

\begin{proof}
Let $\delta<\omega_1$ be the smallest ordinal such that there exists
an infinite $P'\subseteq P$ and an fpse-chain
$\set\hku{^\gamma}:\gamma^0\leq\gamma<\gamma^1.$ above $\gku$ close
to $\lasm$ such that $P'$ is closed and discrete in
$\kw(\K^{\gamma^1})$ and $\hd{\K^{\gamma^1}}{P'}{\lasom}=\delta$.

If $P''\subseteq P'$ is not stable in $\hku{^{\gamma^1}}$ for some
infinite $P''$, there exists an
fpse-chain $C=\set\hku{^\gamma}:\gamma^1\leq\gamma\leq\gamma^2.$ of
finite type above $\hku{^{\gamma^1}}$ relative to $(\CC,\xi)$ such
that
$\hd{\K^{\gamma^2}}{P'''}{\lasom}<\hd{\K^{\gamma^1}}{P'}{\lasom}=\delta$
for some infinite $P'''\subseteq P''$ closed and discrete in
$\kw(\K^{\gamma^2})$.

Using the $m$-reduction $C^{(m)}$ of $C$ instead of $C$ and
Lemmas~\ref{m.reduce.hd} and~\ref{m.reduce.fpse} leads to a
contradiction with the minimality of $\delta$.
\end{proof}

Call the $\delta$ in the proof above the {\em $(\CC,\xi)$-stable
$\K$-height of $P$ above $\lasom$}.

%\ifdraft\vfill\eject\fi
\section{Graded boolean group extensions}
This section introduces an algebraic mechanism for keeping track and
altering of the sequential order of points.

\begin{definition}
Let $\gku$ be a convenient triple, $\CC=\set S^i:i\in\omega.$ and
$\xi=\set\sigma^i:i\in\omega.$ be such that $(\CC,\xi)$ is a sequential
scale in $\gku$. Let $P\subseteq[\lasom]_\Sigma^\kwk$ for some $m\in\omega$
and $\Sigma<\sigma^m-1$.

Call $W=\set(H^\alpha, \K^\alpha, \U^\alpha, S_\alpha,
s_\alpha):\alpha^0\leq\alpha\leq\Omega.$ a {\em well-graded group stack
above $(P,G,\K,\U,\Sigma)$\/} or {\em wggs\/} for short if it
satisfies the following properties:
\begin{countup}[bcp]
\item\label{wggs.order}
$\set\hku{^\alpha}:\alpha^0\leq\alpha\leq\Omega.$ is an fpse-chain of
finite type above $\gku$ along
$\set S_\alpha:\alpha\leq\Omega.$ relative to $\CC$ close to $\lasm$;

\item\label{wggs.marker}
$S_\alpha\to s_\alpha\in H^\alpha\setminus H^{<\alpha}$ in $\kw(\K^\alpha)$;

\item\label{wggs.ext.limit}
if $\alpha$ is a limit ordinal there exist increasing $\alpha_n<\alpha$ such
that $\alpha_n\to\alpha$,
$\so(s_{\alpha_n}, \lasom, \kw(\K^{\alpha_n}))$ is increasing, and
$S_\alpha=\set s_{\alpha_n}:n\in\omega.$;

\item\label{wggs.ext.successor}
if $\alpha$ is a successor ordinal there exist
$p_n\in P$ such that $S_\alpha=\set p_n:n\in\omega.$;

\item\label{wggs.so.other}
if $g\in H^\alpha\setminus H^{<\alpha}$, $\alpha\leq\Omega$ then
$\so(g, \lasom, \kw(\K^\alpha))\geq\so(s_\alpha, \lasom, \kw(\K^\alpha))$;

\item\label{wggs.top}
$\top(W)=\so(s_{\Omega}, \lasom, \kw(\K^{\Omega}))>\so(s_\alpha, \lasom, \kw(\K^\alpha))$
for any $\alpha<\Omega$;
\end{countup}
\end{definition}

\begin{lemma}\label{wggs.c}
Let $\gku$ be a convenient triple. Let $\CC=\set
S^i:i\in\omega.$ and $\xi=\set\sigma^i:i\in\omega.$ be such that
$(\CC,\xi)$ is a sequential scale in $\gku$.
Let $\set\hku{^\gamma}:\gamma<\alpha.$ be a an fpse-chain above $\gku$
relative to $(\CC,\xi)$ where $\alpha$ is a limit ordinal.

Let $m\in\omega$ and
$S=\set s^n:n\in\omega.\subseteq\cl{\lasom}^{\kw(\K^{<\alpha})}$ be
such that $s_n\in H^{\gamma_n}\setminus H^{<\gamma_n}$ where
$\gamma_n\to\alpha$ is strictly increasing and cofinal in
$\alpha$. Suppose
$\so(g, \lasom, \kw(\K^{\gamma_n}))\geq
\sigma_n<\sigma^m$ 
for any $g\in H^{\gamma_n}\setminus H^{<\gamma_n}$. Then $S$ is
independent,
$\hd{\K^{<\alpha}}{S}{\lasom}\geq\sigma=\sup\sigma_n$ and $\las$ is
closed and discrete in $\kw(\K^{<\alpha})$. In particular, if
$\so(s^n, \lasom, \kw(\K^{\gamma_n}))=\sigma_n$ then
$\hd{\K^{<\alpha}}{S}{\lasom}=\sigma$.
\end{lemma}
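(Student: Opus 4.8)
The plan is to prove the four assertions — independence of $S$, the lower bound on $\K$-depth, closedness and discreteness of $\las$, and the concluding equality — using the level filtration by the $H^{\gamma_n}\setminus H^{<\gamma_n}$ together with the preservation of sequential order supplied by property~\ref{fc.scl} of Lemma~\ref{fpsestack}.

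Independence and closed-discreteness both follow from a single observation about the top index. If $g=\sum_{n\in F}s^n$ for a finite nonempty $F$ with $N=\max F$, then $g-s^N=\sum_{n\in F,\,n<N}s^n\in H^{<\gamma_N}$ (each $\gamma_n<\gamma_N$ for $n<N$, since $\gamma_n$ is strictly increasing), while $s^N\notin H^{<\gamma_N}$; hence $g\in H^{\gamma_N}\setminus H^{<\gamma_N}$. Applying this to a putative relation $g=0$ gives $0\in H^{\gamma_N}\setminus H^{<\gamma_N}$, which is absurd, so $S$ is independent. For closed-discreteness, fix $K\in\K^{<\alpha}$, say $K\in\K^{\beta_0}$ with $\beta_0<\alpha$; since $\gamma_n\to\alpha$ is cofinal and strictly increasing, only finitely many $n$ satisfy $\gamma_n\le\beta_0$. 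For any $g=\sum_{n\in F}s^n$ with $\max F=N$ and $\gamma_N>\beta_0$ we have $K\subseteq H^{\beta_0}\subseteq H^{<\gamma_N}$, so $g\in H^{\gamma_N}\setminus H^{<\gamma_N}$ cannot lie in $K$. Thus $\las\cap K\subseteq\la{\set s^n:\gamma_n\le\beta_0.}$, a finite set, so $\las\cap K$ is finite for every $K\in\K^{<\alpha}$, and $\las$ is closed and discrete in $\kw(\K^{<\alpha})$.

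For the depth lower bound I would first exploit the monotonicity of $\K$-depth noted after Lemma~\ref{kdshift}: replacing $S$ by a subset $\set s^{n_k}:k\in\omega.$ with $\sigma_{n_k}$ increasing to $\sigma$ (and $\gamma_{n_k}$ still strictly increasing and cofinal) only decreases the $\K$-depth, so it suffices to prove the bound under the assumption $\sigma_n\nearrow\sigma$. Suppose toward a contradiction that $\hd{\K^{<\alpha}}{S}{\lasom}=\beta<\sigma$, so $S\subseteq[\lasom]_\beta^{\kw(\K^{<\alpha})}+K$ for some $K\in\K^{\beta_0}$. Choose $n$ with $\gamma_n>\beta_0$ and $\sigma_n>\beta$ (possible because cofinitely many $\gamma_n$ exceed $\beta_0$ and, as $\sigma_n\nearrow\sigma>\beta$, cofinitely many $\sigma_n$ exceed $\beta$). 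Write $s^n=x_n+k_n$ with $x_n\in[\lasom]_\beta^{\kw(\K^{<\alpha})}$ and $k_n\in K\subseteq H^{\beta_0}\subseteq H^{<\gamma_n}$; then $x_n=s^n+k_n\in H^{\gamma_n}\setminus H^{<\gamma_n}$. On one hand $\so(x_n,\lasom,\kw(\K^{<\alpha}))\le\beta<\sigma\le\sigma^m$, so property~\ref{fc.scl} of Lemma~\ref{fpsestack} gives $\so(x_n,\lasom,\kw(\K^{\gamma_n}))\le\beta$; on the other hand the hypothesis forces $\so(x_n,\lasom,\kw(\K^{\gamma_n}))\ge\sigma_n>\beta$, a contradiction. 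Hence $\hd{\K^{<\alpha}}{S}{\lasom}\ge\sigma$. I expect the selection of the index $n$, reconciling the two cofinality requirements, together with keeping the direction of the sequential-order comparison correct, to be the main subtlety; the reduction to an increasing $\sigma_n$ via depth monotonicity is what makes the selection clean.

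Finally, for the concluding clause assume $\so(s^n,\lasom,\kw(\K^{\gamma_n}))=\sigma_n$. Since $\K^{\gamma_n}\subseteq\K^{<\alpha}$ determines a coarser $\kw$-topology, we have $\so(s^n,\lasom,\kw(\K^{<\alpha}))\le\sigma_n<\sigma^m$, whereupon property~\ref{fc.scl} of Lemma~\ref{fpsestack} upgrades this to $\so(s^n,\lasom,\kw(\K^{<\alpha}))=\sigma_n\le\sigma$. Thus every $s^n\in[\lasom]_\sigma^{\kw(\K^{<\alpha})}$, so $S\subseteq[\lasom]_\sigma^{\kw(\K^{<\alpha})}+K$ for any $K\in\K^{<\alpha}$ containing $0$, giving $\hd{\K^{<\alpha}}{S}{\lasom}\le\sigma$. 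Combined with the lower bound this yields $\hd{\K^{<\alpha}}{S}{\lasom}=\sigma$.
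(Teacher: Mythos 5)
Your proof is correct and follows essentially the same route as the paper's: finite sums of the $s^n$ land in $H^{\gamma_N}\setminus H^{<\gamma_N}$ for the top index $N$, which gives independence and that $\las\cap K$ is finite, and property~\ref{fc.scl} of Lemma~\ref{fpsestack} transfers sequential order between $\kw(\K^{<\alpha})$ and $\kw(\K^{\gamma_n})$ to force the depth lower bound. You merely make explicit a few points the paper leaves implicit (the subsequence selection making $\sigma_n$ increase to $\sigma$, the independence claim, and the ``in particular'' clause), so no substantive difference.
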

\begin{proof}
Let $K\in\K^{<\alpha}$ and $\beta<\sigma$. Then $K\in\K^\gamma$ for some
$\gamma<\alpha$. Let $\gamma_i>\gamma$ be such that
$\so(s_i, \lasom, \kw(\K^{\gamma_i}))=\sigma_i>\beta$. If
$a+g=s_i$ for some $a\in K$ and $g\in H^{<\alpha}$ then $g\in
H^{\gamma_i}\setminus H^{<\gamma_i}$. Then either
$\so(g, \lasom, \kw(\K^{<\alpha}))\geq\sigma^m>\sigma_i>\beta$ or
$\so(g, \lasom, \kw(\K^{<\alpha}))=\so(g, \lasom, \kw(\K^{\gamma_i}))\geq 
\so(s_i, \lasom, \kw(\K^{\gamma_i}))=\sigma_i>\beta$ by the assumption
and Lemma~\ref{fpsestack}.

Thus for any such $K$ the set
$S\setminus (K+[\lasom]_\beta^{\kw(\K^{<\alpha})})\not=\varnothing$ for
every $\beta<\sigma$ so $\hd{\K^{<\alpha}}{S}{\lasom}\geq\sigma$.

If $s\in\las$ then $s=\sum_{n\in I}s_n$ for some finite
$I\subseteq\omega$ and $s\in H^{\gamma'}\setminus H^{<\gamma'}$ where
$\gamma'=\max_{n\in I}\gamma_n$ since $\gamma_n$ are strictly
increasing. Thus if $s\not\in\laf$ where $F=\set
s_n:\gamma_n\leq\gamma.$ then $s\not\in K$. Therefore $\las\cap K$ is
finite and $\las$ is closed and discrete in $\kw(\K^{<\alpha})$.
\end{proof}

\begin{lemma}\label{wggs.w}
Let $\gku$ be a convenient triple. Let $\CC=\set
S^i:i\in\omega.$ and $\xi=\set\sigma^i:i\in\omega.$ be such that
$(\CC,\xi)$ is a sequential scale in $\gku$. Let $m\in\omega$ and
$P=\set p^n:n\in\omega.\subseteq[\lasom]_\Sigma^\kwk$ be an
independent subset of $G$ such that $\la{P}$ is closed and discrete in
$\kwk$, $\hd{\K}{P}{\lasom}=\Sigma<\sigma^m-1$, and $P$ is
$(\CC,\xi)$-stable in $\gku$. Let $D=\set
d^n:n\in\omega.\subseteq G$ and $d\in[D]_\mu^\kwk$ where $\mu\geq1$ is
a successor ordinal, $\Sigma+\mu\leq\sigma\leq\sigma^m-1$ for some
successor ordinal $\sigma$.

Then there exists a wggs $W=\set(H^\alpha, \K^\alpha, \U^\alpha,
S_\alpha, s_\alpha):\alpha\leq\Omega.$ above $(P, G, \K, \U, \Sigma)$
such that $\top(W)=\sigma$ and the following property holds.
\begin{countup}[bcp]
\item\label{wggs.w.align}
$d+s_\Omega\in[\set d^n+p^n:n\in\omega.]_\eta^{\kw(\K^\Omega)}$ where
$$
\eta=\max\{\mu,\so(s_\Omega, P, \kw(\K^\Omega))\}\leq\sigma-\Sigma;
$$

\end{countup}
In addition, if $P'\subseteq G$ is such that $P'\cap P=\varnothing$,
$P\cup P'$ is independent, and $\langle P'\cup P\rangle$ is closed and
discrete in $\kwk$ then one may assume the following property holds.
\begin{countup}[bcp]
\item\label{wggs.i.discrete}
$\la{P'}$ is closed and discrete in $\kw(\K^\Omega)$;

\end{countup}

\end{lemma}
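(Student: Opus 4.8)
The plan is to construct $W$ by transfinite recursion on the target top order $\sigma$, realizing the chain $\set\hku{^\alpha}:\alpha\leq\Omega.$ as the linearization of a \emph{witnessing tree} that climbs the sequential order of $\lasom$ from $\Sigma$ (the level of $P$) up to $\sigma$. By \ref{wggs.ext.successor} the successor stages of the chain attach convergent subsequences of $P$ and supply the bottom markers, while by \ref{wggs.ext.limit} the limit stages collect previously built markers of non-decreasing (in the sense of the paper) order. Every extension is an fpse of finite type close to $\lasm$ produced by Lemma~\ref{fpse.successor} (and, where a limit of an already-present sequence is wanted, Lemma~\ref{z.group}), so that Lemma~\ref{fpsestack} applies and $(\CC,\xi)$ stays a sequential scale at each stage. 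The auxiliary datum $d\in[D]_\mu$ is carried along under the invariant $\mu\leq\sigma-\Sigma$: at each recursive call I would replace $d$ by lower witnesses $d_{(n)}\to d$ with $d_{(n)}\in[D]_{\mu_n}$, $\mu_n<\mu$, thin $P$ correspondingly, and \emph{partition} the index set $\omega$ among the subtrees while keeping the pairing $n\mapsto(d^n,p^n)$ fixed, so that a single diagonal $\set d^n+p^n:n\in\omega.$ serves the alignment \ref{wggs.w.align} at the root.

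In the base case $\sigma=\Sigma+1$ the inequality $\Sigma+\mu\leq\sigma$ forces $\mu=1$: I relabel so that $d^n\to d$ and apply Lemma~\ref{fpse.successor} to a subsequence of $P$, adding $s_\Omega$ with $p^n\to s_\Omega$; since $+$ is continuous and $\kw(\K)\subseteq\kw(\K^\Omega)$, $d^n+p^n\to d+s_\Omega$, giving $d+s_\Omega\in[\set d^n+p^n:n\in\omega.]_1^{\kw(\K^\Omega)}$ with $\eta=1$. For the inductive step write $\sigma=\tau+1$. If $\tau$ is a successor I apply the recursion with target $\tau$ to each pair $(d_{(n)},\text{thinned }P)$, obtaining sub-towers whose tops $s_{\alpha_n}$ have constant order $\tau$ over $\lasom$ and are aligned with $d_{(n)}$ at level $\max\{\mu_n,\rho_0\}$, where $\Sigma+\rho_0=\tau$; if $\tau$ is a limit I use targets $\tau_n\nearrow\tau$ to get strictly increasing cofinal marker orders. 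In either case a final limit stage $\Omega$ collects $S_\Omega=\set s_{\alpha_n}:n\in\omega.$: Lemma~\ref{wggs.c} shows $\las[S_\Omega]$ is closed and discrete and computes $\hd{\K^{<\Omega}}{S_\Omega}{\lasom}$, so Lemma~\ref{fpse.successor} yields the fpse adding $s_\Omega$ with $\so(s_\Omega,\lasom,\kw(\K^\Omega))=\sigma$ and $\so(s_\Omega,P,\kw(\K^\Omega))=\sigma-\Sigma$, hence $\eta=\sigma-\Sigma$. Continuity of $+$ (together with the nesting of each sub-diagonal inside $\set d^n+p^n:n\in\omega.$) raises the alignment level from $\leq\rho_0$ to $\rho_0+1$, establishing \ref{wggs.w.align}.

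The heart of the argument is showing the orders do not collapse, i.e. \ref{wggs.so.other} and \ref{wggs.top}. Exactness of the bottom climb is precisely the stability of $P$: by Lemma~\ref{p.stable} (used to pass to a stable subset beforehand) the value $\hd{\K^{\gamma^1}}{P'}{\lasom}=\Sigma$ is preserved along any finite-type chain close to $\lasm$, and the $m$-reduction Lemmas~\ref{m.reduce.hd} and~\ref{m.reduce.fpse} let one reduce any such chain to one close to $\lasm$ without changing these depths. That each marker keeps its creation order through the later extensions is Lemma~\ref{fpsestack}, property~\ref{fc.scl}, and that $s_\alpha$ has least order among the new points of its stage (property~\ref{wggs.so.other}) is arranged by adding each $S_\alpha$ via Lemma~\ref{fpse.successor} so that the free sequence satisfies \MM{\lasom,\K^{<\alpha}} with the intended $\K$-depth. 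Here the hypotheses $\Sigma<\sigma^m-1$ and $\sigma\leq\sigma^m-1$ enter: they guarantee $\S(\K^\alpha,S^m,\sigma^m)$ at every stage (Lemma~\ref{D.bind}, via condition~\ref{fine.pse.sep}), which is exactly what drives the order-preservation results (Lemmas~\ref{pparity} and~\ref{psestack}) below the ceiling $\sigma^m$.

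For the final clause, the whole tower lives inside $\cl{\lasm}^{\kw(\K^\Omega)}$, so $P'$ is never touched algebraically and it suffices to choose the precompact data $\U^\alpha$ at each fpse so as to keep $\la{P'}$ discrete. This is done as in Lemmas~\ref{ct.split} and~\ref{z.group}: when selecting $\U_0\supseteq\U^{<\alpha}$ I additionally require $\cl{\la{P'\setminus F}}^{\tau(\U_0)}\cap H^{<\alpha}=\la{P'\setminus F}$ for every finite $F$, which is possible by Lemma~\ref{bhb} since $\la{P\cup P'}$ is closed, discrete and independent; Lemma~\ref{stack.limit} then propagates this through limit stages, so $\la{P'}$ stays closed and discrete in $\kw(\K^\Omega)$. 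The main obstacle is the simultaneous bookkeeping of the second paragraph: one must thin $P$ (and hence $D$) once for each node of the witnessing tree while (i) keeping a single index set so that the one diagonal $\set d^n+p^n:n\in\omega.$ witnesses \ref{wggs.w.align} at the root, and (ii) not disturbing the stability and depth computations that force the order to climb by exactly one at each collecting stage. Making the combinatorial alignment of indices coexist with the order-exactness supplied by stability and Lemma~\ref{fpsestack} is the delicate part; everything else is an assembly of the preservation lemmas of the preceding sections.
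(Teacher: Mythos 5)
Your proposal follows essentially the same route as the paper: the same induction on $\sigma$ with base case $\sigma=\Sigma+1$ handled by a single fpse attaching a limit to a subsequence of $P$, the same partitioning of $P$ and $D$ into countably many pieces indexed by $I_n$ with recursively built sub-wggs whose top orders $\sigma_n\to\sigma-1$, the same use of Lemma~\ref{wggs.c} plus Lemma~\ref{fpse.successor} to collect the markers $s_{\gamma_n}$ into $S_\Omega$ and add $s_\Omega$, the same reliance on stability of $P$ (noting, as the paper does, that stability is a hypothesis inherited by all subsets of $P$, not something obtained via Lemma~\ref{p.stable} mid-proof) to keep the $\K$-depth at $\Sigma$, and the same treatment of $\la{P'}$ via the choice of the precompact data and~\ref{pse.resolve}. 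The sketch is correct in outline and matches the paper's argument.
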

\begin{proof}
Let $\sigma=\Sigma+1<\sigma^m$. Then $\mu=1$ so by passing to subsets
and reindexing we may assume that $d^n\to d$ in $\kwk$. Use
Lemma~\ref{ct.split} to find an $S_0\subseteq P$, and $\U^0\supseteq\U$
such that $\cl{\la{S'}}^{\tau(\U^0)}\cap G=\la{S'}$ for every infinite
$S'\subseteq S_0$ and
$\cl{\le{S_0}}^{\tau(\U^0)}\cap\cl{\lo{S_0}}^{\tau(\U^0)}=\varnothing$. By passing
to subsets and applying Lemma~\ref{free.sequence} we may assume that
$S_0\to s_0$ in $\tau(\U^0)$ and $S_0$ satisfies \MM{\lasom, \K}
and \MM{\lasi, \K} for every $i\in\omega$. Let $\hku{^0}$ be the fpse
of $\gku$ over $S_0$. Put $\hku{^{<0}}=\gku$ and
$W=\set(H^i, \K^i, \U^i, S_i,
s_i):i<1.$. Properties~\ref{wggs.order}--\ref{wggs.ext.successor} are
immediate, \ref{wggs.top} holds vacuously,
and~\ref{wggs.w.align} holds by the construction.

Let $g\in H_0\setminus G$ and let
$\so(g, \lasom, \kw(\K^0))=\sigma'$. If $\sigma'\leq\Sigma$
then $\so(g, \lasom, \kwk)=\sigma'$ by Lemma~\ref{D.flat}, in
particular $g\in G$. Thus $\sigma'>\Sigma$ so
$\sigma'\geq\Sigma+1\geq\so(s_0, \lasom, \kw(\K^0))$
and~\ref{wggs.so.other} holds.

Suppose $K\in\K$ is such that $(K+L)\cap\la{P'}$ is infinite where
$L=\cl{\la{S_0}}^{\tau(\U^0)}$. Let $a_n+d_n=p_n\in\la{P'}$ be
distinct points in $\la{P'}$ such that $a_n\in K$ and $d_n\in L$. Then
$d_n\in G$ so by~\ref{pse.resolve} $d_n\in\la{S_0}\subseteq\la{P}$. If
the set $\set d_n:n\in\omega.$ is infinite then $K\cap\la{P'\cup P}$
is infinite since $P'$ and $P$ are disjoint and $P'\cup P$ is
independent, contradicting $\la{P'\cup P}$ is closed and discrete in
$\kwk$. Thus $\set d_n:n\in\omega.$ is finite so $K'\cap\la{P'}$ is
infinite for some $K'\in\K$ contradicting $\la{P'\cup P}$ is closed
and discrete in $\kwk$, so~\ref{wggs.i.discrete} holds.

Suppose $W$ that satisfy~\ref{wggs.w.align} and~\ref{wggs.i.discrete}
can be constructed for every $\sigma'<\sigma$ where
$\Sigma+1<\sigma\leq\sigma^m-1$.

If $\mu>1$ let $\mu_n\to\mu-1$ be increasing such that $d_n'\to d$ in
$\kw(\K)$ where $\so(d_n', D, \kwk)=\mu_n$. Using regularity, construct disjoint
$I_n\subseteq\omega$ such that $\so(d_n', D_n, \kwk)=\mu_n$ where
$D_n=\set d^i:i\in I_n.$.

If $\mu=1$ assume, by
possibly thinning out and reindexing that $d^n\to d$, put
$\mu_n=1$ and let $I_n\subseteq\omega$ be arbitrary infinite disjoint
subsets.

Let $\sigma_i\to\sigma-1>\Sigma$ be successor ordinals such that
$\sigma_i\geq\Sigma+\mu_i$. Let $P_n=\set p^i:i\in I_n.$. Note that
every subset of $P$ (including $P_n$, and their arbitrary unions) is
$(\CC,\xi)$-stable in $\gku$ and
$\hd{\K}{P_n}{\lasom}=\Sigma$.

Build, by induction, a sequence of
$W_i=\set(H^\alpha, \K^\alpha, \U^\alpha, S_\alpha,
s_\alpha):\gamma_{i-1}<\alpha\leq\gamma_i.$ such that $W_0$ is a wggs above
$(P_0,G,\K,\U,\Sigma)$, $W_{i+1}$ is a wggs above
$(P_{i+1},H^{\gamma_i},\K^{\gamma_i},\U^{\gamma_i},\Sigma)$ for
$i\in\omega$, and the following properties hold.
\begin{countup}[bcp]
\item\label{wggs.i.group}
the subgroup $\la{P'\cup\cup_{j>i}P_j}$ is closed and discrete in
 $\kw(\K^{\gamma_i})$;

\item\label{wggs.i.so}
for each $i\in\omega$
$\so(s_{\gamma_i},\lasom,\kw(\K^{\gamma_i}))=\sigma_i$;

\item\label{wggs.i.sum}
$d_n'+s_{\gamma_n}\in[\set d^i+p^i:i\in
I_n.]_{\eta_n}^{\kw(\K^{\gamma_i})}$ where
$$
\eta_n=\max\{\mu_n, \so(s_{\gamma_n}, P_n, \kw(\K^{\gamma_n}))\leq\sigma_i-\Sigma;
$$

\end{countup}
At the $n$-th step the set $\cup_{j>n}P_j$ is closed, discrete, and
$(\CC,\xi)$-stable in $\hku{^{\gamma_n}}$,
$\hd{\K^{\gamma_n}}{\cup_{j>n}P_j}{\lasom}=\Sigma$, and $\sigma_i<\sigma$ so $W_{n+1}$ exists
by the hypothesis.

Let $\Omega=\lim\gamma_i$ and consider the conventient triple $\hku{^{<\Omega}}$.
Put $S_\Omega'=\set s_{\gamma_i}:i\in\omega.$ and let $g\in
H^{<\Omega}$ be such that $g\in H^\alpha\setminus H^{<\alpha}$ and
$\so(g, \lasom, \kw(\K^{<\Omega}))=\sigma'$ for some $\sigma'<\sigma^m$
and $\alpha<\Omega$. Then $\so(g, \lasom, \kw(\K^\alpha))=\sigma'$
by~\ref{wggs.order}, Lemma~\ref{fpsestack}, and the construction of
$W_i$, in particular,
$\so(s_{\gamma_i}, \lasom, \kw(\K^{<\Omega}))=\sigma_i$.

The sequence of ordinals $\gamma_i$ is cofinal in
$\Omega$ and $\so(g, \lasom, \kw(\K^{\gamma_i}))\geq\sigma_i$ for
every $g\in H^{\gamma_i}\setminus H^{<\gamma_i}$
by~\ref{wggs.so.other} and the hypothesis so
$\hd{\K^{<\Omega}}{S_\Omega}{\lasom}=\sigma-1$ for any infinite subset
$S_\Omega\subseteq S_\Omega'$ and $\la{S_\Omega'}$ is closed and
discrete in $\kw(\K^{<\Omega})$ by Lemma~\ref{wggs.c}. 

Construct an fpse $\hku{^\Omega}$ of $\hku{^{<\Omega}}$ over some
infinite $S_\Omega\subseteq S_\Omega'$ such that $S_\Omega\to
s_\Omega$ in $\kw(\K^\Omega)$ using Lemma~\ref{fpse.successor} and let
$L=\cl{\la{S_\Omega}}^{\tau(\U^\Omega)}$.

Suppose $\so(g', \lasom, \kw(\K^\Omega))<\sigma$ for some $g'\in
H^\Omega$. Then $\so(g', \lasom, \kw(\K^\Omega))\leql\sigma-1$ so by
Lemma~\ref{D.flat}
$\so(g', \lasom, \kw(\K^{<\Omega}))\leql\sigma-1<\omega_1$ and $g'\in
H^{<\Omega}$. Thus $\so(g, \lasom, \kw(\K^\Omega))\geq\sigma$ for any
$g\in H^\Omega\setminus H^{<\Omega}$, in particular,
$\so(s_\Omega, \lasom, \kw(\K^\Omega))=\sigma$.

Now~\ref{wggs.order}--\ref{wggs.top} hold by
construction, \ref{wggs.w.align} holds by the choice of $d_n'$.

Let $K\in\K^{<\Omega}$ be such that $K+L\cap\la{P'}$ is
infinite. Suppose $p_n=a_n+s_n$ are distinct points in $\la{P'}$ such
that $a_n\in K$ and $s_n\in L$. Then $s_n\in H^{<\Omega}$ so
$s_n\in\la{S_\Omega}$ by~\ref{pse.resolve}. Let $K\in\K^\beta$ where
$\beta<\Omega$. If the set $\set s_n:n\in\omega.$ is infinite there is
an $n\in\omega$ such that $s_n\in H^\alpha\setminus H^{<\alpha}$ for
some $\alpha\geq\beta$ so $p_n=a_n+s_n\not\in G$ contradicting $p_n\in
P'$. Thus $\set s_n:n\in\omega.$ is finite and $p_n\in
K'\in\K^{\gamma_i}$ for some $\gamma_i<\Omega$. Then $\la{P'}$ is not
closed and discrete in $\kw(\K^{\gamma_i})$ contradicting the
properties of $W_i$. Hence~\ref{wggs.i.discrete} holds.
\end{proof}

\begin{lemma}\label{cl.projection}
Let $\CC=\set S^i:i\in\omega.$ be discrete separated in some $\kw$-pair
$(G,\K)$. Let $g\in\cl{P}^\kwk$ for some $g\in G$ and $P=\set
p_j:j\in\omega.\subseteq G$. Let $D=\set d_j:j\in\omega.$ where
$d_j=\sum_{i\in I}d_j^i$ for some finite $I\subseteq\omega$, and
$d_j^i\in\cl{\lasi}^\kwk$. Suppose $p_j-d_j\in K$ for some $K\in\K$
and every $j\in\omega$.

Then there are $d^i\in\cl{\lasi}^\kwk$, $i\in I$ and $a\in K$ such
that
\begin{countup}[bcp]
\item\label{cl.projection.sum}
$g=a+\sum_{i\in I}d^i$;

\item\label{cl.projection.align}
for any $U$, $U_K$, $U(i)$, $i\in I$ open in $\kwk$ such that $g\in
U$, $a\in U_K$, and $d^i\in U(i)$ there is an $n\in\omega$ such that
$p_n\in U$, $p_n-d_n\in U_K$, and $d_n^i\in U(i)$;

\end{countup}
\end{lemma}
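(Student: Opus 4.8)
The plan is to realize $g$ as a coordinate of a cluster point of the sequence $\langle(p_j,\,p_j-d_j,\,(d_j^i)_{i\in I}):j\in\omega\rangle$ inside a suitable compact product, and then to read off both conclusions from the single fact that a cluster point lies in the closure of the sequence.

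First I would reduce to the case $P\subseteq K_0$ for one fixed $K_0\in\K$. Since $\kwk$ is determined by the countable family $\K$, which is closed under finite sums and hence directed by inclusion, the standard $k_\omega$ closure formula gives $\cl{P}^\kwk=\bigcup_{K\in\K}\cl{P\cap K}^{K}$. As $g\in\cl{P}^\kwk$ there is a $K_0\in\K$ with $g\in\cl{P\cap K_0}^{K_0}$; discarding the indices $j$ with $p_j\notin K_0$ (which changes nothing, since the witnessing data $d_j,d_j^i,p_j-d_j$ are retained) I may assume $P\subseteq K_0$ and $g\in\cl{\{p_j:j\in\omega\}}^{K_0}$. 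Now set $a_j=p_j-d_j\in K$; then $d_j=p_j-a_j\in K_0+K=:K_1\in\K$, and since $d_j=\sum_{i\in I}d_j^i$ with $d_j^i\in\cl{\lasi}^\kwk$, each $d_j^i$ lies in the projection $\pi_{I,i}(K_1)=:K_i'$. By Lemma~\ref{compact.project} (whose proof uses only that $\CC$ is discrete separated by $\K$, so it applies in the present $\kw$-pair) each $K_i'$ is a compact subspace of $\cl{\lasi}^\kwk$. Thus all pieces $p_j$, $a_j$, and $d_j^i$ range over the fixed compacta $K_0$, $K$, and $K_i'$ respectively.

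Next I would form the compact product $\Pi=K_0\times K\times\prod_{i\in I}K_i'$ ($I$ finite, so $\Pi$ is compact), consider $\Phi\colon\omega\to\Pi$ given by $\Phi(j)=(p_j,a_j,(d_j^i)_{i\in I})$, and set $C=\cl{\Phi[\omega]}^{\Pi}$. The first projection $\pi_1\colon\Pi\to K_0$ is continuous, so $\pi_1(C)$ is closed in $K_0$ and contains $\{p_j\}$, hence contains $g$; pick $(g,a,(d^i)_{i\in I})\in C$. For conclusion \ref{cl.projection.sum}, note that since $\kwk$ is a group topology (Lemma~\ref{kw.groups}) the map $(x_0,x_K,(x_i))\mapsto x_0-x_K-\sum_{i\in I}x_i$ is continuous from $\Pi$ into $(G,\kwk)$, so its zero set $E$ is closed in $\Pi$ ($\kwk$ being $T_1$). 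Every $\Phi(j)$ lies in $E$, whence $C\subseteq E$ and in particular $g=a+\sum_{i\in I}d^i$, with $a\in K$ and $d^i\in K_i'\subseteq\cl{\lasi}^\kwk$, as required. Finally, conclusion \ref{cl.projection.align} is immediate from the membership $(g,a,(d^i))\in C=\cl{\Phi[\omega]}^\Pi$: given $U,U_K,U(i)$ open in $\kwk$ with $g\in U$, $a\in U_K$, $d^i\in U(i)$, their traces on $K_0$, $K$, $K_i'$ are relatively open (the subspace topology these compacta carry from $\kwk$ is exactly the one used in $\Pi$), so $V=(U\cap K_0)\times(U_K\cap K)\times\prod_{i\in I}(U(i)\cap K_i')$ is a neighborhood of $(g,a,(d^i))$ in $\Pi$; as that point is in the closure of $\Phi[\omega]$, some $\Phi(n)\in V$, which says precisely $p_n\in U$, $p_n-d_n=a_n\in U_K$, and $d_n^i\in U(i)$.

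The one genuinely non-routine ingredient is the compactness of the projections $K_i'$: this is where discrete separation of $\CC$ is used (via Lemma~\ref{compact.project}), and without it the $i$-coordinates could escape to infinity so that no cluster tuple over $g$ would exist. Everything else is standard $k_\omega$-bookkeeping, namely the closure formula and the continuity of the group operations restricted to a compact set.
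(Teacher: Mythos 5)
Your cluster\nobreakdash-point argument in the compact product $\Pi$ is fine as far as it goes, but the reduction you open with is false, and it is exactly the point of the lemma. You claim the ``standard $k_\omega$ closure formula'' $\cl{P}^\kwk=\bigcup_{K\in\K}\cl{P\cap K}^{K}$. That identity does not hold in $\kw$ spaces: the right-hand side is only the \emph{first} iterate of the $k$-closure (equivalently, of the sequential closure, since each $K$ here is compact metrizable), and the set $\bigcup_{K\in\K}\cl{P\cap K}^{K}$ need not be closed, so the closure of $P$ is obtained only after transfinitely iterating this operator. The Arens space $S_2$ already defeats the formula ($0$ is in the closure of the second-level points but not in the closure of their trace on any compact subset), and the groups this lemma is applied to are built precisely so that $\so(g,P,\kwk)$ can be any countable ordinal. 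So from $g\in\cl{P}^\kwk$ you cannot conclude $g\in\cl{P\cap K_0}^{K_0}$ for a single $K_0\in\K$, and your entire construction of the tuple $(g,a,(d^i))$ as a cluster point of $\langle(p_j,p_j-d_j,(d_j^i))\rangle$ collapses: when $\so(g,P,\kwk)>1$ there need be no subsequence of the original data clustering at a point lying over $g$.

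The paper's proof repairs exactly this by inducting on $\sigma=\so(g,P,\kwk)$: it picks $g_n\to g$ with $g_n\in[P]_{\sigma_n}$, $\sigma_n<\sigma$, applies the inductive hypothesis to decompose each $g_n=a_n+\sum_{i\in I}d^{i,n}$ \emph{together with} the alignment property \ref{cl.projection.align} for $g_n$, and only then runs the compactness/discrete-separation extraction (the step you correctly identified as the non-routine ingredient, via Lemma~\ref{compact.project}) to get $d^{i,n}\to d^i$ and $a_n\to a$; property \ref{cl.projection.align} for $g$ is then obtained by composing neighborhoods through the approximants $g_m$. In effect your one-step compactness argument is the successor step of that induction. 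To fix your write-up you would need to replace the closure formula by this induction on sequential order (or some equivalent transfinite iteration); there is no way to avoid it, since the hypothesis is only $g\in\cl{P}^\kwk$, not $g\in[P]_1^\kwk$.
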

\begin{proof}
Suppose the Lemma holds for all $g'\in G$ such that $\so(g',
P, \kwk)=\sigma'<\sigma$ and let $\so(g, P, \kwk)=\sigma$. If
$\sigma=0$ the proof is immediate so assume $\sigma>0$. Then there are
$g_n\in[P]_{\sigma_n}$ where $\sigma_n<\sigma$ and $g_n\to g$. By the
hypothesis there are $a_n\in K$ and $d^{i,n}\in\cl{\lasi}^\kwk$ for
$i\in I$ such that $g_n=a_n+\sum_{i\in I}d^{i,n}$ and for any
$m\in\omega$ and any open in $\kwk$ subsets $U\ni g_m$, $U_K\ni a_m$,
$U(i)\ni d^{i,m}$ where $i\in I$ there is an $n\in\omega$ such that
$p_n\in U$, $d_n^i\in U(i)$, $p_n-d_n\in U_K$.

Since $\set\sum_{i\in I}d^{i,n}:n\in\omega.\subseteq K'$ for
some $K'\in\K$ and $\CC$ is discrete separated by $\K$ in $G$,
there is an $i\in I$ such that the set $\set d^{i,n}:n\in\omega.$ is not
closed and discrete. By thinning out and reindexing we then may assume
that $d^{i,n}\to d^i$ for some $d^i\in\cl{\lasi}^\kwk$. Replacing $K'$
with some $K''\supseteq K'+\set d^{i,n}:n\in\omega.$ and repeating this
argument we may find $d^i$ such that $d^{i,n}\to d^i$ for each $i\in
I$. By thinning out and reindexing we may assume that $a_n\to a$ for some
$a\in K$.
Then $g=a+\sum_{i\in I}d^i$.

Let $U$, $U_K$, $U(i)$, $i\in I$ be open in $\kwk$ such that $g\in U$,
$a\in U_K$, and $d^i\in U(i)$ for every $i\in I$. Pick an $m\in\omega$
such that $g_m\in U$, $a_m\in U_K$, and $d^{i,m}\in U(i)$ for every $i\in I$. By the
hypothesis there is an $n\in\omega$ such that $p_n\in U$, $p_n-d_n\in
U_K$, and $d_n^i\in U(i)$, $i\in I$.
\end{proof}

\begin{lemma}\label{g.kw}
Let $\xi=\set\sigma^m:m\in\omega.\subseteq\omega_1\setminus 2$, $m\in\omega$ be a
family of successor ordinals. Then there exists a convenient triple
$(G, \K, \U)$ and a family $\CC=\set S^i:i\in\omega.$ of discrete
separated subsets of $G$ such that $\so(0, \lasom, \kwk)=\sigma^m$ for
every $m\in\omega$ and $(\CC,\xi)$ is a sequential scale in $\gku$.
\end{lemma}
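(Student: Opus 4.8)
The plan is to obtain $\gku$ by a single countable recursion of fine primitive sequential extensions, starting from the \emph{trivial} sequential scale and using the machinery of the previous sections to lower the order of each odd subspace from $\omega_1$ down to the prescribed value. First I would fix pairwise disjoint countably infinite independent sets $S^i$ inside the free boolean group $G_0$ on $\bigcup_i S^i$, endow $G_0$ with a precompact metrizable topology $\tau(\U_0)$ witnessing an embedding into a compact boolean group, and let $\K_0$ consist of the (finite, hence compact) subgroups generated by finite subsets. Then $(G_0,\K_0,\U_0)$ is a convenient triple whose $\kw(\K_0)$-topology is discrete, so $\la{\bigcup_i S^i}=G_0$ is closed and discrete and, by the observation following Definition~\ref{seq.scale}, $(\CC,\xi)$ is already a sequential scale --- but with $\so(0,\lo{S^m},\kw(\K_0))=\omega_1$ for every $m$, since $0$ is isolated and hence lies in no $\cl{\lo{S^m}}$. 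The entire task is therefore to \emph{lower} each of these orders to $\sigma^m$ by adding convergent sequences.

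For a fixed $m$ I would build the order at $0$ over $\lo{S^m}$ by an fpse-chain of finite type \emph{close to} $\lasm$, in direct analogy with the well-graded group stack of Lemma~\ref{wggs.w}. Recursively, for ordinals $\delta<\sigma^m$ I realize closed discrete sets of \emph{odd} points of $\K$-depth $\delta$ over $\lo{S^m}$: having produced points of all depths below $\delta$, I apply Lemma~\ref{fpse.successor} to make suitable sequences of them converge to new odd points of the next depth (taking limit sequences of increasing depth at limit stages, as in~\ref{wggs.ext.limit}). Each such extension is performed with the even/odd separation clause (condition~\ref{fine.pse.sep} of Definition~\ref{fine.pse}), obtained from Lemma~\ref{ct.split}, so that $\S(\K,S^m,\sigma^m)$ --- hence, by Lemma~\ref{U.sep.so}, the lower bound $\so(0,\lo{S^m})\ge\sigma^m$ --- is kept alive throughout by Lemmas~\ref{D.bind} and~\ref{wparity}. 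Once a closed discrete $P^m\subseteq\lo{S^m}$ with $\hd{\K}{P^m}{\lo{S^m}}=\sigma^m-1$ is available, I apply Lemma~\ref{z.group} to force an infinite subsequence of $P^m$ to converge to $0$; this yields $0\in[\lo{S^m}]_{\sigma^m}$, so together with the preserved lower bound $\so(0,\lo{S^m})=\sigma^m$ exactly.

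To treat all $m$ at once I would splice these countably many chains into a single fpse-chain of countable length and pass to the union $\hku{^{<\Omega}}$, which is a convenient triple by Lemma~\ref{stack.limit} and in which $(\CC,\xi)$ is still a sequential scale by Lemma~\ref{fpsestack}. The essential non-interference is that every extension carried out for coordinate $m$ is done \emph{away from} $\la{S^{m'}}$ for each $m'\ne m$: the free sequences $D_\gamma$ then satisfy $\hd{\K^{<\gamma}}{D_\gamma}{\lo{S^{m'}}}=\omega_1$, so by Lemmas~\ref{D.flat}, \ref{so.reflect}, \ref{w1.kdepth.reduce} and~\ref{w1.m.hd} they do not alter $\so(g,\lo{S^{m'}},\cdot)$ for points already present; in particular they neither lower the already-achieved $\so(0,\lo{S^{m'}})=\sigma^{m'}$ nor create a shortcut to $0$. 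Discrete separation~\ref{seq.scale.ds} is preserved by Lemmas~\ref{ds.stable}, \ref{ds.flat} and~\ref{ds.limit}, the direct-sum property~\ref{seq.scale.direct} by Lemma~\ref{pse.sum}, and the independence of $\bigcup_i S^i$ keeps the sums of distinct coordinates closed and discrete.

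The hard part will be the bookkeeping that keeps the two bounds on $\so(0,\lo{S^m})$ compatible while all coordinates are built simultaneously. The lower bound forces every sub-$(\sigma^m-1)$ extension in coordinate $m$ to carry the even/odd splitting of~\ref{fine.pse.sep} (so that the parity homomorphism of Definition~\ref{pre.parity} stays well defined and $\S(\K,S^m,\sigma^m)$ survives each step), while the upper bound forces the terminal Lemma~\ref{z.group} step to be admissible exactly at depth $\sigma^m-1$. Arranging a single recursion in which, at every stage, the current free sequence is at once \emph{close to} $\lasm$ for the coordinate being worked on and \emph{away from} $\la{S^{m'}}$ for all others --- and checking that the limit steps preserve all of~\ref{seq.scale.ds}--\ref{seq.scale.so} --- is where essentially all of the technical apparatus of the preceding sections is brought to bear.
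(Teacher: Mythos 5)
Your proposal shares the local machinery with the paper (wggs's via Lemma~\ref{wggs.w}, preservation of $\S(\cdot,\cdot,\cdot)$ along fpse-chains of finite type), but the global architecture is genuinely different, and the difference is where the paper saves all the work. The paper never makes anything converge to $0$. For each $m$ it works in a \emph{separate} group $G(m)$ with an auxiliary independent set $S(m)$, uses Lemmas~\ref{wggs.w} and~\ref{fpse.successor} to manufacture a single point $s$ of the prescribed sequential order over $\lo{S(m)}$, and then \emph{translates}: it sets $S^m:=s+S(m)$, so that $\lo{(s+S(m))}=s+\lo{S(m)}$ and, translation being a homeomorphism, the order of $0$ over $\lo{S^m}$ equals the order of $s$ over $\lo{S(m)}$. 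Finally it assembles the coordinates as a direct sum $G=\bigoplus_m G_m$ with $\K$ the finite sums of the $\K_m$, which makes discrete separation and property~\ref{seq.scale.direct} essentially automatic and eliminates the ``close to / away from'' interference analysis that occupies your third paragraph.

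Your route of realizing the order \emph{at} $0$ inside a single group is not impossible, but as written it has a concrete gap at the terminal step. Lemma~\ref{z.group} requires the depth-$(\sigma^m-1)$ set to converge to $0$ in $\tau(\U)$; the high-order points you plan to feed it are tops of wggs's added at scattered earlier stages, and nothing in Lemmas~\ref{wggs.w} or~\ref{fpse.successor} places them inside prescribed members of $\U$. To arrange $0\in\cl{P^m}^{\tau(\U)}$ you must thread points of every intermediate order through every basic clopen subgroup across the whole recursion --- exactly the bookkeeping of conditions~\ref{p.cl} and~\ref{p.k.gshift} in the proof of Lemma~\ref{g.ind}, i.e., the hardest part of the main construction, which this lemma is meant to precede and avoid. (A smaller slip: ``a closed discrete $P^m\subseteq\lo{S^m}$ with $\hd{\K}{P^m}{\lo{S^m}}=\sigma^m-1$'' is self-contradictory for $\sigma^m>1$, since a subset of $\lo{S^m}$ has depth $0$ over $\lo{S^m}$; you mean $P^m\subseteq[\lo{S^m}]_{\sigma^m-1}^\kwk$.) The translation trick is the missing idea: it converts ``some point of order $\sigma^m$ over $\lo{S(m)}$, anywhere in the group'' into ``order exactly $\sigma^m$ at $0$ over $\lo{S^m}$'' at no cost, and the direct-sum assembly then disposes of the multi-coordinate interference you correctly identify as the remaining difficulty.
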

\begin{proof}
For each $m\in\omega$ let $\gks{(m)}$ be a convenient triple such that
$\kw(\K(m))$ is neither discrete nor compact.
Let $\CC_m=\set S(n):n\in\omega.$ be a family of countable disjoint
subsets of $G(m)\setminus\{0\}$ such that $\cup_{n\in\omega}S(n)$ is an
independent set and $\la{\cup_{n\in\omega}S(n)}$ is closed and
discrete in $\kw(\K(m))$. Then $(\CC_m,\xi)$ is a trivial sequential scale in
$\gks{(m)}$.

Let $\sigma^m_n\to\sigma^m-1$.
Let $S(m)=\cup_{n\in\omega}S(m,n)$ where $S(m,n)$ are infinite and
disjoint. Each $S(m,n)$ is trivially $(\CC_m,\xi)$-stable in $\gks{(m)}$
(its $\K(m)$-height is $0$). Put $\gks{^{-1}}=\gks{(m)}$,
$\gamma_{-1}(m)=-1$, and use
Lemma~\ref{wggs.w} to construct wggs
$$
W_n(m)=\set(G^\gamma, \K^\gamma, \U^\gamma, S_\gamma,
s_\gamma):\gamma_{n-1}(m)\leq\gamma<\gamma_n(m).
$$
above $(S(m,n),
G^{\gamma_{n-1}(m)}, \K^{\gamma_{n-1}(m)}, \U^{\gamma_{n-1}(m)}, 0)$
relative to $(\CC_m,\xi)$ such that
$\top(W_n(m))=\sigma^m_n$ for every $n\in\omega$. Let
$\alpha(m)=\lim\gamma_n(m)$.

Just as in the proof of Lemma~\ref{wggs.w}, the set $S=\set
s_{\gamma(n)}:n\in\omega.$ is closed and discrete and
$\hd{\K^{<\alpha(m)}}{S}{\lo{S(m)}}=\sigma^m-1$. Use
Lemma~\ref{fpse.successor} to find an fpse $\gks{^\alpha(m)}$ of
$\gks{^{<\alpha(m)}}$ such that $S\to s\not\in G^{<\alpha(m)}$ in
$\kw(\K^{\alpha(m)})$. Then $\so(s, \lo{S(m)}, \kw(\K^{\alpha(m)}))=\sigma^m-1$,
the set $s+S(m)$ is independent,
$\la{s+S(m)}\setminus\le{(s+S(m))}=s+\lo{S(m)}$ so $\S(\K^{\alpha(m)},
s+S(m), \sigma^m)$ holds. Put $S^m=s+S(m)$, $\gks{_m}=\gks{^{\alpha(m)}}$.

Let $G$ be the direct sum of $G_m$, $\U$ be the appropriate basis of
the topology inherited from the product of $(G_m,\tau(\U_m))$, and
$\K$ be the closure of $\cup_{m\in\omega}\K_m$ under finite sums.
Put $\CC=\set S_m:m\in\omega.$. Then the desired properties hold by
the construction.

Note that in the proof above, the full sequential scale $\CC_m$ was
only required to formally satisfy the conditions of
Lemma~\ref{wggs.w}.
\end{proof}

Let $\sigma^m$ be an increasing sequence of successor ordinals in
$\omega_1$. Let $\sigma^i_k\to\sigma^i-1$ be an increasing sequence of
successor
ordinals for every $i\in\omega$ such that
$\sup\sigma^i_k=\sigma^i-1$.

Since all groups $G_\alpha$ in the construction have cardinality
$2^\omega$ we will assume that every $G_\alpha$ is a subgroup
(algebraically) of $2^\omega$. Let $\set
C_\alpha:\alpha<\omega_1.\subseteq[2^\omega]^\omega$ and $\set
P_\alpha:\alpha<\omega_1\hbox{ is a
limit ordinal}.\subseteq[2^\omega]^\omega$.

Let $\gks{_0}=\gku$ where the convenient triple $\gku$ and the
sequential scale $(\CC,\xi)$ have been constructed in Lemma~\ref{g.kw}.
Below we use the notation
$\ccl{C_\alpha}$ for the closure of $C_{\mcofl(\alpha)}$ in $\kw(\K_{<\mcofl(\alpha)})$.

\begin{lemma}\label{g.ind}
There exists an fpse-chain $\set\gks{_\alpha}:\alpha<\omega_1.$ relative to
$(\CC,\xi)$ and an increasing continuous $\mcofl:\omega_1\to\omega_1$ such that
\begin{countup}[bcp]
\item\label{g.cc}
if $\alpha$ is a limit ordinal and $P_\alpha\subseteq
G_{\mcofl(\alpha+1)}$ is an infinite subset that is closed and
discrete in $\kw(\K_{\mcofl(\alpha+1)})$ then
there exists an infinite $S_\alpha\subseteq P_\alpha$ such that
$S_\alpha\to s_\alpha$ in $\kw(\K_{\mcofl(\alpha+2)})$;

\item\label{g.so}
if $\alpha$ is a limit ordinal and
$0\in\overline{P_\alpha}^{\kw(\K_{\mcofl(\alpha+1)})}$ then $\so(0,
P_\alpha, \kw(\K_{\mcofl(\alpha+2)}))\leq\sigma^m$ for some $m\in\omega$;

\item\label{g.lus}
if $\alpha$ is a limit ordinal then one of the following conditions holds
\begin{countup}[label={\rm\alph*.},ref={\rm(\arabic{countupi}.\alph*)}]
\item\label{g.lus.c}
if $\hd{\K_{<\mcofl(\alpha)}}{\ccl{C_\alpha}\cap U}{\CC}=\omega$ for every
$U\in\U_{<\mcofl(\alpha)}$ then $S\to0$ in $\kw(\K_{\mcofl(\alpha)})$ for some
$S\subseteq \ccl{C_\alpha}$;

\item\label{g.lus.wggs}
suppose $\ccl{C_\alpha}\subseteq\sum_{i\in
I}\cl{\lasi}^{\kw(\K_{<\mcofl(\alpha)})}$ for some finite
$I\subset\omega$ (so \ref{g.lus.c} does not hold), and $\ccl{C_\alpha}\cap\sum_{i\in
I'}\cl{\lasi}^{\kw(\K_{<\mcofl(\alpha)})}=\varnothing$ for any $I'\subset
I$, $I'\not=I$; if there exists an fpse-chain
$C=\set\gks{_\gamma}:\mcofl(\alpha)\leq\gamma\leq\gamma^0.$ such that
$\cl{\ccl{C_\alpha}}^{\kw(\K_{\gamma^0})}\cap\sum_{i\in
I'}\cl{\lasi}^{\kw(\K_{\gamma^0})}\not=0$ for some $I'\subset I$,
$I'\not= I$; then $\cl{\ccl{C_\alpha}}^{\kw(\K_{\mcofl(\alpha+1)})}\cap\sum_{i\in
I''}\cl{\lasi}^{\kw(\K_{\mcofl(\alpha+1)})}\not=0$ for some $I''\subset
I$, $I''\not= I$;

\item\label{g.lus.so.raise}
suppose neither \ref{g.lus.c} nor~\ref{g.lus.wggs} holds,
$\ccl{C_\alpha}\subseteq\sum_{i\in
I}\cl{\lasi}^{\kw(\K_{<\mcofl(\alpha)})}$ for some finite $I\subset\omega$, and
$\ccl{C_\alpha}\cap\sum_{i\in
I'}\cl{\lasi}^{\kw(\K_{<\mcofl(\alpha)})}=\varnothing$ for any
$I'\subset I$, $I'\not=I$; suppose there exists an fpse-chain
$$
C=\set\gks{_\gamma}:\mcofl(\alpha)\leq\gamma\leq\gamma^0.
$$
and
for some $\gamma_n$, $n\in\omega$, strictly increasing and cofinal in
$\gamma^0$ the following properties hold:

there are $d(n)\in\cl{\ccl{C_\alpha}}^{\kw(\K_{\gamma^0})}$ such that
$d(n)=\sum_{i\in I}d(i, n)$ for every $n\in\omega$ where
$d(i,n)\in\cl{\lasi}^{\kw(\K_{\gamma^0})}\cap
\bigcap\set \cl{U}^{\tau(\U_{\gamma^0})}:U\in\U_{<\mcofl(\alpha)}.$,
and

$\so(g, \la{S^{i(n)}}, \kw(\K_{\gamma^0}))\geq
\sigma^{i(n)}_n$
for any $g\in G_{\gamma_n}\setminus G_{<\gamma_n}$ where $d({i(n)},n)\in 
G_{\gamma_n}\setminus G_{<\gamma_n}$ for some choice of $i(n)\in I$
and $d(i,n)\not\in G_{<\mcofl(\alpha)}$ for $i\in I$;
then the condition above holds if
$\gamma^0$ is replaced by $\mcofl(\alpha+1)$.

\end{countup}

\item\label{g.resolve}
if $\alpha$ is a limit ordinal then for every finite
$I\subset\omega$
$$
\cap\set U\in\U_{\mcofl(\alpha+3)}:H(I)\subseteq U.=H(I)
$$
where $H(I)=\sum_{i\in I}
\cl{\lasi}^{\kw(\K_{\mcofl(\alpha+3)})}$
and for any $U\in\U_{\mcofl(\alpha+3)}$ there is a $U'\subseteq U$ such
that $U'\in\U_{\mcofl(\alpha+3)}$ and
$$
U'\cap(\sum_{i\in
I}\cl{\lasi}^{\kw(\K_{\mcofl(\alpha+3)})})=\sum_{i\in
I}(U\cap\cl{\lasi}^{\kw(\K_{\mcofl(\alpha+3)})});
$$

\end{countup}
\end{lemma}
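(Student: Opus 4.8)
The plan is to construct the family $\set\gks{_\alpha}:\alpha<\omega_1.$ together with $\mcofl$ by a single transfinite recursion of length $\omega_1$, reserving for each limit ordinal $\alpha$ the block of stages $[\mcofl(\alpha),\mcofl(\alpha+3))$ in which the four requirements are discharged one group at a time. At limit stages of the chain itself I would form the union triple $\gks{_{<\lambda}}$ by the stacking construction of Lemma~\ref{stack.limit}, which yields a convenient triple, and invoke Lemma~\ref{fpsestack} to see that $(\CC,\xi)$ remains a sequential scale in $\gks{_{<\lambda}}$ and that the subgroups $\UU_K(\K_{<\lambda},S^i,\sigma)$ stabilize. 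The map $\mcofl$ is defined continuously at limits, $\mcofl(\lambda)=\sup_{\beta<\lambda}\mcofl(\beta)$, and advanced at successors by exactly the (countable) length of the fpse-chains used to answer the current tasks, so that $\mcofl(\alpha+1),\mcofl(\alpha+2),\mcofl(\alpha+3)$ mark the ends of three successive groups of work.

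Second, the bookkeeping. Working under $\diamondsuit$ (hence under CH, so $2^\omega=\omega_1$ and each $G_\alpha$ embeds in $2^\omega$), I use the sequences $\set C_\beta:\beta<\omega_1.$ and $P_\beta$ fixed before the statement as guessing sequences, consulting $\ccl{C_\alpha}$ and $P_\alpha$ at the limit stage $\alpha$; $\mcofl$ is chosen large enough that these guessed objects already lie in the group $\gks{_{\mcofl(\alpha+1)}}$ by the time they are used. Requirements~\ref{g.cc} and~\ref{g.so} are then handled on $[\mcofl(\alpha+1),\mcofl(\alpha+2))$. If $P_\alpha$ is closed and discrete in $\kw(\K_{\mcofl(\alpha+1)})$ I thin it, via Corollary~\ref{d.subgroup} and Lemma~\ref{free.sequence}, to an independent $D$ with $\lad$ closed and discrete, and apply Lemma~\ref{fpse.successor} to produce an fpse adding $S_\alpha\to s_\alpha$, giving~\ref{g.cc}. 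If instead $0\in\cl{P_\alpha}^{\kw(\K_{\mcofl(\alpha+1)})}$ I split on the $\K$-depth of $P_\alpha$ over $\CC$: when $\hd{\K}{P_\alpha\cap U}{\CC}=\omega$ for every $U$, Lemma~\ref{akdepth} yields $S\to0$ in the precompact topology satisfying~\MM{\CC,\K}, which Lemma~\ref{fpse.successor} converts into $S\to0$ in the $\kw$ topology, so $\so(0,P_\alpha)=1$; when the depth is finite, $P_\alpha$ lies in finitely many summands and the sum structure of the scale (property~\ref{seq.scale.direct} and Lemma~\ref{cs.sos}) bounds $\so(0,P_\alpha)$ by the largest $\sigma^i$ involved. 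Either way~\ref{g.so} holds, and it is preserved to later stages by Lemma~\ref{fpsestack}.

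Third, requirement~\ref{g.lus}, which is the crux of the construction. Case~\ref{g.lus.c} is settled at stage $\mcofl(\alpha)$ exactly as the infinite-depth subcase above, Lemmas~\ref{akdepth} and~\ref{fpse.successor} producing $S\subseteq\ccl{C_\alpha}$ with $S\to0$ in $\kw(\K_{\mcofl(\alpha)})$. The reflecting cases~\ref{g.lus.wggs} and~\ref{g.lus.so.raise} are where $\diamondsuit$ really earns its keep: at stage $\alpha$ I test whether the hypothesized fpse-chain $C$ above $\mcofl(\alpha)$ exists, and, since the effect it realizes is purely countable (a sum-intersection dropping to a proper $I'\subsetneq I$, respectively a cofinal raising of the sequential order), I may take such a $C$ of countable length and splice a copy of it into the main chain on $[\mcofl(\alpha),\mcofl(\alpha+1))$. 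To make the splice legitimate as an fpse-chain close to $\lasm$, I first pass to the $m$-reduction $C^{(m)}$ (Lemma~\ref{m.reduce.fpse}) and then use Lemmas~\ref{m.reduce.so} and~\ref{m.reduce.hd} to transport sequential orders and $\K$-depths across the reordering, the stability apparatus of Definition~\ref{stable} and Lemma~\ref{p.stable} guaranteeing that the $\K$-heights pinned down earlier are not disturbed; the realized intersection-drop or order-jump then survives into $\kw(\K_{\mcofl(\alpha+1)})$.

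Finally,~\ref{g.resolve} is secured on $[\mcofl(\alpha+2),\mcofl(\alpha+3))$ by enlarging $\U$ alone: Lemmas~\ref{ct.resolve} and~\ref{ct.split} force each closed sum $H(I)=\sum_{i\in I}\cl{\lasi}^{\kw(\K_{\mcofl(\alpha+3)})}$ to equal the intersection of the members of $\U_{\mcofl(\alpha+3)}$ containing it, while Lemma~\ref{ssc.resolve} supplies, for each $U$, a clopen $U'\subseteq U$ of finite index with $U'\cap H(I)=\sum_{i\in I}(U\cap\cl{\lasi}^{\kw(\K_{\mcofl(\alpha+3)})})$. I expect the principal obstacle to be compatibility: every reflective splice in~\ref{g.lus} must leave intact the sequential scale, the discrete separation, and the sequential orders already fixed for smaller ordinals, and the order bound of~\ref{g.so} must coexist with the deliberate order-raising of~\ref{g.lus.so.raise}. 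This is exactly what the preservation Lemmas~\ref{fpsestack},~\ref{psestack},~\ref{so.reflect} and~\ref{ds.limit} are meant to protect, but verifying their hypotheses after each $m$-reduction---in particular the~\MM{\cdot,\cdot} properties and the depth side-conditions of Definition~\ref{fine.pse}---is the delicate bookkeeping on which the whole recursion rests.
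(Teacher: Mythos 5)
Your overall architecture matches the paper's: a length-$\omega_1$ recursion with $\diamondsuit$ bookkeeping, blocks of stages between $\mcofl(\alpha)$ and $\mcofl(\alpha+3)$, Lemma~\ref{stack.limit} at limits, Lemma~\ref{z.group} for clause~\ref{g.lus.c}, Lemma~\ref{fpse.successor} for clause~\ref{g.cc}, and Lemmas~\ref{ct.resolve}, \ref{ds.closed.sums} and~\ref{ssc.resolve} for clause~\ref{g.resolve}. For clauses~\ref{g.lus.wggs} and~\ref{g.lus.so.raise} you overcomplicate matters: these are conditionals whose hypothesis is the existence of an fpse-chain $C$ realizing the desired effect, so the paper simply puts $\mcofl(\alpha+1)=\gamma^0$ and adopts $C$ itself as the next segment of the main chain, after which the conclusion holds verbatim; no $m$-reduction is needed, and indeed the hypothesized $C$ need not be of finite type, so $C^{(m)}$ is not even defined for it. A secondary slip: Lemma~\ref{fpse.successor} produces $S\to s$ for a \emph{new} point $s\in G'\setminus G$; to make a sequence converge to $0$ (as required in~\ref{g.lus.c} and in the infinite-depth subcase of~\ref{g.so}) you need the mechanism of Lemma~\ref{z.group}.

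The genuine gap is your treatment of~\ref{g.so} when $0\in\cl{P_\alpha}^{\kw(\K_{\mcofl(\alpha+1)})}$ but $\ha{\K_{\mcofl(\alpha+1)}}{P_\alpha}{\CC}<\omega$. You assert that the sum structure of the scale (property~\ref{seq.scale.direct} and Lemma~\ref{cs.sos}) ``bounds $\so(0,P_\alpha)$ by the largest $\sigma^i$ involved,'' but at that stage $0$ need not lie in \emph{any} iterated sequential closure of $P_\alpha$ in the $\kw$ topology --- it is merely a cluster point in the precompact topology $\tau(\U)$. The bound has to be manufactured, not observed: the paper normalizes via Lemma~\ref{cl.projection} so that $p_j-d_j\in K$ with $d_j=\sum_{i\in I}d_j^i$ and all coordinates accumulate at $0$, partitions $I$ into the indices $I_u$ whose coordinate sequences already converge, $I_z$ which can be sent to $0$ by Lemma~\ref{z.group}, and $I_h$ of small $\K$-depth over $\lasoi$, stabilizes the latter with Lemma~\ref{p.stable}, and then runs an $\omega$-length induction of nested well-graded group stacks (Lemma~\ref{wggs.w}) producing points $p(k)\in[P_\alpha]_{\eta(k)}^{\kw(\K(k))}$ with $\eta(k)\leq\max_{i\in I}\sigma^i-1$ and $p(k)\to0$, so that finally $0\in[P_\alpha]_{\eta}$ with $\eta\leq\max_{i\in I}\sigma^i$. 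Verifying conditions~\ref{p.project}--\ref{p.k.gshift} and their interaction with~\ref{wggs.so.other} is where most of the work of the proof lies, and none of it is replaced by the algebraic direct-sum observation you invoke; without it requirement~\ref{g.so} is not established. (The same issue appears in miniature in your clause~\ref{g.cc}: when $\hd{\K_{\mcofl(\alpha+1)}}{P_\alpha}{\CC}<\omega$, Lemma~\ref{fpse.successor} does not apply directly and one must first make the coordinate sequences $d^i_n$ converge by a finite fpse-chain.)
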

\begin{proof}
Put $\mcofl(0)=0$. If $\alpha$ is a
successor such that $\alpha=\beta+n$ where $n\in\omega$ and $\beta$ is
a limit and $n>3$ or $\beta=0$ and $n>0$ let
$\mcofl(\alpha)=\mcofl(\alpha-1)+1$ and let $\gks{_{\mcofl(\alpha)}}$
be an arbitrary fpse of $\gks{_{\mcofl(\alpha-1)}}$.

Let $\alpha$ be a limit ordinal and $G_{\alpha'}$ etc., $\mcofl(\alpha')$
have been constructed for all $\alpha'<\alpha$. Then $\mcofl(\alpha)$ is
uniquely defined by continuity.

Let $\K_{<\mcofl(\alpha)}=\set K_i:i\in\omega.$ and $\U_{<\mcofl(\alpha)}=\set
U_i:i\in\omega.$.

Suppose $\hd{\K_{<\mcofl(\alpha)}}{\ccl{C_\alpha}\cap U}{\CC}=\omega$ for every
$U\in\U_{\mcofl(\alpha)}$. Find an infinite $S\subseteq \ccl{C_\alpha}$ such that
$S\to0$ in $\tau(\U_{<\mcofl(\alpha)})$ and $S$
satisfies \MM{\CC, \K_{<\mcofl(\alpha)}} using
Lemma~\ref{free.sequence}. Use Lemma~\ref{z.group} to find an
fpse $\gks{_{\mcofl(\alpha)}}$ of $\gks{_{<\mcofl(\alpha)}}$ over $S$ such that
$S\to0$ in $\K_{\mcofl(\alpha)}$. Let $\gks{_{\mcofl(\alpha+1)}}$ be
an arbitrary fpse of $\gks{_{\mcofl(\alpha)}}$ and put
$\mcofl(\alpha+1)=\mcofl(\alpha)+1$ to satisfy~\ref{g.lus.c}.

If the wggs from the statement of~\ref{g.lus.wggs}
or~\ref{g.lus.so.raise} exists put
$\mcofl(\alpha+1)=\gamma^0$. 

Suppose $P_\alpha$ is closed and discrete in
$G_{\mcofl(\alpha+1)}$.

If
$\hd{\K_{\mcofl(\alpha+1)}}{P_\alpha}{\CC}=\omega$ use
Lemma~\ref{fpse.successor} to find an fpse $\gks{_{\mcofl(\alpha+2)}}$
of $\gks{_{\mcofl(\alpha+1)}}$ over $S_\alpha\subseteq P_\alpha$ such
that $S_\alpha\to s\in G_{\mcofl(\alpha+2)}$ in
$\kw(\K_{\mcofl(\alpha+2)})$ where $\mcofl(\alpha+2)=\mcofl(\alpha+1)+1$.

Otherwise $P_\alpha\subseteq\sum_{i\leq
m}\cl{\lasi}^{\kw(\K_{\mcofl(\alpha+1)})}+K$ for some
$K\in\K_{\mcofl(\alpha+1)}$. Pick an infinite $D=\set d_n=\sum_{i\leq
m}d^i_n:n\in\omega.$ where
$d^i_n\in\cl{\lasi}^{\kw(\K_{\mcofl(\alpha+1)})}$ and $p_n=d_n+a_n$
for some distinct $p_n\in P_\alpha$ and $a_n\in K$. By passing to
subsequences and reindexing we may assume that $a_n\to a$ for some
$a\in K$ and either $d^i_n\to d^i$ or $D^i=\set d^i_n:n\in\omega.$ is
closed and discrete in $\K_{<\mcofl(\alpha)}$. Repeatedly using
Lemma~\ref{fpse.successor} find a finite fpse-chain
$$
W=\set\gks{_\gamma}:\mcofl(\alpha+1)<
\gamma\leq\mcofl(\alpha+1)+k=\mcofl(\alpha+2).
$$  
for some $k\leq m$ such that $\set d^i_n:n\in J.\to d^i$ in
$\kw(\K_{\mcofl(\alpha+2)})$ for some infinite $J\subseteq\omega$.

Let $0\in\cl{P_\alpha}^{\kw(\K_{\mcofl(\alpha+1)})}$ and suppose
$\ha{\K_{\mcofl(\alpha+1)}}{P_\alpha}{\CC}<\omega$. Let $P_\alpha=\set
p_j:j\in\omega.$ and
$p_j-d_j\in K$ for some $K\in\K_{\mcofl(\alpha+1)}$ where $d_j=\sum_{i\in
I}d_j^i$ for some finite $I\subset\omega$, and
$d_j^i\in\cl{\lasi}^{\kw(\K_{\mcofl(\alpha+1)})}$.

Using Lemma~\ref{cl.projection} find
$d^i\in\cl{\lasi}^{\kw(\K_{\mcofl(\alpha+1)})}$ and $a\in K$ such that
$0=a+\sum_{i\in I}d^i$ and for any open in $\kw(\K_{\mcofl(\alpha+1)})$
subsets $U\ni0$, $U_K\ni a$, $U(i)\ni d^i$, $i\in I$ there is an $n\in\omega$ such
that $p_n\in U$, $d_n^i\in U(i)$, and $p_n-d_n\in U_K$, $i\in I$.

Replacing each $d_n^i$ with $d_n^i+d^i$, each $d_n$ with $d_n+a$ and picking
$K\in\K_{\mcofl(\alpha+1)}$ so that $d^i\in K$, $i\in I$, we may assume
that the following properties hold.
\begin{countup}[bcp]
\item\label{p.project}
$d_j=\sum_{i\in I}d_j^i$,
$d_j^i\in\cl{\lasi}^{\kw(\K_{\mcofl(\alpha+1)})}$, and $p_j-d_j\in K$ for
every $j\in\omega$;

\item\label{p.cl}
for any $U\ni0$ open in $\kw(\K_{\mcofl(\alpha+1)})$ there exists an
$n\in\omega$ such that $p_n,d_n^i,p_n-d_n\in U$
for every $i\in I$;

\end{countup}
Indeed, let $U$ be open in $\kw(\K_{\mcofl(\alpha+1)})$. Find a $U'\ni0$,
open in $\kw(\K_{\mcofl(\alpha+1)})$ such that $U'+\cdots+U'\subseteq
U$ where the sum has $|I|$ terms. Put $U_K=U'+a$, $U(i)=U'+d^i$ and
let $n\in\omega$ be such that $p_n\in U'$, $d_n^i\in U(i)$, and
$p_n-d_n\in U_K$. Then $d_n^i+d^i\in U'$ and $p_n-(d_n+a)=p_n-d_n+a\in
U'$.

Let $\gks{(0)}=\gks{_{\mcofl(\alpha+1)}}$. By induction on $k\in\omega$
build convenient triples $\gks{(k)}$ and points
$d(i,k)\in\cl{\lasi}^{\kw(\K(k))}$ where $i\in I$, such that the
following properties
hold for $k>0$:
\begin{countup}[bcp]
\item\label{p.k.fpse}
$\gks{(k)}=\gks{_{\gamma(k)}}$ for some $\gamma(k)<\omega_1$ where
$$
\set\gks{_{\gamma}}:\gamma(k-1)<\gamma\leq\gamma(k).
$$
is an fpse-chain above
$\gks{(k-1)}$ relative to $(\CC,\xi)$;

\item\label{p.k.d}
either $d(i,k)=0$ or $d(i,k)\in G_{\gamma(i,k)}\setminus
G_{<\gamma(i,k)}$ where $\gamma(k-1)<\gamma(i,k)\leq\gamma(k)$;

\item\label{p.k.so.bound}
if $g\in G_{\gamma(i,k)}\setminus G_{<\gamma(i,k)}$ then
$\so(g, \lasoi, \K_{\gamma(i,k)})\geq\sigma^i_k$;

\item\label{p.k.gshift}
$d(i,k)\in\cl{U}^{\tau(\U(k))}$ and $p(k)-(\sum_{i\in I}d(i,k))\in
K\cap\cl{U}^{\tau(\U(k))}$ for every $U\in\U(k-1)$ where
$p(k)\in[P_\alpha]_{\eta(k)}^{\kw(\K(k))}$, and
$\eta(k)\leq\max_{i\in I}\sigma^i-1$;

\end{countup}
Suppose $\gks{(k')}$ etc.\ that satisfy~\ref{p.k.fpse}--\ref{p.k.gshift}
have been built for all $k'<k$. Let $\K(k-1)=\set
K_n:n\in\omega.$ and $\U(k-1)=\set U_n:n\in\omega.$. Pick
$n(j)\in\omega$ by induction using~\ref{p.project} and~\ref{p.cl} so
that
$$
p_{n(j)}, d^i_{n(j)}\in\cap_{n<j}U_n\hbox{ and }p_{n(j)}-\sum_{i\in
I}d^i_{n(j)}\in K\cap\cap_{n<j}U_n
$$
In the rest of the construction below an infinite set $J\subseteq\set
n(j):j\in\omega.$ will be selected so that $D(J,i)=\set d_j^i:j\in J.$
are independent subsets that satisfy some additional properties. Note that
$D(J',i)\to0$ in $\tau(\U(k-1))$ for every $i\in I$ and any infinite
$J'\subseteq J$.

Let $i\in I$. If $D(J,i)$ is not closed and discrete in $\kw(\K(k-1))$
then by~\ref{p.cl} there exists an infinite $J_i\subseteq J$ such that
$D(J_i,i)\to0$ in $\kw(\K(k-1))$. Otherwise one can find an infinite
$J_i\subseteq J$ such that for any infinite $J'\subseteq J_i$
$\hd{\K(k-1)}{D(J_i,i)}{\lasoi}=\hd{\K(k-1)}{D(J',i)}{\lasoi}$. 

Repeating this argument for every $i\in I$ in the natural order one
can build $J_i$ such that $J_{i'}\subseteq J_i$ if $i'\leq i$ and one of
the two alternatives above holds. After replacing $J$ with $J=\cap_{i\in
I}J_i$ we may assume that either $D(J,i)\to0$ in $\kw(\K(k-1))$ or $D(J,i)$ is closed and
discrete in $\kw(\K(k-1))$ and
$\hd{\K(k-1)}{D(J,i)}{\lasoi}=\hd{\K(k-1)}{D(J',i)}{\lasoi}$ for any
infinite $J'\subseteq J$.

Let $I=I_u\cup I_z\cup I_h$ where $D(J,i)\to0$ in $\kw(\K(k-1))$ for
every $i\in I_u$, $D(J,i)$ is closed
and discrete in $\kw(\K(k-1))$ for $i\in I_z\cup I_h$,
$\hd{\K(k-1)}{D(J',i)}{\lasoi}\geq\sigma^i-1$ for
each infinite $J'\subseteq J$ and $i\in I_z$, and
$\hd{\K(k-1)}{D(J,i)}{\lasoi}<\sigma^i-1$ for every $i\in I_h$.

Passing to a subset of $J$ if necessary we may assume that
$\la{D(J,i)}$ is closed and discrete in $\kw(\K(k-1))$ for every $i\in
I_z\cup I_h$. Since $(\CC,\xi)$ is a sequential scale the set $\cup_{i\in
I_z\cup I_h}D(J,i)$ is independent and the group
$\la{\cup_{i\in I_z\cup I_h}D(J,i)}$ is closed and discrete in
$\kw(\K(k-1))$. If $i\in I_z$ use $D(J,i)\to0$ in $\tau(\U(k-1))$ and
Lemma~\ref{z.group} to find, after thinning out $J$ if necessary, an fpse
$\gks{_{\gamma(k-1)+1}}$ of $\gks{(k-1)}$ such that $D(J,i)\to0$ in
$\kw(\K_{\gamma(k-1)+1})$, $D(J,i')\to0$ in $\tau(\U_{\gamma(k-1)+1})$ for every
$i'\in I_z\setminus\{i\}$ and $D(J,i'')$ is closed and discrete in
$\kw(\K_{\gamma(k-1)+1})$ for every $i''\in I_h\cup
I_z\setminus\{i\}$. Note that while formally Lemma~\ref{z.group} does not
guarantee that each $D(J,i'')$ remains closed and discrete in
$\kw(\K_{\gamma(k-1)+1})$ for $i''\in I_h\cup I_z\setminus\{i\}$, it
follows from $D(J,i'')\to0$ in 
$\tau(\U_{\gamma(k-1)+1})$ that if $D(J,i'')$ is not closed and discrete
in $\kw(\K_{\gamma(k-1)+1})$, after passing to a smaller $J$ if
necessary we may assume that $D(J,i'')\to0$ in
$\kw(\K_{\gamma(k-1)+1})$. We may thus move such $i''$ to $I_u$ and
proceed with the argument. Alternatively, one may note that the fpse
constructed above is of finite type and use Lemma~\ref{w1.m.hd}.

Repeating this argument for every $i\in I_z$ in the natural order and
possibly passing to a subset of $J$ if necessary one can build a
finite fpse-chain
$\set\gks{_\gamma}:\gamma(k-1)<\gamma\leq\gamma_z=\gamma(k-1)+|I_z|.$
above $\gks{(k-1)}$ such that $D(J,i)\to0$ in $\kw(\K_{\gamma_z})$ for
every $i\in I_u\cup I_z$ and $D(J,i)$ is closed and discrete in
$\kw(\K_{\gamma_z})$ for every $i\in I_h$.

Let $i\in I_h$. Use Lemma~\ref{p.stable} to find an fpse-chain
$\set\gks{_\gamma}:\gamma_z<\gamma<\gamma_s(i).$ above
$\gks{_{\gamma_z}}$ close to $\lasi$ and an infinite $J_i\subseteq J$
such that $D(J_i,i)$ is closed, discrete, and stable in
$\gks{_{\gamma_s(i)}}$. Note that $D(J,i')$ remains closed
discrete in $\gks{_{\gamma_s(i)}}$ for every $i'\in I_h$ by
Lemma~\ref{w1.m.hd}.

Repeating this argument for every $i\in I_h$ in the natural order and
possibly passing to a subset of $J$ if necessary one may build an
fpse-chain $\set\gks{_\gamma}:\gamma_z<\gamma<\gamma_s.$ of finite
type over $\gks{_{\gamma_z}}$ such that each $D(J,i)$, $i\in I_h$ is
closed, discrete, and stable in $\gks{_{<\gamma_s}}$. Put
$\delta_i=\hd{\K_{<\gamma_s}}{D(J,i)}{\lasoi}$ and note that
$\delta_i<\sigma^i-1$ for every $i\in I_h$.

For every $i\in I_h$ pick a $K(i)\in\K_{<\gamma_s}$, a $D^+(J,i)=\set
d_j(i):j\in J.\subseteq[\lasoi]_{\delta_i}^{\kw(\K_{<\gamma_s})}$ and
$d^i_j=d_j(i)+a_j(i)$ for every $j\in J$ where $a_j(i)\in K(i)$. By
thinning out $J$ we may assume that $a_j(i)\to a(i)$ for some
$a(i)\in K(i)$. 

Let $\nu=|I_h|$ and $I_h=\{\,i_1,\ldots,i_\nu\,\}$ where the order of
$i_j$ is chosen so that the following assumption holds. Let successor
$\eta^i(k)<\omega_1$ for $i\in I_h$ be chosen so that
$\sigma^i-1\geq\delta_i+\eta^i(k)\geq\sigma^i_k$ and assume that
$\eta^{i_j}(k)$ are increasing for each $k\in\omega$. Put $d_n'=\sum_{i\in I_u\cup
I_z}d_n(i)$. Then $d_n'\to0$ in
$\kw(\K_{<\gamma_s})$. Use Lemma~\ref{wggs.w} to
construct a wggs
$$
W_1=\set(G_\gamma, \K_\gamma, \U_\gamma, S_\gamma,
s_\gamma):\gamma_s\leq\gamma\leq\gamma(i_1,k).
$$
above
$$
(D^+(J,i_1),
G_{<\gamma_s}, \K_{<\gamma_s}, \U_{<\gamma_s}, \delta_{i_1})
$$
so that
$$
s_{\gamma(i_1,k)}\in[\set
d_n(i_1)+d_n':n\in J.]_{\eta^{i_1}(k)}^{\kw(\K_{\gamma(i_1,k)})}
$$
and
$$\so(s_{\gamma(i_1,k)}, \lasoi, \kw(\K_{\gamma(i_1,k)}))=
\delta_{i_1}+\eta^{i_1}(k)\geq\sigma^{i_1}_k.
$$

Recursively applying Lemma~\ref{wggs.w} and using
$\delta_{i_j}+\eta^{i_{j-1}}\leq\delta_{i_j}+\eta^{i_j}$ for $j>1$ construct
wggs
$$
W_j=\set(G_\gamma, \K_\gamma, \U_\gamma, S_\gamma,
s_\gamma):\gamma(i_{j-1},k)<\gamma\leq\gamma(i_j,k).
$$
above
$$
(D^+(J,i_j),
G_{\gamma(i_{j-1},k)}, \K_{\gamma(i_{j-1},k)}, \U_{\gamma(i_{j-1},k)}, \delta_{i_j})
$$
for every $j\leq\nu$ so that
$$
\sum_{l\leq j}s_{\gamma(i_l,k)}\in
[\set\sum_{l\leq j}d_n(i_l)+d_n':n\in
J.]_{\eta^{i_j}(k)}^{\kw(\K_{\gamma(i_j,k)})}
$$
and
$$
\so(s_{\gamma(i_j,k)}, \lasoi, \kw(\K_{\gamma(i_j,k)}))=
\delta_{i_j}+\eta^{i_j}(k)\geq\sigma^{i_j}_k.
$$

Put $\gamma(k)=\gamma(i_\nu,k)$, $\gks{(k)}=\gks{_{\gamma(k)}}$ and
$d(i,k)=a(i)+s_{\gamma(i,k)}$ if $i\in I_h$, otherwise put
$d(i,k)=0$.

If $U\in\U(k-1)$ then $D(J,i)\ain U$ and $\set a_j(i):j\in J.\ain
a(i)+U$ so $D^+(J,i)\ain a(i)+U$. Now 
$d(i,k)\in\cl{a(i)+D^+(J,i)}^{\kw(\K(k))}\subseteq
\cl{U}^{\tau(\U(k))}$.

By the construction, $\sum_{i\in I_h}s_{\gamma(i,k)}\in[\set\sum_{i\in
I}d_n(i):n\in J.]_{\eta^{i_\nu}(k)}^{\kw(\K(k))}$ and $\sum_{i\in
I}a_n(i)\to\sum_{i\in I}a(i)$, so $\sum_{i\in
I}d(i,k)\in[\set\sum_{i\in I}d^i_n:n\in J.]_{\eta(k)}^{\kw(\K(k))}$ by
Lemma~\ref{set.additivity},
where $\eta(k)=\eta^{i_\nu}(k)\leq\max\set\sigma^i-1:i\in I.$.

Since $p_n-\sum_{i\in I}d^i_n\in K$ for all but finitely many $n\in J$
by the choice of $p_{n(j)}$, there exists a
$p(k)\in[P_\alpha]_{\eta(k)}^{\kw(\K(k))}$
such that $p(k)-\sum_{i\in I}d(i,k)\in K$ by Lemma~\ref{cs.sos}. Since
$d_n^i,p_n,p_n-\sum_{i\in I}d^i_n\in\cl{U}^{\tau(\U(k))}$ for all but
finitely many $n\in J$, $p(k)\in\cl{U}^{\tau(\U(k))}$
so~\ref{p.k.gshift} holds.

Let $g\in G_{\gamma(i,k)}\setminus G_{<\gamma(i,k)}$ and
$\so(g, \lasoi, \kw(\K(k)))<\sigma^i_k\leq\sigma^i-1$.
Then $i=i_j\in I_h$ for some $j\leq\nu$ by~\ref{wggs.order},
Lemma~\ref{w1.m.hd} and the
construction of $W_j$.
Then by
Lemma~\ref{fpsestack} $\so(g, \lasoi, \kw(\K_{\gamma(i,k)}))<\sigma^i_k$
contradicting $\so(g, \lasoi, \kw(\K_{\gamma(i,k)}))\geq
\so(s_{\gamma(i,k)}, \lasoi, \kw(\K_{\gamma(i,k)}))\geq\sigma^i_k$
by~\ref{wggs.so.other} and the construction of $W_j$.
Thus~\ref{p.k.so.bound} holds.

Let $\gamma(\omega)=\lim_{k\in\omega}\gamma(k)$ and
$\gks{(\omega)}=\gks{_{<\gamma(\omega)}}$.

If $U'\in\U(\omega)$ then $U'=\cl{U}^{\tau(\U(\omega))}$ where
$U\in\U(k-1)$ for some $k\in\omega$. By~\ref{p.k.gshift} $d(i,n),
p(n), p(n)-\sum_{i\in I}d(i,n)\in\cl{U}^{\tau(\U(n))}\subseteq U'$ for
all $n\geq k$. Since $b(n)=p(n)-\sum_{i\in I}d(i,n)\in K$
by~\ref{p.k.gshift}, it follows that $b(n)\to0$ in
$\kw(\K(\omega))$.

By thinning out and reindexing we may assume that $d(i,n)\not=0$ if
$i\in I_0$ and $d(i,n)=0$ if $i\not\in I_0$ for some $I_0\subseteq
I$ and every $n\in\omega$. If $g\in G_{\gamma(i,n)}\setminus
G_{<\gamma(i,n)}$ then
$\so(g, \lasoi, \kw(\K_{\gamma(i,n)}))\geq\sigma^i_n$
by~\ref{p.k.so.bound} and $\gamma(i,n)$ is cofinal in
$\lim_{k\in\omega}\gamma(k)$. Thus by~\ref{p.k.d} and Lemma~\ref{wggs.c}
$\hd{\K(\omega)}{D^i}{\lasoi}=\sigma^i-1$ where $D^i=\set
d(i,n):n\in\omega.$ for every $i\in I_0$ and $\la{\cup_{i\in I_0}D^i}$
is closed and discrete in $\kw(\K(\omega))$.

Using Lemma~\ref{z.group} we can build a
finite fpse-chain
$$
\set\gks{_\gamma}:
\gamma(\omega)<\gamma\leq
\gamma(\omega)+s=\gamma_t.
$$ 
above $\gks{(\omega)}$ relative to $(\CC,\xi)$ such that $D^i_0\to0$
in $\kw(\K_{\gamma_t})$ for each $i\in I_0$ where $D^i_0=\set
d(i,n):n\in J_0.$ for some infinite $J_0\subseteq\omega$. Then $\set
p(n):n\in J_0.\to0$ and
$p(n)\in[P_\alpha]_{\eta'}^{\kw(\K_{\gamma_t})}$ where
$\eta'=\sup_{k\in\omega}\eta(k)\leq\max\set\sigma^i-1:i\in I.$ so
$0\in[P_\alpha]_{\eta}^{\kw(\K_{\gamma_t})}$ where
$\eta\leq\max\set\sigma^i:i\in I.$. Put $\mcofl(\alpha+2)=\gamma_t$.

Put $\mcofl(\alpha+3)=\mcofl(\alpha+2)+1$. Let $\gkup$ be an arbitrary
fpse of $\gks{_{\mcofl(\alpha+2)}}$. Using
Lemmas~\ref{ds.closed.sums}, \ref{ct.resolve}, and \ref{ssc.resolve}
extend $\U'$ to a countable family of open (in
$\kw(\K_{\mcofl(\alpha+2)})$) subgroups of $G_{\mcofl(\alpha+2)}$ of
finite index $\U''\supseteq\U'$
so that~\ref{g.resolve} holds after replacing $\U_{\mcofl(\alpha+3)}$
with $\U''$ and let
$\gks{_{\mcofl(\alpha+3)}}=(G', \K', \U'')$.
\end{proof}

\def\Kw{\K_{\omega_1}}
\def\Uw{\U_{\omega_1}}
\def\Gw{G_{\omega_1}}

Suppose $\diamondsuit$ holds and let $\set C_\alpha:\alpha<\omega_1.$ be a
$\diamondsuit$-sequence. Identifying $\omega_1$ and $2^\omega$ we may assume that
each $C_\alpha\subseteq2^\omega$.
Let $\set P_\alpha:\alpha<\omega_1\hbox{ is a
limit ordinal}.$ list all infinite countable subsets of
$2^\omega$ so that each $P_\alpha$ is listed $\omega_1$ times.

\begin{lemma}\label{e}
Let $\gks{_{\omega_1}}=\gks{_{<\omega_1}}$ where $\gks{_\alpha}$,
$\alpha<\omega_1$ have been constructed in Lemma~\ref{g.ind}. Then $\kw(\Kw)=\tau(\Uw)$,
$\Gw$ is countably compact, and
$\so(\kw(\Kw))=\sup\set\sigma^m:m\in\omega.$.
\end{lemma}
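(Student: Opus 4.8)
The plan is to establish the three assertions in the order countable compactness, the identity $\kw(\Kw)=\tau(\Uw)$, and finally the value of the sequential order, since each step relies on the previous one. Throughout I use that $\Kw=\cup_{\alpha<\omega_1}\K_\alpha$, so $\kw(\Kw)$ is \emph{coarser} than every stage topology $\kw(\K_\gamma)$, and that convergence — hence each sequential closure level $[P]_\xi$ — in a finer $\kw(\K_\gamma)$ transfers to the coarser $\kw(\Kw)$. \textbf{Countable compactness.} I would show $(\Gw,\kw(\Kw))$ has no infinite closed discrete subspace. Given infinite $A\subseteq\Gw$, pass to a countable subset, so $A\subseteq G_\beta$ for some $\beta<\omega_1$. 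If $A$ were closed and discrete in $\kw(\Kw)$ it would remain so in every finer $\kw(\K_\gamma)$. Since each countable subset of $2^\omega$ is listed $\omega_1$ times and $\mcofl$ is increasing and continuous, pick a limit $\alpha$ with $A=P_\alpha$ and $\mcofl(\alpha+1)\geq\beta$. Then $A$ meets the hypotheses of~\ref{g.cc}, so some infinite $S_\alpha\subseteq A$ converges to $s_\alpha$ in $\kw(\K_{\mcofl(\alpha+2)})$, hence in the coarser $\kw(\Kw)$, contradicting discreteness. Thus $\Gw$ is countably compact, and by Lemma~\ref{precompact.groups} $\kw(\Kw)$ is precompact.

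\textbf{The identity $\kw(\Kw)=\tau(\Uw)$.} The inclusion $\tau(\Uw)\subseteq\kw(\Kw)$ is immediate, as each generator $\cl{U}^{\kw(\Kw)}\in\Uw$ is a clopen subgroup of $(\Gw,\kw(\Kw))$. For the reverse inclusion I would invoke precompactness: the clopen subgroups of finite index form a base of $\kw(\Kw)$ at $0$, so it suffices to show that each contains a member of $\Uw$. This is where condition~\ref{g.resolve}, applied at the cofinally many stages $\mcofl(\alpha+3)$, does the work: together with Lemmas~\ref{bhb}, \ref{ct.resolve} and~\ref{ssc.resolve} it forces $\U$ to be repeatedly enlarged so as to resolve the subgroups $H(I)=\sum_{i\in I}\cl{\lasi}^{\kw(\Kw)}$ and to realise the convexity $U'\cap H(I)=\sum_{i\in I}(U\cap\cl{\lasi}^{\kw(\Kw)})$. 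Since every compact member of $\Kw$ is absorbed by finitely many of the $\cl{\lasi}^{\kw(\Kw)}$, this makes $\Uw$ a neighbourhood base at $0$ for $\kw(\Kw)$, giving $\kw(\Kw)\subseteq\tau(\Uw)$. This delicate inclusion is the first place where the full strength of the $\diamondsuit$-driven bookkeeping of Lemma~\ref{g.ind} is needed.

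\textbf{Sequential order.} For the lower bound, applying Lemma~\ref{fpsestack} (in the limit form of Lemma~\ref{wparity}) to the whole $\omega_1$-chain shows that $(\CC,\xi)$ is still a sequential scale in $\gks{_{\omega_1}}$; in particular $\S(\Kw,S^m,\sigma^m)$ holds, so Lemma~\ref{U.sep.so} gives $\so(0,\lasom,\kw(\Kw))\geq\sigma^m$ for every $m$, whence $\so(\kw(\Kw))\geq\sup\set\sigma^m:m\in\omega.$. For the upper bound, by homogeneity it is enough to bound $\so(0,P,\kw(\Kw))$ for countable $P$ with $0\in\cl{P}^{\kw(\Kw)}$, and I may take $P=P_\alpha$. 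The key monotonicity is $[P]_\xi^{\kw(\K_\beta)}\subseteq[P]_\xi^{\kw(\Kw)}$ for every $\xi$ and every stage $\beta$; thus it suffices to locate a stage at which $0$ already lies in the stage-closure of $P$ with bounded order. Condition~\ref{g.lus.c}, together with the continuity of $\mcofl$, forces $0$ into the closure at a captured stage, and then~\ref{g.so} yields $\so(0,P_\alpha,\kw(\K_{\mcofl(\alpha+2)}))\leq\sigma^m$ for some $m$; transferring along the monotonicity gives $\so(0,P,\kw(\Kw))\leq\sigma^m\leq\sup\set\sigma^m:m\in\omega.$. Combining the two bounds yields the claimed value; and since all closures stabilise at level at most $\sup\set\sigma^m:m\in\omega.\leq\omega_1$, the space is sequential, so $\so(\kw(\Kw))$ is defined.

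The step I expect to be the main obstacle is the upper bound: reconciling the limit-closure $\cl{P}^{\kw(\Kw)}$ with the finite-stage closures so that~\ref{g.so} becomes applicable, and ensuring that when $P$ is spread over several summands $\sum_{i\in I}\cl{\lasi}^{\kw(\Kw)}$ the order of $0$ over $P$ is bounded by a single $\sigma^m$ rather than by a sum. The latter is precisely the improved growth rate that the graded machinery — the well-graded group stacks of Lemma~\ref{wggs.w}, the $m$-reductions, and the alternatives~\ref{g.lus.wggs} and~\ref{g.lus.so.raise} — was built to supply, circumventing the $n\omega$ and $+1$ obstructions of Lemma~\ref{sigma.so}.
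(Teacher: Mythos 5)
Your outer skeleton matches the paper's: countable compactness is extracted from~\ref{g.cc}, the lower bound on the sequential order from Lemma~\ref{fpsestack} together with Lemma~\ref{U.sep.so}, and the upper bound from~\ref{g.so}. But the central claim $\kw(\Kw)=\tau(\Uw)$ --- which is where essentially all of the work in the paper's proof lives --- is handled by an argument that does not go through. You reduce the inclusion $\kw(\Kw)\subseteq\tau(\Uw)$ to showing that every $\kw(\Kw)$-clopen subgroup of finite index contains a member of $\Uw$, and assert that condition~\ref{g.resolve} ``does the work.'' It does not: \ref{g.resolve} only resolves the specific closed subgroups $H(I)=\sum_{i\in I}\cl{\lasi}^{\kw(\Kw)}$, which are neither open nor of finite index, and it supplies no mechanism for trapping an arbitrary $\kw(\Kw)$-clopen finite-index subgroup between a member of $\Uw$ and itself. (There is also a mild circularity: to invoke Lemma~\ref{precompact.groups} you need $(\Gw,\kw(\Kw))$ to be a topological group, which for the uncountable family $\Kw$ is not automatic and in the paper only becomes clear once the identity with $\tau(\Uw)$ is established.)

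What the paper actually does is dual to your reduction: it fixes a set $A$ with $A\cap K$ closed for every $K\in\Kw$, $0\notin A$, and $A\cap U\neq\varnothing$ for every $U\in\Uw$, and derives a contradiction. The $\diamondsuit$-sequence is used through a countable elementary submodel $M$ to guess $A\cap M$ as some $C_\gamma$ with $\gamma=\mcofl(\alpha)=M\cap\omega_1$, and then the alternatives~\ref{g.lus.c}, \ref{g.lus.wggs} and~\ref{g.lus.so.raise}, together with~\ref{g.resolve}, the minimality of the support set $I$ with $0\in\cl{A\cap\sum_{i\in I}\cl{\lasi}}^{\tau(\Uw)}$, the stability machinery of Lemma~\ref{p.stable}, and the well-graded stacks of Lemma~\ref{wggs.w}, force either a sequence from $A$ converging to $0$ in $\kw(\K_{\mcofl(\alpha+1)})$ (contradicting $0\notin A$ and $A$ being $\kw$-closed) or a violation of the minimality of $I$. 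This case analysis --- in particular the passage from $0\in\cl{A}^{\tau(\Uw)}$ to a $\kw$-convergent sequence inside $A$, which is also what makes~\ref{g.so} applicable for the upper bound on the sequential order --- is the content you identified as ``the main obstacle'' but did not supply; without it the lemma is not proved.
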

\begin{proof}
Suppose $A\subseteq\Gw$ is such that $0\not\in A$, $A\cap K$ is closed
for every $K\in\Kw$ and $A\cap U\not=\varnothing$ for every
$U\in\Uw$.

Let $\theta$ be a large enough cardinal. Consider the sets of the form $A\cap
M$ where $M$ is a countable elementary submodel of $H(\theta)$
and $X\in M$ is a countable set containing the details of the construction
of $\Gw$. The set
$$
\set\gamma\in\omega_1:\gamma=M\cap\omega_1, X\in M, M\leq H(\theta).
$$
is a club in $\omega_1$. Thus $C_\gamma=A\cap M$ for some
$\gamma<\omega_1$ where $M\cap\omega_1=\gamma$. Note that
$\gamma=\mcofl(\alpha)$ for some limit $\alpha<\omega_1$ and
$\ccl{C_\alpha}=A\cap G_{<\mcofl(\alpha)}$.

Let $\K_{<\gamma}=\set K_i:i\in\omega.$, $\U_{\omega_1}\cap M=\set
U_i:i\in\omega.$. Note that for any $U\in\U_{<\gamma}$ there is a
$U_i$ such that $U_i\cap G_{<\omega}=U$ (since
$\cl{U}^{\K_{\omega_1}}\in\U_{\omega_1}$). Pick points $d_n\in A\cap (\cap_{i\leq n}U_i)\cap M$
so that
$$
d_n\not\in\sum_{i<n}K_i+\la{\set d_i:i<n.}+\sum_{m<n}\cl{\lasm}^{\kw(\K_{<\gamma})}
$$
If the recursion does not terminate by~\ref{g.lus.c} there exists an
$S\subseteq D=\set d_n:n\in\omega.\subseteq A$ such that $S\to0$ in
$\kw(\K_{\mcofl(\alpha+1)})$ contradicting the choice of $A$.

If the recursion stops at some $n\in\omega$ let $K'\in\K_{<\gamma}\cap
M$ be such that $\sum_{i<n}K_i+\la{\set d_i:i<n.}\subseteq K'$. Then
for every $d\in A\cap(\cap_{i\leq n}U_i)\cap M$ there is an
$s\in\sum_{m<n}\overline{\lasm}^{\kw(\K_{<\gamma})}\subseteq\sum_{m<n}\cl{\lasm}^{\kw(\Kw)}$
such that $d\in s+K'$.

By elementarity $A\cap U\subseteq\sum_{i\leq
m}\cl{\lasi}^{\kw(\Kw)}+K$ for some $m\in\omega$, $U\in\U_{<\omega_1}$,
and $K\in\Kw$. We may assume that $A=A\cap U$ by picking a subset if
necessary. 

Pick points $d_n\in A\cap (\cap_{i\leq n}U_i)\cap M$ so that
$d_n=a_n+\sum_{i\leq m}d_n^i$ where
$d_n^i\in\cl{\lasi}^{\kw(\K_{<\gamma})}$ and $a_n\in K$.
By passing to a subsequence and reindexing if necessary, assume
$a_n\to a$. Now $d_n\to0$ so $\sum_{i\leq m}d_n^i\to a$ in
$\tau(\U_{<\gamma})$. Let $K\in\K_\beta$ for some $\beta<\gamma$.

If $a\not\in\sum_{i\leq m}\cl{\lasi}^{\kw(\K_{\beta'})}$ for any
$\beta'<\gamma$ let $\alpha'\in\omega_1$ be a limit such that
$\beta<\mcofl(\alpha'+3)<\gamma$. Such $\alpha'$ exists since
$\beta, \mcofl\in M$. Then $a\not\in\sum_{i\leq
m}\cl{\lasi}^{\kw(\K_{\mcofl(\alpha'+3)})}$ and by~\ref{g.resolve}
there is a clopen $U\in\U_{\mcofl(\alpha'+3)}$ such that $a\not\in
U$ and $\sum_{i\leq m}{\lasi}\subseteq U$. Note that $U=U_i\cap
G_{\mcofl(\alpha'+3)}$ for some $i\in\omega$ so $a\not\in U_i$ and $\sum_{i\leq
m}d_n^i\in U_i$ for every $n\in\omega$ contradicting $\sum_{i\leq m}d_n^i\to a$ in $\tau(\U_{<\gamma})$.

Thus
$a\in\sum_{i\leq m}\cl{\lasi}^{\kw(\K_{<\gamma})}$. By~\ref{g.cc} there
exists a $\gamma_0<\omega_1$ such that $\gamma_0>\gamma$ and $D^i\to d^i$ in
$\kw(\K_{\gamma_0})$ for each $i\leq m$ where $D^i\subseteq\set
d_n^i:n\in\omega.$ is infinite. By thinning out and reindexing, we may
assume that $d_n^i\to d^i$ in $\kw(\K_{\gamma_0})$. Then $d_n\to
d=a+\sum_{i\leq m}d^i$ in $\kw(\K_{\gamma_0})$ so $d\in
A\cap\sum_{i\leq m}\cl{\lasi}^{\kw(\K_{\gamma_0})}$ by
Lemma~\ref{set.additivity}.

Let $U\in\Uw\cap M$ then $d_n\in U$ for all but
finitely many $n\in\omega$ so $d\in U$. Thus
for every $U\in\Uw\cap M$ there is a $\gamma'<\omega_1$ and a $d\in
A\cap(\sum_{i\leq
m}\cl{\lasi}^{\kw(\K_{\gamma'})})\cap\cl{U}^{\tau(\U_{\gamma'})}$. Then
the set $A'=A\cap(\sum_{i\leq m}\cl{\lasi}^{\kw(\Kw)})$ is closed in
$\kw(\Kw)$ and by elementarity $0\in\cl{A'}^{\tau(\Uw)}$ so assume
that $A\subseteq\sum_{i\leq m}{\lasi}^{\kw(\Kw)}$.

Let $I\subseteq m$ be such that $0\in\cl{A'}^{\tau(\Uw)}$ and
$0\not\in\cl{A''}^{\tau(\Uw)}$ for any $A''=A\cap(\sum_{i\in
I'}\cl{\lasi}^{\kw(\Kw)})$ where $A'=A\cap(\sum_{i\in
I}\cl{\lasi}^{\kw(\Kw)})$, $I'\subset I$, $I'\not=I$. Using this
property of $I$ and picking a closed (in $\kw(\Kw)$)
subset of $A'$ if necessary we may assume that
every $d\in A$ has the property that $d=\sum_{i\in I}d^i\in A$ where $d^i\not=0$ and
$d^i\in\cl{\lasi}^{\kw(\Kw)}$.

Pick points $d_n=\sum_{i\in I}d_n^i\in A\cap(\cap_{k\leq n}U_k)\cap M$ so
that $d_n^i\in\cl{\lasi}^{\kw(\K_{<\gamma})}\cap(\cap_{k\leq n}U_k)$
for every $i\in I$.

If the recursion terminates at some $n\in\omega$ then there exists a
$U\in \Uw\cap M$ such that for every $d\in A\cap M$ $d\not\in\sum_{i\in
I}(\cl{\lasi}^{\kw(\K_{\gamma'})}\cap U)$ for
any $\gamma'\in M$. Let $\gamma'<\gamma$ be such that $U\cap
G_{\gamma'}\in\U_{\gamma'}$. Then $U\cap G_{\gamma''}\in\U_{\gamma''}$
for any $\gamma''\geq\gamma'$. Let $\alpha'<\gamma$ be a limit such
that $\gamma'<\mcofl(\alpha'+3)<\gamma$. By~\ref{g.resolve} there
exists a $U'\in\Uw\cap M$ such that $U'\subseteq U$, $U'\cap
G_{\mcofl(\alpha'+3)}\in\U_{\mcofl(\alpha'+3)}$, and $U'\cap\sum_{i\in
I}\cl{\lasi}^{\kw(\K_{\mcofl(\alpha'+3)})}=\sum_{i\in
I}(\cl{\lasi}^{\kw(\K_{\mcofl(\alpha'+3)})}\cap U')$. Let
$\gamma_0\geq\mcofl(\alpha'+3)$ by such that $\gamma_0<\gamma$ and
$d=\sum_{i\in I}d^i\in A\cap U'\cap M$ for some
$d^i\in\cl{\lasi}^{\kw(\K_{\gamma_0})}$. Such $d$ and $\gamma_0$ exist
by elementarity since $A,U',\Kw,\CC\in M$. Note that for every $i\in I$
the set $\cl{\lasi}^{\kw(\K_{\gamma_0})}\cap U'$ is closed in
$\kw(\K_{\gamma_0})$ so by Lemma~\ref{ds.closed.sums} the set
$\sum_{i\in I}(\cl{\lasi}^{\kw(\K_{\gamma_0})}\cap U')$ is closed in
$\kw(\K_{\gamma_0})$. Since $U'\cap\sum_{i\in I}\lasi$ is dense in
$U'\cap\sum_{i\in I}\cl{\lasi}^{\kw(\K_{\gamma_0})}$ this shows that
$U'\cap\sum_{i\in I}\cl{\lasi}^{\kw(\K_{\gamma_0})}=\sum_{i\in
I}(\cl{\lasi}^{\kw(\K_{\gamma_0})}\cap U')$. Then
$d^i\in\cl{\lasi}^{\kw(\K_{\gamma_0})}\cap
U'\subseteq\cl{\lasi}^{\kw(\K_{\gamma_0})}\cap U$ (note that $d$ 
uniquely determines $d^i$ by~\ref{seq.scale.direct}) contradicting the
choice of $U$.

Thus $D^i=\set d_n^i:n\in\omega.\to0$ in $\tau(\U_{<\gamma})$ for each
$i\in I$. If $D^i$ is not closed and discrete in $\kw(\K_{<\gamma})$
for some $i\in I$ then for some infinite $J\subseteq\omega$ $\set
d_n^i:i\in J.\to0$ in $\kw(\Kw)$.
Using~\ref{g.cc} and possibly thinning out $J$ we may assume
that $\set d_n^j:n\in J.\to
d(j)$ for every $j\in I$ in $\kw(\Kw)$. Then $d=\sum_{i'\in I}d(i')\in\sum_{i'\in
I\setminus\{i\}}\cl{\lasi}^{\kw(\Kw)}$ and $d\in A$ contradicting the
minimality of $I$.

Thus each $D^i$ is closed and discrete in
$\kw(\K_{<\gamma})$. If $\hd{\K_{<\gamma}}{D^{i'}}{\la{S^{i'}}}\geq\sigma^{i'}-1$
for some $i'\in I$ then by Lemma~\ref{z.group} there exists an fpse
$\gks{_\gamma}$ of $\gks{_{<\gamma}}$ such that $\set d_n^{i'}:n\in J.\to0$ in
$\kw(\K_\gamma)$ and a finite fpse-chain
$\set\gks{_\lambda}:\gamma<\lambda<\gamma+k.$ for some $k\in\omega$
such that $\set d_n^i:n\in J.\to d(i)$ in $\kw(\K_{<\gamma+k})$ for some $d(i)\in
G_{<\gamma+k}$ and some infinite $J\subseteq\omega$. As above $d=\sum_{i\in I}d(i)\in\sum_{i\in
I\setminus\{i'\}}\cl{\lasi}^{\kw(\K_\gamma)}$ and $d\in\cl{A}^{\K_\gamma}$. Thus by~\ref{g.lus.wggs} $A\cap\sum_{i\in
I'}\cl{\lasi}^{\kw(\K_{\mcofl(\alpha+1)})}\not=\varnothing$ for some
$I'\subset I$, $I'\not=I$ contradicting the minimality of $I$.

Now $\hd{\K_{<\gamma}}{D^i}{\la{S^i}}<\sigma^i-1$ for every $i\in I$. 
Pick an $i'\in I$ and use Lemma~\ref{ct.split} to build an fpse-chain
$\set\gks{_\gamma}:\mcofl(\alpha)\leq\gamma\leq \mcofl(\alpha)+k.$ of finite
type above $\gks{_{<\mcofl(\alpha)}}$ away from $\la{S^{i'}}$ for some
$k\in\omega$ so that for some infinite $J\subseteq\omega$ $\set
d_n^i:n\in J.\to d^i\not\in G_{<\mcofl(\alpha)}$ in $\kw(\K_{\mcofl(\alpha)+k})$ for $i\in
I\setminus\{i'\}$. Then $P=\set d_n^{i'}:n\in J.$ is closed and
discrete in $\kw(\K_{\mcofl(\alpha)+k})$ by Lemma~\ref{w1.m.hd}.

Use Lemma~\ref{stable} to construct an fpse-chain
$\set\gks{_\gamma}:\mcofl(\alpha)+k<\gamma\leq\gamma_0.$ such that, after
possibly thinning out $J$, $P$ is stable in $\gks{_{\gamma_0}}$ and
$\la{P}$ is closed and discrete in $\kw{\K_{\gamma_0}}$.

Let $J=\cup_{j\in\omega}J_j$ be such that $J_j$ are infinite and
disjoint and $P^j=\set d_n^{i'}:n\in J_j.$.

Repeatedly apply Lemma~\ref{wggs.w} to build wggs
$$
W_j=\set(G_\gamma, \K_\gamma, \U_\gamma, S_\gamma,
s_\gamma):\gamma_j<\gamma\leq\gamma_{j+1}.
$$
over $(D_j,
G_{\gamma_j}, \K_{\gamma_j}, \U_{\gamma_j}, \delta)$ 
where
$\delta=\hd{\K_{\gamma_j}}{P^j}{\lo{S^{i'}}}=\hd{\K_{\gamma_0}}{P}{\lo{S^{i'}}}$
and
$D_j\subseteq[\lasoi]_{\delta}^{\kw(\K_{<\gamma_j})}$ is such that
$P^j\subseteq D_j+K(j)$ and $D_j\subseteq P^j+K(j)$ for some
$K(j)\in\K_{\gamma_j}$ (due to the 
stability of $P$ we may assume that $K(j)=K\in\K_{\gamma_0}$ but this
stronger property is not needed) such that
$$
\so(s_{\gamma_{j+1}}, \lo{S^{i'}}, \kw(\K_{\gamma_j}))\geq\sigma^{i'}_{j+1}
$$
and $\la{\cup_{k>j}P^j}$ is closed and discrete in
$\kw(\K_{\gamma_{j+1}})$. Let
$\gamma^0=\lim\gamma_j$.

Use
Corollary~\ref{cs.mseq} to find a
$d(i',n)\in\cl{P^n}^{\kw(\K_{\gamma_{n+1}})}$ such that
$d(i',n)=a(n)+s_{\gamma_{n+1}}$ for some $a(n)\in K(n)$. Note that $d(i',n)\in
G_{\gamma_{n+1}}\setminus G_{<\gamma_{n+1}}$ since $a(n)\in
K(n)\in\K_{<\gamma_{n+1}}$ and by~\ref{wggs.so.other}
$$
\so(g, \lo{S^{i'}}, \kw(\K_{\gamma_{n+1}}))\geq
\so(s_{\gamma_{j+1}}, \lo{S^{i'}}, \kw(\K_{\gamma_{n+1}}))\geq\sigma^{i'}_n
$$
for any $g\in G_{\gamma_{n+1}}\setminus G_{<\gamma_{n+1}}$. Let $d(i,n)=d^i$ for $i\in
I\setminus\{i'\}$. Put $d(n)=\sum_{i\in I}d(i,n)$. Then
$d(n)\in\cl{A}^{\kw(\K_{\gamma^0})}$ by Corollary~\ref{cs.mseq}. Since $D^i\to0$ in
$\tau(\U_{<\gamma})$,
$d(i,n)\in\cap\set\cl{U}^{\tau(\U_{<\gamma^0})}:U\in\U_{<\gamma}.$.

Now by~\ref{g.lus.so.raise} for every $n\in\omega$ there exists a
$d(n)\in G_{\mcofl(\alpha+1)}\setminus G_{<\gamma}$ such that
$d(n)\in\cl{A}^{\kw(\K_{\mcofl(\alpha+1)})}$
and $d(n)=\sum_{i\in I}d(i, n)$ where
$$
d(i, n)\in\cl{\lasi}^{\kw(\K_{\mcofl(\alpha+1)})}\cap
\bigcap\set\cl{U}^{\tau(\U_{\mcofl(\alpha+1)})}:U\in\U_{<\mcofl(\alpha)}.
$$
and
$\so(d(i', n), \lasoi, \kw(\K_{\mcofl(\alpha+1)}))\geq\sigma^{i'}_n$
for some $i'\in I$. Note that $d(i, n)\not\in M$.

Thus for any $U\in\Uw\cap
M$, any $\gamma'\in\omega_1\cap M$, and any $n\in\omega$ there exists a
$d(n)=\sum_{i\in I}d(i,n)\in A$ where
$d(i, n)\in\cl{\lasi}^{\kw(\K_{\mcofl(\alpha+1)})}\cap U$, $d(i, n)\not\in
G_{\gamma'}$, and
$\so(d(i', n), \lasoi, \kw(\K_{\mcofl(\alpha+1)}))\geq\sigma^{i'}_n$
for some $i'\in I$, $\alpha<\omega_1$.

By elementarity such $d(n)$ and $d(i, n)$ exist in every
$(\cap_{j\leq n}U_j\setminus G_{<\gamma_n})\cap M$
for some $\gamma_n$ cofinal in $\gamma$. Now
$\hd{\K_{<\gamma}}{\set
d({i'}, n):n\in\omega.}{\lasoi}\geq\sigma^{i'}-1$ by
Lemma~\ref{wggs.c} and $d(i', n)\to0$ in $\U_{<\mcofl(\alpha)}$
so by an argument similar to the one above using~\ref{g.lus.wggs} one obtains a contradiction
with the minimality of $I$.

The upper bound estimate for $\so(\Gw)$ follows from \ref{g.so}, while
the lower bound estimate follows from Lemma~\ref{fpsestack}. Countable
compactness follows from~\ref{g.cc}.
\end{proof}
\begin{theorem}[$\diamondsuit$]\label{cc.g}
Let $\sigma\leq\omega_1$. There exists a countably compact sequential
group $G$ such that $\so(G)=\sigma$.
\end{theorem}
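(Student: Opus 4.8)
The plan is to assemble the final theorem from the machinery built up in the paper, handling the successor and limit values of $\sigma$ separately, with the heaviest lifting already done in Lemma~\ref{e} and Lemma~\ref{g.ind}.

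First I would dispose of the small cases. For $\sigma=0$ the group must be discrete, so any infinite discrete boolean group works; for $\sigma=1$ the group must be Fr\'echet and countably compact, so a convergent sequence with its limit (or more robustly a suitable metrizable countably compact boolean group) suffices. These are degenerate and need only a sentence.

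For $1<\sigma\leq\omega_1$ the strategy is to choose the parameter family $\xi=\set\sigma^m:m\in\omega.$ feeding into Lemma~\ref{g.kw} so that $\sup\set\sigma^m:m\in\omega.$ comes out to exactly $\sigma$, and then run the $\diamondsuit$-driven recursion of Lemma~\ref{g.ind} to its limit. Concretely: if $\sigma$ is a successor ordinal I would take $\sigma^m=\sigma$ for every $m$ (or all but finitely many $m$), so that the supremum is the successor $\sigma$ itself; if $\sigma$ is a limit ordinal (including $\sigma=\omega_1$) I would pick a strictly increasing sequence of successor ordinals $\sigma^m$ with $\sup_m\sigma^m=\sigma$, which is possible precisely because every limit ordinal $\leq\omega_1$ has cofinality $\omega$ (for $\sigma=\omega_1$ one uses $\mathrm{cf}(\omega_1)=\omega_1$—wait, this is the subtle point, addressed below). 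With $\xi$ so chosen, Lemma~\ref{g.kw} produces the starting convenient triple $\gks{_0}=\gku$ together with a sequential scale $(\CC,\xi)$ in which $\so(0,\lasom,\kwk)=\sigma^m$ for each $m$. Then, invoking the $\diamondsuit$-sequence and the enumeration $\set P_\alpha.$ fixed just before Lemma~\ref{e}, I would apply Lemma~\ref{g.ind} to obtain the fpse-chain $\set\gks{_\alpha}:\alpha<\omega_1.$ and the continuous cofinal map $\mcofl$. Setting $\gks{_{\omega_1}}=\gks{_{<\omega_1}}$ and applying Lemma~\ref{e}, I conclude that $\kw(\Kw)=\tau(\Uw)$, that $\Gw$ is countably compact, and that $\so(\kw(\Kw))=\sup\set\sigma^m:m\in\omega.=\sigma$. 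Since $\kw(\Kw)=\tau(\Uw)$ is a Hausdorff precompact (hence, via Lemma~\ref{precompact.groups}, linear) boolean group topology and the space is sequential by the verification inside Lemma~\ref{e}, the group $G=\Gw$ with this topology is the desired example.

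The main obstacle—and the step I would flag for careful treatment—is the case $\sigma=\omega_1$, where the ``diagonal'' supremum argument interacts with the closing-off. For a limit $\sigma<\omega_1$ of countable cofinality the increasing sequence $\sigma^m\to\sigma$ is immediate, and $\sup\set\sigma^m:m\in\omega.=\sigma$ is a genuine limit, so the $+1$ phenomenon discussed after Lemma~\ref{sigma.so} does not force a successor order. For $\sigma=\omega_1$ one again takes $\sigma^m$ strictly increasing and cofinal in $\omega_1$; here the point is that $\so(\Gw)=\sup_m\sigma^m$ can equal $\omega_1$ even though each witness $\sigma^m$ is a countable successor, and that Lemma~\ref{e}'s upper bound $\so(\kw(\Kw))\leq\sup\set\sigma^m.$ (from \ref{g.so}) together with the lower bound from Lemma~\ref{fpsestack} pins the value exactly. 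I would double-check that nothing in the constructions of Lemmas~\ref{g.kw} and~\ref{g.ind} silently assumed $\sup_m\sigma^m<\omega_1$ (each individual $\sigma^m$ is used only as a countable successor bound inside a single wggs, so the constructions remain valid), and that the final topology really is sequential rather than merely $k_\omega$—this sequentiality is exactly what the closing-off argument of Lemma~\ref{e} establishes by showing every set $A$ with $0\in\cl A$ has $0$ in some countable sequential-closure iterate. With those two checks in place the theorem follows directly.
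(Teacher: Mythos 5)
Your handling of $\sigma<\omega_1$ matches the paper: choose $\xi=\set\sigma^m:m\in\omega.$ with $\sup_m\sigma^m=\sigma$ (constant for a successor, strictly increasing for a limit of countable cofinality), feed it into Lemma~\ref{g.kw} and Lemma~\ref{g.ind}, and read off the conclusion from Lemma~\ref{e}. That part is fine. The genuine gap is the case $\sigma=\omega_1$, which is exactly the point you flagged as ``subtle'' and then did not resolve: the family $\xi$ is \emph{countable} and each $\sigma^m$ is a \emph{countable} successor ordinal, so $\sup\set\sigma^m:m\in\omega.$ is always a countable ordinal; since $\mathrm{cf}(\omega_1)=\omega_1$, no countable sequence of countable ordinals is cofinal in $\omega_1$. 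Hence the single-group construction of Lemma~\ref{e}, whose output has sequential order exactly $\sup_m\sigma^m$, can never produce order $\omega_1$, and your plan breaks down at precisely this step. The paper closes this case by a different device: it takes a $\Sigma$-product of $\omega_1$ many countably compact sequential groups $G_\alpha$ with $\so(G_\alpha)=\alpha$ (each supplied by the $\sigma<\omega_1$ case). Countable compactness of the $\Sigma$-product is standard, sequentiality is the cited result of Ismail (see the remark before Lemma~\ref{sigma.so}), the upper bound $\so\leq\omega_1$ is automatic for sequential spaces, and the lower bound comes from the subgroups $G_\alpha$ having unbounded sequential orders. You need some such second construction for $\sigma=\omega_1$; the diagonal choice of $\xi$ cannot do it.

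Two smaller points. First, your degenerate cases are off: an infinite discrete group is never countably compact, so ``any infinite discrete boolean group'' does not witness the bottom case; with the paper's indexing of $[A]_0$ the value $\so(G)=0$ is realized by any compact metrizable (hence Fr\'echet) infinite group, and these trivial values are in any event swallowed by Lemma~\ref{e} with a suitable $\xi$. Second, your concern about whether Lemmas~\ref{g.kw} and~\ref{g.ind} ``silently assume $\sup_m\sigma^m<\omega_1$'' is moot once you accept the cardinality obstruction above: they do not need to assume it because it is forced.
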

\begin{proof}
The existence of such $G$ for $\sigma<\omega_1$ follows from
Lemma~\ref{e}. For $\sigma=\omega_1$ consider a $\Sigma$ product of
$\omega_1$ countably compact sequential groups $G_\alpha$ such that
$\so(G_\alpha)=\alpha$.

Note that the group of sequential order $\omega_1$ constructed above
is not separable unlike the examples of smaller sequential order. A
modified construction similar to that of
Lemma~\ref{e} may be used to construct a separable example of
sequential order $\omega_1$, as well, although we omit the details. 
\end{proof}
\begin{corollary}
The existence of a countably compact sequential non Fr\'echet group is
independent of the axioms of ZFC.
\end{corollary}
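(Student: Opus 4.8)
The plan is to establish independence by exhibiting two models of ZFC: one in which a countably compact sequential non-Fr\'echet group exists, and one in which no such group exists. The first half is an immediate consequence of Theorem~\ref{cc.g}, while the second is supplied by the result of~\cite{Shi1} already quoted in the introduction. Neither direction requires new work beyond assembling these facts.

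For the consistency of existence, I would invoke Theorem~\ref{cc.g} with, say, $\sigma=2$ to obtain, under $\diamondsuit$, a countably compact sequential group $G$ with $\so(G)=2$. The key observation is that a sequential space is Fr\'echet precisely when its sequential order is at most $1$: by Definition~\ref{seq.cl} a space is Fr\'echet iff $\cl A^\tau=[A]_0^\tau$ for every $A$, i.e.\ iff its sequential order (Definition~\ref{pt.so}) is $\leq1$. Hence every sequential group of sequential order $\geq2$ fails to be Fr\'echet, and in particular the group $G$ above is a countably compact sequential non-Fr\'echet group. Since $\diamondsuit$ holds in the constructible universe and is therefore consistent with ZFC, the existence of such a group is consistent with ZFC.

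For the consistency of non-existence, I would simply cite~\cite{Shi1}, where it is shown to be consistent with ZFC that there are no countably compact sequential non-Fr\'echet groups. Combining the two consistency statements yields that the existence of a countably compact sequential non-Fr\'echet group is neither provable nor refutable in ZFC, which is exactly the asserted independence.

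There is essentially no obstacle at the level of the corollary itself; all of the difficulty is absorbed into Theorem~\ref{cc.g} (and into~\cite{Shi1} for the opposite direction). The only point requiring a moment's care is the equivalence between the Fr\'echet property and having sequential order at most $1$, which is what guarantees that the order-$2$ group produced by Theorem~\ref{cc.g} is genuinely non-Fr\'echet rather than merely non-metrizable; one could equally take any $\sigma$ with $2\leq\sigma\leq\omega_1$ in Theorem~\ref{cc.g} to the same effect.
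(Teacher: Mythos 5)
Your proposal is correct and follows exactly the paper's own (one-line) argument: combine Theorem~\ref{cc.g} under $\diamondsuit$ with the consistency of non-existence from~\cite{Shi1}. The extra care you take in noting that sequential order $\geq2$ is what certifies the failure of the Fr\'echet property is a worthwhile, if routine, elaboration of what the paper leaves implicit.
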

\begin{proof}
See Theorem~\ref{cc.g} and \cite{Shi1} Theorem~2.
\end{proof}

We conclude by listing some open questions.

\begin{question}
Does Theorem~\ref{cc.g} follow from
CH alone?
\end{question}

\begin{question}
Is the existence of a countably compact sequential group $G$ such that
$K\subseteq G$ for some compact subspace $K$, $\so(K)\geq2$
consistent with ZFC?
\end{question}

\begin{question}
Can the construction in Theorem~\ref{cc.g} be made
Cohen-indestructible (see, for example,~\cite{Hr1} for the relevant
definitions)?
\end{question}

The last question requires some clarification. Since the group $G$
will contain compact subspaces homeomorphic to the Cantor cube
$2^\omega$, the addition of {\em any\/} reals will destroy
the sequentiality of $G$. Each such subspace, as well as $G$ itself, will inherit
a precompact topology 
from the ground model, however, so the group $\cl{G}\supseteq G$ in the extension may
be defined by taking the Raikov completion of each compact subspace in
$\Kw$ and letting the topology of $\cl{G}$ be determined by the new family
of compact subspaces.

\section{Acknowledgements.}
The research in this paper was started when one of the authors
(Alexander Shibakov) was visiting the Department of Mathematics at
Ehime University in the Summer of 2017. He would like to thank the
Department and Prof.\ Shakhmatov for the support that made this visit
possible
and productive.

\end{document}